\newtheorem{thm}{Theorem}
\newtheorem{lem}{Lemma}
\newtheorem{ex}{Example}
\newcommand{\relmiddle}[1]{\mathrel{}\middle#1\mathrel{}}
\numberwithin{equation}{section}
\numberwithin{figure}{section}
\numberwithin{df}{section}
\numberwithin{prp}{section}
\numberwithin{thm}{section}
\numberwithin{lem}{section}
\numberwithin{rem}{section}
\numberwithin{con}{section}
\numberwithin{ex}{section}
\begin{document}

\title[rigged configuration bijection]
{An Explicit Algorithm of Rigged Configuration Bijection for the Adjoint Crystal of Type $G_{2}^{(1)}$}

\author{Toya Hiroshima}
\address{OCAMI, Osaka City University,
3-3-138 Sugimoto, Sumiyoshi-ku, Osaka 558-8585, Japan}
\email{toya-hiroshima@outlook.jp}


\date{}

\begin{abstract}
We construct an explicit algorithm of the static-preserving bijection between the rigged configurations and the highest weight paths of the form $(B^{2,1})^{\otimes L}$ in the $G_{2}^{(1)}$ adjoint crystals.
\end{abstract}

\subjclass[2010]{Primary~05A19, 17B37; Secondary~05E10, 17B25, 81R10}
\keywords{crystal, rigged configuration, quantum group}

\maketitle

\tableofcontents

\section{Introduction}

Kerov, Kirillov, and Reshetikhin introduced a combinatorial object, called a rigged configuration, through Bethe Ansatz analysis of the isotropic Heisenberg spin chain \cite{KKR86}.
They constructed a bijection between rigged configurations and the semistandard Young tableaux \cite{KKR86,KR88}, through which a bijection between rigged configurations and highest weight element of a tensor product of Killirov-Reshetikhin (KR) crystals or highest weight paths was formulated.
The rigged configuration possess a natural statistic and it coincides with the charge introduced by Lascoux and Sch$\ddot{\mathrm{u}}$tzenberger~\cite{LS78}.
On the other hand, the highest weight path carries the statistic called energy by Nakayashiki and Yamada~\cite{NY97}.
The bijection, which is called the rigged configuration bijection, is a bijection such that the charge of a rigged configuration is sent to the energy of the corresponding path.

The bijection of Kerov, Kirillov, and Reshetikhin is a bijection from rigged configurations to the paths of the form $(B^{1,1})^{\otimes L}$ in type $A_{n}^{(1)}$ where $B^{r,s}$ denotes the KR crystal.
Until now, this has been extended in more general setting and in various types~\cite{DS06,KSS02,OS12,OSS03a,OSS03b,OSS03c,OSS13,OSS+17,Sak14,Sch05,Scr16,Scr20,SS06a}.
For nonexceptional types, the generalization of the rigged configuration bijection has been established~\cite{OSS18}.
On the other hand, there remain unsolved problems in exception types.
This paper concerns one of these problems.
For the $G_{2}^{(1)}$ adjoint crystals, the explicit algorithm of the rigged configuration bijection $\Phi$ is not known as pointed out in ~\cite{Scr20} though the crystal structure is very simple (see Fig.~\ref{fig:graph}). 
In this paper, we construct a map $\Phi$ from rigged configurations to highest weight elements of $(B^{2,1})^{\otimes L}$ by executing a fundamental procedure $\delta_{\theta}$ repeatedly.
Our result provides an alternate but direct proof of $X=M$ conjecture of~\cite{HKO+99,HKO+02b} in our setting, which has been proved previously by Naoi~\cite{Nao12}.

This paper is organized as follows.
In Section 2, we give the necessary background on KR crystals and paths for $G_{2}^{(1)}$.
Rigged configurations and the bijection $\Phi$ are described in Section 3, where we state our main theorem (Theorem~\ref{th:main}).
In Section 4, we provide an explicit description of the bijection $\delta_{\theta}$ as well as its inverse $\Tilde{\delta}_{\theta}$ for $G_{2}^{(1)}$ adjoint crystals. 
Section 5 is devoted to the proof of Theorem~\ref{th:main}.
In Section 6, we explain some forbidden rules in the algorithm.

\section{Affine Algebra $G_{2}^{(1)}$ and the KR Crystal}

\subsection{Affine algebra $G_{2}^{(1)}$}
We consider in this paper the exceptional untwisted affine algebra $G_{2}^{(1)}$.
The Dynkin diagram is depicted in Figure~\ref{fig:Dynkin}.
We follow \cite{Kac90} for the labeling of the Dynkin nodes.
Let $I$ be the index set of the Dynkin nodes and let $\alpha_{i}$, $\alpha_{i}^{\vee}$, $\Lambda_{i}$ $(i\in I)$ be simple roots, simple coroots, fundamental weights, respectively.
Following the notation in \cite{Kac90} we denote the projection of $\Lambda_{i}$ onto the weight space of $G_{2}$ by $\Bar{\Lambda}_{i}$ ($i\in I_{0}:=I\backslash {0}$) and set $\Bar{P}=\bigoplus_{i\in I_{0}}\mathbb{Z}\Bar{\Lambda}_{i}$, $\Bar{P}^{+}=\bigoplus_{i\in I_{0}}\mathbb{Z}_{\geq 0}\Bar{\Lambda}_{i}$.
Let $(A_{i,j})_{i,j\in I}$ stand for the Cartan matrix for $G_{2}^{(1)}$.
The canonical pairing $\left<\;,\;\right> : P^{\vee}\times P \rightarrow \mathbb{Z}$ is given by $\left< \alpha_{i}^{\vee},\alpha_{j}\right>=A_{i,j}$.

\setlength{\unitlength}{12pt}

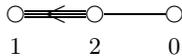
\begin{figure} \label{fig:Dynkin}
\begin{center}
\begin{picture}(5.5,2)

\put(0,1.5){\circle{0.5}}
\put(2.5,1.5){\circle{0.5}}
\put(5,1.5){\circle{0.5}}

\put(0.3,1.4){\line(1,0){1.9}}
\put(0.3,1.5){\line(1,0){1.9}}
\put(0.3,1.6){\line(1,0){1.9}}
\put(2.8,1.5){\line(1,0){1.9}}

\put(0.75,1){\makebox(1,1){\Large{$<$}}}

\put(-0.5,0){\makebox(1,1){\small{$1$}}}
\put(2,0){\makebox(1,1){\small{$2$}}}
\put(4.5,0){\makebox(1,1){\small{$0$}}}

\end{picture} 
\end{center}
\caption{The Dynkin diagram for $G_{2}^{(1)}$ in the Bourbaki labeling.}
\end{figure}

\subsection{KR crystal}
Let $\mathfrak{g}$ be any affine algebra and $U_{q}^{\prime}(\mathfrak{g})$ the corresponding quantized enveloping algebra without the degree operator.
Among finite-dimensional $U_{q}^{\prime}(\mathfrak{g})$-modules there is a distinguished family called Kirillov-Reshetikhin (KR) modules~\cite{Her10,KNT02,Nak03b}.
One of the remarkable properties of KR modules is that they admit crystal bases introduced by Kashiwara~\cite{Kas90,Kas91}.
Such a crystal base is called a KR crystal.
The KR crystal is indexed by $(a,i)\in\mathcal{H}_{0}:=I_{0}\times \mathbb{Z}_{>0}$ and denoted by $B^{a,i}$.
For exceptional types the KR crystal is known to exist when the KR module is irreducible or the index $a$ is adjacent to $0$~\cite{KKM+92b}.

The KR crystal we are interested in in this paper is $G_{2}^{(1)}$ adjoint crystal $B^{2,1}$, which is a level 1 perfect crystal and is constructed in \cite{BFKL06}.
The crystal structure is depicted in Figure~\ref{fig:graph}.
See also \cite{MMO10,MOW12} (note the difference indexing of the Dynkin diagram).
Here vertices in the graph signify elements of $B^{2,1}$ and $b\stackrel{i}{\longrightarrow}b^{\prime}$ stands for $f_{i}b=b^{\prime}$ or equivalently $b=e_{i}b^{\prime}$.
we adopt the anti-Kashiwara convention for the tensor products of crystals as in \cite{BS17}.
Namely, if $B_{1}$ and $B_{2}$ are crystals, then for $b_{1}\otimes b_{2}\in B_{1}\otimes B_{2}$ the action of $e_{i}$ is defined as

\[
e_{i}(b_{1}\otimes b_{2})=
\begin{cases}
e_{i}b_{1}\otimes b_{2} &  if \; \varphi_{i}(b_{2})<\varepsilon_{i}(b_{1}), \\
b_{1}\otimes e_{i}b_{2} &  if \; \varphi_{i}(b_{2})\geq\varepsilon_{i}(b_{1}),
\end{cases}
\]
where $\varepsilon_{i}(b)=\max\left\{  k\geq0 \relmiddle| e_{i}^{k}b\in B\right\}$ and 
$\varphi_{i}(b)=\max\left\{  k\geq0 \relmiddle| f_{i}^{k}b\in B\right\}$.

In what follow in this paper $B=B^{2,1}$.
The set of \emph{classically restricted paths} in $B^{\otimes L}$ of weight $\lambda \in \Bar{P}^{+}$ is by definition
\[
\mathcal{P}(\lambda,L)=\left\{ b\in B^{\otimes L} \relmiddle| \mathrm{wt}(b)=\lambda \text{ and } e_{i}b=0 \text{ for all } i\in I_{0}   \right\}.
\]
One may check that the following are equivalent for $b=b_{1}\otimes b_{2}\otimes \cdots \otimes b_{L}\in B^{\otimes L}$ and $\lambda \in \Bar{P}^{+}$.

\begin{itemize}
\item[(1)] 
$b$ is a classically restricted path of weight $\lambda \in \Bar{P}^{+}$.

\item[(2)] 
 $b_{2}\otimes \cdots \otimes b_{L}$ is a classically restricted path of weight $\lambda -\mathrm{wt}(b_{1})$ and $\varepsilon_{i}\leq \left<\alpha_{i}^{\vee},\lambda -\mathrm{wt}(b_{1})\right>$ for all $i\in I_{0}$.
\end{itemize}

The weight function $\mathrm{wt}:B\rightarrow \Bar{P}$ is given by 
$\mathrm{wt}(b)=\sum_{i\in I_{0}}(\varphi_{i}(b)-\varepsilon_{i}(b))\Bar{\Lambda}_{i}$.
The weight function $\mathrm{wt}:B^{\otimes L}\rightarrow \Bar{P}$ is defined by 
$\mathrm{wt}(b_{1}\otimes\cdots\otimes b_{L})=\sum_{j=1}^{L}\mathrm{wt}(b_{j})$.

In Figure~\ref{fig:graph}, we show the crystal subgraph for $B_{0}$, which is obtained by ignoring the $0$-arrow from the crystal graph for $B^{2,1}$.
All the $0$-arrows are listed below.

\setlength{\unitlength}{12pt}

\begin{center}
\begin{picture}(22,1.5)

\put(0,0){\framebox(1,1){\small{$10$}}}
\put(3,0){\framebox(1,1){\small{$2$}}}

\put(6,0){\framebox(1,1){\small{$11$}}}
\put(9,0){\framebox(1,1){\small{$3$}}}

\put(12,0){\framebox(1,1){\small{$12$}}}
\put(15,0){\framebox(1,1){\small{$4$}}}

\put(18,0){\framebox(1,1){\small{$13$}}}
\put(21,0){\framebox(1,1){\small{$6$}}}

\put(1,0,5){\vector(1,0){2}}
\put(7,0,5){\vector(1,0){2}}
\put(13,0,5){\vector(1,0){2}}
\put(19,0,5){\vector(1,0){2}}

\put(1.5,0.5){\makebox(1,1){\tiny{$0$}}}
\put(7.5,0.5){\makebox(1,1){\tiny{$0$}}}
\put(13.5,0.5){\makebox(1,1){\tiny{$0$}}}
\put(19.5,0.5){\makebox(1,1){\tiny{$0$}}}
\end{picture} 
\end{center}
and

\begin{center}
\begin{picture}(8,1.5)

\put(0,0){\framebox(1,1){\small{$14$}}}
\put(3,0){\framebox(1,1){\small{$\emptyset$}}}
\put(6,0){\framebox(1,1){\small{$1$}}}
\put(7,0){\makebox(1,1){.}}

\put(1,0,5){\vector(1,0){2}}
\put(4,0,5){\vector(1,0){2}}

\put(1.5,0.5){\makebox(1,1){\tiny{$0$}}}
\put(4.5,0.5){\makebox(1,1){\tiny{$0$}}}
\end{picture} 
\end{center}

\setlength{\unitlength}{12pt}

\begin{figure} \label{fig:graph}
\begin{center}
\begin{picture}(19,7)

\put(0,0){\framebox(1,1){\small{$3$}}}
\put(0,3){\framebox(1,1){\small{$2$}}}
\put(0,6){\framebox(1,1){\small{$1$}}}

\put(3,3){\framebox(1,1){\small{$4$}}}

\put(6,0){\framebox(1,1){\small{$6$}}}
\put(6,6){\framebox(1,1){\small{$5$}}}

\put(9,0){\framebox(1,1){\small{$8$}}}
\put(9,6){\framebox(1,1){\small{$7$}}}

\put(12,0){\framebox(1,1){\small{$10$}}}
\put(12,6){\framebox(1,1){\small{$9$}}}

\put(15,3){\framebox(1,1){\small{$11$}}}

\put(18,0){\framebox(1,1){\small{$14$}}}
\put(18,3){\framebox(1,1){\small{$13$}}}
\put(18,6){\framebox(1,1){\small{$12$}}}

\put(0.5,3){\vector(0,-1){2}}
\put(0.5,6){\vector(0,-1){2}}

\put(1,1){\vector(1,1){2}}

\put(4,3){\vector(1,-1){2}}
\put(4,4){\vector(1,1){2}}

\put(7,0.5){\vector(1,0){2}}
\put(7,6.5){\vector(1,0){2}}

\put(10,0.5){\vector(1,0){2}}
\put(10,6.5){\vector(1,0){2}}

\put(13,1){\vector(1,1){2}}
\put(13,6){\vector(1,-1){2}}

\put(16,4){\vector(1,1){2}}

\put(18.5,3){\vector(0,-1){2}}
\put(18.5,6){\vector(0,-1){2}}

\put(-0.5,1.5){\makebox(1,1){\tiny{$1$}}}
\put(-0.5,4.5){\makebox(1,1){\tiny{$2$}}}

\put(2,1.5){\makebox(1,1){\tiny{$1$}}}

\put(4,1.5){\makebox(1,1){\tiny{$1$}}}
\put(4,4.5){\makebox(1,1){\tiny{$2$}}}

\put(7.5,-0.5){\makebox(1,1){\tiny{$2$}}}
\put(7.5,6.5){\makebox(1,1){\tiny{$1$}}}

\put(10.5,-0.5){\makebox(1,1){\tiny{$2$}}}
\put(10.5,6.5){\makebox(1,1){\tiny{$1$}}}

\put(14,1.5){\makebox(1,1){\tiny{$1$}}}
\put(14,4.5){\makebox(1,1){\tiny{$2$}}}

\put(16,4.5){\makebox(1,1){\tiny{$1$}}}

\put(18.5,1.5){\makebox(1,1){\tiny{$2$}}}
\put(18.5,4.5){\makebox(1,1){\tiny{$1$}}}
\end{picture} 
\end{center}
\caption{Crystal graph of $B_{0}$ for $B^{2,1}$.}
\end{figure}
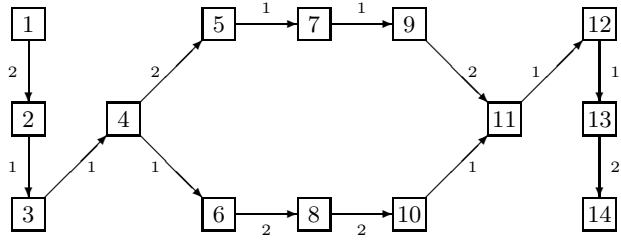

The \emph{energy function} $D: B^{\otimes L}\rightarrow \mathbb{Z}$ gives the grading on $B^{\otimes L}$.
In our case where a path is an element of the tensor product of a single KR crystal it takes a simple form.
Due to the existence of the universal $R$-matrix and the fact that $B\otimes B$ is connected, by \cite{KKM+92a} there is a unique (up to global additive constant) function $H: B\otimes B\rightarrow \mathbb{Z}$ called the \emph{local energy function}, such that
\[
H(e_{i}(b\otimes b^{\prime}))=
\begin{cases}
H(b\otimes b^{\prime})-1 & \text{ if } i=0 \text{ and } e_{0}(b\otimes b^{\prime})=e_{0}b \otimes b^{\prime}, \\
H(b\otimes b^{\prime})+1 & \text{ if } i=0 \text{ and } e_{0}(b\otimes b^{\prime})=b\otimes e_{0}b^{\prime}, \\
H(b\otimes b^{\prime}) & \text{otherwise}.
\end{cases}
\]
We normalize $H$ by the condition $H(\framebox{$1$}\otimes\framebox{$1$})=0$.

The crystal graph of $B_{0}\otimes B_{0}$ decomposes into five connected components as
\begin{equation} \label{eq:decomposition}
B_{0}\otimes B_{0}=B(2\Bar{\Lambda}_{2})\oplus
B(3\Bar{\Lambda}_{1})\oplus B(2\Bar{\Lambda}_{1})\oplus B(\Bar{\Lambda}_{2}) \oplus B(0),
\end{equation}
where $B(\lambda)$ stands for the highest weight $G_{2}$ crystal of weight $\lambda$ and the highest vector are given by $\framebox{$1$}\otimes\framebox{$1$}$ ($\lambda=2\Bar{\Lambda}_{2}$), $\framebox{$2$}\otimes\framebox{$1$}$ ($\lambda=3\Bar{\Lambda}_{1}$), $\framebox{$5$}\otimes\framebox{$1$}$ ($\lambda=2\Bar{\Lambda}_{1}$), $\framebox{$8$}\otimes\framebox{$1$} $ ($\lambda=\Bar{\Lambda}_{2}$), and $\framebox{$14$}\otimes\framebox{$1$}$ ($\lambda=0$).
$H$ is constant on each component, and takes the value $H(\framebox{$1$}\otimes\framebox{$1$})=0$, $H(\framebox{$2$}\otimes\framebox{$1$})=-1$, and $H=-2$ for the rest.
It is easily verified that $H(\framebox{$\emptyset$}\otimes\framebox{$\emptyset$})=-2$ and $H(\framebox{$\emptyset$}\otimes\framebox{$i$})=H(\framebox{$i$}\otimes\framebox{$\emptyset$})=-1$ $(1\leq i\leq 14)$.
With this $H$, the energy function $D$ is defined by
\begin{equation} \label{eq:D}
D(b_{1}\otimes \cdots \otimes b_{L})=\sum_{j=1}^{L}j H(b_{j}\otimes b_{j+1}),
\end{equation}
where $b_{L+1}=\framebox{$1$}$.

\section{Rigged Configuration and the Bijection}

\subsection{Rigged configuration}

Fix $\lambda\in \Bar{P}^{+}$ and a matrix $\boldsymbol{L}=(L_{i}^{(a)})_{(a,i)\in \mathcal{H}_{0}}$ of nonnegative integers.
Let $\nu=(m_{i}^{(a)})_{(a,i)\in\mathcal{H}_{0}}$.
Say that $\nu$ is a $\lambda$-configuration if it satisfies

\begin{equation} \label{eq:admissible}
\sum_{(a,i)\in\mathcal{H}_{0}}i m_{i}^{(a)}\alpha_{a}=\sum_{(a,i)\in\mathcal{H}_{0}}i L_{i}^{(a)}\Bar{\Lambda}_{a}-\lambda
\end{equation}
and $p_{i}^{(a)}\geq 0$ for all $(a,i)\in\mathcal{H}_{0}$, where
\begin{equation} \label{eq:vacancy}
p_{i}^{(a)}=\sum_{j\in \mathbb{Z}_{>0}} L_{j}^{(a)}\min (i,j)-\sum_{b\in I_{0}}\frac{A_{ab}}{\gamma_{b}}\sum_{j\in \mathbb{Z}_{>0}}\min (\gamma_{a}i,\gamma_{b}j)m_{j}^{(b)}
\end{equation}
is the \emph{vacancy numbers} and $\gamma_{a}$ ($a\in I_{0}$) are scaling factors which are given by $\gamma_{1}=1$ and $\gamma_{2}=3$ for $G_{2}$.

Let $\nu$ be an admissible configuration.
We identify $\nu$ with a sequence of partitions $\{ \nu^{(a)}\}_{a\in I_{0}}$ such that $\nu^{(a)}=(1^{m_{1}^{(a)}}2^{m_{2}^{(a)}}\cdots)$.
Let $J=\{ J^{(a,i)} \}_{(a,i)\in \mathcal{H}_{0}}$ be a double sequence of partitions.
Then a rigged configuration (RC) is a pair $(\nu,J)$ subject to the restriction Eq.~\eqref{eq:admissible} and the requirement that $J^{(a,i)}$ be a partition contained in a $m_{i}^{(a)}\times p_{i}^{(a)}$ rectangle.
The set of rigged configurations for fixed $\lambda$ and $\boldsymbol{L}$ is denoted by $\mathrm{RC}(\lambda,\boldsymbol{L})$.

Define the \emph{charge} of a configuration $\nu$ by
\begin{align*}
c(\nu)=&\frac{1}{2}\sum_{a,b\in I_{0}}\frac{A_{ab}}{\gamma_{b}}\sum_{i,j\in \mathbb{Z}_{>0}}\min (\gamma_{a}i,\gamma_{b}j)m_{i}^{(a)}m_{j}^{(b)} \\
&-\sum_{a\in I_{0},j,k\in \mathbb{Z}_{>0}}L_{j}^{(a)}\min (j,k)m_{k}^{(a)}.
\end{align*} 
To obtain the charge of a rigged configuration $(\nu, J)$, we add all of the riggings to $c(\nu)$;
\[
c(\nu,J)=c(\nu)+|J|,
\]
where $|J|=\sum_{(a,i)\in \mathcal{H}_{0}}|J^{(a,i)}|$.

We now set 
\begin{equation} \label{eq:restriction}
L_{i}^{(a)}=L\delta_{a,2}\delta_{i,1} \quad ((a,i)\in \mathcal{H}_{0}),
\end{equation}
which corresponds to considering paths in $(B^{2,1})^{\otimes L}$.
The set $\mathrm{RC}(\lambda,\boldsymbol{L})$ with the restriction Eq.~\eqref{eq:restriction} is denoted by $\mathrm{RC}(\lambda,L)$.
The vacancy numbers $p_{i}^{(a)}$ and the charge $c(\nu)$ take the following forms.

\begin{align}
p_{i}^{(1)}&=-2\sum_{j\in \mathbb{Z}_{>0}}\min (i,j)m_{j}^{(1)}+\sum_{j\in \mathbb{Z}_{>0}}\min (i,3j)m_{j}^{(2)}, \label{eq:vacancy1} \\
p_{i}^{(2)}&=L+\sum_{j\in \mathbb{Z}_{>0}}\min (3i,j)m_{j}^{(1)}-2\sum_{j\in \mathbb{Z}_{>0}}\min (i,j)m_{j}^{(2)} \label{eq:vacancy2},
\end{align} 
and
\begin{align} \label{eq:charge}
c(\nu)=&\sum_{i,j\in \mathbb{Z}_{>0}}\min (i,j)m_{i}^{(1)}m_{j}^{(1)}
-\sum_{i,j\in \mathbb{Z}_{>0}}\min (i,3j)m_{i}^{(1)}m_{j}^{(2)} \\
&+\sum_{i,j\in \mathbb{Z}_{>0}}\min (i,j)m_{i}^{(2)}m_{j}^{(2)} \nonumber
-L\sum_{j\in \mathbb{Z}_{>0}}m_{j}^{(2)}.
\end{align}

\subsection{The bijection from RCs to paths.}
We now describe the bijection $\Phi : \mathrm{RC}(\lambda,L)\rightarrow \mathcal{P}(\lambda,L)$.
Let $(\nu,J)\in \mathrm{RC}(\lambda,L)$.
We shall define a map $\gamma : \mathrm{RC}(\lambda,L)\rightarrow B$ which associates to $(\nu,J)$ an element of $B$.
Define by $\mathrm{RC}_{b}(\lambda,L)$ the elements of $\mathrm{RC}(\lambda,L)$ such that $\gamma (\nu,L)=b$.
We shall define a bijection $\delta_{\theta} : \mathrm{RC}_{b}(\lambda,L)\rightarrow \mathrm{RC}(\lambda-\mathrm{wt}(b),L-1)$.
The disjoint union of these bijections then defines a bijection $\delta_{\theta} : \mathrm{RC}(\lambda,L)\rightarrow \bigsqcup _{b\in B}\mathrm{RC}(\lambda-\mathrm{wt}(b),L-1)$.

The bijection $\Phi$ is defined recursively as follows.
For $b\in B$ let $\mathcal{P}_{b}(\lambda,L)$ be the set of paths in $B^{\otimes L}$ that have $b$ as leftmost tensor factor.
For $L=0$ the bijection $\Phi$ sends the empty rigged configuration (the only element of the set $\mathrm{RC}(\lambda,L)$) to the empty path (the only element of $\mathcal{P}(\lambda,L)$).
Otherwise assume that $\Phi$ has been defined for $B^{\otimes (L-1)}$ and define it for $B^{\otimes L}$ by the commutative diagram

\[
\begin{CD}
\mathrm{RC}_{b}(\lambda,L) @>{\Phi}>> \mathcal{P}_{b}(\lambda,L) \\
@V{\delta_{\theta}}VV @VVV \\
\mathrm{RC}(\lambda-\mathrm{wt}(b),L-1) @>{\Phi}>> \mathcal{P}(\lambda-\mathrm{wt}(b),L-1)
\end{CD}
\]
where the right-hand side vertical map removes the leftmost tensor factor $b$.

Here follows the main theorem of our paper.

\begin{thm} \label{th:main}
$\Phi : \mathrm{RC}(\lambda,L)\rightarrow \mathcal{P}(\lambda,L)$ is a bijection such that
\begin{equation} \label{eq:main}
c(\nu,J)=D(\Phi(\nu,J))\text{ for all }(\nu,J)\in \mathrm{RC}(\lambda,L).
\end{equation}
\end{thm}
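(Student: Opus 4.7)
My plan is to prove Theorem~\ref{th:main} by induction on $L$. The base case $L=0$ is immediate, as the empty configuration is sent to the empty path with both statistics vanishing. For the inductive step, fix $(\nu,J)\in\mathrm{RC}(\lambda,L)$, set $b=\gamma(\nu,J)$ and $(\nu',J')=\delta_\theta(\nu,J)$, so that $\Phi(\nu,J)=b\otimes\Phi(\nu',J')$. Three things must be established: that $\Phi(\nu,J)\in\mathcal{P}(\lambda,L)$, that $\Phi$ is bijective at length $L$, and that \eqref{eq:main} holds.

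The first two points rest on properties of $\delta_\theta$ supplied in Section~4. For well-definedness, the equivalence recorded after the definition of $\mathcal{P}(\lambda,L)$ reduces matters to checking $\varepsilon_i(b)\leq\langle\alpha_i^{\vee},\lambda-\mathrm{wt}(b)\rangle$ for every $i\in I_0$; this is a finite case check, one case per $b\in B$, using the explicit definition of $\gamma$ and the non-negativity of the vacancy numbers \eqref{eq:vacancy1}--\eqref{eq:vacancy2} of $(\nu',J')$. Bijectivity at length $L$ follows by combining the inductive hypothesis with the fact that each restriction $\delta_\theta:\mathrm{RC}_b(\lambda,L)\to\mathrm{RC}(\lambda-\mathrm{wt}(b),L-1)$ admits the explicit two-sided inverse $\tilde\delta_\theta$ constructed in Section~4.

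The statistic-preserving identity \eqref{eq:main} is the substantive step. Writing $\Phi(\nu,J)=b_1\otimes\cdots\otimes b_L$ with $b_1=b$ and $b_{L+1}=\framebox{$1$}$, a short telescoping from \eqref{eq:D} shows
\[
D(\Phi(\nu,J))-D(\Phi(\nu',J'))=\sum_{k=1}^{L}H(b_k\otimes b_{k+1}),
\]
so by the inductive hypothesis $D(\Phi(\nu',J'))=c(\nu',J')$ the theorem reduces to the local identity
\[
c(\nu,J)-c(\nu',J')=\sum_{k=1}^{L}H(b_k\otimes b_{k+1}).
\]
This is where the main obstacle lies: the right-hand side is naively global, while the left-hand side depends only on $(\nu,J)$ and the prescription $\delta_\theta$. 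The plan is to compute the left-hand side from \eqref{eq:charge} together with the row-by-row effect of $\delta_\theta$ described in Section~4, and match it against the right-hand side by a case analysis on $b=\gamma(\nu,J)\in B$, leveraging the list of $H$-values recorded after \eqref{eq:decomposition} and the constancy of $H$ on each connected component of $B_0\otimes B_0$. The exceptional letter $\framebox{$\emptyset$}$ and the $0$-arrows, where $H$ takes the non-generic values listed in the excerpt, are expected to be the most delicate sub-cases; keeping all the $p_i^{(a)}$ and rigging changes in accurate bookkeeping across these branches will be the principal combinatorial burden.
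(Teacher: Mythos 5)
Your induction scheme and the telescoping identity $D(\Phi(\nu,J))-D(\Phi(\nu',J'))=\sum_{k=1}^{L}H(b_k\otimes b_{k+1})$ are correct, and you rightly identify that the right-hand side is global while $\Delta c(\nu,J)=c(\nu,J)-c(\nu',J')$ is local. But your proposed resolution --- matching the two sides ``by a case analysis on $b=\gamma(\nu,J)$'' --- cannot close this gap: the tail $\sum_{k=2}^{L}H(b_k\otimes b_{k+1})$ depends on all subsequent tensor factors, hence on the entire rigged configuration, and is not determined by which letter $b$ the first application of $\delta_\theta$ returns. Some further inductive input is required to express that tail as a local statistic of $(\nu',J')$.

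The paper supplies exactly this missing device in Lemma~\ref{lem:stat} (essentially \cite[Lemma~5.1]{OSS03a}): introduce the auxiliary statistic $\alpha_1^{(2)}$, the length of the first column of $\nu^{(2)}$, and prove the two local identities $\Delta c(\nu,J)=-\alpha_1^{(2)}+\chi(b_1=\emptyset)$ and $H(b_1\otimes b_2)=\tilde{\alpha}_1^{(2)}-\alpha_1^{(2)}+\chi(b_1=\emptyset)-\chi(b_2=\emptyset)$. Summing the second identity over successive applications of $\delta_\theta$ telescopes to $\sum_{k=1}^{L}H(b_k\otimes b_{k+1})=-\alpha_1^{(2)}+\chi(b_1=\emptyset)$, which is then matched against the first; each of the two identities genuinely is verifiable by a case analysis on $b$ together with the vacancy-number changes \textbf{(VC-1)}--\textbf{(VC-14)} and the rigging adjustments. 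Without this intermediate quantity (or an equivalent strengthened induction hypothesis asserting that $\sum_k H(b_k\otimes b_{k+1})$ for the path $\Phi(\nu,J)$ equals an explicit local function of $(\nu,J)$), your plan stalls at the reduction step. The remaining items --- dominance of $\lambda-\mathrm{wt}(b)$, admissibility of $(\nu',J')$, and invertibility via $\tilde{\delta}_\theta$ --- are organized essentially as you describe, though the paper's verification of the first two rests on nontrivial convexity and nonnegativity lemmas for the vacancy numbers rather than a routine finite check.
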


\section{The Bijection}

In this section, for $(\nu,J)\in \mathrm{RC}(\lambda,L)$, an algorithm is given which defines $b=\gamma (\nu,J)$, the new smaller rigged configuration $(\Tilde{\nu},\Tilde{J})=\delta (\nu,J)$ such that $(\Tilde{\nu},\Tilde{J})\in \mathrm{RC}(\rho,L-1)$ where $\rho=\lambda -\mathrm{wt}(b)$, and the new vacancy numbers $\Tilde{p}_{i}^{(a)}$ in terms of the old $p_{i}^{(a)}$.
Before describing the algorithm, we give an example of $\Phi$, where we fix notation and give several definitions.

\begin{ex} \label{ex:def}
The algorithm $\Phi$ for an RC of $L=4$ and $\lambda=2\Bar{\Lambda}_{1}$ is depicted at each step $\delta_{\theta}$ below, where partitions $\nu^{(1)}$ and $\nu^{(2)}$ are illustrated as the left and the right Young diagrams in the RC.
A row in a Young diagram is called a \emph{string}.

\setlength{\unitlength}{10pt}
\begin{flushleft}
\begin{picture}(29,5)
\put(1,2){\line(1,0){2}}
\put(1,3){\line(1,0){6}}
\put(1,4){\line(1,0){6}}
\put(1,2){\line(0,1){2}}
\put(2,2){\line(0,1){2}}
\put(3,2){\line(0,1){2}}
\put(4,3){\line(0,1){1}}
\put(5,3){\line(0,1){1}}
\put(6,3){\line(0,1){1}}
\put(7,3){\line(0,1){1}}

\put(3,4){\makebox(2,1){$\downarrow$}}
\put(0,2){\makebox(1,1){{\scriptsize $0$}}}
\put(0,3){\makebox(1,1){{\scriptsize $2$}}}
\put(3,2){\makebox(1,1){{\scriptsize $0$}}}
\put(7,3){\makebox(1,1){{\scriptsize $1$}}}
\put(1,2){\makebox(1,1){{\scriptsize $[3]$}}}
\put(2,2){\makebox(1,1){{\scriptsize $[2]$}}}
\put(6,3){\makebox(1,1){{\scriptsize $[5]$}}}

\put(10,0){\line(1,0){1}}
\put(10,1){\line(1,0){1}}
\put(10,2){\line(1,0){2}}
\put(10,3){\line(1,0){2}}
\put(10,4){\line(1,0){2}}

\put(10,0){\line(0,1){4}}
\put(11,0){\line(0,1){4}}
\put(12,2){\line(0,1){2}}

\put(9,1){\makebox(1,1){{\scriptsize $1$}}}
\put(9,3){\makebox(1,1){{\scriptsize $0$}}}
\put(11,0){\makebox(1,1){{\scriptsize $1$}}}
\put(11,1){\makebox(1,1){{\scriptsize $0$}}}
\put(12,2){\makebox(1,1){{\scriptsize $0$}}}
\put(12,3){\makebox(1,1){{\scriptsize $0$}}}

\put(10,0){\makebox(1,1){{\scriptsize $[1]$}}}
\put(11,2){\makebox(1,1){{\scriptsize $[4]$}}}

\put(14,3){\makebox(2,1){$\longrightarrow$}}
\put(14,2){\makebox(2,1){$7$}}

\put(18,3){\line(1,0){5}}
\put(18,4){\line(1,0){5}}
\put(18,3){\line(0,1){1}}
\put(19,3){\line(0,1){1}}
\put(20,3){\line(0,1){1}}
\put(21,3){\line(0,1){1}}
\put(22,3){\line(0,1){1}}
\put(23,3){\line(0,1){1}}

\put(20,4){\makebox(2,1){$\downarrow$}}
\put(17,3){\makebox(1,1){{\scriptsize $1$}}}
\put(23,3){\makebox(1,1){{\scriptsize $1$}}}
\put(18,3){\makebox(1,1){{\scriptsize $[8]$}}}
\put(19,3){\makebox(1,1){{\scriptsize $[7]$}}}
\put(20,3){\makebox(1,1){{\scriptsize $[4]$}}}
\put(21,3){\makebox(1,1){{\scriptsize $[3]$}}}
\put(22,3){\makebox(1,1){{\scriptsize $[2]$}}}

\put(26,1){\line(1,0){1}}
\put(26,2){\line(1,0){1}}
\put(26,3){\line(1,0){2}}
\put(26,4){\line(1,0){2}}
\put(26,1){\line(0,1){3}}
\put(27,1){\line(0,1){3}}
\put(28,3){\line(0,1){1}}
\put(25,2){\makebox(1,1){{\scriptsize $1$}}}
\put(25,3){\makebox(1,1){{\scriptsize $1$}}}
\put(27,1){\makebox(1,1){{\scriptsize $1$}}}
\put(27,2){\makebox(1,1){{\scriptsize $0$}}}
\put(28,3){\makebox(1,1){{\scriptsize $0$}}}
\put(26,1){\makebox(1,1){{\scriptsize $[1]$}}}
\put(26,3){\makebox(1,1){{\scriptsize $[6]$}}}
\put(27,3){\makebox(1,1){{\scriptsize $[5]$}}}
\end{picture}
\end{flushleft}

\setlength{\unitlength}{10pt}
\begin{flushright}
\begin{picture}(26,2)
\put(0,1){\makebox(2,1){$\longrightarrow$}}
\put(0,0){\makebox(2,1){$12$}}

\put(4,1){\makebox(1,1){{$\emptyset$}}}

\put(7,1){\line(1,0){1}}
\put(7,2){\line(1,0){1}}
\put(7,1){\line(0,1){1}}
\put(8,1){\line(0,1){1}}
\put(6,1){\makebox(1,1){{\scriptsize $0$}}}
\put(8,1){\makebox(1,1){{\scriptsize $0$}}}
\put(7,1){\makebox(1,1){{\scriptsize $[1]$}}}

\put(10,1){\makebox(2,1){$\longrightarrow$}}
\put(10,0){\makebox(2,1){$2$}}

\put(14,1){\makebox(1,1){{$\emptyset$}}}
\put(16,1){\makebox(1,1){{$\emptyset$}}}

\put(19,1){\makebox(2,1){$\longrightarrow$}}
\put(19,0){\makebox(2,1){$1$}}

\put(23,1){\makebox(1,1){{$\emptyset$}}}
\put(25,1){\makebox(1,1){{$\emptyset$}}}
\end{picture}
\end{flushright}
so that the first RC corresponds to the path 
$\framebox{$7$}\otimes \framebox{$12$}\otimes \framebox{$2$}\otimes \framebox{$1$}$.
The energy of this path is computed to be $-8$.
The computation uses Eq.~\eqref{eq:D} and Eq.~\eqref{eq:Hb}.
The charge of the first RC is computed to be $-8$ using Eq.~\eqref{eq:charge}.

\end{ex}

The down arrows marked in Example~\ref{ex:def} are called \emph{delimiters}, which are marked at each end of the consecutive sequence of three boxes of a string in $\nu^{(1)}$.
The numbers marked on the left of strings are vacancy numbers, which are common for the strings of the same length.
The numbers marked on the right of strings are riggings.
We follow the convention that riggings in strings of the same length are sorted in an increasing order (from the top).
We call a string of length $i$ in $\nu^{(a)}$ \emph{singular}, \emph{q-singular}, and \emph{qq-singular} if its rigging is equal to $p_{i}^{(a)}$, $p_{i}^{(a)}-1$, and $p_{i}^{(a)}-2$, respectively.
Here, ``q-singular'' is the abbreviation of ``quasi singular'' firstly introduced in \cite{Moh12b} and commonly used in the literature~\cite{OSS18,Scr20}.
The ``qq-singular'' strings are firstly introduced in this paper.
A singular or q-singular string is written as a singular/q-singular string.
The other cases are similar. 
We say that the string which is not singular is \emph{q-singular at best}.
A string of \emph{qq-singular at best} is defined similarly.
A string which is qq-singular at best but not qq-singular is called \emph{qqq-singular at best}.
The boxes marked by $[n]$ are deleted at each step $\delta_{\theta}$.
The number $n$ indicates the order of marking process.
We call this process the \emph{box marking} and say that the box or the string is marked by $[n]$. 
We also say that $[n]$ is marked in a box or a string when the box or the string is marked by $[n]$.
The string marked by $[n]$ is called the \emph{selected $i_{n}$-string} or simply the \emph{$i_{n}$-string}.
The length of the $i_{n}$-string is also denoted by $i_{n}$.
Therefore, the $i_{5}$-string in $\nu^{(1)}$ in the first RC is also a 6-string.
In the first RC in Example~\ref{ex:def}, the $i_{2}$-string coincide with the $i_{3}$-string.
We write $i_{2}=i_{3}$ in this case.
The length of the $i_{5}$-string is larger than that of the $i_{3}$-string.
We write $i_{3}<i_{5}$ in this case.
This notation is generalized in an obvious manner.
Strings in $\nu^{(1)}$ are classified by their types.

\setlength{\unitlength}{10pt}
\begin{center}
\begin{picture}(16,2)
\put(0,0){\line(1,0){4}}
\put(0,1){\line(1,0){4}}
\put(1,0){\line(0,1){1}}
\put(2,0){\line(0,1){1}}
\put(3,0){\line(0,1){1}}
\put(4,0){\line(0,1){1}}
\put(0,1){\makebox(2,1){$\downarrow$}}

\put(7,0){\line(1,0){3}}
\put(7,1){\line(1,0){3}}
\put(8,0){\line(0,1){1}}
\put(9,0){\line(0,1){1}}
\put(10,0){\line(0,1){1}}
\put(7,1){\makebox(2,1){$\downarrow$}}

\put(13,0){\line(1,0){2}}
\put(13,1){\line(1,0){2}}
\put(14,0){\line(0,1){1}}
\put(15,0){\line(0,1){1}}
\put(13,1){\makebox(2,1){$\downarrow$}}
\end{picture}
\end{center}
Strings of the first, the second, and the third types are called \emph{type-0}, \emph{type-I}, and \emph{type-II}, respectively.
A string of type-0 or type-I is written as a string of type-0/I.
The other cases are similar.
The \emph{effective length} of a string of length $i$ in $\nu^{(1)}$ is defined by $i^{eff}:=\lceil i/3 \rceil$.
For example, $i_{3}^{eff}=1$ and $i_{5}^{eff}=2$ in the first RC in Example~\ref{ex:def}.
The real length $i$ is recovered as $3i^{eff}$, $3i^{eff}-1$, or $3i^{eff}-2$ if the string is type-0, type-I, or type-II, respectively.
We form a new RC by removing marked boxes, adjusting the values of riggings of the box-deleted strings, and keeping the other the same.

\subsection{Algorithm $\delta_{\theta}$}

The algorithm consists of box marking and adjusting the values of riggings of the box-deleted strings.
We begin with the algorithm of box marking.

\begin{description}

\item[(BM-1)] Marking $[1]$.

Find the singular string of minimum length in $\nu^{(2)}$.
If such a string exists, then mark the rightmost box of the string by $[1]$.
If not, then terminate the algorithm and return $\framebox{$1$}$.
When $[1]$ is marked, we prescribe that boxes of length $(\leq 3(i_{1}-1))$ in $\nu^{(1)}$ cannot be marked.

\item[(BM-2)] Marking $[2]$.

Find the strings of effective length $i_{1}$ in $\nu^{(1)}$.
If such strings exist, then mark the rightmost box of one of such strings by [2] according to the following preferential rule.

\begin{center}
\begin{tabular}{c|ccc}
                & type-0 & type-I & type-II \\ \hline
singular     & 1 & 2 & 3 \\
q-singular & 4 & 5 & \\
qq-singular & 6 & & 
\end{tabular}
\end{center}
For example, firstly find a singular string of length $3i_{1}$.
If such a string exists, then mark the string by $[2]$.
If such strings do not exist, then find the singular string of length $3i_{1}-1$, etc.
If $[2]$ cannot be marked in a string of effective length $i_{1}$, then find the string of minimum length $(\geq 3i_{1}+1)$ which is singular/q-singular/qq-singular.
In this search, the type-II (resp. type-I) qq-singular string must be ignored if there exists a type-I (resp. type-0) singular string of the same effective length (see Example~\ref{ex:BM-2a}).
We prescribe that the type-II qq-singular string of length ($\geq 3i_{1}+1$) is ignored if there exists a type-0 singular string of the same effective length.
We also prescribe that the type-I q-singular (resp. qq-singular) string of length ($\geq 3i_{1}+1$) is ignored if there exists a type-0 singular (resp. q-singular) string of the same effective length (see Example~\ref{ex:BM-2b}).
If the search is successful, then mark the rightmost box of the founded string by $[2]$.
If not, then delete the marked box and return $\framebox{$2$}$.
If [2] is marked, then we prescribe that boxes of north and northwest of the box marked by [2] cannot be marked.
This rule is also applied in any box marking in $\nu^{(1)}$ (not in $\nu^{(2)}$).

\begin{ex} \label{ex:BM-2a}
$L=6$ and $\lambda=2\Bar{\Lambda}_{1}+\Bar{\Lambda}_{2}$.

\setlength{\unitlength}{10pt}
\begin{center}
\begin{picture}(35,7)
\put(1,4){\line(1,0){5}}
\put(1,5){\line(1,0){6}}
\put(1,6){\line(1,0){6}}
\put(1,4){\line(0,1){2}}
\put(2,4){\line(0,1){2}}
\put(3,4){\line(0,1){2}}
\put(4,4){\line(0,1){2}}
\put(5,4){\line(0,1){2}}
\put(6,4){\line(0,1){2}}
\put(7,5){\line(0,1){1}}

\put(3,6){\makebox(2,1){$\downarrow$}}
\put(0,4){\makebox(1,1){{\scriptsize $2$}}}
\put(0,5){\makebox(1,1){{\scriptsize $2$}}}
\put(6,4){\makebox(1,1){{\scriptsize $0$}}}
\put(7,5){\makebox(1,1){{\scriptsize $2$}}}
\put(4,5){\makebox(1,1){{\scriptsize $[4]$}}}
\put(5,5){\makebox(1,1){{\scriptsize $[3]$}}}
\put(6,5){\makebox(1,1){{\scriptsize $[2]$}}}

\put(10,0){\line(1,0){1}}
\put(10,1){\line(1,0){1}}
\put(10,2){\line(1,0){1}}
\put(10,3){\line(1,0){1}}
\put(10,4){\line(1,0){2}}
\put(10,5){\line(1,0){2}}
\put(10,6){\line(1,0){2}}
\put(10,0){\line(0,1){6}}
\put(11,0){\line(0,1){6}}
\put(12,4){\line(0,1){2}}
\put(9,3){\makebox(1,1){{\scriptsize $0$}}}
\put(9,5){\makebox(1,1){{\scriptsize $1$}}}
\put(11,0){\makebox(1,1){{\scriptsize $0$}}}
\put(11,1){\makebox(1,1){{\scriptsize $0$}}}
\put(11,2){\makebox(1,1){{\scriptsize $0$}}}
\put(11,3){\makebox(1,1){{\scriptsize $0$}}}
\put(12,4){\makebox(1,1){{\scriptsize $0$}}}
\put(12,5){\makebox(1,1){{\scriptsize $0$}}}
\put(10,0){\makebox(1,1){{\scriptsize $[1]$}}}
\put(11,4){\makebox(1,1){{\scriptsize $[5]$}}}

\put(14,4){\makebox(2,1){$\longrightarrow$}}
\put(14,3){\makebox(2,1){$8$}}

\put(18,4){\line(1,0){3}}
\put(18,5){\line(1,0){5}}
\put(18,6){\line(1,0){5}}
\put(18,4){\line(0,1){2}}
\put(19,4){\line(0,1){2}}
\put(20,4){\line(0,1){2}}
\put(21,4){\line(0,1){2}}
\put(22,5){\line(0,1){1}}
\put(23,5){\line(0,1){1}}

\put(20,6){\makebox(2,1){$\downarrow$}}
\put(17,4){\makebox(1,1){{\scriptsize $3$}}}
\put(17,5){\makebox(1,1){{\scriptsize $1$}}}
\put(21,4){\makebox(1,1){{\scriptsize $3$}}}
\put(23,5){\makebox(1,1){{\scriptsize $0$}}}
\put(18,4){\makebox(1,1){{\scriptsize $[4]$}}}
\put(19,4){\makebox(1,1){{\scriptsize $[3]$}}}
\put(20,4){\makebox(1,1){{\scriptsize $[2]$}}}

\put(26,1){\line(1,0){1}}
\put(26,2){\line(1,0){1}}
\put(26,3){\line(1,0){1}}
\put(26,4){\line(1,0){1}}
\put(26,5){\line(1,0){2}}
\put(26,6){\line(1,0){2}}
\put(26,1){\line(0,1){5}}
\put(27,1){\line(0,1){5}}
\put(28,5){\line(0,1){1}}
\put(25,4){\makebox(1,1){{\scriptsize $1$}}}
\put(25,5){\makebox(1,1){{\scriptsize $1$}}}
\put(27,1){\makebox(1,1){{\scriptsize $1$}}}
\put(27,2){\makebox(1,1){{\scriptsize $0$}}}
\put(27,3){\makebox(1,1){{\scriptsize $0$}}}
\put(27,4){\makebox(1,1){{\scriptsize $0$}}}
\put(28,5){\makebox(1,1){{\scriptsize $0$}}}
\put(26,1){\makebox(1,1){{\scriptsize $[1]$}}}
\put(26,2){\makebox(1,1){{\scriptsize $[5]$}}}

\put(30,4){\makebox(2,1){$\longrightarrow$}}
\put(30,3){\makebox(2,1){$8$}}

\end{picture}
\end{center}
which corresponds to the path 
$\framebox{$8$}\otimes \framebox{$8$}\otimes \framebox{$1$}\otimes \framebox{$5$}\otimes \framebox{$8$}\otimes \framebox{$1$}$.
The charge of the first RC and the energy of this path coincide, which is computed to be $-19$.
If we selected the qq-singular string for the $i_{2}$-string in the first RC, then we would have

\setlength{\unitlength}{10pt}
\begin{center}
\begin{picture}(35,7)
\put(1,4){\line(1,0){5}}
\put(1,5){\line(1,0){6}}
\put(1,6){\line(1,0){6}}
\put(1,4){\line(0,1){2}}
\put(2,4){\line(0,1){2}}
\put(3,4){\line(0,1){2}}
\put(4,4){\line(0,1){2}}
\put(5,4){\line(0,1){2}}
\put(6,4){\line(0,1){2}}
\put(7,5){\line(0,1){1}}

\put(3,6){\makebox(2,1){$\downarrow$}}
\put(0,4){\makebox(1,1){{\scriptsize $2$}}}
\put(0,5){\makebox(1,1){{\scriptsize $2$}}}
\put(6,4){\makebox(1,1){{\scriptsize $0$}}}
\put(7,5){\makebox(1,1){{\scriptsize $2$}}}
\put(5,4){\makebox(1,1){{\scriptsize $[2]$}}}
\put(6,5){\makebox(1,1){{\scriptsize $[3]$}}}

\put(10,0){\line(1,0){1}}
\put(10,1){\line(1,0){1}}
\put(10,2){\line(1,0){1}}
\put(10,3){\line(1,0){1}}
\put(10,4){\line(1,0){2}}
\put(10,5){\line(1,0){2}}
\put(10,6){\line(1,0){2}}
\put(10,0){\line(0,1){6}}
\put(11,0){\line(0,1){6}}
\put(12,4){\line(0,1){2}}
\put(9,3){\makebox(1,1){{\scriptsize $0$}}}
\put(9,5){\makebox(1,1){{\scriptsize $1$}}}
\put(11,0){\makebox(1,1){{\scriptsize $0$}}}
\put(11,1){\makebox(1,1){{\scriptsize $0$}}}
\put(11,2){\makebox(1,1){{\scriptsize $0$}}}
\put(11,3){\makebox(1,1){{\scriptsize $0$}}}
\put(12,4){\makebox(1,1){{\scriptsize $0$}}}
\put(12,5){\makebox(1,1){{\scriptsize $0$}}}
\put(10,0){\makebox(1,1){{\scriptsize $[1]$}}}

\put(14,4){\makebox(2,1){$\longrightarrow$}}
\put(14,3){\makebox(2,1){$4$}}

\put(18,4){\line(1,0){4}}
\put(18,5){\line(1,0){5}}
\put(18,6){\line(1,0){5}}
\put(18,4){\line(0,1){2}}
\put(19,4){\line(0,1){2}}
\put(20,4){\line(0,1){2}}
\put(21,4){\line(0,1){2}}
\put(22,4){\line(0,1){2}}
\put(23,5){\line(0,1){1}}

\put(20,6){\makebox(2,1){$\downarrow$}}
\put(17,4){\makebox(1,1){{\scriptsize $1$}}}
\put(17,5){\makebox(1,1){{\scriptsize $1$}}}
\put(22,4){\makebox(1,1){{\scriptsize $1$}}}
\put(23,5){\makebox(1,1){{\scriptsize $0$}}}
\put(21,4){\makebox(1,1){{\scriptsize $[2]$}}}

\put(26,1){\line(1,0){1}}
\put(26,2){\line(1,0){1}}
\put(26,3){\line(1,0){1}}
\put(26,4){\line(1,0){2}}
\put(26,5){\line(1,0){2}}
\put(26,6){\line(1,0){2}}
\put(26,1){\line(0,1){5}}
\put(27,1){\line(0,1){5}}
\put(28,4){\line(0,1){2}}
\put(25,3){\makebox(1,1){{\scriptsize $1$}}}
\put(25,5){\makebox(1,1){{\scriptsize $0$}}}
\put(27,1){\makebox(1,1){{\scriptsize $0$}}}
\put(27,2){\makebox(1,1){{\scriptsize $0$}}}
\put(27,3){\makebox(1,1){{\scriptsize $0$}}}
\put(28,4){\makebox(1,1){{\scriptsize $0$}}}
\put(28,5){\makebox(1,1){{\scriptsize $0$}}}
\put(27,4){\makebox(1,1){{\scriptsize $[1]$}}}

\put(30,4){\makebox(2,1){$\longrightarrow$}}
\put(30,3){\makebox(2,1){$3$}}

\end{picture}
\end{center}
which corresponds to the path 
$\framebox{$4$}\otimes \framebox{$3$}\otimes \framebox{$\emptyset$}\otimes \framebox{$5$}\otimes \framebox{$8$}\otimes \framebox{$1$}$ 
whose energy is computed to be $-20$.
\end{ex}

\begin{ex} \label{ex:BM-2b}
$L=6$ and $\lambda=2\Bar{\Lambda}_{2}+\Bar{\Lambda}_{2}$ as in Example~\ref{ex:BM-2a}.

\setlength{\unitlength}{10pt}
\begin{center}
\begin{picture}(35,7)
\put(1,4){\line(1,0){5}}
\put(1,5){\line(1,0){6}}
\put(1,6){\line(1,0){6}}
\put(1,4){\line(0,1){2}}
\put(2,4){\line(0,1){2}}
\put(3,4){\line(0,1){2}}
\put(4,4){\line(0,1){2}}
\put(5,4){\line(0,1){2}}
\put(6,4){\line(0,1){2}}
\put(7,5){\line(0,1){1}}

\put(3,6){\makebox(2,1){$\downarrow$}}
\put(0,4){\makebox(1,1){{\scriptsize $2$}}}
\put(0,5){\makebox(1,1){{\scriptsize $2$}}}
\put(6,4){\makebox(1,1){{\scriptsize $0$}}}
\put(7,5){\makebox(1,1){{\scriptsize $1$}}}
\put(5,4){\makebox(1,1){{\scriptsize $[2]$}}}
\put(6,5){\makebox(1,1){{\scriptsize $[3]$}}}

\put(10,0){\line(1,0){1}}
\put(10,1){\line(1,0){1}}
\put(10,2){\line(1,0){1}}
\put(10,3){\line(1,0){1}}
\put(10,4){\line(1,0){2}}
\put(10,5){\line(1,0){2}}
\put(10,6){\line(1,0){2}}
\put(10,0){\line(0,1){6}}
\put(11,0){\line(0,1){6}}
\put(12,4){\line(0,1){2}}
\put(9,3){\makebox(1,1){{\scriptsize $0$}}}
\put(9,5){\makebox(1,1){{\scriptsize $1$}}}
\put(11,0){\makebox(1,1){{\scriptsize $1$}}}
\put(11,1){\makebox(1,1){{\scriptsize $0$}}}
\put(11,2){\makebox(1,1){{\scriptsize $0$}}}
\put(11,3){\makebox(1,1){{\scriptsize $0$}}}
\put(12,4){\makebox(1,1){{\scriptsize $1$}}}
\put(12,5){\makebox(1,1){{\scriptsize $0$}}}
\put(10,0){\makebox(1,1){{\scriptsize $[1]$}}}
\put(11,4){\makebox(1,1){{\scriptsize $[4]$}}}

\put(14,4){\makebox(2,1){$\longrightarrow$}}
\put(14,3){\makebox(2,1){$5$}}

\put(18,4){\line(1,0){4}}
\put(18,5){\line(1,0){5}}
\put(18,6){\line(1,0){5}}
\put(18,4){\line(0,1){2}}
\put(19,4){\line(0,1){2}}
\put(20,4){\line(0,1){2}}
\put(21,4){\line(0,1){2}}
\put(22,4){\line(0,1){2}}
\put(23,5){\line(0,1){1}}

\put(20,6){\makebox(2,1){$\downarrow$}}
\put(17,4){\makebox(1,1){{\scriptsize $-1$}}}
\put(17,5){\makebox(1,1){{\scriptsize $0$}}}

\put(26,1){\line(1,0){1}}
\put(26,2){\line(1,0){1}}
\put(26,3){\line(1,0){1}}
\put(26,4){\line(1,0){1}}
\put(26,5){\line(1,0){2}}
\put(26,6){\line(1,0){2}}
\put(26,1){\line(0,1){5}}
\put(27,1){\line(0,1){5}}
\put(28,5){\line(0,1){1}}
\put(25,4){\makebox(1,1){{\scriptsize $1$}}}
\put(25,5){\makebox(1,1){{\scriptsize $2$}}}

\end{picture}
\end{center}
The second RC is not admissible.

\end{ex}

\item[(BM-3)] Marking $[3]$.

\begin{itemize}
\item[(1)] $i_{2}=3i_{1}-2$.

Find the q-singular string of length $3i_{1}$ ignoring q-singular strings of length $3i_{1}-1$ even if they exist.
If such s string of length $3i_{1}$ exists, then mark the rightmost box of the founded string by $[3]$.
If such a string does not exist, then find the string (singular or q-singular) of length $(\geq 3i_{1}+1)$.
If such a string exists, then mark the rightmost box of the founded string by $[3]$.
If not, then delete the marked boxes and return $\framebox{$3$}$.

\item[(2)] $i_{2}=3i_{1}-1$.

\begin{itemize}
\item[(a)] The selected $i_{2}$-string is singular.

Mark the box on the left of $[2]$ by $[3]$.

\item[(b)] The selected $i_{2}$-string is q-singular.

Find the singular/q-singular string of minimum length $(\geq 3i_{1}+1)$.
If such a string exists, then mark the rightmost box of the founded string by $[3]$.
If not, then delete the marked boxes and return $\framebox{$3$}$.

\end{itemize}

\item[(3)] $i_{2}=3i_{1}$.

\begin{itemize}
\item[(a)] The selected $i_{2}$-string is q-singular.

Mark the box on the left of $[2]$ by $[3]$.

\item[(b)] The selected $i_{2}$-string is qq-singular.

Find the singular/q-singular string of minimum length $(\geq i_{2}+1)$.
If such a string exists, then mark the rightmost box of the founded string by [3].
If not, then delete the marked boxes and return $\framebox{$3$}$.

\end{itemize}

\item[(4)] $i_{2}\geq 3i_{1}+1$.

\begin{itemize}

\item[(a)] The selected $i_{2}$-string is singular/q-singular.

Mark the box on the left of [2] by [3].

\item[(b)] The selected $i_{2}$-string is qq-singular.

Follow the same as in case (3-b).

\end{itemize}

\end{itemize}
In cases (3) and (4), if the selected $i_{2}$-string is qq-singular of type-0 and there exists a singular string of length $i_{2}+1$, then discard the previous $i_{2}$-string selection and do the following box marking.

\setlength{\unitlength}{10pt}
\begin{center}
\begin{picture}(3,3)
\put(0,0){\line(1,0){2}}
\put(0,1){\line(1,0){3}}
\put(0,2){\line(1,0){3}}
\put(1,0){\line(0,1){2}}
\put(2,0){\line(0,1){2}}
\put(3,1){\line(0,1){1}}
\put(1,2){\makebox(2,1){$\downarrow$}}

\put(1,1){\makebox(1,1){{\scriptsize $[3]$}}}
\put(2,1){\makebox(1,1){{\scriptsize $[2]$}}}
\end{picture}
\end{center}

\begin{ex} \label{ex:BM-3}
$L=5$ and $\lambda=\Bar{\Lambda}_{1}+2\Bar{\Lambda}_{2}$.

\setlength{\unitlength}{10pt}
\begin{center}
\begin{picture}(26,6)
\put(1,3){\line(1,0){3}}
\put(1,4){\line(1,0){4}}
\put(1,5){\line(1,0){4}}
\put(1,3){\line(0,1){2}}
\put(2,3){\line(0,1){2}}
\put(3,3){\line(0,1){2}}
\put(4,3){\line(0,1){2}}
\put(5,4){\line(0,1){1}}
\put(3,5){\makebox(2,1){$\downarrow$}}
\put(0,3){\makebox(1,1){{\scriptsize $3$}}}
\put(0,4){\makebox(1,1){{\scriptsize $1$}}}
\put(4,3){\makebox(1,1){{\scriptsize $1$}}}
\put(5,4){\makebox(1,1){{\scriptsize $1$}}}
\put(2,4){\makebox(1,1){{\scriptsize $[4]$}}}
\put(3,4){\makebox(1,1){{\scriptsize $[3]$}}}
\put(4,4){\makebox(1,1){{\scriptsize $[2]$}}}

\put(8,0){\line(1,0){1}}
\put(8,1){\line(1,0){1}}
\put(8,2){\line(1,0){1}}
\put(8,3){\line(1,0){1}}
\put(8,4){\line(1,0){1}}
\put(8,5){\line(1,0){1}}
\put(8,0){\line(0,1){5}}
\put(9,0){\line(0,1){5}}
\put(7,4){\makebox(1,1){{\scriptsize $1$}}}
\put(9,0){\makebox(1,1){{\scriptsize $1$}}}
\put(9,1){\makebox(1,1){{\scriptsize $0$}}}
\put(9,2){\makebox(1,1){{\scriptsize $0$}}}
\put(9,3){\makebox(1,1){{\scriptsize $0$}}}
\put(9,4){\makebox(1,1){{\scriptsize $0$}}}
\put(8,0){\makebox(1,1){{\scriptsize $[1]$}}}

\put(11,4){\makebox(2,1){$\longrightarrow$}}
\put(11,3){\makebox(2,1){$6$}}

\put(15,3){\line(1,0){1}}
\put(15,4){\line(1,0){3}}
\put(15,5){\line(1,0){3}}
\put(15,3){\line(0,1){2}}
\put(16,3){\line(0,1){2}}
\put(17,4){\line(0,1){1}}
\put(18,4){\line(0,1){1}}
\put(14,3){\makebox(1,1){{\scriptsize $0$}}}
\put(14,4){\makebox(1,1){{\scriptsize $4$}}}
\put(16,3){\makebox(1,1){{\scriptsize $0$}}}
\put(18,4){\makebox(1,1){{\scriptsize $1$}}}
\put(15,3){\makebox(1,1){{\scriptsize $[2]$}}}

\put(21,1){\line(1,0){1}}
\put(21,2){\line(1,0){1}}
\put(21,3){\line(1,0){1}}
\put(21,4){\line(1,0){1}}
\put(21,5){\line(1,0){1}}
\put(21,1){\line(0,1){4}}
\put(22,1){\line(0,1){4}}
\put(20,4){\makebox(1,1){{\scriptsize $0$}}}
\put(22,1){\makebox(1,1){{\scriptsize $0$}}}
\put(22,2){\makebox(1,1){{\scriptsize $0$}}}
\put(22,3){\makebox(1,1){{\scriptsize $0$}}}
\put(22,4){\makebox(1,1){{\scriptsize $0$}}}
\put(21,1){\makebox(1,1){{\scriptsize $[1]$}}}

\put(24,4){\makebox(2,1){$\longrightarrow$}}
\put(24,3){\makebox(2,1){$3$}}
\end{picture}
\end{center}
which corresponds to the path 
$\framebox{$6$}\otimes \framebox{$3$}\otimes \framebox{$3$}\otimes \framebox{$5$}\otimes \framebox{$1$}$.
The charge of the first RC and the energy of this path coincide, which is computed to be $-14$.
If we overrode the above rule, then we would have

\setlength{\unitlength}{10pt}
\begin{center}
\begin{picture}(26,6)
\put(1,3){\line(1,0){3}}
\put(1,4){\line(1,0){4}}
\put(1,5){\line(1,0){4}}
\put(1,3){\line(0,1){2}}
\put(2,3){\line(0,1){2}}
\put(3,3){\line(0,1){2}}
\put(4,3){\line(0,1){2}}
\put(5,4){\line(0,1){1}}
\put(3,5){\makebox(2,1){$\downarrow$}}
\put(0,3){\makebox(1,1){{\scriptsize $3$}}}
\put(0,4){\makebox(1,1){{\scriptsize $1$}}}
\put(4,3){\makebox(1,1){{\scriptsize $1$}}}
\put(5,4){\makebox(1,1){{\scriptsize $1$}}}
\put(3,3){\makebox(1,1){{\scriptsize $[2]$}}}
\put(4,4){\makebox(1,1){{\scriptsize $[3]$}}}

\put(8,0){\line(1,0){1}}
\put(8,1){\line(1,0){1}}
\put(8,2){\line(1,0){1}}
\put(8,3){\line(1,0){1}}
\put(8,4){\line(1,0){1}}
\put(8,5){\line(1,0){1}}
\put(8,0){\line(0,1){5}}
\put(9,0){\line(0,1){5}}
\put(7,4){\makebox(1,1){{\scriptsize $1$}}}
\put(9,0){\makebox(1,1){{\scriptsize $1$}}}
\put(9,1){\makebox(1,1){{\scriptsize $0$}}}
\put(9,2){\makebox(1,1){{\scriptsize $0$}}}
\put(9,3){\makebox(1,1){{\scriptsize $0$}}}
\put(9,4){\makebox(1,1){{\scriptsize $0$}}}
\put(8,0){\makebox(1,1){{\scriptsize $[1]$}}}

\put(11,4){\makebox(2,1){$\longrightarrow$}}
\put(11,3){\makebox(2,1){$4$}}

\put(15,3){\line(1,0){2}}
\put(15,4){\line(1,0){3}}
\put(15,5){\line(1,0){3}}
\put(15,3){\line(0,1){2}}
\put(16,3){\line(0,1){2}}
\put(17,3){\line(0,1){2}}
\put(18,4){\line(0,1){1}}
\put(14,3){\makebox(1,1){{\scriptsize $0$}}}
\put(14,4){\makebox(1,1){{\scriptsize $2$}}}
\put(17,3){\makebox(1,1){{\scriptsize $0$}}}
\put(18,4){\makebox(1,1){{\scriptsize $1$}}}

\put(21,1){\line(1,0){1}}
\put(21,2){\line(1,0){1}}
\put(21,3){\line(1,0){1}}
\put(21,4){\line(1,0){1}}
\put(21,5){\line(1,0){1}}
\put(21,1){\line(0,1){4}}
\put(22,1){\line(0,1){4}}
\put(20,4){\makebox(1,1){{\scriptsize $1$}}}
\put(22,1){\makebox(1,1){{\scriptsize $0$}}}
\put(22,2){\makebox(1,1){{\scriptsize $0$}}}
\put(22,3){\makebox(1,1){{\scriptsize $0$}}}
\put(22,4){\makebox(1,1){{\scriptsize $0$}}}

\put(24,4){\makebox(2,1){$\longrightarrow$}}
\put(24,3){\makebox(2,1){$1$}}

\end{picture}
\end{center}
which corresponds to the path 
$\framebox{$4$}\otimes \framebox{$1$}\otimes \framebox{$7$}\otimes \framebox{$5$}\otimes \framebox{$1$}$.
However, the energy of this path is computed to be $-15$. 

\end{ex}

\item[(BM-4)] Marking $[4]$.

\begin{itemize}

\item[(1)] The selected $i_{3}$-string is singular.

The $i_{3}$-string is possibly marked by [2].

\begin{itemize}
\item[(a)] The selected $i_{3}$-string is type-0.

Mark the box on the left of $[3]$ by $[4]$.

\item[(b)] The selected $i_{3}$-string is type-I.

If $i_{3}^{eff}>i_{1}$, then mark the box on the left of $[3]$ by $[4]$.

If $i_{3}^{eff}=i_{1}$, i.e. $i_{3}=3i_{1}-1$, then the $i_{3}$-string is marked by [2].
Find the singular string of minimum length $(\geq i_{3}^{eff})$ in $\nu^{(2)}$.
If such a string exists, then let $l_{4}^{(2)}$ be the length of the founded string.
If not, then set $l_{4}^{(2)}=\infty$.
Find the singular string in $\nu^{(1)}$ of minimum length $(\geq i_{3}+1)$.
Note that the singular strings of length $3i_{1}$ do not exists in this case. 
If such a string exists, then let $l_{4}^{(1)}$ be the effective length of the founded string.
If not, then set $l_{4}^{(1)}=\infty$.
If $l_{4}^{(1)}\geq l_{4}^{(2)}$, then mark the rightmost box of the founded singular string of length $l_{4}^{(2)}$ in $\nu^{(2)}$ by [4]. 
If $l_{4}^{(1)}< l_{4}^{(2)}$, then mark the rightmost box of the founded singular string of effective length $l_{4}^{(1)}$ in $\nu^{(1)}$ by $[4]$. 
If $l_{4}^{(1)}=l_{4}^{(2)}=\infty$, then delete the marked boxes and return $\framebox{$4$}$.

\item[(c)] The selected $i_{3}$-string is type-II.

Find the singular string in $\nu^{(2)}$ of length $i_{3}^{eff}-1$ (the effective length reduction).
If such a string exists, then mark the rightmost box of the founded string by $[4]$ (see Examples~\ref{ex:BM-4a} and \ref{ex:BM-4b}).
If not, then find the singular string in $\nu^{(2)}$ of length $i_{3}^{eff}$.
If such a string exists, then mark the rightmost box of the founded string by $[4]$.
If not, then mark the box on the left of $[3]$ by $[4]$. 

\end{itemize}

\item[(2)] The selected $i_{3}$-string is q-singular.

Follow the same as in (1-b).

If the selected $i_{4}$-string is type-0 and the selected $i_{3}$-string is type-I with the same effective length, then discard the $i_{3}$-string selection and do the following box marking.

\setlength{\unitlength}{10pt}
\begin{center}
\begin{picture}(5,3)
\put(0,0){\line(1,0){3}}
\put(0,1){\line(1,0){4}}
\put(0,2){\line(1,0){4}}
\put(1,0){\line(0,1){2}}
\put(2,0){\line(0,1){2}}
\put(3,0){\line(0,1){2}}
\put(4,1){\line(0,1){1}}
\put(0,2){\makebox(2,1){$\downarrow$}}
\put(2,1){\makebox(1,1){{\scriptsize $[4]$}}}
\put(3,1){\makebox(1,1){{\scriptsize $[3]$}}}
\end{picture}
\end{center}
This rule guarantees the bijection when the q-singular $i_{5}$-string is in $\nu^{(2)}$ with $i_{4}^{eff}=i_{5}$ and when the singular $i_{5}$-string is in $\nu^{(2)}$ with $i_{4}^{eff}=i_{5}-1$ or $i_{4}^{eff}=i_{5}$ (see Examples~\ref{ex:BM-4c} and \ref{ex:BM-4d}). 

\end{itemize}

\begin{ex} \label{ex:BM-4a}
$L=3$ and $\lambda=\Bar{\Lambda}_{1}+\Bar{\Lambda}_{2}$.
The only admissible configuration $\nu$ is

\setlength{\unitlength}{10pt}
\begin{center}
\begin{picture}(9,4)
\put(1,2){\line(1,0){4}}
\put(1,3){\line(1,0){4}}
\put(1,2){\line(0,1){1}}
\put(2,2){\line(0,1){1}}
\put(3,2){\line(0,1){1}}
\put(4,2){\line(0,1){1}}
\put(5,2){\line(0,1){1}}
\put(3,3){\makebox(2,1){$\downarrow$}}
\put(0,2){\makebox(1,1){{\scriptsize $1$}}}

\put(8,0){\line(1,0){1}}
\put(8,1){\line(1,0){1}}
\put(8,2){\line(1,0){1}}
\put(8,3){\line(1,0){1}}
\put(8,0){\line(0,1){3}}
\put(9,0){\line(0,1){3}}
\put(7,2){\makebox(1,1){{\scriptsize $0$}}}
\end{picture}
\end{center}
which has the charge $c(\nu)=-5$.
The first two steps of $\Phi$ are depicted as

\setlength{\unitlength}{10pt}
\begin{center}
\begin{picture}(25,4)
\put(1,2){\line(1,0){4}}
\put(1,3){\line(1,0){4}}
\put(1,2){\line(0,1){1}}
\put(2,2){\line(0,1){1}}
\put(3,2){\line(0,1){1}}
\put(4,2){\line(0,1){1}}
\put(5,2){\line(0,1){1}}
\put(3,3){\makebox(2,1){$\downarrow$}}
\put(0,2){\makebox(1,1){{\scriptsize $1$}}}
\put(5,2){\makebox(1,1){{\scriptsize $1$}}}
\put(1,2){\makebox(1,1){{\scriptsize $[6]$}}}
\put(2,2){\makebox(1,1){{\scriptsize $[5]$}}}
\put(3,2){\makebox(1,1){{\scriptsize $[3]$}}}
\put(4,2){\makebox(1,1){{\scriptsize $[2]$}}}

\put(8,0){\line(1,0){1}}
\put(8,1){\line(1,0){1}}
\put(8,2){\line(1,0){1}}
\put(8,3){\line(1,0){1}}
\put(8,0){\line(0,1){3}}
\put(9,0){\line(0,1){3}}
\put(7,2){\makebox(1,1){{\scriptsize $0$}}}
\put(9,0){\makebox(1,1){{\scriptsize $0$}}}
\put(9,1){\makebox(1,1){{\scriptsize $0$}}}
\put(9,2){\makebox(1,1){{\scriptsize $0$}}}
\put(8,0){\makebox(1,1){{\scriptsize $[1]$}}}
\put(8,1){\makebox(1,1){{\scriptsize $[4]$}}}

\put(11,2){\makebox(2,1){$\longrightarrow$}}
\put(11,1){\makebox(2,1){$9$}}

\put(15,2){\makebox(1,1){{$\emptyset$}}}

\put(18,2){\line(1,0){1}}
\put(18,3){\line(1,0){1}}
\put(18,2){\line(0,1){1}}
\put(19,2){\line(0,1){1}}
\put(17,2){\makebox(1,1){{\scriptsize $0$}}}
\put(19,2){\makebox(1,1){{\scriptsize $0$}}}
\put(18,2){\makebox(1,1){{\scriptsize $[1]$}}}

\put(21,2){\makebox(2,1){$\longrightarrow$}}
\put(21,1){\makebox(2,1){$2$}}
\end{picture}
\end{center}
which corresponds to the path $\framebox{$9$}\otimes \framebox{$2$}\otimes \framebox{$1$}$ whose energy is $-4$ and

\setlength{\unitlength}{10pt}
\begin{center}
\begin{picture}(25,4)
\put(1,2){\line(1,0){4}}
\put(1,3){\line(1,0){4}}
\put(1,2){\line(0,1){1}}
\put(2,2){\line(0,1){1}}
\put(3,2){\line(0,1){1}}
\put(4,2){\line(0,1){1}}
\put(5,2){\line(0,1){1}}
\put(3,3){\makebox(2,1){$\downarrow$}}
\put(0,2){\makebox(1,1){{\scriptsize $1$}}}
\put(5,2){\makebox(1,1){{\scriptsize $0$}}}
\put(3,2){\makebox(1,1){{\scriptsize $[3]$}}}
\put(4,2){\makebox(1,1){{\scriptsize $[2]$}}}

\put(8,0){\line(1,0){1}}
\put(8,1){\line(1,0){1}}
\put(8,2){\line(1,0){1}}
\put(8,3){\line(1,0){1}}
\put(8,0){\line(0,1){3}}
\put(9,0){\line(0,1){3}}
\put(7,2){\makebox(1,1){{\scriptsize $0$}}}
\put(9,0){\makebox(1,1){{\scriptsize $0$}}}
\put(9,1){\makebox(1,1){{\scriptsize $0$}}}
\put(9,2){\makebox(1,1){{\scriptsize $0$}}}
\put(8,0){\makebox(1,1){{\scriptsize $[1]$}}}

\put(11,2){\makebox(2,1){$\longrightarrow$}}
\put(11,1){\makebox(2,1){$4$}}

\put(15,2){\line(1,0){2}}
\put(15,3){\line(1,0){2}}
\put(15,2){\line(0,1){1}}
\put(16,2){\line(0,1){1}}
\put(17,2){\line(0,1){1}}
\put(14,2){\makebox(1,1){{\scriptsize $0$}}}
\put(17,2){\makebox(1,1){{\scriptsize $0$}}}
\put(15,2){\makebox(1,1){{\scriptsize $[3]$}}}
\put(16,2){\makebox(1,1){{\scriptsize $[2]$}}}

\put(20,1){\line(1,0){1}}
\put(20,2){\line(1,0){1}}
\put(20,3){\line(1,0){1}}
\put(20,1){\line(0,1){2}}
\put(21,1){\line(0,1){2}}
\put(19,2){\makebox(1,1){{\scriptsize $0$}}}
\put(21,1){\makebox(1,1){{\scriptsize $0$}}}
\put(21,2){\makebox(1,1){{\scriptsize $0$}}}
\put(20,1){\makebox(1,1){{\scriptsize $[1]$}}}
\put(20,2){\makebox(1,1){{\scriptsize $[4]$}}}

\put(23,2){\makebox(2,1){$\longrightarrow$}}
\put(23,1){\makebox(2,1){$5$}}
\end{picture}
\end{center}
which corresponds to the path  $\framebox{$4$}\otimes \framebox{$5$}\otimes \framebox{$1$}$ whose energy is $-5$.
\end{ex}

\begin{ex} \label{ex:BM-4b}
$L=4$ and $\lambda=2\Bar{\Lambda}_{1}+\Bar{\Lambda}_{2}$.

\setlength{\unitlength}{10pt}
\begin{center}
\begin{picture}(25,5)
\put(1,2){\line(1,0){1}}
\put(1,3){\line(1,0){4}}
\put(1,4){\line(1,0){4}}
\put(1,2){\line(0,1){2}}
\put(2,2){\line(0,1){2}}
\put(3,3){\line(0,1){1}}
\put(4,3){\line(0,1){1}}
\put(5,3){\line(0,1){1}}
\put(3,4){\makebox(2,1){$\downarrow$}}
\put(0,2){\makebox(1,1){{\scriptsize $0$}}}
\put(0,3){\makebox(1,1){{\scriptsize $2$}}}
\put(2,2){\makebox(1,1){{\scriptsize $0$}}}
\put(5,3){\makebox(1,1){{\scriptsize $2$}}}
\put(1,2){\makebox(1,1){{\scriptsize $[2]$}}}
\put(2,3){\makebox(1,1){{\scriptsize $[6]$}}}
\put(3,3){\makebox(1,1){{\scriptsize $[5]$}}}
\put(4,3){\makebox(1,1){{\scriptsize $[3]$}}}

\put(8,0){\line(1,0){1}}
\put(8,1){\line(1,0){1}}
\put(8,2){\line(1,0){1}}
\put(8,3){\line(1,0){1}}
\put(8,4){\line(1,0){1}}
\put(8,0){\line(0,1){4}}
\put(9,0){\line(0,1){4}}
\put(7,3){\makebox(1,1){{\scriptsize $0$}}}
\put(9,0){\makebox(1,1){{\scriptsize $0$}}}
\put(9,1){\makebox(1,1){{\scriptsize $0$}}}
\put(9,2){\makebox(1,1){{\scriptsize $0$}}}
\put(9,3){\makebox(1,1){{\scriptsize $0$}}}
\put(8,0){\makebox(1,1){{\scriptsize $[1]$}}}
\put(8,1){\makebox(1,1){{\scriptsize $[4]$}}}

\put(11,3){\makebox(2,1){$\longrightarrow$}}
\put(11,2){\makebox(2,1){$9$}}

\put(15,3){\line(1,0){1}}
\put(15,4){\line(1,0){1}}
\put(15,3){\line(0,1){1}}
\put(16,3){\line(0,1){1}}
\put(14,3){\makebox(1,1){{\scriptsize $0$}}}
\put(16,3){\makebox(1,1){{\scriptsize $0$}}}
\put(15,3){\makebox(1,1){{\scriptsize $[2]$}}}

\put(19,2){\line(1,0){1}}
\put(19,3){\line(1,0){1}}
\put(19,4){\line(1,0){1}}
\put(19,2){\line(0,1){2}}
\put(20,2){\line(0,1){2}}
\put(18,3){\makebox(1,1){{\scriptsize $0$}}}
\put(20,2){\makebox(1,1){{\scriptsize $0$}}}
\put(20,3){\makebox(1,1){{\scriptsize $0$}}}
\put(19,2){\makebox(1,1){{\scriptsize $[1]$}}}

\put(22,3){\makebox(2,1){$\longrightarrow$}}
\put(22,2){\makebox(2,1){$3$}}
\end{picture}
\end{center}
which corresponds to the path $\framebox{$9$}\otimes \framebox{$3$}\otimes \framebox{$2$}\otimes \framebox{$1$}$ whose energy is $-7$.
\end{ex}

\begin{ex} \label{ex:BM-4c}
$L=6$ and $\lambda=\Bar{\Lambda}_{1}+\Bar{\Lambda}_{2}$.

\setlength{\unitlength}{10pt}
\begin{center}
\begin{picture}(35,8)
\put(1,4){\line(1,0){2}}
\put(1,5){\line(1,0){5}}
\put(1,6){\line(1,0){6}}
\put(1,7){\line(1,0){6}}
\put(1,4){\line(0,1){3}}
\put(2,4){\line(0,1){3}}
\put(3,4){\line(0,1){3}}
\put(4,5){\line(0,1){2}}
\put(5,5){\line(0,1){2}}
\put(6,5){\line(0,1){2}}
\put(7,6){\line(0,1){1}}

\put(3,7){\makebox(2,1){$\downarrow$}}
\put(0,4){\makebox(1,1){{\scriptsize $2$}}}
\put(0,5){\makebox(1,1){{\scriptsize $1$}}}
\put(0,6){\makebox(1,1){{\scriptsize $1$}}}
\put(3,4){\makebox(1,1){{\scriptsize $1$}}}
\put(6,5){\makebox(1,1){{\scriptsize $0$}}}
\put(7,6){\makebox(1,1){{\scriptsize $1$}}}
\put(2,4){\makebox(1,1){{\scriptsize $[2]$}}}
\put(5,6){\makebox(1,1){{\scriptsize $[4]$}}}
\put(6,6){\makebox(1,1){{\scriptsize $[3]$}}}

\put(10,0){\line(1,0){1}}
\put(10,1){\line(1,0){1}}
\put(10,2){\line(1,0){1}}
\put(10,3){\line(1,0){1}}
\put(10,4){\line(1,0){1}}
\put(10,5){\line(1,0){2}}
\put(10,6){\line(1,0){2}}
\put(10,7){\line(1,0){2}}
\put(10,0){\line(0,1){7}}
\put(11,0){\line(0,1){7}}
\put(12,5){\line(0,1){2}}
\put(9,4){\makebox(1,1){{\scriptsize $0$}}}
\put(9,6){\makebox(1,1){{\scriptsize $1$}}}
\put(11,0){\makebox(1,1){{\scriptsize $0$}}}
\put(11,1){\makebox(1,1){{\scriptsize $0$}}}
\put(11,2){\makebox(1,1){{\scriptsize $0$}}}
\put(11,3){\makebox(1,1){{\scriptsize $0$}}}
\put(11,4){\makebox(1,1){{\scriptsize $0$}}}
\put(12,5){\makebox(1,1){{\scriptsize $0$}}}
\put(12,6){\makebox(1,1){{\scriptsize $0$}}}
\put(10,0){\makebox(1,1){{\scriptsize $[1]$}}}
\put(11,5){\makebox(1,1){{\scriptsize $[5]$}}}

\put(14,5){\makebox(2,1){$\longrightarrow$}}
\put(14,4){\makebox(2,1){$8$}}

\put(18,4){\line(1,0){1}}
\put(18,5){\line(1,0){4}}
\put(18,6){\line(1,0){5}}
\put(18,7){\line(1,0){5}}
\put(18,4){\line(0,1){3}}
\put(19,4){\line(0,1){3}}
\put(20,5){\line(0,1){2}}
\put(21,5){\line(0,1){2}}
\put(22,5){\line(0,1){2}}
\put(23,6){\line(0,1){1}}

\put(20,7){\makebox(2,1){$\downarrow$}}
\put(17,4){\makebox(1,1){{\scriptsize $0$}}}
\put(17,5){\makebox(1,1){{\scriptsize $1$}}}
\put(17,6){\makebox(1,1){{\scriptsize $0$}}}
\put(19,4){\makebox(1,1){{\scriptsize $0$}}}
\put(22,5){\makebox(1,1){{\scriptsize $1$}}}
\put(23,6){\makebox(1,1){{\scriptsize $0$}}}
\put(18,4){\makebox(1,1){{\scriptsize $[2]$}}}
\put(19,5){\makebox(1,1){{\scriptsize $[6]$}}}
\put(20,5){\makebox(1,1){{\scriptsize $[5]$}}}
\put(21,5){\makebox(1,1){{\scriptsize $[3]$}}}

\put(26,1){\line(1,0){1}}
\put(26,2){\line(1,0){1}}
\put(26,3){\line(1,0){1}}
\put(26,4){\line(1,0){1}}
\put(26,5){\line(1,0){1}}
\put(26,6){\line(1,0){2}}
\put(26,7){\line(1,0){2}}
\put(26,1){\line(0,1){6}}
\put(27,1){\line(0,1){6}}
\put(28,6){\line(0,1){1}}
\put(25,5){\makebox(1,1){{\scriptsize $0$}}}
\put(25,6){\makebox(1,1){{\scriptsize $1$}}}
\put(27,1){\makebox(1,1){{\scriptsize $0$}}}
\put(27,2){\makebox(1,1){{\scriptsize $0$}}}
\put(27,3){\makebox(1,1){{\scriptsize $0$}}}
\put(27,4){\makebox(1,1){{\scriptsize $0$}}}
\put(27,5){\makebox(1,1){{\scriptsize $0$}}}
\put(28,6){\makebox(1,1){{\scriptsize $0$}}}
\put(26,1){\makebox(1,1){{\scriptsize $[1]$}}}
\put(26,2){\makebox(1,1){{\scriptsize $[4]$}}}

\put(30,5){\makebox(2,1){$\longrightarrow$}}
\put(30,4){\makebox(2,1){$9$}}

\end{picture}
\end{center}
which corresponds to the path 
$\framebox{$8$}\otimes \framebox{$9$}\otimes \framebox{$3$}\otimes \framebox{$5$}\otimes \framebox{$8$}\otimes \framebox{$1$}$.
The charge of the first RC and the energy of this path coincide, which is computed to be $-22$. 
If we selected the q-singular string for the $i_{3}$-string in the first RC, then we would have

\setlength{\unitlength}{10pt}
\begin{center}
\begin{picture}(35,8)
\put(1,4){\line(1,0){2}}
\put(1,5){\line(1,0){5}}
\put(1,6){\line(1,0){6}}
\put(1,7){\line(1,0){6}}
\put(1,4){\line(0,1){3}}
\put(2,4){\line(0,1){3}}
\put(3,4){\line(0,1){3}}
\put(4,5){\line(0,1){2}}
\put(5,5){\line(0,1){2}}
\put(6,5){\line(0,1){2}}
\put(7,6){\line(0,1){1}}

\put(3,7){\makebox(2,1){$\downarrow$}}
\put(0,4){\makebox(1,1){{\scriptsize $2$}}}
\put(0,5){\makebox(1,1){{\scriptsize $1$}}}
\put(0,6){\makebox(1,1){{\scriptsize $1$}}}
\put(3,4){\makebox(1,1){{\scriptsize $1$}}}
\put(6,5){\makebox(1,1){{\scriptsize $0$}}}
\put(7,6){\makebox(1,1){{\scriptsize $1$}}}
\put(2,4){\makebox(1,1){{\scriptsize $[2]$}}}
\put(5,5){\makebox(1,1){{\scriptsize $[3]$}}}
\put(6,6){\makebox(1,1){{\scriptsize $[4]$}}}

\put(10,0){\line(1,0){1}}
\put(10,1){\line(1,0){1}}
\put(10,2){\line(1,0){1}}
\put(10,3){\line(1,0){1}}
\put(10,4){\line(1,0){1}}
\put(10,5){\line(1,0){2}}
\put(10,6){\line(1,0){2}}
\put(10,7){\line(1,0){2}}
\put(10,0){\line(0,1){7}}
\put(11,0){\line(0,1){7}}
\put(12,5){\line(0,1){2}}
\put(9,4){\makebox(1,1){{\scriptsize $0$}}}
\put(9,6){\makebox(1,1){{\scriptsize $1$}}}
\put(11,0){\makebox(1,1){{\scriptsize $0$}}}
\put(11,1){\makebox(1,1){{\scriptsize $0$}}}
\put(11,2){\makebox(1,1){{\scriptsize $0$}}}
\put(11,3){\makebox(1,1){{\scriptsize $0$}}}
\put(11,4){\makebox(1,1){{\scriptsize $0$}}}
\put(12,5){\makebox(1,1){{\scriptsize $0$}}}
\put(12,6){\makebox(1,1){{\scriptsize $0$}}}
\put(10,0){\makebox(1,1){{\scriptsize $[1]$}}}
\put(11,5){\makebox(1,1){{\scriptsize $[5]$}}}

\put(14,5){\makebox(2,1){$\longrightarrow$}}
\put(14,4){\makebox(2,1){$8$}}

\put(18,4){\line(1,0){1}}
\put(18,5){\line(1,0){4}}
\put(18,6){\line(1,0){5}}
\put(18,7){\line(1,0){5}}
\put(18,4){\line(0,1){3}}
\put(19,4){\line(0,1){3}}
\put(20,5){\line(0,1){2}}
\put(21,5){\line(0,1){2}}
\put(22,5){\line(0,1){2}}
\put(23,6){\line(0,1){1}}

\put(20,7){\makebox(2,1){$\downarrow$}}
\put(17,4){\makebox(1,1){{\scriptsize $0$}}}
\put(17,5){\makebox(1,1){{\scriptsize $1$}}}
\put(17,6){\makebox(1,1){{\scriptsize $0$}}}
\put(19,4){\makebox(1,1){{\scriptsize $0$}}}
\put(22,5){\makebox(1,1){{\scriptsize $0$}}}
\put(23,6){\makebox(1,1){{\scriptsize $0$}}}
\put(18,4){\makebox(1,1){{\scriptsize $[2]$}}}
\put(21,5){\makebox(1,1){{\scriptsize $[3]$}}}
\put(22,6){\makebox(1,1){{\scriptsize $[4]$}}}

\put(26,1){\line(1,0){1}}
\put(26,2){\line(1,0){1}}
\put(26,3){\line(1,0){1}}
\put(26,4){\line(1,0){1}}
\put(26,5){\line(1,0){1}}
\put(26,6){\line(1,0){2}}
\put(26,7){\line(1,0){2}}
\put(26,1){\line(0,1){6}}
\put(27,1){\line(0,1){6}}
\put(28,6){\line(0,1){1}}
\put(25,5){\makebox(1,1){{\scriptsize $0$}}}
\put(25,6){\makebox(1,1){{\scriptsize $1$}}}
\put(27,1){\makebox(1,1){{\scriptsize $0$}}}
\put(27,2){\makebox(1,1){{\scriptsize $0$}}}
\put(27,3){\makebox(1,1){{\scriptsize $0$}}}
\put(27,4){\makebox(1,1){{\scriptsize $0$}}}
\put(27,5){\makebox(1,1){{\scriptsize $0$}}}
\put(28,6){\makebox(1,1){{\scriptsize $0$}}}
\put(26,1){\makebox(1,1){{\scriptsize $[1]$}}}
\put(27,6){\makebox(1,1){{\scriptsize $[5]$}}}

\put(30,5){\makebox(2,1){$\longrightarrow$}}
\put(30,4){\makebox(2,1){$8$}}

\end{picture}
\end{center}
which corresponds to the path 
$\framebox{$8$}\otimes \framebox{$8$}\otimes \framebox{$7$}\otimes \framebox{$4$}\otimes \framebox{$5$}\otimes \framebox{$1$}$.
However, the energy of this path is computed to be $-23$.

\end{ex}

\begin{ex} \label{ex:BM-4d}
$L=7$ and $\lambda=3\Bar{\Lambda}_{1}+\Bar{\Lambda}_{2}$.

\setlength{\unitlength}{10pt}
\begin{center}
\begin{picture}(35,8)
\put(1,4){\line(1,0){1}}
\put(1,5){\line(1,0){5}}
\put(1,6){\line(1,0){6}}
\put(1,7){\line(1,0){6}}
\put(1,4){\line(0,1){3}}
\put(2,4){\line(0,1){3}}
\put(3,5){\line(0,1){2}}
\put(4,5){\line(0,1){2}}
\put(5,5){\line(0,1){2}}
\put(6,5){\line(0,1){2}}
\put(7,6){\line(0,1){1}}

\put(3,7){\makebox(2,1){$\downarrow$}}
\put(0,4){\makebox(1,1){{\scriptsize $1$}}}
\put(0,5){\makebox(1,1){{\scriptsize $1$}}}
\put(0,6){\makebox(1,1){{\scriptsize $0$}}}
\put(2,4){\makebox(1,1){{\scriptsize $1$}}}
\put(6,5){\makebox(1,1){{\scriptsize $0$}}}
\put(7,6){\makebox(1,1){{\scriptsize $0$}}}
\put(1,4){\makebox(1,1){{\scriptsize $[2]$}}}
\put(5,6){\makebox(1,1){{\scriptsize $[4]$}}}
\put(6,6){\makebox(1,1){{\scriptsize $[3]$}}}

\put(10,0){\line(1,0){1}}
\put(10,1){\line(1,0){1}}
\put(10,2){\line(1,0){1}}
\put(10,3){\line(1,0){1}}
\put(10,4){\line(1,0){1}}
\put(10,5){\line(1,0){1}}
\put(10,6){\line(1,0){3}}
\put(10,7){\line(1,0){3}}
\put(10,0){\line(0,1){7}}
\put(11,0){\line(0,1){7}}
\put(12,6){\line(0,1){1}}
\put(13,6){\line(0,1){1}}
\put(9,5){\makebox(1,1){{\scriptsize $0$}}}
\put(9,6){\makebox(1,1){{\scriptsize $1$}}}
\put(11,0){\makebox(1,1){{\scriptsize $0$}}}
\put(11,1){\makebox(1,1){{\scriptsize $0$}}}
\put(11,2){\makebox(1,1){{\scriptsize $0$}}}
\put(11,3){\makebox(1,1){{\scriptsize $0$}}}
\put(11,4){\makebox(1,1){{\scriptsize $0$}}}
\put(11,5){\makebox(1,1){{\scriptsize $0$}}}
\put(13,6){\makebox(1,1){{\scriptsize $1$}}}
\put(10,0){\makebox(1,1){{\scriptsize $[1]$}}}
\put(11,6){\makebox(1,1){{\scriptsize $[6]$}}}
\put(12,6){\makebox(1,1){{\scriptsize $[5]$}}}

\put(15,5){\makebox(2,1){$\longrightarrow$}}
\put(15,4){\makebox(2,1){$10$}}

\put(19,5){\line(1,0){4}}
\put(19,6){\line(1,0){5}}
\put(19,7){\line(1,0){5}}
\put(19,5){\line(0,1){2}}
\put(20,5){\line(0,1){2}}
\put(21,5){\line(0,1){2}}
\put(22,5){\line(0,1){2}}
\put(23,5){\line(0,1){2}}
\put(24,6){\line(0,1){1}}

\put(21,7){\makebox(2,1){$\downarrow$}}
\put(18,5){\makebox(1,1){{\scriptsize $2$}}}
\put(18,6){\makebox(1,1){{\scriptsize $0$}}}
\put(23,5){\makebox(1,1){{\scriptsize $2$}}}
\put(24,6){\makebox(1,1){{\scriptsize $0$}}}
\put(19,5){\makebox(1,1){{\scriptsize $[6]$}}}
\put(20,5){\makebox(1,1){{\scriptsize $[5]$}}}
\put(21,5){\makebox(1,1){{\scriptsize $[3]$}}}
\put(22,5){\makebox(1,1){{\scriptsize $[2]$}}}

\put(27,1){\line(1,0){1}}
\put(27,2){\line(1,0){1}}
\put(27,3){\line(1,0){1}}
\put(27,4){\line(1,0){1}}
\put(27,5){\line(1,0){1}}
\put(27,6){\line(1,0){1}}
\put(27,7){\line(1,0){1}}
\put(27,1){\line(0,1){6}}
\put(28,1){\line(0,1){6}}
\put(26,6){\makebox(1,1){{\scriptsize $0$}}}
\put(28,1){\makebox(1,1){{\scriptsize $0$}}}
\put(28,2){\makebox(1,1){{\scriptsize $0$}}}
\put(28,3){\makebox(1,1){{\scriptsize $0$}}}
\put(28,4){\makebox(1,1){{\scriptsize $0$}}}
\put(28,5){\makebox(1,1){{\scriptsize $0$}}}
\put(28,6){\makebox(1,1){{\scriptsize $0$}}}
\put(27,1){\makebox(1,1){{\scriptsize $[1]$}}}
\put(27,2){\makebox(1,1){{\scriptsize $[4]$}}}

\put(30,5){\makebox(2,1){$\longrightarrow$}}
\put(30,4){\makebox(2,1){$9$}}

\end{picture}
\end{center}
which corresponds to the path 
$\framebox{$10$}\otimes \framebox{$9$}\otimes \framebox{$3$}\otimes \framebox{$1$}\otimes \framebox{$9$}\otimes \framebox{$2$}\otimes \framebox{$1$}$.
The charge of the first RC and the energy of this path coincide, which is computed to be $-24$. 
If we selected the q-singular string for the $i_{3}$-string in the first RC, then we would have

\setlength{\unitlength}{10pt}
\begin{center}
\begin{picture}(35,8)
\put(1,4){\line(1,0){1}}
\put(1,5){\line(1,0){5}}
\put(1,6){\line(1,0){6}}
\put(1,7){\line(1,0){6}}
\put(1,4){\line(0,1){3}}
\put(2,4){\line(0,1){3}}
\put(3,5){\line(0,1){2}}
\put(4,5){\line(0,1){2}}
\put(5,5){\line(0,1){2}}
\put(6,5){\line(0,1){2}}
\put(7,6){\line(0,1){1}}

\put(3,7){\makebox(2,1){$\downarrow$}}
\put(0,4){\makebox(1,1){{\scriptsize $1$}}}
\put(0,5){\makebox(1,1){{\scriptsize $1$}}}
\put(0,6){\makebox(1,1){{\scriptsize $0$}}}
\put(2,4){\makebox(1,1){{\scriptsize $1$}}}
\put(6,5){\makebox(1,1){{\scriptsize $0$}}}
\put(7,6){\makebox(1,1){{\scriptsize $0$}}}
\put(1,4){\makebox(1,1){{\scriptsize $[2]$}}}
\put(5,5){\makebox(1,1){{\scriptsize $[3]$}}}
\put(6,6){\makebox(1,1){{\scriptsize $[4]$}}}

\put(10,0){\line(1,0){1}}
\put(10,1){\line(1,0){1}}
\put(10,2){\line(1,0){1}}
\put(10,3){\line(1,0){1}}
\put(10,4){\line(1,0){1}}
\put(10,5){\line(1,0){1}}
\put(10,6){\line(1,0){3}}
\put(10,7){\line(1,0){3}}
\put(10,0){\line(0,1){7}}
\put(11,0){\line(0,1){7}}
\put(12,6){\line(0,1){1}}
\put(13,6){\line(0,1){1}}
\put(9,5){\makebox(1,1){{\scriptsize $0$}}}
\put(9,6){\makebox(1,1){{\scriptsize $1$}}}
\put(11,0){\makebox(1,1){{\scriptsize $0$}}}
\put(11,1){\makebox(1,1){{\scriptsize $0$}}}
\put(11,2){\makebox(1,1){{\scriptsize $0$}}}
\put(11,3){\makebox(1,1){{\scriptsize $0$}}}
\put(11,4){\makebox(1,1){{\scriptsize $0$}}}
\put(11,5){\makebox(1,1){{\scriptsize $0$}}}
\put(13,6){\makebox(1,1){{\scriptsize $1$}}}
\put(10,0){\makebox(1,1){{\scriptsize $[1]$}}}
\put(11,6){\makebox(1,1){{\scriptsize $[6]$}}}
\put(12,6){\makebox(1,1){{\scriptsize $[5]$}}}

\put(15,5){\makebox(2,1){$\longrightarrow$}}
\put(15,4){\makebox(2,1){$10$}}

\put(19,5){\line(1,0){4}}
\put(19,6){\line(1,0){5}}
\put(19,7){\line(1,0){5}}
\put(19,5){\line(0,1){2}}
\put(20,5){\line(0,1){2}}
\put(21,5){\line(0,1){2}}
\put(22,5){\line(0,1){2}}
\put(23,5){\line(0,1){2}}
\put(24,6){\line(0,1){1}}

\put(21,7){\makebox(2,1){$\downarrow$}}
\put(18,5){\makebox(1,1){{\scriptsize $2$}}}
\put(18,6){\makebox(1,1){{\scriptsize $0$}}}
\put(23,5){\makebox(1,1){{\scriptsize $1$}}}
\put(24,6){\makebox(1,1){{\scriptsize $0$}}}
\put(21,5){\makebox(1,1){{\scriptsize $[3]$}}}
\put(22,5){\makebox(1,1){{\scriptsize $[2]$}}}
\put(23,6){\makebox(1,1){{\scriptsize $[4]$}}}

\put(27,1){\line(1,0){1}}
\put(27,2){\line(1,0){1}}
\put(27,3){\line(1,0){1}}
\put(27,4){\line(1,0){1}}
\put(27,5){\line(1,0){1}}
\put(27,6){\line(1,0){1}}
\put(27,7){\line(1,0){1}}
\put(27,1){\line(0,1){6}}
\put(28,1){\line(0,1){6}}
\put(26,6){\makebox(1,1){{\scriptsize $0$}}}
\put(28,1){\makebox(1,1){{\scriptsize $0$}}}
\put(28,2){\makebox(1,1){{\scriptsize $0$}}}
\put(28,3){\makebox(1,1){{\scriptsize $0$}}}
\put(28,4){\makebox(1,1){{\scriptsize $0$}}}
\put(28,5){\makebox(1,1){{\scriptsize $0$}}}
\put(28,6){\makebox(1,1){{\scriptsize $0$}}}
\put(27,1){\makebox(1,1){{\scriptsize $[1]$}}}

\put(30,5){\makebox(2,1){$\longrightarrow$}}
\put(30,4){\makebox(2,1){$6$}}

\end{picture}
\end{center}
which corresponds to the path 
$\framebox{$10$}\otimes \framebox{$6$}\otimes \framebox{$5$}\otimes \framebox{$1$}\otimes \framebox{$9$}\otimes \framebox{$2$}\otimes \framebox{$1$}$.
However, the energy of this path is computed to be $-25$.

\end{ex}

\item[(BM-5)] Marking $[5]$.

\begin{itemize}

\item[(1)] The selected $i_{4}$-string is in $\nu^{(1)}$.

Find the singular/q-singular string of minimum length $(\geq i_{4}^{eff})$ in $\nu^{(2)}$.
Suppose that such a string exists.
If the founded string is singular and the length is $1$, then delete the marked boxes and return $\framebox{$\emptyset$}$.
In this case, the box marking of $\nu^{(1)}$ is

\setlength{\unitlength}{10pt}
\begin{center}
\begin{picture}(3,3)
\put(0,2){\line(1,0){3}}
\put(0,3){\line(1,0){3}}
\put(0,2){\line(0,1){1}}
\put(1,2){\line(0,1){1}}
\put(2,2){\line(0,1){1}}
\put(3,2){\line(0,1){1}}
\put(2,0.9){\makebox(1,1){$\uparrow$}}
\put(0,2){\makebox(1,1){{\scriptsize $[4]$}}}
\put(1,2){\makebox(1,1){{\scriptsize $[3]$}}}
\put(2,2){\makebox(1,1){{\scriptsize $[2]$}}}
\put(2,0){\makebox(1,1){{\scriptsize $3$}}}
\end{picture}
\end{center}
If the founded string is q-singular or singular with length $(\geq 2)$, then the rightmost box of the founded string by $[5]$.
If not, then delete the marked boxes and return $\framebox{$6$}$.

\item[(2)] The selected $i_{4}$-string is in $\nu^{(2)}$.

Find the singular or q-singular string of effective length $(\geq i_{4})$ in $\nu^{(1)}$.
If such a string exists and it is not a type-I/II q-singular string of effective length $i_{4}$, then mark the rightmost box of the founded string by $[5]$.
If such a string does not exist, then delete the marked boxes and return $\framebox{$5$}$.

\begin{ex}
$L=4$ and $\lambda=3\Lambda_{1}$.

\setlength{\unitlength}{10pt}
\begin{center}
\begin{picture}(18,5)
\put(1,2){\line(1,0){1}}
\put(1,3){\line(1,0){5}}
\put(1,4){\line(1,0){5}}
\put(1,2){\line(0,1){2}}
\put(2,2){\line(0,1){2}}
\put(3,3){\line(0,1){1}}
\put(4,3){\line(0,1){1}}
\put(5,3){\line(0,1){1}}
\put(6,3){\line(0,1){1}}

\put(3,4){\makebox(2,1){$\downarrow$}}

\put(0,2){\makebox(1,1){{\scriptsize $0$}}}
\put(0,3){\makebox(1,1){{\scriptsize $2$}}}
\put(2,2){\makebox(1,1){{\scriptsize $0$}}}
\put(6,3){\makebox(1,1){{\scriptsize $1$}}}
\put(1,2){\makebox(1,1){{\scriptsize $[2]$}}}
\put(5,3){\makebox(1,1){{\scriptsize $[3]$}}}

\put(9,0){\line(1,0){1}}
\put(9,1){\line(1,0){1}}
\put(9,2){\line(1,0){1}}
\put(9,3){\line(1,0){2}}
\put(9,4){\line(1,0){2}}
\put(9,0){\line(0,1){4}}
\put(10,0){\line(0,1){4}}
\put(11,3){\line(0,1){1}}
\put(8,2){\makebox(1,1){{\scriptsize $0$}}}
\put(8,3){\makebox(1,1){{\scriptsize $0$}}}
\put(10,0){\makebox(1,1){{\scriptsize $0$}}}
\put(10,1){\makebox(1,1){{\scriptsize $0$}}}
\put(10,2){\makebox(1,1){{\scriptsize $0$}}}
\put(11,3){\makebox(1,1){{\scriptsize $0$}}}
\put(9,0){\makebox(1,1){{\scriptsize $[1]$}}}
\put(10,3){\makebox(1,1){{\scriptsize $[4]$}}}

\put(13,3){\makebox(2,1){$\longrightarrow$}}
\put(13,2){\makebox(2,1){$5$}}
\end{picture}
\end{center}
which corresponds to the path $\framebox{$5$}\otimes \framebox{$9$}\otimes \framebox{$2$} \otimes \framebox{$1$}$.
The charge of the first RC and the energy of this path coincide, which is computed to be $-8$.
For the rest steps, see Example~\ref{ex:BM-4a}.
If [5] were marked on the left of [3], then we would have

\setlength{\unitlength}{10pt}
\begin{center}
\begin{picture}(28,5)
\put(1,2){\line(1,0){1}}
\put(1,3){\line(1,0){5}}
\put(1,4){\line(1,0){5}}
\put(1,2){\line(0,1){2}}
\put(2,2){\line(0,1){2}}
\put(3,3){\line(0,1){1}}
\put(4,3){\line(0,1){1}}
\put(5,3){\line(0,1){1}}
\put(6,3){\line(0,1){1}}

\put(3,4){\makebox(2,1){$\downarrow$}}

\put(0,2){\makebox(1,1){{\scriptsize $0$}}}
\put(0,3){\makebox(1,1){{\scriptsize $2$}}}
\put(2,2){\makebox(1,1){{\scriptsize $0$}}}
\put(6,3){\makebox(1,1){{\scriptsize $1$}}}
\put(1,2){\makebox(1,1){{\scriptsize $[2]$}}}
\put(4,3){\makebox(1,1){{\scriptsize $[5]$}}}
\put(5,3){\makebox(1,1){{\scriptsize $[3]$}}}

\put(9,0){\line(1,0){1}}
\put(9,1){\line(1,0){1}}
\put(9,2){\line(1,0){1}}
\put(9,3){\line(1,0){2}}
\put(9,4){\line(1,0){2}}
\put(9,0){\line(0,1){4}}
\put(10,0){\line(0,1){4}}
\put(11,3){\line(0,1){1}}
\put(8,2){\makebox(1,1){{\scriptsize $0$}}}
\put(8,3){\makebox(1,1){{\scriptsize $0$}}}
\put(10,0){\makebox(1,1){{\scriptsize $0$}}}
\put(10,1){\makebox(1,1){{\scriptsize $0$}}}
\put(10,2){\makebox(1,1){{\scriptsize $0$}}}
\put(11,3){\makebox(1,1){{\scriptsize $0$}}}
\put(9,0){\makebox(1,1){{\scriptsize $[1]$}}}
\put(10,3){\makebox(1,1){{\scriptsize $[4]$}}}

\put(13,3){\makebox(2,1){$\longrightarrow$}}
\put(13,2){\makebox(2,1){$7$}}

\put(17,3){\line(1,0){3}}
\put(17,4){\line(1,0){3}}
\put(17,3){\line(0,1){1}}
\put(18,3){\line(0,1){1}}
\put(19,3){\line(0,1){1}}
\put(20,3){\line(0,1){1}}

\put(16,3){\makebox(1,1){{\scriptsize $3$}}}
\put(20,3){\makebox(1,1){{\scriptsize $3$}}}
\put(17,3){\makebox(1,1){{\scriptsize $[4]$}}}
\put(18,3){\makebox(1,1){{\scriptsize $[3]$}}}
\put(19,3){\makebox(1,1){{\scriptsize $[2]$}}}

\put(23,1){\line(1,0){1}}
\put(23,2){\line(1,0){1}}
\put(23,3){\line(1,0){1}}
\put(23,4){\line(1,0){1}}
\put(23,1){\line(0,1){3}}
\put(24,1){\line(0,1){3}}

\put(22,3){\makebox(1,1){{\scriptsize $0$}}}
\put(24,1){\makebox(1,1){{\scriptsize $0$}}}
\put(24,2){\makebox(1,1){{\scriptsize $0$}}}
\put(24,3){\makebox(1,1){{\scriptsize $0$}}}
\put(23,1){\makebox(1,1){{\scriptsize $[1]$}}}
\put(23,2){\makebox(1,1){{\scriptsize $[5]$}}}

\put(26,3){\makebox(2,1){$\longrightarrow$}}
\put(26,2){\makebox(2,1){$\emptyset$}}
\end{picture}
\end{center}
which corresponds to the path $\framebox{$7$}\otimes \framebox{$\emptyset$}\otimes \framebox{$2$} \otimes \framebox{$1$}$.
However, the energy of this path is computed to be $-6$.

\end{ex}

\end{itemize}

\item[(BM-6)] Marking $[6]$.

\begin{itemize}

\item[(1)] The selected $i_{5}$-string is q-singular.

Find the singular string of minimum length $(\geq i_{5}+1)$.
If such a string exists, then mark the rightmost box of the founded string by $[6]$.
If such a string does not exist, then delete the marked boxes and return $\framebox{$7$}$ when the selected $i_{5}$-string is in $\nu^{(1)}$ and $\framebox{$8$}$ when the selected $i_{5}$-string is in $\nu^{(2)}$.

\item[(2)] The selected $i_{5}$-string is singular.

Mark the box on the left of $[5]$ by $[6]$.

\end{itemize}
According to the marking rule of [4] the box marking such that the selected $i_{4}$-string in $\nu^{(1)}$ is type-II with $i_{4}^{eff}=i_{6}$ cannot occur.

\item[(BM-7)] Marking $[7]$.

\begin{itemize}

\item[(1)] The selected $i_{6}$-string is in $\nu^{(1)}$.

Find the singular string of minimum length $(\geq i_{6}^{eff})$.
If such a string exists, then mark the rightmost unmarked box of the founded string by $[7]$.
If not, then delete the marked boxes and return $\framebox{$9$}$.
Note that it is possible $[7]$ is marked on the left of $[4]$.

\item[(2)] The selected $i_{6}$-string is in $\nu^{(2)}$.

Find the selected $i_{4}$-string with $i_{4}^{eff}=i_{6}$, which is type-0/I.
If the search is successful, then mark [7] on the left of [4].
If not, then follow the box marking rule \textbf{(BM-2)} with replacing $i_{1}$ by $i_{6}$.
If the search is not successful, then delete the marked boxes and return $\framebox{$10$}$.

\end{itemize}

\item[(BM-8)] Marking $[8]$.

\begin{itemize}

\item[(1)] The selected $i_{7}$-string is in $\nu^{(1)}$.

Find the selected $i_{7}$-string which is also marked by [4].
If such a string exists, then mark [8] on the left of [7].
If the selected string is type-I and the rightmost box of the selected string is marked by [3], then prohibit the further box marking of this string (see Example~\ref{ex:BM-8a}).
We say that this string is \emph{inactivated}.
If the selected $i_{7}$-string is not marked by [4], then follow the box marking rule \textbf{(BM-3)} with replacing $i_{1}$ by $i_{6}$ and $i_{2}$ by $i_{7}$ except that [9] cannot be marked in $\nu^{(2)}$.
If the search is not successful, then delete the marked boxes and return $\framebox{$11$}$.

\begin{ex} \label{ex:BM-8a}
$L=5$ and $\lambda=2\Bar{\Lambda}_{1}$.

\setlength{\unitlength}{10pt}
\begin{flushleft}
\begin{picture}(31,6)
\put(1,3){\line(1,0){3}}
\put(1,4){\line(1,0){8}}
\put(1,5){\line(1,0){8}}
\put(1,3){\line(0,1){2}}
\put(2,3){\line(0,1){2}}
\put(3,3){\line(0,1){2}}
\put(4,3){\line(0,1){2}}
\put(5,4){\line(0,1){1}}
\put(6,4){\line(0,1){1}}
\put(7,4){\line(0,1){1}}
\put(8,4){\line(0,1){1}}
\put(9,4){\line(0,1){1}}

\put(3,5){\makebox(2,1){$\downarrow$}}
\put(6,5){\makebox(2,1){$\downarrow$}}
\put(0,3){\makebox(1,1){{\scriptsize $3$}}}
\put(0,4){\makebox(1,1){{\scriptsize $1$}}}
\put(4,3){\makebox(1,1){{\scriptsize $1$}}}
\put(9,4){\makebox(1,1){{\scriptsize $1$}}}
\put(3,3){\makebox(1,1){{\scriptsize $[2]$}}}
\put(5,4){\makebox(1,1){{\scriptsize $[8]$}}}
\put(6,4){\makebox(1,1){{\scriptsize $[7]$}}}
\put(7,4){\makebox(1,1){{\scriptsize $[4]$}}}
\put(8,4){\makebox(1,1){{\scriptsize $[3]$}}}

\put(12,0){\line(1,0){1}}
\put(12,1){\line(1,0){1}}
\put(12,2){\line(1,0){1}}
\put(12,3){\line(1,0){2}}
\put(12,4){\line(1,0){3}}
\put(12,5){\line(1,0){3}}
\put(12,0){\line(0,1){5}}
\put(13,0){\line(0,1){5}}
\put(14,3){\line(0,1){2}}
\put(15,4){\line(0,1){1}}
\put(11,2){\makebox(1,1){{\scriptsize $1$}}}
\put(11,3){\makebox(1,1){{\scriptsize $0$}}}
\put(11,4){\makebox(1,1){{\scriptsize $0$}}}
\put(13,0){\makebox(1,1){{\scriptsize $1$}}}
\put(13,1){\makebox(1,1){{\scriptsize $0$}}}
\put(13,2){\makebox(1,1){{\scriptsize $0$}}}
\put(14,3){\makebox(1,1){{\scriptsize $0$}}}
\put(15,4){\makebox(1,1){{\scriptsize $0$}}}
\put(12,0){\makebox(1,1){{\scriptsize $[1]$}}}
\put(13,4){\makebox(1,1){{\scriptsize $[6]$}}}
\put(14,4){\makebox(1,1){{\scriptsize $[5]$}}}

\put(17,4){\makebox(2,1){$\longrightarrow$}}
\put(17,3){\makebox(2,1){$12$}}

\put(21,3){\line(1,0){2}}
\put(21,4){\line(1,0){4}}
\put(21,5){\line(1,0){4}}
\put(21,3){\line(0,1){2}}
\put(22,3){\line(0,1){2}}
\put(23,3){\line(0,1){2}}
\put(24,4){\line(0,1){1}}
\put(25,4){\line(0,1){1}}

\put(23,5){\makebox(2,1){$\downarrow$}}
\put(20,3){\makebox(1,1){{\scriptsize $0$}}}
\put(20,4){\makebox(1,1){{\scriptsize $1$}}}
\put(23,3){\makebox(1,1){{\scriptsize $0$}}}
\put(25,4){\makebox(1,1){{\scriptsize $1$}}}
\put(21,3){\makebox(1,1){{\scriptsize $[3]$}}}
\put(22,3){\makebox(1,1){{\scriptsize $[2]$}}}
\put(23,4){\makebox(1,1){{\scriptsize $[6]$}}}
\put(24,4){\makebox(1,1){{\scriptsize $[5]$}}}

\put(28,1){\line(1,0){1}}
\put(28,2){\line(1,0){1}}
\put(28,3){\line(1,0){1}}
\put(28,4){\line(1,0){2}}
\put(28,5){\line(1,0){2}}
\put(28,1){\line(0,1){4}}
\put(29,1){\line(0,1){4}}
\put(30,4){\line(0,1){1}}

\put(27,3){\makebox(1,1){{\scriptsize $1$}}}
\put(27,4){\makebox(1,1){{\scriptsize $0$}}}
\put(29,1){\makebox(1,1){{\scriptsize $1$}}}
\put(29,2){\makebox(1,1){{\scriptsize $0$}}}
\put(29,3){\makebox(1,1){{\scriptsize $0$}}}
\put(30,4){\makebox(1,1){{\scriptsize $0$}}}
\put(28,1){\makebox(1,1){{\scriptsize $[1]$}}}
\put(28,4){\makebox(1,1){{\scriptsize $[7]$}}}
\put(29,4){\makebox(1,1){{\scriptsize $[4]$}}}
\end{picture}
\end{flushleft}
which corresponds to the path 
$\framebox{$12$}\otimes \framebox{$11$}\otimes \framebox{$1$}\otimes \framebox{$5$}\otimes \framebox{$1$}$.
The charge of the first RC and the energy of this path coincide, which is computed to be $-13$.
If [9] were marked on the left of [8], then the first return would be $13$.
However, $\lambda-\mathrm{wt}(\framebox{$13$})=2\Bar{\Lambda}_{1}-(-3\Bar{\Lambda}_{1}+\Bar{\Lambda}_{2})=5\Bar{\Lambda_{1}}-\Bar{\Lambda_{2}}$, which is not dominant.

\end{ex}

\item[(2)] The selected $i_{7}$-string is in $\nu^{(2)}$.

Find the singular/q-singular string of effective length $i_{7}$ in $\nu^{(1)}$, which is possibly marked by [6], according to the following preferential rule.

\begin{center}
\begin{tabular}{c|cc}
                & type-0 & type-I \\ \hline
singular     & 1 & 2 \\
q-singular & 3 & 
\end{tabular}
\end{center}
If such a string exists, then mark the rightmost unmarked box of the founded string by $[8]$.
If not, then find the singular or q-singular string of minimum length whose effective length $(\geq i_{7}+1)$.
If the search is successful, then mark the rightmost box of the unmarked box of the founded string by $[8]$.
If not, then delete the marked boxes and return $\framebox{$11$}$.
If the selected $i_{8}$-string is type-I and the rightmost box of this string is marked by [5] or [6], then prohibit the further box marking of this string (see Examples~\ref{ex:BM-8b} and \ref{ex:BM-8c}).
We also say that this string is inactivated.

\begin{ex} \label{ex:BM-8b}
$L=5$ and $\lambda=2\Bar{\Lambda}_{1}$ as in Example~\ref{ex:BM-8a}.

\setlength{\unitlength}{10pt}
\begin{flushleft}
\begin{picture}(31,6)
\put(1,3){\line(1,0){3}}
\put(1,4){\line(1,0){8}}
\put(1,5){\line(1,0){8}}
\put(1,3){\line(0,1){2}}
\put(2,3){\line(0,1){2}}
\put(3,3){\line(0,1){2}}
\put(4,3){\line(0,1){2}}
\put(5,4){\line(0,1){1}}
\put(6,4){\line(0,1){1}}
\put(7,4){\line(0,1){1}}
\put(8,4){\line(0,1){1}}
\put(9,4){\line(0,1){1}}

\put(3,5){\makebox(2,1){$\downarrow$}}
\put(6,5){\makebox(2,1){$\downarrow$}}
\put(0,3){\makebox(1,1){{\scriptsize $3$}}}
\put(0,4){\makebox(1,1){{\scriptsize $1$}}}
\put(4,3){\makebox(1,1){{\scriptsize $2$}}}
\put(9,4){\makebox(1,1){{\scriptsize $1$}}}
\put(2,3){\makebox(1,1){{\scriptsize $[3]$}}}
\put(3,3){\makebox(1,1){{\scriptsize $[2]$}}}
\put(6,4){\makebox(1,1){{\scriptsize $[8]$}}}
\put(7,4){\makebox(1,1){{\scriptsize $[6]$}}}
\put(8,4){\makebox(1,1){{\scriptsize $[5]$}}}

\put(12,0){\line(1,0){1}}
\put(12,1){\line(1,0){1}}
\put(12,2){\line(1,0){1}}
\put(12,3){\line(1,0){2}}
\put(12,4){\line(1,0){3}}
\put(12,5){\line(1,0){3}}
\put(12,0){\line(0,1){5}}
\put(13,0){\line(0,1){5}}
\put(14,3){\line(0,1){2}}
\put(15,4){\line(0,1){1}}
\put(11,2){\makebox(1,1){{\scriptsize $1$}}}
\put(11,3){\makebox(1,1){{\scriptsize $0$}}}
\put(11,4){\makebox(1,1){{\scriptsize $0$}}}
\put(13,0){\makebox(1,1){{\scriptsize $1$}}}
\put(13,1){\makebox(1,1){{\scriptsize $0$}}}
\put(13,2){\makebox(1,1){{\scriptsize $0$}}}
\put(14,3){\makebox(1,1){{\scriptsize $0$}}}
\put(15,4){\makebox(1,1){{\scriptsize $0$}}}
\put(12,0){\makebox(1,1){{\scriptsize $[1]$}}}
\put(13,3){\makebox(1,1){{\scriptsize $[4]$}}}
\put(14,4){\makebox(1,1){{\scriptsize $[7]$}}}

\put(17,4){\makebox(2,1){$\longrightarrow$}}
\put(17,3){\makebox(2,1){$12$}}

\put(21,3){\line(1,0){1}}
\put(21,4){\line(1,0){5}}
\put(21,5){\line(1,0){5}}
\put(21,3){\line(0,1){2}}
\put(22,3){\line(0,1){2}}
\put(23,4){\line(0,1){1}}
\put(24,4){\line(0,1){1}}
\put(25,4){\line(0,1){1}}
\put(26,4){\line(0,1){1}}

\put(23,5){\makebox(2,1){$\downarrow$}}
\put(20,3){\makebox(1,1){{\scriptsize $0$}}}
\put(20,4){\makebox(1,1){{\scriptsize $2$}}}
\put(22,3){\makebox(1,1){{\scriptsize $0$}}}
\put(26,4){\makebox(1,1){{\scriptsize $2$}}}
\put(21,3){\makebox(1,1){{\scriptsize $[2]$}}}
\put(22,4){\makebox(1,1){{\scriptsize $[8]$}}}
\put(23,4){\makebox(1,1){{\scriptsize $[7]$}}}
\put(24,4){\makebox(1,1){{\scriptsize $[4]$}}}
\put(25,4){\makebox(1,1){{\scriptsize $[3]$}}}

\put(29,1){\line(1,0){1}}
\put(29,2){\line(1,0){1}}
\put(29,3){\line(1,0){1}}
\put(29,4){\line(1,0){2}}
\put(29,5){\line(1,0){2}}
\put(29,1){\line(0,1){4}}
\put(30,1){\line(0,1){4}}
\put(31,4){\line(0,1){1}}

\put(28,3){\makebox(1,1){{\scriptsize $0$}}}
\put(28,4){\makebox(1,1){{\scriptsize $0$}}}
\put(30,1){\makebox(1,1){{\scriptsize $0$}}}
\put(30,2){\makebox(1,1){{\scriptsize $0$}}}
\put(30,3){\makebox(1,1){{\scriptsize $0$}}}
\put(31,4){\makebox(1,1){{\scriptsize $0$}}}
\put(29,1){\makebox(1,1){{\scriptsize $[1]$}}}
\put(29,4){\makebox(1,1){{\scriptsize $[6]$}}}
\put(30,4){\makebox(1,1){{\scriptsize $[5]$}}}
\end{picture}
\end{flushleft}
which corresponds to the path 
$\framebox{$12$}\otimes \framebox{$12$}\otimes \framebox{$3$}\otimes \framebox{$2$}\otimes \framebox{$1$}$.
The charge of the first RC and the energy of this path coincide, which is computed to be $-12$.
By the same reasoning as in Example~\ref{ex:BM-8a}, [9] cannot be marked on the left of [8] in the first RC.

\end{ex}

\begin{ex} \label{ex:BM-8c}
$L=5$ and $\lambda=2\Bar{\Lambda}_{1}$ as in Example~\ref{ex:BM-8a}.

\setlength{\unitlength}{10pt}
\begin{flushleft}
\begin{picture}(32,6)
\put(1,3){\line(1,0){3}}
\put(1,4){\line(1,0){8}}
\put(1,5){\line(1,0){8}}
\put(1,3){\line(0,1){2}}
\put(2,3){\line(0,1){2}}
\put(3,3){\line(0,1){2}}
\put(4,3){\line(0,1){2}}
\put(5,4){\line(0,1){1}}
\put(6,4){\line(0,1){1}}
\put(7,4){\line(0,1){1}}
\put(8,4){\line(0,1){1}}
\put(9,4){\line(0,1){1}}

\put(3,5){\makebox(2,1){$\downarrow$}}
\put(6,5){\makebox(2,1){$\downarrow$}}
\put(0,3){\makebox(1,1){{\scriptsize $3$}}}
\put(0,4){\makebox(1,1){{\scriptsize $1$}}}
\put(4,3){\makebox(1,1){{\scriptsize $2$}}}
\put(9,4){\makebox(1,1){{\scriptsize $1$}}}
\put(1,3){\makebox(1,1){{\scriptsize $[5]$}}}
\put(2,3){\makebox(1,1){{\scriptsize $[3]$}}}
\put(3,3){\makebox(1,1){{\scriptsize $[2]$}}}
\put(7,4){\makebox(1,1){{\scriptsize $[8]$}}}
\put(8,4){\makebox(1,1){{\scriptsize $[6]$}}}

\put(12,0){\line(1,0){1}}
\put(12,1){\line(1,0){1}}
\put(12,2){\line(1,0){1}}
\put(12,3){\line(1,0){2}}
\put(12,4){\line(1,0){3}}
\put(12,5){\line(1,0){3}}
\put(12,0){\line(0,1){5}}
\put(13,0){\line(0,1){5}}
\put(14,3){\line(0,1){2}}
\put(15,4){\line(0,1){1}}
\put(11,2){\makebox(1,1){{\scriptsize $1$}}}
\put(11,3){\makebox(1,1){{\scriptsize $0$}}}
\put(11,4){\makebox(1,1){{\scriptsize $0$}}}
\put(13,0){\makebox(1,1){{\scriptsize $1$}}}
\put(13,1){\makebox(1,1){{\scriptsize $1$}}}
\put(13,2){\makebox(1,1){{\scriptsize $0$}}}
\put(14,3){\makebox(1,1){{\scriptsize $0$}}}
\put(15,4){\makebox(1,1){{\scriptsize $0$}}}
\put(12,0){\makebox(1,1){{\scriptsize $[1]$}}}
\put(12,1){\makebox(1,1){{\scriptsize $[4]$}}}
\put(14,4){\makebox(1,1){{\scriptsize $[7]$}}}

\put(17,4){\makebox(2,1){$\longrightarrow$}}
\put(17,3){\makebox(2,1){$12$}}

\put(21,4){\line(1,0){6}}
\put(21,5){\line(1,0){6}}
\put(21,4){\line(0,1){1}}
\put(22,4){\line(0,1){1}}
\put(23,4){\line(0,1){1}}
\put(24,4){\line(0,1){1}}
\put(25,4){\line(0,1){1}}
\put(26,4){\line(0,1){1}}
\put(27,4){\line(0,1){1}}

\put(23,5){\makebox(2,1){$\downarrow$}}
\put(20,4){\makebox(1,1){{\scriptsize $3$}}}
\put(27,4){\makebox(1,1){{\scriptsize $2$}}}
\put(24,4){\makebox(1,1){{\scriptsize $[5]$}}}
\put(25,4){\makebox(1,1){{\scriptsize $[3]$}}}
\put(26,4){\makebox(1,1){{\scriptsize $[2]$}}}

\put(29,2){\line(1,0){1}}
\put(29,3){\line(1,0){2}}
\put(29,4){\line(1,0){2}}
\put(29,5){\line(1,0){2}}
\put(29,2){\line(0,1){3}}
\put(30,2){\line(0,1){3}}
\put(31,3){\line(0,1){2}}

\put(28,2){\makebox(1,1){{\scriptsize $1$}}}
\put(28,4){\makebox(1,1){{\scriptsize $0$}}}
\put(30,2){\makebox(1,1){{\scriptsize $0$}}}
\put(31,3){\makebox(1,1){{\scriptsize $0$}}}
\put(31,4){\makebox(1,1){{\scriptsize $0$}}}
\put(30,3){\makebox(1,1){{\scriptsize $[1]$}}}
\put(30,4){\makebox(1,1){{\scriptsize $[4]$}}}

\end{picture}
\end{flushleft}
which corresponds to the path 
$\framebox{$12$}\otimes \framebox{$7$}\otimes \framebox{$\emptyset$}\otimes \framebox{$2$}\otimes \framebox{$1$}$.
The charge of the first RC and the energy of this path coincide, which is computed to be $-11$. 
By the same reasoning as in Example~\ref{ex:BM-8a}, [9] cannot be marked on the left of [8] in the first RC.

\end{ex}

\end{itemize}

\item[(BM-9)] Marking $[9]$.

Find the active singular string that is not inactivated of length $(\geq i_{8})$ in $\nu^{(1)}$.
If the founded string whose length is denoted by $l_{9}^{(1)}$ is type-I and the rightmost box of the founded string is marked by $[2]$, then discard this selection and find the string of length $(> l_{9}^{(1)})$.
If the founded string whose length is denoted by $l_{9}^{(1)}$ is type-II and the rightmost box of the founded string is marked by $[3]$, then discard this selection and find the string of length $(> l_{9}^{(1)})$.
The last two rules prohibit the box marking in a single string across two delimiters.
If the string which can be marked by $[9]$ is founded, then mark the rightmost box of the unmarked box of the founded string by $[9]$.
If not, then delete the marked boxes and return $\framebox{$12$}$. 

If the selected $i_{9}$-string is type-0 and the selected $i_{8}$-string is type-I with the same effective length, then discard the $i_{8}$-string selection and do the following box marking.

\setlength{\unitlength}{10pt}
\begin{center}
\begin{picture}(5,3)
\put(0,0){\line(1,0){3}}
\put(0,1){\line(1,0){4}}
\put(0,2){\line(1,0){4}}
\put(1,0){\line(0,1){2}}
\put(2,0){\line(0,1){2}}
\put(3,0){\line(0,1){2}}
\put(4,1){\line(0,1){1}}
\put(0,2){\makebox(2,1){$\downarrow$}}
\put(2,1){\makebox(1,1){{\scriptsize $[9]$}}}
\put(3,1){\makebox(1,1){{\scriptsize $[8]$}}}
\end{picture}
\end{center}
This rule guarantees the bijection when [10] is marked in a singular string with $i_{9}^{eff}=i_{10}$ in $\nu^{(2)}$.

\item[(BM-10)] Marking $[10]$.

Find the singular string of minimum length $(\geq i_{9}^{eff})$ in $\nu^{(2)}$.
If such a string exists, then mark the rightmost unmarked box of the founded string by $[10]$ and delete the marked boxes returning  $\framebox{$14$}$.
If not, then delete the marked boxes and return $\framebox{$13$}$.

\end{description}

There are several exceptional rules for box marking.

\begin{ex} \label{ex:boomerang}
$L=4$ and $\lambda=2\Bar{\Lambda}_{2}$.

According to the box marking rules prescribed above, we have

\setlength{\unitlength}{10pt}
\begin{center}
\begin{picture}(16,7)
\put(1,3){\line(1,0){3}}
\put(1,4){\line(1,0){6}}
\put(1,5){\line(1,0){6}}
\put(1,3){\line(0,1){2}}
\put(2,3){\line(0,1){2}}
\put(3,3){\line(0,1){2}}
\put(4,3){\line(0,1){2}}
\put(5,4){\line(0,1){1}}
\put(6,4){\line(0,1){1}}
\put(7,4){\line(0,1){1}}

\put(3,5){\makebox(2,1){$\downarrow$}}
\put(0,3){\makebox(1,1){{\scriptsize $3$}}}
\put(0,4){\makebox(1,1){{\scriptsize $0$}}}
\put(4,3){\makebox(1,1){{\scriptsize $1$}}}
\put(7,4){\makebox(1,1){{\scriptsize $0$}}}
\put(3,3){\makebox(1,1){{\scriptsize $[2]$}}}
\put(4,4){\makebox(1,1){{\scriptsize $[7]$}}}
\put(5,4){\makebox(1,1){{\scriptsize $[4]$}}}
\put(6,4){\makebox(1,1){{\scriptsize $[3]$}}}

\put(10,0){\line(1,0){1}}
\put(10,1){\line(1,0){1}}
\put(10,2){\line(1,0){1}}
\put(10,3){\line(1,0){1}}
\put(10,4){\line(1,0){2}}
\put(10,5){\line(1,0){2}}
\put(10,0){\line(0,1){5}}
\put(11,0){\line(0,1){5}}
\put(12,4){\line(0,1){1}}
\put(9,3){\makebox(1,1){{\scriptsize $0$}}}
\put(9,4){\makebox(1,1){{\scriptsize $1$}}}
\put(11,0){\makebox(1,1){{\scriptsize $0$}}}
\put(11,1){\makebox(1,1){{\scriptsize $0$}}}
\put(11,2){\makebox(1,1){{\scriptsize $0$}}}
\put(11,3){\makebox(1,1){{\scriptsize $0$}}}
\put(12,4){\makebox(1,1){{\scriptsize $0$}}}
\put(10,0){\makebox(1,1){{\scriptsize $[1]$}}}
\put(10,4){\makebox(1,1){{\scriptsize $[6]$}}}
\put(11,4){\makebox(1,1){{\scriptsize $[5]$}}}

\put(14,4){\makebox(2,1){$\longrightarrow$}}
\put(14,3){\makebox(2,1){$11$}}

\end{picture}
\end{center}
However, $\lambda-\mathrm{wt}(\framebox{$11$})=2\Bar{\Lambda}_{2}-(\Bar{\Lambda}_{1}-\Bar{\Lambda}_{2})=-\Bar{\Lambda}_{1}+3\Bar{\Lambda}_{2}$, which is not dominant.
For the above RC, we must do the following box marking.

\setlength{\unitlength}{10pt}
\begin{center}
\begin{picture}(16,7)
\put(1,3){\line(1,0){3}}
\put(1,4){\line(1,0){6}}
\put(1,5){\line(1,0){6}}
\put(1,3){\line(0,1){2}}
\put(2,3){\line(0,1){2}}
\put(3,3){\line(0,1){2}}
\put(4,3){\line(0,1){2}}
\put(5,4){\line(0,1){1}}
\put(6,4){\line(0,1){1}}
\put(7,4){\line(0,1){1}}

\put(3,5){\makebox(2,1){$\downarrow$}}
\put(0,3){\makebox(1,1){{\scriptsize $3$}}}
\put(0,4){\makebox(1,1){{\scriptsize $0$}}}
\put(4,3){\makebox(1,1){{\scriptsize $1$}}}
\put(7,4){\makebox(1,1){{\scriptsize $0$}}}
\put(1,4){\makebox(1,1){{\scriptsize $[9]$}}}
\put(2,4){\makebox(1,1){{\scriptsize $[8]$}}}
\put(3,4){\makebox(1,1){{\scriptsize $[7]$}}}
\put(4,4){\makebox(1,1){{\scriptsize $[4]$}}}
\put(5,4){\makebox(1,1){{\scriptsize $[3]$}}}
\put(6,4){\makebox(1,1){{\scriptsize $[2]$}}}

\put(10,0){\line(1,0){1}}
\put(10,1){\line(1,0){1}}
\put(10,2){\line(1,0){1}}
\put(10,3){\line(1,0){1}}
\put(10,4){\line(1,0){2}}
\put(10,5){\line(1,0){2}}
\put(10,0){\line(0,1){5}}
\put(11,0){\line(0,1){5}}
\put(12,4){\line(0,1){1}}
\put(9,3){\makebox(1,1){{\scriptsize $0$}}}
\put(9,4){\makebox(1,1){{\scriptsize $1$}}}
\put(11,0){\makebox(1,1){{\scriptsize $0$}}}
\put(11,1){\makebox(1,1){{\scriptsize $0$}}}
\put(11,2){\makebox(1,1){{\scriptsize $0$}}}
\put(11,3){\makebox(1,1){{\scriptsize $0$}}}
\put(12,4){\makebox(1,1){{\scriptsize $0$}}}
\put(10,0){\makebox(1,1){{\scriptsize $[1]$}}}
\put(10,4){\makebox(1,1){{\scriptsize $[6]$}}}
\put(11,4){\makebox(1,1){{\scriptsize $[5]$}}}

\put(14,4){\makebox(2,1){$\longrightarrow$}}
\put(14,3){\makebox(2,1){$13$}}

\end{picture}
\end{center}

\end{ex}

The first string in $\nu^{(1)}$ is called a \emph{boomerang string}.
Here we summarize the rules of finding boomerang strings, which have a common feature; the box marking restarts in the boomerang string leading to the larger box marking.

\begin{description}

\item[(BS-1)] The selected $i_{2}$-string is type-II q-singular.

The type-0/I singular string of effective length $i_{2}^{eff}$ under the condition that $\nu^{(2)}$ has a singular string of length $i_{2}^{eff}$.

\item[(BS-2)] The selected $i_{2}$-string is type-0/I q-singular or type-I qq-singular.

The following singular strings are boomerang strings for the $i_{2}$-string. 
If there exist boomerang strings more than two, then select the string of minimum length.

\begin{itemize}
\item[(1)]
The type-II singular string of effective length $i_{2}^{eff}+1$ under the condition that $\nu^{(2)}$ has a singular string of length $i_{2}^{eff}$ or $i_{2}^{eff}+1$.

\item[(2)]
The type-0/I singular string of effective length $i_{2}^{eff}+1$ under the condition that $\nu^{(2)}$ has a singular string of length $i_{2}^{eff}+1$.
\end{itemize}

\item[(BS-3)] The selected $i_{2}$-string is type-0 qq-singular.

The type-0/I singular string of effective length $i_{2}^{eff}+1$ is a boomerang string if $\nu^{(2)}$ has a singular string of length $i_{2}^{eff}+1$.
See Example~\ref{ex:boomerang}.
Note that we always select the type-II singular string of effective length $i_{2}^{eff}+1$ if it exists.
See Example~\ref{ex:BM-3}. 

\item[(BS-4)] The selected $i_{3}$-string with $i_{2}<i_{3}$ is type-II q-singular.

The type-0/I singular string of effective length $i_{3}^{eff}$ is a boomerang string for the $i_{3}$-string if $\nu^{(2)}$ has a singular string of length $i_{3}^{eff}$.

\item[(BS-5)] The selected $i_{3}$-string with $i_{2}<i_{3}$ is type-0/I q-singular.

The singular string of any type and of effective length $i_{3}^{eff}+1$ is a boomerang string for the $i_{3}$-string if $\nu^{(2)}$ has a singular string of length $i_{3}^{eff}+1$.

\item[(BS-6)] The selected $i_{5}$-string in $\nu^{(1)}$ with $i_{3}<i_{5}$ is q-singular and type-I/II.

The singular string of length ($>i_{5}$) and of the effective length $i_{5}^{eff}$ is a boomerang string for the $i_{5}$-string if $\nu^{(2)}$ has a singular string of length $i_{5}^{eff}$.

\end{description}

Here, we summarize the rule of adjusting the values of riggings in $\Tilde{\nu}^{(a)}$ $(a=1,2)$.
The fundamental rule is that we set the box-deleted string in $\Tilde{\nu}^{(a)}$ $(a=1,2)$ to be singular and that we keep the riggings of the box-undeleted strings the same.

\begin{description}
\item[(RA-1)] When $[6]$ is marked in $\nu^{(1)}$ or $\nu^{(2)}$ with $i_{5}< i_{6}$, we set the box-deleted $(i_{6}-n)$-string to be q-singular where $n\geq 1$.
See Example~\ref{ex:BM-8c}.

\item[(RA-2)] When $[3]$ is marked in $\nu^{(1)}$ with $i_{2}< i_{3}$ and if [4] is not marked in $\nu^{(1)}$ and the $i_{3}$-string is not marked by [5], we do as follows according to the type of the selected $i_{3}$-string.

\begin{itemize}
\item[(1)] type-II.

We set the box-deleted $(i_{3}-1)$-string to be q-singular unconditionally.

\item[(2)] type-0/I.

If [4] is marked in $\nu^{(2)}$ with $i_{3}^{eff}=i_{4}$, we set the box-deleted $(i_{3}-1)$-string to be singular.
Otherwise, we set the box-deleted $(i_{3}-1)$-string to be q-singular. 

\end{itemize}

\item[(RA-3)] When $[4]$ is marked in $\nu^{(1)}$ with $i_{3}< i_{4}$ and the selected $i_{4}$-string is marked by $[4]$ only, we do as follows according to the type of the selected $i_{4}$-string.

\begin{itemize}
\item[(1)] type-0.

If $[5]$ is marked in a q-singular string in $\nu^{(2)}$ with $i_{4}^{eff}=i_{5}$ or $[5]$ and $[6]$ are marked in the same singular string in $\nu^{(2)}$ with $i_{4}^{eff}=i_{6}-1$, then we set the box-deleted $(i_{4}-1)$-string to be singular.
Otherwise, we set the box-deleted $(i_{4}-1)$-string to be qq-singular.

\item[(2)] type-I.

If $[5]$ is not marked in $\nu^{(2)}$, then we set the box-deleted $(i_{4}-1)$-string to be qq-singular.
If $[5]$ is marked in a q-singular string in $\nu^{(2)}$ with $i_{4}^{eff}=i_{5}$ or [5] and [6] are marked in the same singular string in $\nu^{(2)}$ with $i_{4}^{eff}=i_{6}-1$, then we set the box-deleted $(i_{4}-1)$-string to be q-singular.
Otherwise, we set the box-deleted $(i_{4}-1)$-string to be qq-singular.

\item[(3)] type-II.

We set the box-deleted $(i_{4}-1)$-string to be qq-singular unconditionally.
\end{itemize}
In either case, if the selected $i_{3}$-string is marked by [3] only, we set the box-deleted $(i_{3}-1)$-string to be q-singular.

\item[(RA-4)] When [4] is marked in $\nu^{(1)}$ with $i_{2}< i_{3}=i_{4}$ and the selected $i_{4}$-string is not marked by [7], we do as follows according to the type of the selected $i_{4}$-string.

\begin{itemize}
\item[(1)] type-0.

If [5] is marked in a q-singular string in $\nu^{(2)}$ with $i_{4}^{eff}=i_{5}$ or [5] and [6] are marked in the same singular string in $\nu^{(2)}$ with $i_{4}^{eff}=i_{6}-1$, then we set the box-deleted $(i_{4}-2)$-string to be singular.
Otherwise, we set the box-deleted $(i_{4}-2)$-string to be q-singular.

\item[(2)] type-I/II.

We set the box-deleted $(i_{4}-2)$-string to be q-singular unconditionally.

\end{itemize}

\item[(RA-5)] When [8] is marked in $\nu^{(1)}$ but [9] is not and the selected $i_{8}$-string is marked by [8] only, we set the box-deleted $(i_{8}-1)$-string in $\Tilde{\nu}^{(1)}$ to be q-singular whether [7] is marked in $\nu^{(1)}$ or $\nu^{(2)}$.

\item[(RA-6)] When [9] is marked in $\nu^{(1)}$ and the rightmost box of the selected $i_{9}$-string is marked by [8], i.e., $i_{8}=i_{9}$, we do as follows according to the type of the selected $i_{9}$-string whether [7] is marked in $\nu^{(1)}$ or $\nu^{(2)}$.

\begin{itemize}
\item[(1)] type-0.

If [10] is marked in $\nu^{(2)}$ with $i_{9}^{eff}=i_{10}$, then we set the box-deleted $(i_{9}-2)$-string to be singular.
Otherwise, we set this string to be q-singular.

\item[(2)] type-I/II.

We set the box-deleted $(i_{9}-2)$-string to be q-singular unconditionally.

\end{itemize}

\item[(RA-7)] When [9] are marked in $\nu^{(1)}$ with $i_{8}< i_{9}$, we do as follows according to the type of the $i_{9}$-string whether [7] is marked in $\nu^{(1)}$ or $\nu^{(2)}$.

\begin{itemize}
\item[(1)] type-0.

If [10] is marked in $\nu^{(2)}$ with $i_{9}^{eff}=i_{10}$, then we set the box-deleted $(i_{9}-1)$-string to be singular.
Otherwise, we set this string to be qq-singular.

\item[(2)] type-I.

If [10] is marked in $\nu^{(2)}$ with $i_{9}^{eff}=i_{10}$, then we set the box-deleted $(i_{9}-1)$-string to be q-singular.
Otherwise, we set this string to be qq-singular.

\item[(3)] type-II.

We set the box-deleted $(i_{9}-1)$-string to be qq-singular unconditionally.

\end{itemize}
In either case, if the selected $i_{8}$-string is marked by [8] only, we set the box-deleted $(i_{8}-1)$-string to be q-singular.

\end{description}

Let $A$ be a statement, then $\chi (A)=1$ if $A$ is true and $\chi (A)=0$ if $A$ is false.
The changes of vacancy numbers are defined by $\Delta p_{i}^{(a)}:=\Tilde{p}_{i}^{(a)}-p_{i}^{(a)}$.
Here we compute the changes of vacancy numbers according to the return $b$ of $\delta_{\theta}$.

\begin{description}
\item[(VC-1)] $b= \framebox{$1$}$.

From Eqs.~\eqref{eq:vacancy1} and \eqref{eq:vacancy2}, we have $\Delta p_{i}^{(1)}=0$ $(i\geq 1)$ and $\Delta p_{i}^{(2)}=-1$ $(i\geq 1)$.

\item[(VC-2)] $b= \framebox{$2$}$.

The changes of vacancy numbers $\Delta p_{i}^{(1)}$ are obtained by overwriting 
\begin{align*}
&\min (i,3i_{1})(m_{i_{1}}^{(2)}-1)+\min (i,3(i_{1}-1))(m_{i_{1}-1}^{(2)}+1) \\
&-\min (i,3i_{1})m_{i_{1}}^{(2)}-\min (i,3(i_{1}-1))m_{i_{1}-1}^{(2)} \\
&=-\min (i,3i_{1})+\min (i,3(i_{1}-1)) \\
&=-3\chi (i\geq 3i_{1})-2\chi (i=3i_{1}-1)-\chi (i=3i_{1}-2)
\end{align*}
on $\Delta p_{i}^{(1)}$ in \textbf{(VC-1)} and $\Delta p_{i}^{(2)}$ are obtained by overwriting 
\begin{align*}
-&2\min (i,i_{1})(m_{i_{1}}^{(2)}-1)-2\min (i,i_{1}-1)(m_{i_{1}-1}^{(2)}+1) \\
&+2\min (i,i_{1})m_{i_{1}}^{(2)}+2\min (i,i_{1}-1)m_{i_{1}-1}^{(2)} \\
&=2\min (i,i_{1})-2\min (i,i_{1}-1)=2\chi (i\geq i_{1})
\end{align*}
on $\Delta p_{i}^{(2)}$ in \textbf{(VC-1)}.
The changed $\Delta p_{i}^{(1)}$ and $\Delta p_{i}^{(2)}$ are listed below.

\begin{center}
\begin{tabular}{cl|cl}
$\Delta p_{i}^{(1)}$ & & $\Delta p_{i}^{(2)}$ & \\ \hline
$-3$ & $(i\geq 3i_{1})$ & 1 & $(i\geq i_{1})$ \\
$-2$ & $(i=3i_{1}-1)$ & & \\
$-1$ & $(i=3i_{1}-2)$ & & 
\end{tabular}
\end{center}

\item[(VC-3)] $b= \framebox{$3$}$.

The changes of vacancy numbers $\Delta p_{i}^{(1)}$ are obtained by overwriting $2\chi (i\geq i_{2})$ on 
$\Delta p_{i}^{(1)}$ in \textbf{(VC-2)} and $\Delta p_{i}^{(2)}$ are obtained by overwriting $-\chi (3i\geq i_{2})$ on $\Delta p_{i}^{(2)}$ in \textbf{(VC-2)}.

\begin{itemize}
\item[(1)] $i_{2}\leq 3i_{1}-1$.

\begin{center}
\begin{tabular}{cccl|cl}
$\Delta p_{i}^{(1)}$ & $i_{2}$:I & $i_{2}$:II &  & $\Delta p_{i}^{(2)}$ & \\ \hline
& $-1$ &$-1$ & $(i\geq 3i_{1})$ & 0 & $(i\geq i_{1})$ \\
& $0$  &$0$ & $(i=3i_{1}-1)$ & & \\
& & $1$ & $(i=3i_{1}-2)$ & & 
\end{tabular}
\end{center}
where $i_{2}$:I (resp. $i_{2}$:II) stands for that the $i_{2}$-string is type-I (resp. type-II).

\item[(2)] $i_{2}\geq3i_{1}$.

\begin{center}
\begin{tabular}{cl|cl}
$\Delta p_{i}^{(1)}$ & & $\Delta p_{i}^{(2)}$ & \\ \hline
$-1$ & $(i\geq i_{2})$ & 0 & $(i\geq i_{2}^{eff})$
\end{tabular}
\end{center}

\end{itemize}

\item[(VC-4)] $b= \framebox{$4$}$.

\begin{itemize}
\item[(1)] $i_{2}=i_{3}$.

The changes of vacancy numbers $\Delta p_{i}^{(1)}$ are obtained by overwriting $4\chi (i\geq i_{3})+2\chi (i=i_{3}-1)$ on $\Delta p_{i}^{(1)}$ in \textbf{(VC-2)} and $\Delta p_{i}^{(2)}$ are obtained by overwriting $-2\chi (3i\geq i_{3})-\chi (3i=i_{3}-1)$ on $\Delta p_{i}^{(2)}$ in \textbf{(VC-2)}.

\begin{itemize}
\item[(a)] $i_{3}^{eff}=i_{1}$.

\begin{center}
\begin{tabular}{cccl|cl}
$\Delta p_{i}^{(1)}$ & $i_{3}$:0 & $i_{3}$:I & & $\Delta p_{i}^{(2)}$ & \\ \hline
& $1$ & $1$ & $(i\geq 3i_{1})$ & $-1$ & $(i\geq i_{1})$ \\
& $0$ & $2$ & $(i=3i_{1}-1)$ & & \\
& $-1$ & $1$ & $(i=3i_{1}-2)$ & & 
\end{tabular}
\end{center}

\item[(b)] $i_{3}^{eff}>i_{1}$.
\begin{center}
\begin{tabular}{cl|cccl}
$\Delta p_{i}^{(1)}$ & & $\Delta p_{i}^{(2)}$ & $i_{3}$:0/I & $i_{3}$:II \\ \hline
$1$ & $(i\geq i_{3})$ & & $-1$ & $-1$ & $(i\geq i_{3}^{eff})$ \\
$-1$ & $(i=i_{3}-1)$ & & $1$ & $0$ & $(i=i_{3}^{eff}-1)$
\end{tabular}
\end{center}
where $i_{3}$:0/I (resp. $i_{3}$:II) stands for that the $i_{3}$-string is type-0/I (resp. type-II).

\end{itemize}

\item[(2)] $i_{2}<i_{3}$.

In this case, $i_{3}\geq 3i_{1}$.
The changes of vacancy numbers $\Delta p_{i}^{(1)}$ are obtained by overwriting $2\chi (i\geq i_{3})$ on 
$\Delta p_{i}^{(1)}$ in \textbf{(VC-3)} and $\Delta p_{i}^{(2)}$ are obtained by overwriting $-\chi (3i\geq i_{3})$ on $\Delta p_{i}^{(2)}$ in \textbf{(VC-3)}.

\begin{center}
\begin{tabular}{cl|cl}
$\Delta p_{i}^{(1)}$ & & $\Delta p_{i}^{(2)}$ & \\ \hline
$1$ & $(i\geq i_{3})$ & $-1$ & $(i\geq i_{3}^{eff})$ 
\end{tabular}
\end{center}

\end{itemize}

\item[(VC-5)] $b= \framebox{$5$}$.

The changes of vacancy numbers $\Delta p_{i}^{(1)}$ are obtained by overwriting 
\[
-3\chi (i\geq 3i_{4})-2\chi (i=3i_{4}-1)-\chi (i=3i_{4}-2)
\]
on $\Delta p_{i}^{(1)}$ in \textbf{(VC-4)} and $\Delta p_{i}^{(2)}$ are obtained by overwriting $2\chi (i\geq i_{4})$ on $\Delta p_{i}^{(2)}$ in \textbf{(VC-4)}.

\begin{itemize}
\item[(1)] $i_{2}=i_{3}$.

\begin{itemize}
\item[(a)] $i_{3}=3i_{4}-1$.

\begin{center}
\begin{tabular}{cccl|cl}
$\Delta p_{i}^{(1)}$ & $i_{1}=i_{4}$ & $i_{1}<i_{4}$ & & $\Delta p_{i}^{(2)}$ & \\ \hline
& $-2$ & $-2$ & $(i\geq 3i_{4})$ & $1$ & $(i\geq i_{1})$ \\
& $0$  & $-1$ & $(i=3i_{4}-1)$ & & \\
& $0$  & $-2$ & $(i=3i_{4}-2)$ & & 
\end{tabular}
\end{center}
Note that ``$i\geq i_{1}$'' in $\Delta p_{i}^{(2)}$ is not a typo.

\item[(b)] otherwise.

\begin{center}
\begin{tabular}{cl|cl}
$\Delta p_{i}^{(1)}$ & & $\Delta p_{i}^{(2)}$ & \\ \hline
$-2$ & $(i\geq 3i_{4})$ & $1$ & $(i\geq i_{4})$ \\
$-1$ & $(i=3i_{4}-1)$ & & \\
$0$ & $(i=3i_{4}-2)$ & & 
\end{tabular}
\end{center}

\end{itemize}
Note that $i_{3}^{eff}<i_{4}$ when the $i_{3}$-string is type-0.

\item[(2)] $i_{2}<i_{3}$.

\begin{itemize}

\item[(a)] $i_{3}^{eff}<i_{4}$.

\begin{center}
\begin{tabular}{cl|cl}
$\Delta p_{i}^{(1)}$ & & $\Delta p_{i}^{(2)}$ & \\ \hline
$-2$ & $(i\geq 3i_{4})$ & $1$ & $(i\geq i_{4})$ \\
$-1$ & $(i=3i_{4}-1)$ & & \\
$0$ & $(i=3i_{4}-2)$ & & 
\end{tabular}
\end{center}

\item[(b)] $i_{3}=3i_{4}-1$.

\begin{center}
\begin{tabular}{cl|cl}
$\Delta p_{i}^{(1)}$ & & $\Delta p_{i}^{(2)}$ & \\ \hline
$-2$ & $(i\geq 3i_{4})$ & $1$ & $(i\geq i_{4})$ \\
$-1$ & $(i=3i_{4}-1)$ & & \\
$-2$ & $(i=3i_{4}-2)$ & & 
\end{tabular}
\end{center}

\item[(c)] $i_{3}=3i_{4}-2$.

\begin{center}
\begin{tabular}{cl|cl}
$\Delta p_{i}^{(1)}$ & & $\Delta p_{i}^{(2)}$ & \\ \hline
$-2$ & $(i\geq 3i_{4})$ & $1$ & $(i\geq i_{4})$ \\
$-1$ & $(i=3i_{4}-1)$ & & \\
$0$ & $(i=3i_{4}-2)$ & & 
\end{tabular}
\end{center}

\end{itemize}
Note that the case when $i_{3}=3i_{4}$ cannot occur.

\end{itemize}

\item[(VC-6)] $b= \framebox{$6$}$.

\begin{itemize}
\item[(1)] $i_{2}=i_{3}=i_{4}$.

The changes of vacancy numbers $\Delta p_{i}^{(1)}$ are obtained by overwriting 
\[
6\chi (i\geq i_{4})+4\chi (i=i_{4}-1)+2\chi (i=i_{4}-2)
\]
on $\Delta p_{i}^{(1)}$ in \textbf{(VC-2)} and $\Delta p_{i}^{(2)}$ are obtained by overwriting 
\[
-3\chi (3i\geq i_{4})-2\chi (3i=i_{4}-1)-\chi (3i=i_{4}-2)
\]
on $\Delta p_{i}^{(2)}$ in \textbf{(VC-2)}.

\begin{itemize}
\item[(a)] The $i_{4}$-string is type-0 with $i_{4}^{eff}=i_{1}$.

\begin{center}
\begin{tabular}{cl|cl}
$\Delta p_{i}^{(1)}$ & & $\Delta p_{i}^{(2)}$ & \\ \hline
$3$ & $(i\geq i_{4})$ & $-2$ & $(i\geq i_{4}^{eff})$ \\
$2$ & $(i=i_{4}-1)$ & & \\
$1$ & $(i=i_{4}-2)$ & & 
\end{tabular}
\end{center}

\item[(b)] The $i_{4}$-string is type-0 with $i_{4}^{eff}>i_{1}$.

\begin{center}
\begin{tabular}{cl|cl}
$\Delta p_{i}^{(1)}$ & & $\Delta p_{i}^{(2)}$ & \\ \hline
$3$ & $(i\geq i_{4})$ & $-2$ & $(i\geq i_{4}^{eff})$ \\
$1$ & $(i=i_{4}-1)$ & $1$ & $(i_{1}\leq i\leq i_{4}^{eff}-1)$ \\
$-1$ & $(i=i_{4}-2)$ & & 
\end{tabular}
\end{center}

\item[(c)] The $i_{4}$-string is type-I.

\begin{center}
\begin{tabular}{cl|cl}
$\Delta p_{i}^{(1)}$ & & $\Delta p_{i}^{(2)}$ & \\ \hline
$3$ & $(i\geq i_{4})$ & $-2$ & $(i\geq i_{4}^{eff})$ \\
$1$ & $(i=i_{4}-1)$   & $0$ & $(i=i_{4}^{eff}-1)$\\
$-1$ & $(i=i_{4}-2)$ & & 
\end{tabular}
\end{center}

\item[(d)] The $i_{4}$-string is type-II.

\begin{center}
\begin{tabular}{cccl}
$\Delta p_{i}^{(1)}$ & $i_{4}^{eff}=i_{1}+1$ & $i_{4}^{eff}>i_{1}+1$ & \\ \hline
& $3$ & $3$ & $(i\geq i_{4})$ \\
& $1$ & $1$ & $(i=i_{4}-1)$ \\
& $0$ & $-1$ & $(i=i_{4}-2)$
\end{tabular}
\end{center}

\begin{center}
\begin{tabular}{cl}
$\Delta p_{i}^{(2)}$ & \\ \hline
$-2$ & $(i\geq i_{4}^{eff})$ \\
$-1$ & $(i=i_{4}^{eff}-1)$
\end{tabular}
\end{center}

\end{itemize}

\item[(2)] $i_{2}<i_{3}=i_{4}$.

The changes of vacancy numbers $\Delta p_{i}^{(1)}$ are obtained by overwriting $4\chi (i\geq i_{4})+2\chi (i=i_{4}-1)$ on $\Delta p_{i}^{(1)}$ in \textbf{(VC-3)} and $\Delta p_{i}^{(2)}$ are obtained by overwriting $-2\chi (3i\geq i_{4})-\chi (3i=i_{4}-1)$ on $\Delta p_{i}^{(2)}$ in \textbf{(VC-3)}.

\begin{center}
\begin{tabular}{cl|cccl}
$\Delta p_{i}^{(1)}$ & & $\Delta p_{i}^{(2)}$ & $i_{4}$:0/I & $i_{4}$:II \\ \hline
$3$ & $(i\geq i_{4})$ & & $-2$ & $-2$ & $(i\geq i_{4}^{eff})$ \\
$1$ & $(i=i_{4}-1)$ & & $0$ & $-1$ & $(i=i_{4}^{eff}-1)$
\end{tabular}
\end{center}

\item[(3)] $i_{3}<i_{4}$.

The changes of vacancy numbers $\Delta p_{i}^{(1)}$ are obtained by overwriting $2\chi (i\geq i_{4})$ on 
$\Delta p_{i}^{(1)}$ in \textbf{(VC-4)} and $\Delta p_{i}^{(2)}$ are obtained by overwriting $-\chi (3i\geq i_{4})$ on $\Delta p_{i}^{(2)}$ in \textbf{(VC-4)}.

\begin{center}
\begin{tabular}{cl|cl}
$\Delta p_{i}^{(1)}$ & & $\Delta p_{i}^{(2)}$ & \\ \hline
$3$ & $(i\geq i_{4})$ & $-2$ & $(i\geq i_{4}^{eff})$ 
\end{tabular}
\end{center}

\end{itemize}

\item[(VC-7)] $b= \framebox{$7$}$.

\begin{itemize}
\item[(1)] $i_{2}=i_{3}=i_{5}$.

In this case, $i_{5}=3i_{4}$.
The changes of vacancy numbers $\Delta p_{i}^{(1)}$ are obtained by overwriting 
\begin{align*}
&6\chi (i\geq i_{5})+4\chi (i=i_{5}-1)+2\chi (i=i_{5}-2) \\
&-3\chi (i\geq 3i_{4})-2\chi (i=3i_{4}-1)-\chi (i=3i_{4}-2) \\
=& 3\chi (i\geq i_{5})+2\chi (i=i_{5}-1)+\chi (i=i_{5}-2)
\end{align*}
on $\Delta p_{i}^{(1)}$ in \textbf{(VC-2)} and $\Delta p_{i}^{(2)}$ are obtained by overwriting 
\begin{align*}
&-3\chi (3i\geq i_{5})-2\chi (3i=i_{5}-1)-\chi (3i=i_{5}-2)+2\chi (i\geq i_{4}) \\
=&-\chi (i\geq i_{4})
\end{align*}
on $\Delta p_{i}^{(2)}$ in \textbf{(VC-2)}.

\begin{center}
\begin{tabular}{cccl|cl}
$\Delta p_{i}^{(1)}$ & $i_{1}=i_{4}$ & $i_{1}<i_{4}$ & & $\Delta p_{i}^{(2)}$ & \\ \hline
& $0$ & $0$ & $(i\geq i_{5})$ & $0$ & $(i\geq i_{4})$ \\
& $0$ & $-1$ & $(i=i_{5}-1)$ & & \\
& $0$ & $-2$ & $(i=i_{5}-2)$ & & 
\end{tabular}
\end{center}

\item[(2)] $i_{2}<i_{3}=i_{5}$.

In this case $i_{5}=3i_{4}$.
The changes of vacancy numbers $\Delta p_{i}^{(1)}$ are obtained by overwriting 
\begin{align*}
&4\chi (i\geq i_{5})+2\chi (i=i_{5}-1) \\
&-3\chi (i\geq 3i_{4})-2\chi (i=3i_{4}-1)-\chi (i=3i_{4}-2) \\
=& \chi (i\geq i_{5})-\chi (i=i_{5}-2)
\end{align*}
on $\Delta p_{i}^{(1)}$ in \textbf{(VC-3)} and $\Delta p_{i}^{(2)}$ are obtained by overwriting 
\[
-2\chi (3i\geq i_{5})-\chi (3i=i_{5}-1)+2\chi (i\geq i_{4})=0
\]
on $\Delta p_{i}^{(2)}$ in \textbf{(VC-3)}.
The changes of vacancy numbers coincide with those in case (1).

\item[(3)] $i_{3}<i_{5}$.

The changes of vacancy numbers $\Delta p_{i}^{(1)}$ are obtained by overwriting $2\chi (i\geq i_{5})$ on 
$\Delta p_{i}^{(1)}$ in \textbf{(VC-5)} and the changes of vacancy numbers $\Delta p_{i}^{(2)}$ are obtained by overwriting $-\chi (3i\geq i_{5})$ on $\Delta p_{i}^{(2)}$ in \textbf{(VC-5)}.

\begin{center}
\begin{tabular}{cl|cl}
$\Delta p_{i}^{(1)}$ & & $\Delta p_{i}^{(2)}$ & \\ \hline
$0$ & $(i\geq i_{5})$ & $0$ & $(i\geq i_{5}^{eff})$ 
\end{tabular}
\end{center}

\end{itemize}

\item[(VC-8)] $b= \framebox{$8$}$.

The changes of vacancy numbers $\Delta p_{i}^{(1)}$ are obtained by overwriting 
\[
-3\chi (i\geq 3i_{5})-2\chi (i=3i_{5}-1)-\chi (i=3i_{5}-2)
\]
on $\Delta p_{i}^{(1)}$ in \textbf{(VC-6)} and $\Delta p_{i}^{(2)}$ are obtained by overwriting $2\chi (i\geq i_{5})$ on $\Delta p_{i}^{(2)}$ in \textbf{(VC-6)}.

\begin{itemize}
\item[(1)] $i_{4}^{eff}<i_{5}$.

\begin{center}
\begin{tabular}{cl|cl}
$\Delta p_{i}^{(1)}$ & & $\Delta p_{i}^{(2)}$ & \\ \hline
$0$ & $(i\geq 3i_{5})$ & $0$ & $(i\geq i_{5})$ \\
$1$ & $(i=3i_{5}-1)$ & & \\
$2$ & $(i=3i_{5}-2)$ & & 
\end{tabular}
\end{center}

\item[(2)] $i_{2}=i_{3}=i_{4}$ and $i_{4}^{eff}=i_{5}$.

In this case, the $i_{4}$-string is type-0/I.

\begin{center}
\begin{tabular}{cccl|cl}
$\Delta p_{i}^{(1)}$ & $i_{1}=i_{5}$ & $i_{1}<i_{5}$ & & $\Delta p_{i}^{(2)}$ & \\ \hline
& $0$ & $0$  & $(i\geq 3i_{5})$ & $0$ & $(i\geq i_{5})$ \\
& $0$ & $-1$ & $(i=3i_{5}-1)$ & & \\
& $0$ & $-2$ & $(i=3i_{5}-2)$ & & 
\end{tabular}
\end{center}

\item[(3)] $i_{2}<i_{3}=i_{4}$ and $i_{4}^{eff}=i_{5}$.

\begin{center}
\begin{tabular}{ccccl|cl}
$\Delta p_{i}^{(1)}$ & $i_{4}$:0 & $i_{4}$:I & $i_{4}$:II & & $\Delta p_{i}^{(2)}$ & \\ \hline
& $0$  & $0$ & $0$ & $(i\geq 3i_{5})$ & $0$ & $(i\geq i_{5})$ \\
& $-1$ & $1$ & $1$ & $(i=3i_{5}-1)$ & & \\
& $-2$ & $0$ & $2$ & $(i=3i_{5}-2)$ & & 
\end{tabular}
\end{center}

\item[(4)] $i_{3}<i_{4}$ and $i_{4}^{eff}=i_{5}$.

\begin{center}
\begin{tabular}{ccccl|cl}
$\Delta p_{i}^{(1)}$ & $i_{4}$:0 & $i_{4}$:I & $i_{4}$:II & & $\Delta p_{i}^{(2)}$ & \\ \hline
& $0$  & $0$ & $0$ & $(i\geq 3i_{5})$ & $0$ & $(i\geq i_{5})$ \\
& $-1$ & $1$ & $1$ & $(i=3i_{5}-1)$ & & \\
& $0$  & $0$ & $2$ & $(i=3i_{5}-2)$ & & 
\end{tabular}
\end{center}
Note that $i_{3}\leq 3i_{5}-2$ when $i_{4}=3i_{5}$ (see \textbf{(BM-4)}).

\end{itemize}

\item[(VC-9)] $b= \framebox{$9$}$.

\begin{itemize}
\item[(1)] $i_{2}=i_{3}=i_{5}=i_{6}$.

In this case, the $i_{6}$-string is type-II and $i_{6}^{eff}=i_{4}+1$.
The changes of vacancy numbers $\Delta p_{i}^{(1)}$ are obtained by overwriting 
\begin{align*}
&8\chi (i\geq i_{6})+6\chi (i=i_{6}-1)+4\chi (i=i_{6}-2)+2\chi (i=i_{6}-3) \\
&-3\chi (i\geq 3i_{4})-2\chi (i=3i_{4}-1)-\chi (i=3i_{4}-2) \\
=& 5\chi (i\geq i_{6})+3\chi (i=i_{6}-1)+2\chi (i=i_{6}-2)+\chi (i=i_{6}-3)
\end{align*}
on $\Delta p_{i}^{(1)}$ in \textbf{(VC-2)} and $\Delta p_{i}^{(2)}$ are obtained by overwriting 
\begin{align*}
&-4\chi (3i\geq i_{6})-3\chi (3i=i_{6}-1)-2\chi (3i=i_{6}-2) \\
&-\chi (3i=i_{6}-3)+2\chi (i\geq i_{4}) =-2\chi (i\geq i_{4}+1)-\chi (i=i_{4})
\end{align*}
on $\Delta p_{i}^{(2)}$ in \textbf{(VC-2)}.

\begin{center}
\begin{tabular}{cccl|cl}
$\Delta p_{i}^{(1)}$ & $i_{1}=i_{4}$ & $i_{1}<i_{4}$ & & $\Delta p_{i}^{(2)}$ & \\ \hline
& $2$ & $2$ & $(i\geq i_{6})$ & $-1$ & $(i\geq i_{4}+1)$ \\
& $0$ & $0$ & $(i=i_{6}-1)$ & $0$ & $(i=i_{4})$ \\
& $0$ & $-1$ & $(i=i_{6}-2)$ & & \\
& $0$ & $-2$ & $(i=i_{6}-3)$ & &
\end{tabular}
\end{center}

\item[(2)] $i_{2}<i_{3}=i_{5}=i_{6}$.

In this case, the $i_{6}$-string is type-II and $i_{6}^{eff}=i_{4}+1$.
The changes of vacancy numbers $\Delta p_{i}^{(1)}$ are obtained by overwriting 
\begin{align*}
&6\chi (i\geq i_{6})+4\chi (i=i_{6}-1)+2\chi (i=i_{6}-2) \\
&-3\chi (i\geq 3i_{4})-2\chi (i=3i_{4}-1)-\chi (i=3i_{4}-2) \\
=& 3\chi (i\geq i_{6})+\chi (i=i_{6}-1)-\chi (i=i_{6}-3)
\end{align*}
on $\Delta p_{i}^{(1)}$ in \textbf{(VC-3)} and $\Delta p_{i}^{(2)}$ are obtained by overwriting 
\begin{align*}
&-3\chi (3i\geq i_{6})-2\chi (3i=i_{6}-1)-\chi (3i=i_{6}-2) \\
&+2\chi (i\geq i_{4}) =-\chi (i\geq i_{4}+1)
\end{align*}
on $\Delta p_{i}^{(2)}$ in \textbf{(VC-3)}.
The changes of vacancy numbers coincide with those in case (1).

\item[(3)] $i_{3}<i_{5}=i_{6}$.

The changes of vacancy numbers $\Delta p_{i}^{(1)}$ are obtained by overwriting $4\chi (i\geq i_{6})+2\chi (i=i_{6}-1)$ on $\Delta p_{i}^{(1)}$ in \textbf{(VC-5)} and $\Delta p_{i}^{(2)}$ are obtained by overwriting $-2\chi (3i\geq i_{6})-\chi (3i=i_{6}-1)$ on $\Delta p_{i}^{(2)}$ in \textbf{(VC-5)}.

\begin{center}
\begin{tabular}{cl|cccl}
$\Delta p_{i}^{(1)}$ & & $\Delta p_{i}^{(2)}$ & $i_{6}$:0/I & $i_{6}$:II & \\ \hline
$2$ & $(i\geq i_{6})$ & & $-1$ & $-1$ & $(i\geq i_{6}^{eff})$ \\
$0$ & $(i=i_{6}-1)$   & & $1$ & $0$ & $(i=i_{6}^{eff}-1)$
\end{tabular}
\end{center}

\item[(4)] $i_{5}<i_{6}$.

The changes of vacancy numbers $\Delta p_{i}^{(1)}$ are obtained by overwriting $2\chi (i\geq i_{6})$ on 
$\Delta p_{i}^{(1)}$ in \textbf{(VC-7)} and $\Delta p_{i}^{(2)}$ are obtained by overwriting $-\chi (3i\geq i_{6})$ on $\Delta p_{i}^{(2)}$ in \textbf{(VC-7)}.

\begin{center}
\begin{tabular}{cl|cl}
$\Delta p_{i}^{(1)}$ & & $\Delta p_{i}^{(2)}$ & \\ \hline
$2$ & $(i\geq i_{6})$ & $-1$ & $(i\geq i_{6}^{eff})$ 
\end{tabular}
\end{center}

\end{itemize}

\item[(VC-10)] $b= \framebox{$10$}$.

\begin{itemize}
\item[(1)] $i_{5}=i_{6}$.

In this case $i_{6}\geq i_{4}^{eff}+1$.
The changes of vacancy numbers $\Delta p_{i}^{(1)}$ are obtained by overwriting 
\[
-6\chi (i\geq 3i_{6})-5\chi (i=3i_{6}-1)-\cdots -\chi (i=3i_{6}-5)
\]
on $\Delta p_{i}^{(1)}$ in \textbf{(VC-6)} and $\Delta p_{i}^{(2)}$ are obtained by overwriting $4\chi (i\geq i_{6})+2\chi (i=i_{6}-1)$ on $\Delta p_{i}^{(2)}$ in \textbf{(VC-6)}.

\begin{center}
\begin{tabular}{cl|cl}
$\Delta p_{i}^{(1)}$ & & $\Delta p_{i}^{(2)}$ & \\ \hline
$-3$ & $(i\geq 3i_{6})$ & $2$ & $(i\geq i_{6})$ \\
$-2$ & $(i=3i_{6}-1)$ & $0$ & $(i=i_{6}-1)$ \\
$-1$ & $(i=3i_{6}-2)$ & & \\
$0$  & $(i=3i_{6}-3)$ & &
\end{tabular}
\end{center}
We omit $\Delta p_{i}^{(1)}$ $(3i_{6}-5\leq i\leq 3i_{6}-4)$.

\item[(2)] $i_{5}<i_{6}$.

The changes of vacancy numbers $\Delta p_{i}^{(1)}$ are obtained by overwriting 
\[
-3\chi (i\geq 3i_{6})-2\chi (i=3i_{6}-1)-\chi (i=3i_{6}-2)
\]
on $\Delta p_{i}^{(1)}$ in \textbf{(VC-8)} and $\Delta p_{i}^{(2)}$ are obtained by overwriting $2\chi (i\geq i_{6})$ on $\Delta p_{i}^{(2)}$ in \textbf{(VC-8)}.

\begin{center}
\begin{tabular}{cl|cl}
$\Delta p_{i}^{(1)}$ & & $\Delta p_{i}^{(2)}$ & \\ \hline
$-3$ & $(i\geq 3i_{6})$ & $2$ & $(i\geq i_{6})$ \\
$-2$ & $(i=3i_{6}-1)$ & $0$ & $(i=i_{6}-1)$ \\
$-1$ & $(i=3i_{6}-2)$ & & 
\end{tabular}
\end{center}

\end{itemize}

\item[(VC-11)] $b= \framebox{$11$}$.

\begin{itemize}
\item[(1)] The $i_{7}$-string is in $\nu^{(1)}$.

In this case [7] is marked in the rightmost box of the $i_{7}$-string.
The changes of vacancy numbers $\Delta p_{i}^{(1)}$ are obtained by overwriting $2\chi (i\geq i_{7})$ on 
$\Delta p_{i}^{(1)}$ in \textbf{(VC-10)} and $\Delta p_{i}^{(2)}$ are obtained by overwriting $-\chi (3i\geq i_{7})$ on $\Delta p_{i}^{(2)}$ in \textbf{(VC-10)}.

\begin{itemize}
\item[(a)] $i_{7}^{eff}=i_{6}$.

\begin{center}
\begin{tabular}{ccccl|cl}
$\Delta p_{i}^{(1)}$ & $i_{7}$:0 & $i_{7}$:I & $i_{7}$:II & & $\Delta p_{i}^{(2)}$ & \\ \hline
& $-1$ & $-1$ & $-1$ & $(i\geq 3i_{6})$ & $1$ & $(i\geq i_{6})$ \\
&         & $0$ & $0$ & $(i\geq 3i_{6}-1)$ & & \\
&         &           & $1$ & $(i\geq 3i_{6}-2)$ & &
\end{tabular}
\end{center}

\item[(b)] $i_{7}^{eff}>i_{6}$.
\begin{center}
\begin{tabular}{cl|cl}
$\Delta p_{i}^{(1)}$ & & $\Delta p_{i}^{(2)}$ & \\ \hline
$-1$ & $(i\geq i_{7})$ & $1$ & $(i\geq i_{7}^{eff})$
\end{tabular}
\end{center}

\end{itemize}

\item[(2)] The $i_{7}$-string is in $\nu^{(2)}$ and $i_{4}=i_{7}$.

We have

\begin{center}
\begin{tabular}{cl|cl}
$\Delta p_{i}^{(1)}$ & & $\Delta p_{i}^{(2)}$ & \\ \hline
$-1$ & $(i\geq 3i_{7})$ & $1$ & $(i\geq i_{7})$ \\
$0$ & $(i=3i_{7}-1)$ & $0$ & $(i=i_{7}-1)$ \\
$1$ & $(i=3i_{7}-2)$ & & \\
$0$   & $(i=3i_{7}-3)$ & &
\end{tabular}
\end{center}
for all possible box marking patterns of $\nu^{(1)}$, 

\setlength{\unitlength}{10pt}
\begin{center}
\begin{picture}(18,3)
\put(0,0){\line(1,0){5}}
\put(0,1){\line(1,0){5}}
\put(1,0){\line(0,1){1}}
\put(2,0){\line(0,1){1}}
\put(3,0){\line(0,1){1}}
\put(4,0){\line(0,1){1}}
\put(5,0){\line(0,1){1}}
\put(3,1){\makebox(2,1){$\downarrow$}}
\put(1,0){\makebox(1,1){{\scriptsize $[6]$}}}
\put(2,0){\makebox(1,1){{\scriptsize $[5]$}}}
\put(3,0){\makebox(1,1){{\scriptsize $[3]$}}}
\put(4,0){\makebox(1,1){{\scriptsize $[2]$}}}
\put(0,2){\makebox(1,1){{\small (a)}}}

\put(8,0){\line(1,0){4}}
\put(8,1){\line(1,0){4}}
\put(9,0){\line(0,1){1}}
\put(10,0){\line(0,1){1}}
\put(11,0){\line(0,1){1}}
\put(12,0){\line(0,1){1}}
\put(10,1){\makebox(2,1){$\downarrow$}}
\put(9,0){\makebox(1,1){{\scriptsize $[6]$}}}
\put(10,0){\makebox(1,1){{\scriptsize $[5]$}}}
\put(11,0){\makebox(1,1){{\scriptsize $[3]$}}}
\put(8,2){\makebox(1,1){{\small (b)}}}

\put(15,0){\line(1,0){3}}
\put(15,1){\line(1,0){3}}
\put(16,0){\line(0,1){1}}
\put(17,0){\line(0,1){1}}
\put(18,0){\line(0,1){1}}
\put(16,1){\makebox(2,1){$\downarrow$}}
\put(16,0){\makebox(1,1){{\scriptsize $[6]$}}}
\put(17,0){\makebox(1,1){{\scriptsize $[5]$}}}
\put(15,2){\makebox(1,1){{\small (c)}}}
\end{picture}
\end{center}
where the $i_{6}$-string is always type-II.
We omit the unmarked strings.
In case (a), $\Delta p_{i}^{(1)}$ $(3i_{7}-5 \leq i\leq 3i_{7}-4)$ are obtained by overwriting $2\chi (i=3i_{7}-4)+\chi (i=3i_{7}-5)$ on $\Delta p_{i}^{(1)}$ in \textbf{(VC-2)} and $\Delta p_{i}^{(1)}$ $(i\leq 3i_{7}-6)$ and $\Delta p_{i}^{(2)}$ $(i\leq i_{7}-2)$ are the same as in \textbf{(VC-2)}.
In case (b), $\Delta p_{i}^{(1)}$ $(3i_{7}-5 \leq i\leq 3i_{7}-4)$ are obtained by overwriting $-\chi (i=3i_{7}-5)$ on $\Delta p_{i}^{(1)}$ in \textbf{(VC-3)} and $\Delta p_{i}^{(1)}$ $(i\leq 3i_{7}-6)$ and $\Delta p_{i}^{(2)}$ $(i\leq i_{7}-2)$ are the same as in \textbf{(VC-3)}.
In case (c), $\Delta p_{i}^{(1)}$ $(3i_{7}-5 \leq i\leq 3i_{7}-4)$ are obtained by overwriting $-2\chi (i=3i_{7}-4)-\chi (i=3i_{7}-5)$ on $\Delta p_{i}^{(1)}$ in \textbf{(VC-4)} and $\Delta p_{i}^{(1)}$ $(i\leq 3i_{7}-6)$ and $\Delta p_{i}^{(2)}$ $(i\leq i_{7}-2)$ are the same as in \textbf{(VC-4)}.

\item[(3)] The $i_{7}$-string is in $\nu^{(2)}$ and $i_{4}<i_{7}$.

\begin{center}
\begin{tabular}{cl|cl}
$\Delta p_{i}^{(1)}$ & & $\Delta p_{i}^{(2)}$ & \\ \hline
$-1$ & $(i\geq 3i_{7})$ & $1$ & $(i\geq i_{7})$ \\
$0$ & $(i=3i_{7}-1)$ & & \\
$1$ & $(i=3i_{7}-2)$ & &
\end{tabular}
\end{center}
The changes of vacancy numbers $\Delta p_{i}^{(1)}$ $(i\leq 3i_{7}-3)$ and $\Delta p_{i}^{(2)}$ $(i\leq i_{7}-1)$ are the same as in \textbf{(VC-9)}.

\end{itemize}

\item[(VC-12)] $b= \framebox{$12$}$.

\begin{itemize}
\item[(1)] The selected $i_{7}$-string is in $\nu^{(1)}$ and $i_{4}=i_{7}=i_{8}$.

In this case $i_{5}=i_{6}$.
Possible box marking patterns in $\nu^{(1)}$ are following two (see \textbf{(BM-8)}).

\setlength{\unitlength}{10pt}
\begin{center}
\begin{picture}(14,3)
\put(0,0){\line(1,0){6}}
\put(0,1){\line(1,0){6}}
\put(1,0){\line(0,1){1}}
\put(2,0){\line(0,1){1}}
\put(3,0){\line(0,1){1}}
\put(4,0){\line(0,1){1}}
\put(5,0){\line(0,1){1}}
\put(6,0){\line(0,1){1}}
\put(3,1){\makebox(2,1){$\downarrow$}}
\put(1,0){\makebox(1,1){{\scriptsize $[8]$}}}
\put(2,0){\makebox(1,1){{\scriptsize $[7]$}}}
\put(3,0){\makebox(1,1){{\scriptsize $[4]$}}}
\put(4,0){\makebox(1,1){{\scriptsize $[3]$}}}
\put(5,0){\makebox(1,1){{\scriptsize $[2]$}}}
\put(0,2){\makebox(1,1){{\small (a)}}}

\put(9,0){\line(1,0){5}}
\put(9,1){\line(1,0){5}}
\put(10,0){\line(0,1){1}}
\put(11,0){\line(0,1){1}}
\put(12,0){\line(0,1){1}}
\put(13,0){\line(0,1){1}}
\put(14,0){\line(0,1){1}}
\put(11,1){\makebox(2,1){$\downarrow$}}
\put(10,0){\makebox(1,1){{\scriptsize $[8]$}}}
\put(11,0){\makebox(1,1){{\scriptsize $[7]$}}}
\put(12,0){\makebox(1,1){{\scriptsize $[4]$}}}
\put(13,0){\makebox(1,1){{\scriptsize $[3]$}}}
\put(9,2){\makebox(1,1){{\small (b)}}}

\end{picture}
\end{center}

\begin{center}
\begin{tabular}{cccl|cl}
$\Delta p_{i}^{(1)}$ & $i_{6}=i_{1}+1$ & $i_{6}>i_{1}+1$ & & $\Delta p_{i}^{(2)}$ & \\ \hline
& $1$ & $1$ & $(i\geq 3i_{6})$ & $0$ & $(i\geq i_{6}-1)$ \\
& $2$ & $2$ & $(i=3i_{6}-1)$ & & \\
& $1$ & $1$ & $(i=3i_{6}-2)$ & & \\
& $0$ & $0$ & $(i=3i_{6}-3)$ & & \\
& $0$ & $-1$ & $(i=3i_{6}-4)$ & & \\
& $0$ & $-2$ & $(i=3i_{6}-5)$ & &
\end{tabular}
\end{center}
The changes of vacancy numbers $\Delta p_{i}^{(1)}$ $(i\leq 3i_{6}-6)$ and $\Delta p_{i}^{(2)}$ $(i\leq i_{6}-2)$ are the same as in \textbf{(VC-2)} (case (a)) and \textbf{(VC-3)} (case (b)).

\item[(2)] The selected $i_{7}$-string is in $\nu^{(1)}$ and $i_{4}<i_{7}=i_{8}$.

\begin{itemize}
\item[(a)] $i_{8}^{eff}=i_{6}$.

\begin{center}
\begin{tabular}{ccccl}
$\Delta p_{i}^{(1)}$ & $i_{8}$:0 & $i_{8}$:I & $i_{8}$:II &  \\ \hline
& $1$ & $1$ & $1$ & $(i\geq 3i_{6})$ \\
& $0$ & $2$ & $2$ & $(i=3i_{6}-1)$ \\
&       & $1$ & $3$ & $(i=3i_{6}-2)$ \\
&       &       & $2$ & $(i=3i_{6}-3)$  
\end{tabular}
\end{center}

\begin{center}
\begin{tabular}{cccl}
$\Delta p_{i}^{(2)}$ & $i_{8}$:0/I & $i_{8}$:II & \\ \hline
& $0$ & $0$ & $(i\geq i_{6})$ \\
& $0$ & $-1$ & $(i=i_{6}-1)$
\end{tabular}
\end{center}

\item[(b)] $i_{8}^{eff}>i_{6}$.

\begin{center}
\begin{tabular}{cl|cccl}
$\Delta p_{i}^{(1)}$ & & $\Delta p_{i}^{(2)}$ & $i_{8}$:0/I & $i_{8}$:II & \\ \hline
$1$ & $(i\geq i_{8})$ & & $0$ & $0$ & $(i\geq i_{8}^{eff})$ \\
$2$ & $(i=i_{8}-1)$   & & $0$ & $-1$ & $(i=i_{8}^{eff}-1)$
\end{tabular}
\end{center}

\end{itemize}
The changes of vacancy numbers $\Delta p_{i}^{(1)}$ $(i\leq i_{8}-2)$ and $\Delta p_{i}^{(2)}$ $(i\leq i_{8}^{eff}-2)$ are the same as in \textbf{(VC-10)}.

\item[(3)] The selected $i_{7}$-string is in $\nu^{(1)}$ and $i_{7}<i_{8}$.

\begin{center}
\begin{tabular}{cl|cl}
$\Delta p_{i}^{(1)}$ & & $\Delta p_{i}^{(2)}$ & \\ \hline
$1$ & $(i\geq i_{8})$ & $0$ & $(i\geq i_{8}^{eff})$ 
\end{tabular}
\end{center}
The changes of vacancy numbers $\Delta p_{i}^{(1)}$ $(i\leq i_{8}-1)$ and $\Delta p_{i}^{(2)}$ $(i\leq i_{8}^{eff}-1)$ are the same as in \textbf{(VC-11)} (1).

\item[(4)] The selected $i_{7}$-string is in $\nu^{(2)}$ and $i_{4}=i_{7}$.

In this case $i_{8}^{eff}=i_{7}$.
The only possible box marking in $\nu^{(1)}$ is

\setlength{\unitlength}{10pt}
\begin{center}
\begin{picture}(4,2)
\put(0,0){\line(1,0){4}}
\put(0,1){\line(1,0){4}}
\put(1,0){\line(0,1){1}}
\put(2,0){\line(0,1){1}}
\put(3,0){\line(0,1){1}}
\put(4,0){\line(0,1){1}}
\put(1,1){\makebox(2,1){$\downarrow$}}
\put(1,0){\makebox(1,1){{\scriptsize $[8]$}}}
\put(2,0){\makebox(1,1){{\scriptsize $[6]$}}}
\put(3,0){\makebox(1,1){{\scriptsize $[5]$}}}

\end{picture}
\end{center}

\begin{center}
\begin{tabular}{cccl|cl}
$\Delta p_{i}^{(1)}$ & $i_{7}=i_{1}+1$ & $i_{7}>i_{1}+1$ & & $\Delta p_{i}^{(2)}$ & \\ \hline
& $1$ & $1$ & $(i\geq 3i_{7})$ & $0$ & $(i\geq i_{7}-1)$ \\
& $2$ & $2$ & $(i=3i_{7}-1)$ & & \\
& $1$ & $1$ & $(i=3i_{7}-2)$ & & \\
& $0$ & $0$ & $(i=3i_{7}-3)$ & & \\
& $0$ & $-1$ & $(i=3i_{7}-4)$ & & \\
& $0$ & $0$ & $(i=3i_{7}-5)$ & &
\end{tabular}
\end{center}
The changes of vacancy numbers $\Delta p_{i}^{(1)}$ $(i\leq 3i_{7}-6)$ and $\Delta p_{i}^{(2)}$ $(i\leq i_{7}-2)$ are the same as in \textbf{(VC-4)}.

\item[(5)] The selected $i_{7}$-string is in $\nu^{(2)}$ and $i_{4}<i_{7}$.

\begin{itemize}
\item[(a)] The box marking in $\nu^{(1)}$ is 

\setlength{\unitlength}{10pt}
\begin{center}
\begin{picture}(4,2)
\put(0,0){\line(1,0){4}}
\put(0,1){\line(1,0){4}}
\put(1,0){\line(0,1){1}}
\put(2,0){\line(0,1){1}}
\put(3,0){\line(0,1){1}}
\put(4,0){\line(0,1){1}}
\put(1,1){\makebox(2,1){$\downarrow$}}
\put(1,0){\makebox(1,1){{\scriptsize $[8]$}}}
\put(2,0){\makebox(1,1){{\scriptsize $[6]$}}}
\put(3,0){\makebox(1,1){{\scriptsize $[5]$}}}

\end{picture}
\end{center}

\begin{center}
\begin{tabular}{cl|cl}
$\Delta p_{i}^{(1)}$ & & $\Delta p_{i}^{(2)}$ & \\ \hline
$1$ & $(i\geq 3i_{7})$ & $0$ & $(i\geq i_{7}-1)$ \\
$2$ & $(i=3i_{7}-1)$ & & \\
$1$ & $(i=3i_{7}-2)$ & & \\
$0$ & $(i=3i_{7}-3)$ & & 
\end{tabular}
\end{center}
The changes of vacancy numbers $\Delta p_{i}^{(1)}$ $(i\leq 3i_{7}-4)$ and $\Delta p_{i}^{(2)}$ $(i\leq i_{7}-2)$ are the same as in \textbf{(VC-5)}.

\item[(b)] The box marking in $\nu^{(1)}$ is 

\setlength{\unitlength}{10pt}
\begin{center}
\begin{picture}(4,2)
\put(0,0){\line(1,0){3}}
\put(0,1){\line(1,0){3}}
\put(1,0){\line(0,1){1}}
\put(2,0){\line(0,1){1}}
\put(3,0){\line(0,1){1}}
\put(0,1){\makebox(2,1){$\downarrow$}}
\put(1,0){\makebox(1,1){{\scriptsize $[8]$}}}
\put(2,0){\makebox(1,1){{\scriptsize $[6]$}}}

\end{picture}
\end{center}

\begin{center}
\begin{tabular}{cl|cl}
$\Delta p_{i}^{(1)}$ & & $\Delta p_{i}^{(2)}$ & \\ \hline
$1$ & $(i\geq 3i_{7})$ & $-$ & \\
$2$ & $(i=3i_{7}-1)$ & & \\
$1$ & $(i=3i_{7}-2)$ & & 
\end{tabular}
\end{center}
The changes of vacancy numbers $\Delta p_{i}^{(1)}$ $(i\leq 3i_{7}-3)$ and $\Delta p_{i}^{(2)}$ are the same as in \textbf{(VC-7)}.

\item[(c)] $i_{8}^{eff}>i_{7}$.

\begin{center}
\begin{tabular}{cl|cl}
$\Delta p_{i}^{(1)}$ & & $\Delta p_{i}^{(2)}$ & \\ \hline
$1$ & $(i\geq i_{8})$ & $0$ & $(i\geq i_{8}^{eff})$ 
\end{tabular}
\end{center}
The changes of vacancy numbers $\Delta p_{i}^{(1)}$ $(i\leq i_{8}-1)$ and $\Delta p_{i}^{(2)}$ $(i\leq i_{8}^{eff}-1)$ are the same as in \textbf{(VC-11)} (2) or (3).

\end{itemize}

\end{itemize}

\item[(VC-13)] $b= \framebox{$13$}$.

\begin{itemize}
\item[(1)] The selected $i_{7}$-string is in $\nu^{(2)}$ with $i_{4}=i_{7}$.

The only possible box marking of the selected $i_{9}$-string is

\setlength{\unitlength}{10pt}
\begin{center}
\begin{picture}(5,2)
\put(0,0){\line(1,0){5}}
\put(0,1){\line(1,0){5}}
\put(1,0){\line(0,1){1}}
\put(2,0){\line(0,1){1}}
\put(3,0){\line(0,1){1}}
\put(4,0){\line(0,1){1}}
\put(5,0){\line(0,1){1}}
\put(1,1){\makebox(2,1){$\downarrow$}}
\put(1,0){\makebox(1,1){{\scriptsize $[9]$}}}
\put(2,0){\makebox(1,1){{\scriptsize $[8]$}}}
\put(3,0){\makebox(1,1){{\scriptsize $[6]$}}}
\put(4,0){\makebox(1,1){{\scriptsize $[5]$}}}

\end{picture}
\end{center}

\begin{center}
\begin{tabular}{cccl|cl}
$\Delta p_{i}^{(1)}$ & $i_{7}=i_{1}+1$ & $i_{7}>i_{1}+1$ & & $\Delta p_{i}^{(2)}$ & \\ \hline
& $3$ & $3$  & $(i\geq i_{9})$ & $-1$ & $(i\geq i_{7})$ \\
& $2$ & $2$  & $(i=i_{9}-1)$ & $0$ & $(i=i_{7}-1)$ \\
& $1$ & $1$  & $(i=i_{9}-2)$ & & \\
& $0$ & $0$  & $(i=i_{9}-3)$ & & \\
& $0$ & $-1$ & $(i=i_{9}-4)$ & & \\
& $0$ & $0$  & $(i=i_{9}-5)$ & &
\end{tabular}
\end{center}
The changes of vacancy numbers $\Delta p_{i}^{(1)}$ $(i\leq i_{9}-6)$ and $\Delta p_{i}^{(2)}$ $(i\leq i_{7}-2)$ are the same as in \textbf{(VC-4)}.

\item[(2)] The selected $i_{7}$-string is in $\nu^{(2)}$ with $i_{4}<i_{7}$ and the box marking in $\nu^{(1)}$ is either

\setlength{\unitlength}{10pt}
\begin{center}
\begin{picture}(13,3)
\put(0,0){\line(1,0){5}}
\put(0,1){\line(1,0){5}}
\put(1,0){\line(0,1){1}}
\put(2,0){\line(0,1){1}}
\put(3,0){\line(0,1){1}}
\put(4,0){\line(0,1){1}}
\put(5,0){\line(0,1){1}}
\put(1,1){\makebox(2,1){$\downarrow$}}
\put(1,0){\makebox(1,1){{\scriptsize $[9]$}}}
\put(2,0){\makebox(1,1){{\scriptsize $[8]$}}}
\put(3,0){\makebox(1,1){{\scriptsize $[6]$}}}
\put(4,0){\makebox(1,1){{\scriptsize $[5]$}}}
\put(0,2){\makebox(1,1){{\small (a)}}}

\put(6,0){\makebox(2,1){$\text{or}$}}

\put(9,0){\line(1,0){4}}
\put(9,1){\line(1,0){4}}
\put(10,0){\line(0,1){1}}
\put(11,0){\line(0,1){1}}
\put(12,0){\line(0,1){1}}
\put(13,0){\line(0,1){1}}
\put(9,1){\makebox(2,1){$\downarrow$}}
\put(10,0){\makebox(1,1){{\scriptsize $[9]$}}}
\put(11,0){\makebox(1,1){{\scriptsize $[8]$}}}
\put(12,0){\makebox(1,1){{\scriptsize $[6]$}}}
\put(9,2){\makebox(1,1){{\small (b)}}}
\end{picture}
\end{center}

\begin{center}
\begin{tabular}{cl|cl}
$\Delta p_{i}^{(1)}$ & & $\Delta p_{i}^{(2)}$ & \\ \hline
$3$ & $(i\geq i_{9})$ & $-1$ & $(i\geq i_{7})$ \\
$2$ & $(i=i_{9}-1)$ & $0$ & $(i=i_{7}-1)$ \\
$1$ & $(i=i_{9}-2)$ & & \\
$0$ & $(i=i_{9}-3)$ & & 
\end{tabular}
\end{center}
The changes of vacancy numbers $\Delta p_{i}^{(1)}$ $(i\leq i_{9}-4)$ and $\Delta p_{i}^{(2)}$ $(i\leq i_{7}-2)$ are the same as in \textbf{(VC-5)} (case (a)) and \textbf{(VC-7)} (case (b)).

\item[(3)] The selected $i_{6}$-string is in $\nu^{(2)}$ with $i_{5}=i_{6}$ and the box marking in $\nu^{(1)}$ is either

\setlength{\unitlength}{10pt}
\begin{center}
\begin{picture}(17,3)
\put(0,0){\line(1,0){7}}
\put(0,1){\line(1,0){7}}
\put(1,0){\line(0,1){1}}
\put(2,0){\line(0,1){1}}
\put(3,0){\line(0,1){1}}
\put(4,0){\line(0,1){1}}
\put(5,0){\line(0,1){1}}
\put(6,0){\line(0,1){1}}
\put(7,0){\line(0,1){1}}
\put(3,1){\makebox(2,1){$\downarrow$}}
\put(1,0){\makebox(1,1){{\scriptsize $[9]$}}}
\put(2,0){\makebox(1,1){{\scriptsize $[8]$}}}
\put(3,0){\makebox(1,1){{\scriptsize $[7]$}}}
\put(4,0){\makebox(1,1){{\scriptsize $[4]$}}}
\put(5,0){\makebox(1,1){{\scriptsize $[3]$}}}
\put(6,0){\makebox(1,1){{\scriptsize $[2]$}}}
\put(0,2){\makebox(1,1){{\small (a)}}}

\put(8,0){\makebox(2,1){$\text{or}$}}

\put(11,0){\line(1,0){6}}
\put(11,1){\line(1,0){6}}
\put(12,0){\line(0,1){1}}
\put(13,0){\line(0,1){1}}
\put(14,0){\line(0,1){1}}
\put(15,0){\line(0,1){1}}
\put(16,0){\line(0,1){1}}
\put(17,0){\line(0,1){1}}
\put(13,1){\makebox(2,1){$\downarrow$}}
\put(12,0){\makebox(1,1){{\scriptsize $[9]$}}}
\put(13,0){\makebox(1,1){{\scriptsize $[8]$}}}
\put(14,0){\makebox(1,1){{\scriptsize $[7]$}}}
\put(15,0){\makebox(1,1){{\scriptsize $[4]$}}}
\put(16,0){\makebox(1,1){{\scriptsize $[3]$}}}
\put(11,2){\makebox(1,1){{\small (b)}}}

\end{picture}
\end{center}

\begin{center}
\begin{tabular}{cccl|cl}
$\Delta p_{i}^{(1)}$ & $i_{6}=i_{1}+1$ & $i_{6}>i_{1}+1$ & & $\Delta p_{i}^{(2)}$ & \\ \hline
& $3$ & $3$  & $(i\geq i_{9})$ & $-1$ & $(i\geq i_{6})$ \\
& $2$ & $2$  & $(i=i_{9}-1)$ & $0$ & $(i=i_{6}-1)$ \\
& $1$ & $1$  & $(i=i_{9}-2)$ & & \\
& $0$ & $0$  & $(i=i_{9}-3)$ & & \\
& $0$ & $-1$ & $(i=i_{9}-4)$ & & \\
& $0$ & $-2$  & $(i=i_{9}-5)$ & &
\end{tabular}
\end{center}
The changes of vacancy numbers $\Delta p_{i}^{(1)}$ $(i\leq i_{9}-6)$ and $\Delta p_{i}^{(2)}$ $(i\leq i_{6}-2)$ are the same as in \textbf{(VC-2)} (case (a)) and \textbf{(VC-3)} (case (b)).

\item[(4)] The rightmost box of the $i_{9}$-string is marked by [7].

\begin{center}
\begin{tabular}{cccl|cl}
$\Delta p_{i}^{(1)}$ & $i_{9}^{eff}=i_{6}$ & $i_{9}^{eff}>i_{6}$ & & $\Delta p_{i}^{(2)}$ & \\ \hline
& $3$ & $3$  & $(i\geq i_{9})$ & $-1$ & $(i\geq i_{9}^{eff})$ \\
& $2$ & $1$  & $(i=i_{9}-1)$ & & \\
& $1$ & $-1$  & $(i=i_{9}-2)$ & &
\end{tabular}
\end{center}
The changes of vacancy numbers $\Delta p_{i}^{(1)}$ $(i\leq i_{9}-3)$ and $\Delta p_{i}^{(2)}$ $(i\leq i_{9}^{eff}-1)$ are the same as in \textbf{(VC-10)}.

\item[(5)] The rightmost box of the $i_{9}$-string is marked by [8].

\begin{center}
\begin{tabular}{cl|cccl}
$\Delta p_{i}^{(1)}$ & & $\Delta p_{i}^{(2)}$ & $i_{9}$:0/I & $i_{9}$:II & \\ \hline
$3$ & $(i\geq i_{9})$ & & $-1$ & $-1$ & $(i\geq i_{9}^{eff})$ \\
$1$ & $(i=i_{9}-1)$   & & $1$ & $0$ & $(i=i_{9}^{eff}-1)$
\end{tabular}
\end{center}
The changes of vacancy numbers $\Delta p_{i}^{(1)}$ $(i\leq i_{9}-2)$ and $\Delta p_{i}^{(2)}$ $(i\leq i_{9}^{eff}-2)$ are the same as in \textbf{(VC-11)}.

\item[(6)] The $i_{9}$-string is marked by [9] only.

\begin{center}
\begin{tabular}{cl|cl}
$\Delta p_{i}^{(1)}$ & & $\Delta p_{i}^{(2)}$ & \\ \hline
$3$ & $(i\geq i_{9})$ & $-1$ & $(i\geq i_{9}^{eff})$ 
\end{tabular}
\end{center}
The changes of vacancy numbers $\Delta p_{i}^{(1)}$ $(i\leq i_{9}-1)$ and $\Delta p_{i}^{(2)}$ $(i\leq i_{9}^{eff}-1)$ are the same as in \textbf{(VC-12)}.

\end{itemize}

\item[(VC-14)] $b= \framebox{$14$}$.

\begin{itemize}
\item[(1)] $i_{1}<i_{10}$.

The changes of vacancy numbers $\Delta p_{i}^{(1)}$ are obtained by overwriting 
\[
-3\chi (i\geq 3i_{10})-2\chi (i=3i_{10}-1)-\chi (i=3i_{10}-2)
\]
on $\Delta p_{i}^{(1)}$ in \textbf{(VC-13)} and $\Delta p_{i}^{(2)}$ are obtained by overwriting $2\chi (i\geq i_{10})$ on $\Delta p_{i}^{(2)}$ in \textbf{(VC-13)}.

\item[(2)] $i_{1}=i_{10}$.

The only possible box marking of $\nu^{(1)}$ (left) and $\nu^{(2)}$ (right) is 

\setlength{\unitlength}{10pt}
\begin{center}
\begin{picture}(14,4)
\put(0,2){\line(1,0){7}}
\put(0,3){\line(1,0){7}}
\put(1,2){\line(0,1){1}}
\put(2,2){\line(0,1){1}}
\put(3,2){\line(0,1){1}}
\put(4,2){\line(0,1){1}}
\put(5,2){\line(0,1){1}}
\put(6,2){\line(0,1){1}}
\put(7,2){\line(0,1){1}}
\put(3,3){\makebox(2,1){$\downarrow$}}
\put(6,0.9){\makebox(1,1){$\uparrow$}}
\put(1,2){\makebox(1,1){{\scriptsize $[9]$}}}
\put(2,2){\makebox(1,1){{\scriptsize $[8]$}}}
\put(3,2){\makebox(1,1){{\scriptsize $[7]$}}}
\put(4,2){\makebox(1,1){{\scriptsize $[4]$}}}
\put(5,2){\makebox(1,1){{\scriptsize $[3]$}}}
\put(6,2){\makebox(1,1){{\scriptsize $[2]$}}}
\put(5,0){\makebox(3,1){{\scriptsize $3i_{1}$}}}

\put(10,2){\line(1,0){3}}
\put(10,3){\line(1,0){3}}
\put(10,4){\line(1,0){3}}
\put(11,2){\line(0,1){2}}
\put(12,2){\line(0,1){2}}
\put(13,2){\line(0,1){2}}
\put(12,0.9){\makebox(1,1){$\uparrow$}}
\put(11,2){\makebox(1,1){{\scriptsize $[10]$}}}
\put(11,3){\makebox(1,1){{\scriptsize $[6]$}}}
\put(12,2){\makebox(1,1){{\scriptsize $[1]$}}}
\put(12,3){\makebox(1,1){{\scriptsize $[5]$}}}
\put(11,0){\makebox(3,1){{\scriptsize $i_{1}$}}}
\end{picture}
\end{center}

\begin{center}
\begin{tabular}{cl|cl}
$\Delta p_{i}^{(1)}$ & & $\Delta p_{i}^{(2)}$ & \\ \hline
0 & $(i\geq 1)$ & $1$ & $(i\geq i_{1})$ \\
& & 0 & $(i=i_{1}-1)$ \\
& & $-1$ & $(1\leq i\leq i_{1}-2)$ \\
\end{tabular}
\end{center}

\end{itemize} 

\item[(VC-$\emptyset$)] $b= \framebox{$\emptyset$}$.

\begin{center}
\begin{tabular}{cl|cl}
$\Delta p_{i}^{(1)}$ & & $\Delta p_{i}^{(2)}$ & \\ \hline
$0$ & $(i\geq 1)$ & $0$ & $(i\geq 1)$
\end{tabular}
\end{center}

\end{description}

\subsection{Inverse algorithm $\Tilde{\delta}_{\theta}$}
The reader may skip this subsection as we do not use it in the following sections.
This is provided just for completeness.

For a given RC $(\Tilde{\nu},\Tilde{J})$ and $b\in B$ the inverse algorithm $\Tilde{\delta}_{\theta}$ of $\delta_{\theta}$ is described in this subsection.
The inverse algorithm consists of adding boxes to strings in $\Tilde{\nu}$ and adjusting the numbers of riggings.
We omit the rule of adjusting the numbers of riggings as it is obvious by the algorithm of $\delta_{\theta}$. 
We follow the convention that riggings in strings of the same length are sorted in an decreasing order (from the top).
We assume that $\Tilde{\nu}^{(a)}$ has singular strings of length zero with zero vacancy (and therefore zero rigging).
In the following, strings of length $l$ under some conditions are selected and the $l$-string is referred to the selected string unless stated otherwise.
We denote by $[l]_{eff}$ the effective length of the (selected or unselected) string of length $l$ in $\Tilde{\nu}^{(1)}$.

\begin{flushleft}
Case 1. $b= \framebox{$\emptyset$}$.
\end{flushleft}
Add three boxes to the string of length zero in $\Tilde{\nu}^{(1)}$ and add one box to two strings of length zero in $\Tilde{\nu}^{(2)}$.

\begin{flushleft}
Case 2. $b= \framebox{$2$}$.
\end{flushleft}
Find the singular string of maximum length in $\Tilde{\nu}^{(2)}$ and add one box in this string.
This corresponds to the box marking in this string in $\nu^{(2)}$ by $[1]$.
The augmented string is called the box-added $i_{1}$-string.
This terminology is generalized in an obvious manner.

\begin{flushleft}
Case 3. $b= \framebox{$3$}$.
\end{flushleft}
Find the singular string of maximum length in $\Tilde{\nu}^{(1)}$, add one box in this string, and follow the algorithm of Case 2 such that the length of the box-added $i_{1}$-string does not exceed the effective length of the box-added $i_{2}$-string.

\begin{flushleft}
Case 4. $b= \framebox{$4$}$.
\end{flushleft}
Find the singular/q-singular string of maximum length $l^{(1)}$ in $\Tilde{\nu}^{(1)}$.
We choose the singular string when there exist singular and q-singular strings of the same length.

If the $l^{(1)}$-string is singular, then add two boxes to this string and follow the algorithm of Case 2.
If the $l^{(1)}$-string is q-singular, then add one box to this string and follow the algorithm of Case 3.

\begin{flushleft}
Case 5. $b= \framebox{$5$}$.
\end{flushleft}
Find the singular string of maximum length $l^{(2)}$ in $\Tilde{\nu}^{(2)}$ and add one box in this string.
Then find singular/q-singular string of maximum length $l^{(1)}$ in $\Tilde{\nu}^{(1)}$.
We choose the singular string when there exist singular and q-singular strings of the same length and do the following cases.

\begin{itemize}
\item[(1)] The $l^{(1)}$-string is singular of length $3l^{(2)}+1$.

Add one box to the $l^{(1)}$-string and follow the algorithm, of Case 3.
This corresponds to the following box marking in $\nu^{(1)}$.
\setlength{\unitlength}{10pt}
\begin{center}
\begin{picture}(3,2)
\put(0,0){\line(1,0){3}}
\put(0,1){\line(1,0){3}}
\put(1,0){\line(0,1){1}}
\put(2,0){\line(0,1){1}}
\put(3,0){\line(0,1){1}}
\put(2,0){\makebox(1,1){{\scriptsize $[3]$}}}
\put(0,1){\makebox(2,1){$\downarrow$}}
\end{picture}
\end{center}
with $i_{3}^{eff}=i_{4}$.

\item[(2)] The $l^{(1)}$-string is singular of length $3l^{(2)}$ or $3l^{(2)}-1$.

Add two boxes to the $l^{(1)}$-string and follow the algorithm of Case 2.
This corresponds to the following box marking in $\nu^{(1)}$
\setlength{\unitlength}{10pt}
\begin{center}
\begin{picture}(10,2)
\put(0,0){\line(1,0){3}}
\put(0,1){\line(1,0){3}}
\put(1,0){\line(0,1){1}}
\put(2,0){\line(0,1){1}}
\put(3,0){\line(0,1){1}}
\put(0,1){\makebox(2,1){$\downarrow$}}
\put(1,0){\makebox(1,1){{\scriptsize $[3]$}}}
\put(2,0){\makebox(1,1){{\scriptsize $[2]$}}}

\put(4,0){\makebox(2,1){$\text{or}$}}

\put(7,0){\line(1,0){3}}
\put(7,1){\line(1,0){3}}
\put(8,0){\line(0,1){1}}
\put(9,0){\line(0,1){1}}
\put(10,0){\line(0,1){1}}
\put(8,1){\makebox(2,1){$\downarrow$}}
\put(8,0){\makebox(1,1){{\scriptsize $[3]$}}}
\put(9,0){\makebox(1,1){{\scriptsize $[2]$}}}

\end{picture}
\end{center}
with $i_{3}^{eff}=i_{4}$.

\item[(3)] The $l^{(1)}$-string is q-singular of length $3l^{(2)}$.

Add one box to the founded string and follow the algorithm of Case 3.
This corresponds to the following box marking in $\nu^{(1)}$
\setlength{\unitlength}{10pt}
\begin{center}
\begin{picture}(2,2)
\put(0,0){\line(1,0){2}}
\put(0,1){\line(1,0){2}}
\put(1,0){\line(0,1){1}}
\put(2,0){\line(0,1){1}}
\put(1,0){\makebox(1,1){{\scriptsize $[3]$}}}
\put(0,1){\makebox(2,1){$\downarrow$}}
\end{picture}
\end{center}
with $i_{3}^{eff}=i_{4}$.

\end{itemize}

If the box adding not successful, then follow the algorithm of Case 4.
This corresponds to the box marking in $\nu$ with $i_{3}^{eff}<i_{4}$.

\begin{flushleft}
Case 6. $b= \framebox{$6$}$.
\end{flushleft}
Find the singular/q-singular/qq-singular string of maximum length $l^{(1)}$ in $\Tilde{\nu}^{(1)}$.
We choose the singular string when there exists singular and q/qq-singular strings of the same length.
Similarly, we choose the q-singular string when there exists q-singular and qq-singular strings of the same length.
That is, we choose the string of the same length by the following preferential rule 
\begin{center}
\begin{tabular}{ccc}
singular & q-singular & qq-singular \\ \hline
1 & 2 & 3
\end{tabular}
\end{center}
In addition, we impose the following preferential rule for the $l^{(1)}$-string.
That is, if the $l^{(1)}$-string is qq-singular and there exists a singular string of length $l^{(1)}-1$ in $\Tilde{\nu}^{(1)}$, then reset the $l^{(1)}$-string to be the $(l^{(1)}-1)$-string.

The box adding goes as follows.
If the $l^{(1)}$-string is singular (resp. q-singular), then add three (resp. two) boxes to the $l^{(1)}$-string and follow the algorithm of Case 2 (resp. Case 3).
If the $l^{(1)}$ is qq-singular, then add one box to the $l^{(1)}$-string and follow the algorithm of Case 4.

\begin{flushleft}
Case 7. $b= \framebox{$7$}$.
\end{flushleft}
Find the singular string of maximum length $l^{(1)}$ in $\Tilde{\nu}^{(1)}$ and do as follows according to the type of the $l^{(1)}$-string.

\begin{itemize}
\item[(1)] type-0. 

Find the singular string of length $[l^{(1)}]_{eff}$ in $\Tilde{\nu}^{(2)}$.
If such a string exists, then add three boxes to the $l^{(1)}$-string in $\Tilde{\nu}^{(1)}$ and one box to the founded string in $\Tilde{\nu}^{(2)}$ and follow the algorithm of Case 2.
This corresponds to the following box marking in $\nu^{(1)}$
\setlength{\unitlength}{10pt}
\begin{center}
\begin{picture}(5,2)
\put(0,0){\line(1,0){4}}
\put(0,1){\line(1,0){4}}
\put(1,0){\line(0,1){1}}
\put(2,0){\line(0,1){1}}
\put(3,0){\line(0,1){1}}
\put(4,0){\line(0,1){1}}
\put(1,0){\makebox(1,1){{\scriptsize $[5]$}}}
\put(2,0){\makebox(1,1){{\scriptsize $[3]$}}}
\put(3,0){\makebox(1,1){{\scriptsize $[2]$}}}
\put(0,1){\makebox(2,1){$\downarrow$}}
\end{picture}
\end{center}
with $i_{5}^{eff}=i_{4}$.

\item[(2)] type-I. 

Find the singular string of length $[l^{(1)}]_{eff}-1$ in $\Tilde{\nu}^{(2)}$.
If such a string exists, then add one box to the $l^{(1)}$-string in $\Tilde{\nu}^{(1)}$ and follow the algorithm of Case 5 where the length of the box-added $i_{4}$-string in $\Tilde{\nu}^{(2)}$ is $[l^{(1)}]_{eff}$.

\item[(3)] type-II. 

Find the singular string of length $[l^{(1)}]_{eff}-1$ in $\Tilde{\nu}^{(2)}$.
If such a string exists, then add two boxes to the $l^{(1)}$-string in $\Tilde{\nu}^{(1)}$, one box to the founded string in $\Tilde{\nu}^{(2)}$ and follow the algorithm of Case 3.
This corresponds to the following box marking in $\nu^{(1)}$
\setlength{\unitlength}{10pt}
\begin{center}
\begin{picture}(5,2)
\put(0,0){\line(1,0){4}}
\put(0,1){\line(1,0){4}}
\put(1,0){\line(0,1){1}}
\put(2,0){\line(0,1){1}}
\put(3,0){\line(0,1){1}}
\put(4,0){\line(0,1){1}}
\put(2,0){\makebox(1,1){{\scriptsize $[5]$}}}
\put(3,0){\makebox(1,1){{\scriptsize $[3]$}}}
\put(0,1){\makebox(2,1){$\downarrow$}}
\end{picture}
\end{center}
with $i_{5}^{eff}=i_{4}$.

\end{itemize}

If the box adding is not successful, then add one box to the $l^{(1)}$-string in $\Tilde{\nu}^{(1)}$ and follow the algorithm of Case 5.
This corresponds to the box marking in $\nu$ with $i_{5}^{eff}>i_{4}$.

\begin{flushleft}
Case 8. $b= \framebox{$8$}$.
\end{flushleft}
Find the singular string of maximum length $l^{(2)}$ in $\Tilde{\nu}^{(2)}$ and add one box to this string.
The resulting string is the box-added $i_{5}$-string.
Then, find the string of maximum length $l^{(1)}$ in $\Tilde{\nu}^{(1)}$ such that it is 
\begin{itemize}
\item[(1)] a singular string of type-I/II (resp. type-0) with $[l^{(1)}]_{eff}=l^{(2)}+1$ (resp. $[l^{(1)}]_{eff}=l^{(2)}$) or

\item[(2)] a q-singular string of type-II (resp. type-0) with $[l^{(1)}]_{eff}=l^{(2)}+1$ (resp. $[l^{(1)}]_{eff}=l^{(2)}$) or 

\item[(3)] a singular string of type-I/II with $[l^{(1)}]_{eff}=l^{(2)}$ or 

\item[(4)] a q-singular string of type-I with with $[l^{(1)}]_{eff}=l^{(2)}$ oir

\item[(5)] a qq-singular string of type-0 with $[l^{(1)}]_{eff}=l^{(2)}$.
\end{itemize}
We choose the string of the same length by the following preferential rule 
\begin{center}
\begin{tabular}{ccc}
singular & q-singular & qq-singular \\ \hline
1 & 2 & 3
\end{tabular}
\end{center}
and do as follows according to the type of the $l^{(1)}$-string.
All correspond to the box marking in $\nu$ with $i_{4}^{eff}=i_{5}$.

\begin{itemize}
\item[(1)] type-0.

If the $l^{(1)}$-string is singular (resp. q-singular), then add three (resp. two) boxes to this string and follow the algorithm of Case 2 (resp. Case 3).
If the $l^{(1)}$-string is qq-singular, then add one box to this string and follow the algorithm of Case 4.

\item[(2)] type-I.

If the $l^{(1)}$-string is singular with with $[l^{(1)}]_{eff}=l^{(2)}+1$, then add one box to this string and follow the algorithm of Case 4 ignoring the q-singular string of length $l^{(1)}-1$ in $\Tilde{\nu}^{(1)}$.
This is because the following box marking in $\nu^{(1)}$

\setlength{\unitlength}{10pt}
\begin{center}
\begin{picture}(5,3)
\put(0,0){\line(1,0){3}}
\put(0,1){\line(1,0){4}}
\put(0,2){\line(1,0){4}}
\put(1,0){\line(0,1){2}}
\put(2,0){\line(0,1){2}}
\put(3,0){\line(0,1){2}}
\put(4,1){\line(0,1){1}}
\put(0,2){\makebox(2,1){$\downarrow$}}
\put(2,0){\makebox(1,1){{\scriptsize $[3]$}}}
\put(3,1){\makebox(1,1){{\scriptsize $[4]$}}}
\end{picture}
\end{center}
with $i_{4}^{eff}=i_{5}$ is forbidden.
If the selected string is singular (resp. q-singular) with $[l^{(1)}]_{eff}=l^{(2)}$, then add three (resp. two) boxes to this string and follow the algorithm of Case 2 (resp. Case 3).

\item[(3)] type-II.

If the $l^{(1)}$-string is singular (resp. q-singular) with $[l^{(1)}]_{eff}=l^{(2)}+1$, then add two (resp. one) box(es) to this string and follow the algorithm of Case 3 (resp. Case 4).
If the $l^{(1)}$-string is singular with $[l^{(1)}]_{eff}=l^{(2)}$, then add three boxes to this string and follow the algorithm of Case 2.
\end{itemize} 

If the box adding is not successful, then follow the algorithm of Case 6 such that the effective length of the box-added $i_{4}$-string is smaller than $l^{(2)}+1$, which corresponds to the box marking in $\nu$ with $i_{4}^{eff}<i_{5}$.

\begin{flushleft}
Case 9. $b= \framebox{$9$}$.
\end{flushleft}
Find the singular string of maximum length $l^{(2)}$ in $\Tilde{\nu}^{(2)}$.
Then, find the singular/q-singular string of maximum length $l^{(1)}$ in $\Tilde{\nu}^{(1)}$.
We choose the singular string when there exist singular and q-singular strings of the same length.
In addition, we impose the following preferential rule for the $l^{(1)}$-string.
\begin{itemize}
\item
If the $l^{(1)}$-string in $\Tilde{\nu}^{(1)}$ is type-I q-singular with $[l^{(1)}]_{eff}=l^{(2)}+1$ and there exists a type-II singular string of the same effective length in $\Tilde{\nu}^{(1)}$, then reset the $l^{(1)}$-string to be the founded type-II singular string.
\item
If the $l^{(1)}$-string is type-I/II q-singular string with $[l^{(1)}]_{eff}=l^{(2)}+1$ and there exists a type-0 singular string of effective length $l^{(2)}$, then reset the $l^{(1)}$-string to be the founded type-0 singular string.
\end{itemize}

The box adding goes as follows.
If the $l^{(1)}$-string is q-singular, then add one box to this string and follow the algorithm of Case 7.
If the $l^{(1)}$-string is singular, then do as follows according to the type of the $l^{(1)}$-string.

\begin{itemize}
\item[(1)] type-0.

If $[l^{(1)}]_{eff}=l^{(2)}$, then add four boxes to this string, one box to the $l^{(2)}$-string, and follow the algorithm of Case 2.

\item[(2)] type-II.

If $[l^{(1)}]_{eff}=l^{(2)}+1$, then add three boxes to this string, one box to the $l^{(2)}$-string, and follow the algorithm of Case 3.

\item[(3)] type-I.

If $[l^{(1)}]_{eff}=l^{(2)}$, then add two boxes to this string, one box to the $l^{(2)}$-string, and follow the algorithm of Case 4 ignoring the q-singular string of length $l^{(1)}-1$ in $\Tilde{\nu}^{(1)}$.
This is due to the rule of \textbf{(BS-5)}.

\end{itemize}

The corresponding box marking in $\nu^{(1)}$ is one of the following three.

\setlength{\unitlength}{10pt}
\begin{center}
\begin{picture}(20,2)
\put(0,0){\line(1,0){5}}
\put(0,1){\line(1,0){5}}
\put(1,0){\line(0,1){1}}
\put(2,0){\line(0,1){1}}
\put(3,0){\line(0,1){1}}
\put(4,0){\line(0,1){1}}
\put(5,0){\line(0,1){1}}
\put(3,1){\makebox(2,1){$\downarrow$}}
\put(1,0){\makebox(1,1){{\scriptsize $[6]$}}}
\put(2,0){\makebox(1,1){{\scriptsize $[5]$}}}
\put(3,0){\makebox(1,1){{\scriptsize $[3]$}}}
\put(4,0){\makebox(1,1){{\scriptsize $[2]$}}}

\put(6,0){\makebox(2,1){$\text{or}$}}

\put(9,0){\line(1,0){4}}
\put(9,1){\line(1,0){4}}
\put(10,0){\line(0,1){1}}
\put(11,0){\line(0,1){1}}
\put(12,0){\line(0,1){1}}
\put(13,0){\line(0,1){1}}
\put(11,1){\makebox(2,1){$\downarrow$}}
\put(10,0){\makebox(1,1){{\scriptsize $[6]$}}}
\put(11,0){\makebox(1,1){{\scriptsize $[5]$}}}
\put(12,0){\makebox(1,1){{\scriptsize $[3]$}}}

\put(14,0){\makebox(2,1){$\text{or}$}}

\put(17,0){\line(1,0){3}}
\put(17,1){\line(1,0){3}}
\put(18,0){\line(0,1){1}}
\put(19,0){\line(0,1){1}}
\put(20,0){\line(0,1){1}}
\put(18,1){\makebox(2,1){$\downarrow$}}
\put(18,0){\makebox(1,1){{\scriptsize $[6]$}}}
\put(19,0){\makebox(1,1){{\scriptsize $[5]$}}}

\end{picture}
\end{center}
where $i_{6}^{eff}=i_{4}$ in the left two and $i_{6}^{eff}=i_{4}+1$ in the rightmost.

If the box adding is not successful, then add two boxes to the $l^{(1)}$-string and follow the algorithm of Case 5.

\begin{flushleft}
Case 10. $b= \framebox{$10$}$.
\end{flushleft}
Find the singular/q-singular string of maximum length $l^{(2)}$ in $\Tilde{\nu}^{(2)}$.
We choose the singular string when there exist singular and q-singular strings of the same length.

If the $l^{(2)}$-string is q-singular, then add one box to this string and follow the algorithm of Case 8.

If the $l^{(2)}$-string is singular, then add two boxes to the $l^{(2)}$-string and find the string of maximum length $l^{(1)}$ in $\Tilde{\nu}^{(1)}$ such that it is
\begin{itemize}
\item[(1)]
a singular string of type-I/II with $[l^{(1)}]_{eff}=l^{(2)}+1$ or a singular string (of any type) with $[l^{(1)}]_{eff}=l^{(2)}$ or

\item[(2)]
a q-singular string of type-II (resp. type-0/I) with $[l^{(1)}]_{eff}=l^{(2)}+1$ (resp. $[l^{(1)}]_{eff}=l^{(2)}$) or

\item[(3)]
a qq-singular string of type-0 with $[l^{(1)}]_{eff}=l^{(2)}$.

\end{itemize}
In addition, we impose the following preferential rule for the $l^{(1)}$-string.

\begin{itemize}
\item
If the $l^{(1)}$-string is type-II q-singular and there exists a type-0 singular string of effective length $[l^{(1)}]_{eff}-1$, then reset the $l^{(1)}$-string to be the founded singular string.
\item
If the $l^{(1)}$-string is qq-singular and there exists a type-I singular string of the same effective length, then reset the $l^{(1)}$-string to be the founded singular string.
\end{itemize}

The box adding goes as follows.

\begin{itemize}
\item[(1)] The $l^{(1)}$-string is singular.

Do the following according to the type of the $l^{(1)}$-string.

\begin{itemize}
\item[(a)] type-0.

Add three boxes to this string and follow the algorithm of Case 2.

\item[(b)] type-I.

If $[l^{(1)}]_{eff}=l^{(2)}$, then add three boxes to this string and follow the algorithm of Case 2.
If $[l^{(1)}]_{eff}=l^{(2)}+1$, then add one box to this string and follow the algorithm of Case 4.

\item[(c)] type-II.

If $[l^{(1)}]_{eff}=l^{(2)}$, then add three boxes to this string and follow the algorithm of Case 2.
If $[l^{(1)}]_{eff}=l^{(2)}+1$, then add two boxes to this string and follow the algorithm of Case 3.
\end{itemize}

\item[(2)] The $l^{(1)}$-string is q-singular.
If the $l^{(1)}$-string is type-0/I (resp. type-II), then add two (resp. one) box(es) to this string and follow the algorithm of Case 3 (resp. Case 4).

\item[(3)] The $l^{(1)}$-string is qq-singular.

Add one box to this string and follow the algorithm of Case 4.

\end{itemize}

If the box adding is not successful, then follow the algorithm of Case 6 such that the effective length of the box-added $i_{4}$-string does not exceed $l^{(2)}$.

\begin{flushleft}
Case 11. $b= \framebox{$11$}$.
\end{flushleft}
Find the singular string of maximum length $l^{(1)}$ in $\Tilde{\nu}^{(1)}$.
Find the singular string of maximum length $l_{s}^{(2)}$ in $\Tilde{\nu}^{(2)}$.
Find the q-singular string of maximum length $l_{q}^{(2)}$ satisfying $[ l^{(1)}+1]_{eff}\geq l_{q}^{(2)}$.
If such a q-singular string does not exist, then set $l_{q}^{(2)}=0$.
If $l_{q}^{(2)}>l_{s}^{(2)}$, then add one box to the $l^{(1)}$-string in $\Tilde{\nu}^{(1)}$, add one box to the $l_{q}^{(2)}$-string in $\Tilde{\nu}^{(2)}$ and follow the algorithm of Case 8.
This corresponds to the following box marking in $\nu^{(1)}$ (left) and $\nu^{(2)}$ (right). 
\setlength{\unitlength}{10pt}
\begin{center}
\begin{picture}(7,1)
\put(0,0){\line(1,0){2}}
\put(0,1){\line(1,0){2}}
\put(1,0){\line(0,1){1}}
\put(2,0){\line(0,1){1}}
\put(1,0){\makebox(1,1){{\scriptsize $[7]$}}}


\put(5,0){\line(1,0){2}}
\put(5,1){\line(1,0){2}}
\put(6,0){\line(0,1){1}}
\put(7,0){\line(0,1){1}}
\put(6,0){\makebox(1,1){{\scriptsize $[6]$}}}
\end{picture}
\end{center}
If $l_{s}^{(2)}>l_{q}^{(2)}$, then do as follows.
If $[l^{(1)}+1]_{eff}\geq l_{s}^{(2)}$, the add one box to the $l^{(1)}$-string in $\Tilde{\nu}^{(1)}$, add two boxes to the $l_{s}^{(2)}$ singular string in $\Tilde{\nu}^{(2)}$ and follow the algorithm of Case 6.
This corresponds to the following box marking in $\nu^{(1)}$ (left) and $\nu^{(2)}$ (right). 
\setlength{\unitlength}{10pt}
\begin{center}
\begin{picture}(8,1)
\put(0,0){\line(1,0){2}}
\put(0,1){\line(1,0){2}}
\put(1,0){\line(0,1){1}}
\put(2,0){\line(0,1){1}}
\put(1,0){\makebox(1,1){{\scriptsize $[7]$}}}


\put(5,0){\line(1,0){3}}
\put(5,1){\line(1,0){3}}
\put(6,0){\line(0,1){1}}
\put(7,0){\line(0,1){1}}
\put(8,0){\line(0,1){1}}
\put(6,0){\makebox(1,1){{\scriptsize $[6]$}}}
\put(7,0){\makebox(1,1){{\scriptsize $[5]$}}}
\end{picture}
\end{center}

If the box adding is not successful, then do as follows.
Find the singular string of maximum length $l^{(2)}$ in $\Tilde{\nu}^{(2)}$.
Find the singular string of maximum length $l^{(1)}$ in $\Tilde{\nu}^{(1)}$.
If the $l^{(1)}$-string is type-0, $[l^{(1)}+4]_{eff}=l^{(2)}+2$, and there are no q-singular strings of length $l$ ($l^{(1)}+1\leq l\leq l^{(1)}+3$), then add four boxes to the $l^{(1)}$-string, add two boxes to the $l^{(2)}$-string, and follow the algorithm of Case 2.
This corresponds to the following box marking $\nu^{(1)}$ (left) and $\nu^{(2)}$ (right).

\setlength{\unitlength}{10pt}
\begin{center}
\begin{picture}(11,2)
\put(0,0){\line(1,0){5}}
\put(0,1){\line(1,0){5}}
\put(1,0){\line(0,1){1}}
\put(2,0){\line(0,1){1}}
\put(3,0){\line(0,1){1}}
\put(4,0){\line(0,1){1}}
\put(5,0){\line(0,1){1}}
\put(3,1){\makebox(2,1){$\downarrow$}}
\put(1,0){\makebox(1,1){{\scriptsize $[6]$}}}
\put(2,0){\makebox(1,1){{\scriptsize $[5]$}}}
\put(3,0){\makebox(1,1){{\scriptsize $[3]$}}}
\put(4,0){\makebox(1,1){{\scriptsize $[2]$}}}

\put(8,0){\line(1,0){3}}
\put(8,1){\line(1,0){3}}
\put(9,0){\line(0,1){1}}
\put(10,0){\line(0,1){1}}
\put(11,0){\line(0,1){1}}
\put(9,0){\makebox(1,1){{\scriptsize $[7]$}}}
\put(10,0){\makebox(1,1){{\scriptsize $[4]$}}}

\end{picture}
\end{center}

If the box adding is not successful, then do as follows.
Find the singular string of maximum length $l_{1}^{(2)}$ in $\Tilde{\nu}^{(2)}$.
Find the singular string of maximum length $l^{(1)}$ in $\Tilde{\nu}^{(1)}$.
If $l^{(1)}$-string is type-0, $[l^{(1)}+4]_{eff}\leq l^{(2)}+1$, and there are no q-singular strings of length $l$ ($l^{(1)}+1\leq l\leq 3l_{1}^{(2)}$), then find the singular string of length $l_{2}^{(2)}$ in $\Tilde{\nu}^{(2)}$ satisfying $l_{2}^{(2)}+1=[l^{(1)}+4]_{eff}-1$.
If such a singular string exists, then add four boxes to the $l^{(1)}$-string, add one box the $l_{1}^{(2)}$- and $l_{2}^{(2)}$-string and follow the algorithm of Case 2.
This corresponds to the following box marking in $\nu^{(1)}$ (left) and $\nu^{(2)}$ (right)

\setlength{\unitlength}{10pt}
\begin{center}
\begin{picture}(12,3)
\put(0,1){\line(1,0){5}}
\put(0,2){\line(1,0){5}}
\put(1,1){\line(0,1){1}}
\put(2,1){\line(0,1){1}}
\put(3,1){\line(0,1){1}}
\put(4,1){\line(0,1){1}}
\put(5,1){\line(0,1){1}}
\put(3,2){\makebox(2,1){$\downarrow$}}
\put(1,1){\makebox(1,1){{\scriptsize $[6]$}}}
\put(2,1){\makebox(1,1){{\scriptsize $[5]$}}}
\put(3,1){\makebox(1,1){{\scriptsize $[3]$}}}
\put(4,1){\makebox(1,1){{\scriptsize $[2]$}}}

\put(8,0){\line(1,0){2}}
\put(8,1){\line(1,0){4}}
\put(8,2){\line(1,0){4}}
\put(9,0){\line(0,1){1}}
\put(10,0){\line(0,1){1}}
\put(11,1){\line(0,1){1}}
\put(12,1){\line(0,1){1}}
\put(9,0){\makebox(1,1){{\scriptsize $[4]$}}}
\put(11,1){\makebox(1,1){{\scriptsize $[7]$}}}

\end{picture}
\end{center}
where $i_{6}^{eff}=i_{4}+1\leq i_{7}$.

If the box adding is not successful, then do as follows.
Find the singular string of maximum length $l_{1}^{(2)}$ in $\Tilde{\nu}^{(2)}$.
Find the singular string of maximum length $l^{(1)}$ in $\Tilde{\nu}^{(1)}$.
If the $l^{(1)}$-string is type-II, $[l^{(1)}+3]_{eff}\leq l_{1}^{(2)}+1$, and there are no q-singular strings of length $l$ ($l^{(1)}+1\leq l\leq 3l_{1}^{(2)}$), then find the singular string of length $l_{2}^{(2)}$ in $\Tilde{\nu}^{(2)}$ satisfying $l_{2}^{(2)}+1=[l^{(1)}+3]_{eff}-1$.
If such a singular string exists, then add three boxes to the $l^{(1)}$-string, add one box the $l_{1}^{(2)}$- and $l_{2}^{(2)}$-string and follow the algorithm of Case 3.
This corresponds to the following box marking in $\nu^{(1)}$ (left) and $\nu^{(2)}$ (right)

\setlength{\unitlength}{10pt}
\begin{center}
\begin{picture}(11,3)
\put(0,1){\line(1,0){4}}
\put(0,2){\line(1,0){4}}
\put(1,1){\line(0,1){1}}
\put(2,1){\line(0,1){1}}
\put(3,1){\line(0,1){1}}
\put(4,1){\line(0,1){1}}
\put(2,2){\makebox(2,1){$\downarrow$}}
\put(1,1){\makebox(1,1){{\scriptsize $[6]$}}}
\put(2,1){\makebox(1,1){{\scriptsize $[5]$}}}
\put(3,1){\makebox(1,1){{\scriptsize $[3]$}}}

\put(7,0){\line(1,0){2}}
\put(7,1){\line(1,0){4}}
\put(7,2){\line(1,0){4}}
\put(8,0){\line(0,1){1}}
\put(9,0){\line(0,1){1}}
\put(10,1){\line(0,1){1}}
\put(11,1){\line(0,1){1}}
\put(8,0){\makebox(1,1){{\scriptsize $[4]$}}}
\put(10,1){\makebox(1,1){{\scriptsize $[7]$}}}

\end{picture}
\end{center}
where $i_{6}^{eff}=i_{4}+1\leq i_{7}$.

If the box adding is not successful, then do as follows.
Find the singular string of maximum length $l^{(2)}$ in $\Tilde{\nu}^{(2)}$.
Find the singular string of maximum length $l^{(1)}$ in $\Tilde{\nu}^{(1)}$.
If the $l^{(1)}$-string is type-I and $[l^{(1)}+2]_{eff}=l^{(2)}+2$, then add two boxes to the $l^{(1)}$- and $l^{(2)}$-strings and follow the algorithm of Case 4 ignoring the q-singular string of length $l^{(1)}-1$ in $\Tilde{\nu}^{(1)}$.
The box adding in this case corresponds to the following box marking in $\nu^{(1)}$ (left) and $\nu^{(2)}$ (right).

\setlength{\unitlength}{10pt}
\begin{center}
\begin{picture}(9,2)
\put(0,0){\line(1,0){3}}
\put(0,1){\line(1,0){3}}
\put(1,0){\line(0,1){1}}
\put(2,0){\line(0,1){1}}
\put(3,0){\line(0,1){1}}
\put(1,1){\makebox(2,1){$\downarrow$}}
\put(1,0){\makebox(1,1){{\scriptsize $[6]$}}}
\put(2,0){\makebox(1,1){{\scriptsize $[5]$}}}

\put(6,0){\line(1,0){3}}
\put(6,1){\line(1,0){3}}
\put(7,0){\line(0,1){1}}
\put(8,0){\line(0,1){1}}
\put(9,0){\line(0,1){1}}
\put(7,0){\makebox(1,1){{\scriptsize $[7]$}}}
\put(8,0){\makebox(1,1){{\scriptsize $[4]$}}}

\end{picture}
\end{center}

If the box adding is not successful, then do as follows.
Find the singular string of maximum length $l^{(2)}$ in $\Tilde{\nu}^{(2)}$.
Find the singular string of maximum length $l^{(1)}$ in $\Tilde{\nu}^{(1)}$.
If the $l^{(1)}$-string is type-I and $[l^{(1)}+2]_{eff}=l^{(2)}+1$, then add two boxes to the $l^{(1)}$-string and one box to the $l^{(2)}$-strings and follow the algorithm of Case 5.
This corresponds to the following box marking in $\nu^{(1)}$ (left) and $\nu^{(2)}$ (right)

\setlength{\unitlength}{10pt}
\begin{center}
\begin{picture}(8,2)
\put(0,0){\line(1,0){3}}
\put(0,1){\line(1,0){3}}
\put(1,0){\line(0,1){1}}
\put(2,0){\line(0,1){1}}
\put(3,0){\line(0,1){1}}
\put(1,1){\makebox(2,1){$\downarrow$}}
\put(1,0){\makebox(1,1){{\scriptsize $[6]$}}}
\put(2,0){\makebox(1,1){{\scriptsize $[5]$}}}

\put(6,0){\line(1,0){2}}
\put(6,1){\line(1,0){2}}
\put(7,0){\line(0,1){1}}
\put(8,0){\line(0,1){1}}
\put(7,0){\makebox(1,1){{\scriptsize $[7]$}}}

\end{picture}
\end{center}
where $i_{6}^{eff}=i_{7}$.

If the box adding is not successful, then do as follows.
Find the singular string of maximum length $l^{(2)}$ in $\Tilde{\nu}^{(2)}$.
Find the q-singular string of maximum length $l^{(1)}$ in $\Tilde{\nu}^{(1)}$.
If the $l^{(1)}$-string is type-0 and $[l^{(1)}+1]_{eff}=l^{(2)}+1$, then add one box to the $l^{(1)}$- and the $l^{(2)}$-strings and follow the algorithm of Case 7.
This corresponds to the following box marking in $\nu^{(1)}$ (left) and $\nu^{(2)}$ (right)

\setlength{\unitlength}{10pt}
\begin{center}
\begin{picture}(7,2)
\put(0,0){\line(1,0){2}}
\put(0,1){\line(1,0){2}}
\put(1,0){\line(0,1){1}}
\put(2,0){\line(0,1){1}}
\put(0,1){\makebox(2,1){$\downarrow$}}
\put(1,0){\makebox(1,1){{\scriptsize $[6]$}}}


\put(5,0){\line(1,0){2}}
\put(5,1){\line(1,0){2}}
\put(6,0){\line(0,1){1}}
\put(7,0){\line(0,1){1}}
\put(6,0){\makebox(1,1){{\scriptsize $[7]$}}}
\end{picture}
\end{center}
where $i_{6}^{eff}=i_{7}$.

If the box adding is not successful, then do as follows.
Find the singular string of maximum length $l^{(2)}$ in $\Tilde{\nu}^{(2)}$.
Then, find the string of maximum length $l^{(1)}$ such that the $l^{(1)}$-string is singular with $[l^{(1)}+2]_{eff}<l^{(2)}+1$ or the $l^{(1)}$-string is q-singular with $[l^{(1)}+1]_{eff}<l^{(2)}+1$.
If the $l^{(1)}$-string is singular, then add two boxes to the $l^{(1)}$-string and one box to the $l^{(2)}$-strings and follow the algorithm of Case 5.
If the $l^{(1)}$-string is q-singular, then add one box to the $l^{(1)}$- and the $l^{(2)}$-strings and follow the algorithm of Case 7.
The corresponding box marking in $\nu^{(1)}$ is

\setlength{\unitlength}{10pt}
\begin{center}
\begin{picture}(8,2)
\put(0,0){\line(1,0){3}}
\put(0,1){\line(1,0){3}}
\put(1,0){\line(0,1){1}}
\put(2,0){\line(0,1){1}}
\put(3,0){\line(0,1){1}}
\put(1,0){\makebox(1,1){{\scriptsize $[6]$}}}
\put(2,0){\makebox(1,1){{\scriptsize $[5]$}}}

\put(4,0){\makebox(2,1){$\text{or}$}}

\put(7,0){\line(1,0){2}}
\put(7,1){\line(1,0){2}}
\put(8,0){\line(0,1){1}}
\put(9,0){\line(0,1){1}}
\put(8,0){\makebox(1,1){{\scriptsize $[6]$}}}

\end{picture}
\end{center}
with $i_{6}^{eff}<i_{7}$.

\begin{flushleft}
Case 12. $b= \framebox{$12$}$.
\end{flushleft}
Find the singular/q-singular string of maximum length $l^{(1)}$ in $\Tilde{\nu}^{(1)}$.
We choose the singular string when there exist singular and q-singular strings of the same length.
In addition, we impose the following preferential rules.
\begin{itemize}
\item
If the $l^{(1)}$-string is q-singular (of any type), then find the type-0 singular string of type-0 of length $l^{(1)\prime}$ whose effective length $[l^{(1)}]_{eff}-1$ in $\Tilde{\nu}^{(1)}$ such that there exists a singular string of length $l^{(2)}$ satisfying $[l^{(1)\prime}+5]_{eff}=l^{(2)}+2$.
If such an $l^{(1)\prime}$-string exists, then reset the $l^{(1)}$-string to be $l^{(1)\prime}$-string.
\item
If the $l^{(1)}$-string is q-singular of type I, then find the type-II of length $l^{(1)\prime}=l^{(1)}-1$ such that there exists a singular string of length $l^{(2)}$ satisfying $[l^{(1)\prime}+4]_{eff}=l^{(2)}+2$.
If such an $l^{(1)\prime}$-string exists, then reset the $l^{(1)}$-string to be $l^{(1)\prime}$-string.
\end{itemize}

The box adding goes as follows.

If the $l^{(1)}$-string is singular of type-0, then find a singular string of length $l^{(2)}$ in $\Tilde{\nu}^{(2)}$ satisfying $[l^{(1)}+5]_{eff}=l^{(2)}+2$.
If such a string exists, then add five boxes to the singular $l^{(1)}$-string, two boxes to the $l^{(2)}$-string, and follow the algorithm of Case 2.
This corresponds to the following box marking $\nu^{(1)}$ (left) and $\nu^{(2)}$ (right).

\setlength{\unitlength}{10pt}
\begin{center}
\begin{picture}(12,2)
\put(0,0){\line(1,0){6}}
\put(0,1){\line(1,0){6}}
\put(1,0){\line(0,1){1}}
\put(2,0){\line(0,1){1}}
\put(3,0){\line(0,1){1}}
\put(4,0){\line(0,1){1}}
\put(5,0){\line(0,1){1}}
\put(6,0){\line(0,1){1}}
\put(3,1){\makebox(2,1){$\downarrow$}}
\put(1,0){\makebox(1,1){{\scriptsize $[8]$}}}
\put(2,0){\makebox(1,1){{\scriptsize $[7]$}}}
\put(3,0){\makebox(1,1){{\scriptsize $[4]$}}}
\put(4,0){\makebox(1,1){{\scriptsize $[3]$}}}
\put(5,0){\makebox(1,1){{\scriptsize $[2]$}}}

\put(9,0){\line(1,0){3}}
\put(9,1){\line(1,0){3}}
\put(10,0){\line(0,1){1}}
\put(11,0){\line(0,1){1}}
\put(12,0){\line(0,1){1}}
\put(10,0){\makebox(1,1){{\scriptsize $[6]$}}}
\put(11,0){\makebox(1,1){{\scriptsize $[5]$}}}
\end{picture}
\end{center}

If the $l^{(1)}$-string is singular of type-II, then find a singular string of length $l^{(2)}$ in $\Tilde{\nu}^{(2)}$ satisfying $[l^{(1)}+4]_{eff}=l^{(2)}+2$.
If such a string exists, then add four boxes to the singular $l^{(1)}$-string, two boxes to the $l^{(2)}$-string, and follow the algorithm of Case 3.
This corresponds to the following box marking $\nu^{(1)}$ (left) and $\nu^{(2)}$ (right).

\setlength{\unitlength}{10pt}
\begin{center}
\begin{picture}(11,2)
\put(0,0){\line(1,0){5}}
\put(0,1){\line(1,0){5}}
\put(1,0){\line(0,1){1}}
\put(2,0){\line(0,1){1}}
\put(3,0){\line(0,1){1}}
\put(4,0){\line(0,1){1}}
\put(5,0){\line(0,1){1}}
\put(2,1){\makebox(2,1){$\downarrow$}}
\put(1,0){\makebox(1,1){{\scriptsize $[8]$}}}
\put(2,0){\makebox(1,1){{\scriptsize $[7]$}}}
\put(3,0){\makebox(1,1){{\scriptsize $[4]$}}}
\put(4,0){\makebox(1,1){{\scriptsize $[3]$}}}

\put(8,0){\line(1,0){3}}
\put(8,1){\line(1,0){3}}
\put(9,0){\line(0,1){1}}
\put(10,0){\line(0,1){1}}
\put(11,0){\line(0,1){1}}
\put(9,0){\makebox(1,1){{\scriptsize $[6]$}}}
\put(10,0){\makebox(1,1){{\scriptsize $[5]$}}}
\end{picture}
\end{center}

If the $l^{(1)}$-string is singular of type-I, then find a singular string of length $l^{(2)}$ in $\Tilde{\nu}^{(2)}$ satisfying $[l^{(1)}+3]_{eff}=l^{(2)}+1$.
If such a string exists, then add three boxes to the singular $l^{(1)}$-string, one box to the $l^{(2)}$-string, and follow the algorithm of Case 5.
This corresponds to the following box marking $\nu^{(1)}$ (left) and $\nu^{(2)}$ (right).

\setlength{\unitlength}{10pt}
\begin{center}
\begin{picture}(9,2)
\put(0,0){\line(1,0){4}}
\put(0,1){\line(1,0){4}}
\put(1,0){\line(0,1){1}}
\put(2,0){\line(0,1){1}}
\put(3,0){\line(0,1){1}}
\put(4,0){\line(0,1){1}}
\put(1,1){\makebox(2,1){$\downarrow$}}
\put(1,0){\makebox(1,1){{\scriptsize $[8]$}}}
\put(2,0){\makebox(1,1){{\scriptsize $[6]$}}}
\put(3,0){\makebox(1,1){{\scriptsize $[5]$}}}

\put(7,0){\line(1,0){2}}
\put(7,1){\line(1,0){2}}
\put(8,0){\line(0,1){1}}
\put(9,0){\line(0,1){1}}
\put(8,0){\makebox(1,1){{\scriptsize $[7]$}}}
\end{picture}
\end{center}

If the $l^{(1)}$-string is singular of type-I, then find a singular string of length $l^{(2)}$ in $\Tilde{\nu}^{(2)}$ satisfying $[l^{(1)}+3]_{eff}=l^{(2)}+2$.
If such a string exists, then add three boxes to the singular $l^{(1)}$-string, two boxes to the $l^{(2)}$-string, and follow the algorithm of Case 4 ignoring the q-singular string of length $l^{(1)}-1$ in $\Tilde{\nu}^{(1)}$.
This corresponds to the following box marking $\nu^{(1)}$ (left) and $\nu^{(2)}$ (right).

\setlength{\unitlength}{10pt}
\begin{center}
\begin{picture}(10,2)
\put(0,0){\line(1,0){4}}
\put(0,1){\line(1,0){4}}
\put(1,0){\line(0,1){1}}
\put(2,0){\line(0,1){1}}
\put(3,0){\line(0,1){1}}
\put(4,0){\line(0,1){1}}
\put(1,1){\makebox(2,1){$\downarrow$}}
\put(1,0){\makebox(1,1){{\scriptsize $[8]$}}}
\put(2,0){\makebox(1,1){{\scriptsize $[6]$}}}
\put(3,0){\makebox(1,1){{\scriptsize $[5]$}}}

\put(7,0){\line(1,0){3}}
\put(7,1){\line(1,0){3}}
\put(8,0){\line(0,1){1}}
\put(9,0){\line(0,1){1}}
\put(10,0){\line(0,1){1}}
\put(8,0){\makebox(1,1){{\scriptsize $[7]$}}}
\put(9,0){\makebox(1,1){{\scriptsize $[4]$}}}
\end{picture}
\end{center}

If the $l^{(1)}$-string is singular of type-0/II, then find a singular (resp. q-singular) string of length $l^{(2)}$ in $\Tilde{\nu}^{(2)}$ satisfying $[l^{(1)}+2]_{eff}=l^{(2)}+2$ (resp. $[l^{(1)}+2]_{eff}=l^{(2)}+1$).
If such a string exists, then add two boxes to the $l^{(1)}$-string and follow the algorithm of Case 10 such that $i_{8}^{eff}=i_{6}$.
This corresponds to the following box marking in $\nu^{(1)}$.

\setlength{\unitlength}{10pt}
\begin{center}
\begin{picture}(3,1)
\put(0,0){\line(1,0){3}}
\put(0,1){\line(1,0){3}}
\put(1,0){\line(0,1){1}}
\put(2,0){\line(0,1){1}}
\put(3,0){\line(0,1){1}}
\put(1,0){\makebox(1,1){{\scriptsize $[8]$}}}
\put(2,0){\makebox(1,1){{\scriptsize $[7]$}}}
\end{picture}
\end{center}

If the $l^{(1)}$-string is q-singular of type-0, then find a singular string of length $l^{(2)}$ in $\Tilde{\nu}^{(2)}$ satisfying $[l^{(1)}+2]_{eff}=l^{(2)}+1$.
If such a string exists, then add two boxes to the $l^{(1)}$-string, one box to the $l^{(2)}$-string, and follow the algorithm of Case 7.
This corresponds to the following box marking in $\nu^{(1)}$ (left) and $\nu^{(2)}$ (right).

\setlength{\unitlength}{10pt}
\begin{center}
\begin{picture}(8,2)
\put(0,0){\line(1,0){3}}
\put(0,1){\line(1,0){3}}
\put(1,0){\line(0,1){1}}
\put(2,0){\line(0,1){1}}
\put(3,0){\line(0,1){1}}
\put(0,1){\makebox(2,1){$\downarrow$}}
\put(1,0){\makebox(1,1){{\scriptsize $[8]$}}}
\put(2,0){\makebox(1,1){{\scriptsize $[6]$}}}

\put(6,0){\line(1,0){2}}
\put(6,1){\line(1,0){2}}
\put(7,0){\line(0,1){1}}
\put(8,0){\line(0,1){1}}
\put(7,0){\makebox(1,1){{\scriptsize $[7]$}}}

\end{picture}
\end{center}

If the $l^{(1)}$-string is q-singular of type-I/II, then find a singular string of length $l^{(2)}$ in $\Tilde{\nu}^{(2)}$ satisfying $[l^{(1)}+2]_{eff}=l^{(2)}+2$ or $[l^{(1)}+2]_{eff}=l^{(2)}+1$.
If such a string exists, then add two boxes to the $l^{(1)}$-string and follow the algorithm of Case 11.
This corresponds to the following box marking in $\nu^{(2)}$.

\setlength{\unitlength}{10pt}
\begin{center}
\begin{picture}(8,2)
\put(0,0){\line(1,0){3}}
\put(0,1){\line(1,0){3}}
\put(1,0){\line(0,1){1}}
\put(2,0){\line(0,1){1}}
\put(3,0){\line(0,1){1}}
\put(1,0){\makebox(1,1){{\scriptsize $[7]$}}}
\put(2,0){\makebox(1,1){{\scriptsize $[4]$}}}

\put(4,0){\makebox(2,1){$\text{or}$}}

\put(7,0){\line(1,0){2}}
\put(7,1){\line(1,0){2}}
\put(8,0){\line(0,1){1}}
\put(9,0){\line(0,1){1}}
\put(8,0){\makebox(1,1){{\scriptsize $[7]$}}}

\end{picture}
\end{center}
with $i_{8}^{eff}=i_{7}$ and the $i_{8}$-string in  $\nu^{(1)}$ is marked by [8] only.

If the box adding so far is not successful, then do as follows.
If the $l^{(1)}$-string is singular, then add two boxes to the $l^{(1)}$-string and follow the algorithm of Case 10 such that $i_{8}^{eff}>i_{6}$.
If the $l^{(1)}$-string is q-singular, then add one box to the $l^{(1)}$-string and follow the algorithm of Case 11.

\begin{flushleft}
Case 13. $b= \framebox{$13$}$.
\end{flushleft}
Find the singular/q-singular/qq-singular string of maximum length $l^{(1)}$ in $\Tilde{\nu}^{(1)}$
We choose the string of the same length by the following preferential rule 
\begin{center}
\begin{tabular}{ccc}
singular & q-singular & qq-singular \\ \hline
1 & 2 & 3
\end{tabular}
\end{center}
In addition, we impose the following preferential rules.
\begin{itemize}
\item
If the $l^{(1)}$-string is q-singular/qq-singular of any type, then find a type-0 singular string of length $l^{(1)\prime}$ in $\Tilde{\nu}^{(1)}$ with $[l^{(1)\prime}]_{eff}=[l^{(1)}]_{eff}-1$ such that there exists a singular string of length $l^{(2)}$ in $\Tilde{\nu}^{(2)}$ satisfying $[l^{(1)\prime}+6]_{eff}=l^{(2)}+2$.
If such an $l^{(1)\prime}$-string exists, then reset the $l^{(1)}$-string to be the $l^{(1)\prime}$-string.
\item
If the $l^{(1)}$-string is qq-singular of type-II, then find a singular string of length $l^{(1)\prime}=l^{(1)}-4$ in $\Tilde{\nu}^{(1)}$ such that there exists a singular string of length $l^{(2)}$ in $\Tilde{\nu}^{(2)}$ satisfying $[l^{(1)\prime}+6]_{eff}=l^{(2)}+2$.
If such an $l^{(1)\prime}$-string exists, then reset the $l^{(1)}$-string to be the $l^{(1)\prime}$-string.
\item
If the $l^{(1)}$-string is q-singular of type-I, then find a singular string of length $l^{(1)\prime}=l^{(1)}-1$ in $\Tilde{\nu}^{(1)}$ such that there exists a singular string of length $l^{(2)}$ in $\Tilde{\nu}^{(2)}$ satisfying $[l^{(1)\prime}+5]_{eff}=l^{(2)}+2$.
If such an $l^{(1)\prime}$-string exists, then reset the $l^{(1)}$-string to be the $l^{(1)\prime}$-string.
\end{itemize}

The box adding goes as follows.

If the $l^{(1)}$-string is qq-singular, then add one box to the $l^{(1)}$-string and follow the algorithm of Case 12.

If the $l^{(1)}$-string is singular of type-0, then find a singular string of length $l^{(2)}$ in $\Tilde{\nu}^{(2)}$ satisfying $[l^{(1)}+6]_{eff}=l^{(2)}+2$.
If such a string exists, then add six boxes to the $l^{(1)}$-string, two boxes to the $l^{(2)}$-string, and follow the algorithm of Case 2.
This corresponds to the following box marking in $\nu^{(1)}$ (left) and $\nu^{(2)}$ (right).

\setlength{\unitlength}{10pt}
\begin{center}
\begin{picture}(13,2)
\put(0,0){\line(1,0){7}}
\put(0,1){\line(1,0){7}}
\put(1,0){\line(0,1){1}}
\put(2,0){\line(0,1){1}}
\put(3,0){\line(0,1){1}}
\put(4,0){\line(0,1){1}}
\put(5,0){\line(0,1){1}}
\put(6,0){\line(0,1){1}}
\put(7,0){\line(0,1){1}}
\put(3,1){\makebox(2,1){$\downarrow$}}
\put(1,0){\makebox(1,1){{\scriptsize $[9]$}}}
\put(2,0){\makebox(1,1){{\scriptsize $[8]$}}}
\put(3,0){\makebox(1,1){{\scriptsize $[7]$}}}
\put(4,0){\makebox(1,1){{\scriptsize $[4]$}}}
\put(5,0){\makebox(1,1){{\scriptsize $[3]$}}}
\put(6,0){\makebox(1,1){{\scriptsize $[2]$}}}

\put(10,0){\line(1,0){3}}
\put(10,1){\line(1,0){3}}
\put(11,0){\line(0,1){1}}
\put(12,0){\line(0,1){1}}
\put(13,0){\line(0,1){1}}
\put(11,0){\makebox(1,1){{\scriptsize $[6]$}}}
\put(12,0){\makebox(1,1){{\scriptsize $[5]$}}}

\end{picture}
\end{center}

If the $l^{(1)}$-string is singular of type-II, then find a singular string of length $l^{(2)}$ in $\Tilde{\nu}^{(2)}$ satisfying $[l^{(1)}+5]_{eff}=l^{(2)}+2$.
If such a string exists, then add five boxes to the $l^{(1)}$-string, two boxes to the $l^{(2)}$-string, and follow the algorithm of Case 3.
This corresponds to the following box marking in $\nu^{(1)}$ (left) and $\nu^{(2)}$ (right).

\setlength{\unitlength}{10pt}
\begin{center}
\begin{picture}(12,2)
\put(0,0){\line(1,0){6}}
\put(0,1){\line(1,0){6}}
\put(1,0){\line(0,1){1}}
\put(2,0){\line(0,1){1}}
\put(3,0){\line(0,1){1}}
\put(4,0){\line(0,1){1}}
\put(5,0){\line(0,1){1}}
\put(6,0){\line(0,1){1}}
\put(2,1){\makebox(2,1){$\downarrow$}}
\put(1,0){\makebox(1,1){{\scriptsize $[9]$}}}
\put(2,0){\makebox(1,1){{\scriptsize $[8]$}}}
\put(3,0){\makebox(1,1){{\scriptsize $[7]$}}}
\put(4,0){\makebox(1,1){{\scriptsize $[4]$}}}
\put(5,0){\makebox(1,1){{\scriptsize $[3]$}}}

\put(9,0){\line(1,0){3}}
\put(9,1){\line(1,0){3}}
\put(10,0){\line(0,1){1}}
\put(11,0){\line(0,1){1}}
\put(12,0){\line(0,1){1}}
\put(10,0){\makebox(1,1){{\scriptsize $[6]$}}}
\put(11,0){\makebox(1,1){{\scriptsize $[5]$}}}

\end{picture}
\end{center}

If the $l^{(1)}$-string is singular of type-I, then find a singular string of length $l^{(2)}$ in $\Tilde{\nu}^{(2)}$ satisfying $[l^{(1)}+4]_{eff}=l^{(2)}+1$.
If such a string exists, then add four boxes to the $l^{(1)}$-string, one box to the $l^{(2)}$-string, and follow the algorithm of Case 5.
This corresponds to the following box marking in $\nu^{(1)}$ (left) and $\nu^{(2)}$ (right).

\setlength{\unitlength}{10pt}
\begin{center}
\begin{picture}(10,2)
\put(0,0){\line(1,0){5}}
\put(0,1){\line(1,0){5}}
\put(1,0){\line(0,1){1}}
\put(2,0){\line(0,1){1}}
\put(3,0){\line(0,1){1}}
\put(4,0){\line(0,1){1}}
\put(5,0){\line(0,1){1}}
\put(1,1){\makebox(2,1){$\downarrow$}}
\put(1,0){\makebox(1,1){{\scriptsize $[9]$}}}
\put(2,0){\makebox(1,1){{\scriptsize $[8]$}}}
\put(3,0){\makebox(1,1){{\scriptsize $[6]$}}}
\put(4,0){\makebox(1,1){{\scriptsize $[5]$}}}

\put(8,0){\line(1,0){2}}
\put(8,1){\line(1,0){2}}
\put(9,0){\line(0,1){1}}
\put(10,0){\line(0,1){1}}
\put(9,0){\makebox(1,1){{\scriptsize $[7]$}}}
\end{picture}
\end{center}

If the $l^{(1)}$-string is singular of type-I, then find a singular string of length $l^{(2)}$ in $\Tilde{\nu}^{(2)}$ satisfying $[l^{(1)}+4]_{eff}=l^{(2)}+2$ .
If such a string exists, then add four boxes to the $l^{(1)}$-string, two boxes to the $l^{(2)}$-string, and follow the algorithm of Case 4 ignoring the q-singular string of length $l^{(1)}-1$ in $\Tilde{\nu}^{(1)}$.
This corresponds to the following box marking in $\nu^{(1)}$ (left) and $\nu^{(2)}$ (right).

\setlength{\unitlength}{10pt}
\begin{center}
\begin{picture}(11,2)
\put(0,0){\line(1,0){5}}
\put(0,1){\line(1,0){5}}
\put(1,0){\line(0,1){1}}
\put(2,0){\line(0,1){1}}
\put(3,0){\line(0,1){1}}
\put(4,0){\line(0,1){1}}
\put(5,0){\line(0,1){1}}
\put(1,1){\makebox(2,1){$\downarrow$}}
\put(1,0){\makebox(1,1){{\scriptsize $[9]$}}}
\put(2,0){\makebox(1,1){{\scriptsize $[8]$}}}
\put(3,0){\makebox(1,1){{\scriptsize $[6]$}}}
\put(4,0){\makebox(1,1){{\scriptsize $[5]$}}}

\put(8,0){\line(1,0){3}}
\put(8,1){\line(1,0){3}}
\put(9,0){\line(0,1){1}}
\put(10,0){\line(0,1){1}}
\put(11,0){\line(0,1){1}}
\put(9,0){\makebox(1,1){{\scriptsize $[7]$}}}
\put(10,0){\makebox(1,1){{\scriptsize $[4]$}}}
\end{picture}
\end{center}

If the box adding is not successful, then do as follows.
If the $l^{(1)}$-string is singular of type-0, then find a singular string of length $l^{(2)}$ in $\Tilde{\nu}^{(2)}$ satisfying $[l^{(1)}+3]_{eff}=l^{(2)}+2$ or a q-singular string of length $l^{(2)}$ satisfying $[l^{(1)}+3]_{eff}=l^{(2)}+1$ .
If such a string exists, then add three boxes to the $l^{(1)}$-string, and follow the algorithm of Case 10.
This corresponds to the following box marking in $\nu^{(1)}$ (left) and $\nu^{(2)}$ (right)

\setlength{\unitlength}{10pt}
\begin{center}
\begin{picture}(9,2)
\put(0,0){\line(1,0){4}}
\put(0,1){\line(1,0){4}}
\put(1,0){\line(0,1){1}}
\put(2,0){\line(0,1){1}}
\put(3,0){\line(0,1){1}}
\put(4,0){\line(0,1){1}}
\put(1,1){\makebox(2,1){$\downarrow$}}
\put(1,0){\makebox(1,1){{\scriptsize $[9]$}}}
\put(2,0){\makebox(1,1){{\scriptsize $[8]$}}}
\put(3,0){\makebox(1,1){{\scriptsize $[7]$}}}

\put(7,0){\line(1,0){2}}
\put(7,1){\line(1,0){2}}
\put(8,0){\line(0,1){1}}
\put(9,0){\line(0,1){1}}
\put(8,0){\makebox(1,1){{\scriptsize $x$}}}
\end{picture}
\end{center}
where
\setlength{\unitlength}{10pt}
\begin{picture}(1,1)
\put(0,0){\line(1,0){1}}
\put(0,1){\line(1,0){1}}
\put(0,0){\line(0,1){1}}
\put(1,0){\line(0,1){1}}
\put(0,0){\makebox(1,1){{\scriptsize $x$}}}
\end{picture}
 is
\setlength{\unitlength}{10pt}
\begin{picture}(2,1)
\put(0,0){\line(1,0){2}}
\put(0,1){\line(1,0){2}}
\put(0,0){\line(0,1){1}}
\put(1,0){\line(0,1){1}}
\put(2,0){\line(0,1){1}}
\put(0,0){\makebox(1,1){{\scriptsize $[6]$}}}
\put(1,0){\makebox(1,1){{\scriptsize $[5]$}}}
\end{picture}
or
\setlength{\unitlength}{10pt}
\begin{picture}(1,1)
\put(0,0){\line(1,0){1}}
\put(0,1){\line(1,0){1}}
\put(0,0){\line(0,1){1}}
\put(1,0){\line(0,1){1}}
\put(0,0){\makebox(1,1){{\scriptsize $[6]$}}}
\end{picture}
with $i_{9}^{eff}=i_{6}$.

If the box adding is not successful, then do as follows.
If the $l^{(1)}$-string is singular of any type, then find a singular string of length $l^{(2)}$ in $\Tilde{\nu}^{(2)}$ satisfying $[l^{(1)}+3]_{eff}-1\geq l^{(2)}+2$ or a q-singular string of length $l^{(2)}$ satisfying $[l^{(1)}+3]_{eff}-1\geq l^{(2)}+1$.
If such a string exists, then add three boxes to the $l^{(1)}$-string, and follow the algorithm of Case 10.
This corresponds to the box marking $\nu^{(1)}$ and $\nu^{(2)}$ depicted above but with $i_{9}^{eff}>i_{6}$.
If the $l^{(1)}$-string is q-singular of type-0, then find a singular string of length $l^{(2)}$ in $\Tilde{\nu}^{(2)}$ satisfying $[l^{(1)}+3]_{eff}=l^{(2)}+1$.
If such a string exists, then add three boxes to the $l^{(1)}$-string, one box to the $l^{(2)}$-string, and follow the algorithm of Case 7.
This corresponds to the following box marking in $\nu^{(1)}$ (left) and $\nu^{(2)}$ (right).

\setlength{\unitlength}{10pt}
\begin{center}
\begin{picture}(9,2)
\put(0,0){\line(1,0){4}}
\put(0,1){\line(1,0){4}}
\put(1,0){\line(0,1){1}}
\put(2,0){\line(0,1){1}}
\put(3,0){\line(0,1){1}}
\put(4,0){\line(0,1){1}}
\put(0,1){\makebox(2,1){$\downarrow$}}
\put(1,0){\makebox(1,1){{\scriptsize $[9]$}}}
\put(2,0){\makebox(1,1){{\scriptsize $[8]$}}}
\put(3,0){\makebox(1,1){{\scriptsize $[6]$}}}

\put(7,0){\line(1,0){2}}
\put(7,1){\line(1,0){2}}
\put(8,0){\line(0,1){1}}
\put(9,0){\line(0,1){1}}
\put(8,0){\makebox(1,1){{\scriptsize $[7]$}}}
\end{picture}
\end{center}
If the $l^{(1)}$-string is q-singular of type-II, then find a singular string of length $l^{(2)}$ in $\Tilde{\nu}^{(2)}$ satisfying $[l^{(1)}+2]_{eff}=l^{(2)}+1$ or $[l^{(1)}+2]_{eff}=l^{(2)}+2$.
If such a string exists, then add two boxes to the $l^{(1)}$-string and follow the algorithm of Case 11 such that the length of the box-added $i_{7}$-string is equal to $[l^{(1)}+2]_{eff}$.
This corresponds to the following box marking in $\nu^{(1)}$ (left) and $\nu^{(2)}$ (right)

\setlength{\unitlength}{10pt}
\begin{center}
\begin{picture}(9,2)
\put(0,0){\line(1,0){3}}
\put(0,1){\line(1,0){3}}
\put(1,0){\line(0,1){1}}
\put(2,0){\line(0,1){1}}
\put(3,0){\line(0,1){1}}
\put(2,1){\makebox(2,1){$\downarrow$}}
\put(1,0){\makebox(1,1){{\scriptsize $[9]$}}}
\put(2,0){\makebox(1,1){{\scriptsize $[8]$}}}

\put(6,0){\line(1,0){2}}
\put(6,1){\line(1,0){2}}
\put(7,0){\line(0,1){1}}
\put(8,0){\line(0,1){1}}
\put(7,0){\makebox(1,1){{\scriptsize $x$}}}
\end{picture}
\end{center}
where
\setlength{\unitlength}{10pt}
\begin{picture}(1,1)
\put(0,0){\line(1,0){1}}
\put(0,1){\line(1,0){1}}
\put(0,0){\line(0,1){1}}
\put(1,0){\line(0,1){1}}
\put(0,0){\makebox(1,1){{\scriptsize $x$}}}
\end{picture}
 is
\setlength{\unitlength}{10pt}
\begin{picture}(2,1)
\put(0,0){\line(1,0){2}}
\put(0,1){\line(1,0){2}}
\put(0,0){\line(0,1){1}}
\put(1,0){\line(0,1){1}}
\put(2,0){\line(0,1){1}}
\put(0,0){\makebox(1,1){{\scriptsize $[7]$}}}
\put(1,0){\makebox(1,1){{\scriptsize $[4]$}}}
\end{picture}
or
\setlength{\unitlength}{10pt}
\begin{picture}(1,1)
\put(0,0){\line(1,0){1}}
\put(0,1){\line(1,0){1}}
\put(0,0){\line(0,1){1}}
\put(1,0){\line(0,1){1}}
\put(0,0){\makebox(1,1){{\scriptsize $[7]$}}}
\end{picture}
with $i_{9}^{eff}=i_{7}$.

If the box adding so far is not successful, then do as follows.
If the $l^{(1)}$-string is q-singular of any type, then find a singular string of length $l^{(2)}$ in $\Tilde{\nu}^{(2)}$ satisfying $[l^{(1)}+2]_{eff}-1\geq l^{(2)}+1$.
If such a string exists, then add two boxes to the $l^{(1)}$-string and follow the algorithm of Case 11 such that the length of the box-added $i_{7}$-string is smaller than $[l^{(1)}+2]_{eff}$.


\begin{flushleft}
Case 14. $b= \framebox{$14$}$.
\end{flushleft}

Find two longest singular strings of length $l_{1}^{(2)}$ and $l_{2}^{(2)}(\leq l_{1}^{(2)})$ in $\Tilde{\nu}^{(2)}$.
If $l_{1}^{(2)}=l_{2}^{(2)}$, then find the singular string of length $l^{(1)}$ in $\Tilde{\nu}^{(1)}$ satisfying $[l^{(1)}+6]_{eff}=l_{1}^{(2)}$.
If such a string exists, then add six boxes to the $l^{(1)}$-string, two boxes to the $l_{1}^{(2)}$- and $l_{2}^{(2)}$-strings, and terminate the box adding.
This corresponds to the following box marking in $\nu^{(1)}$ (left) and $\nu^{(2)}$ (right).

\setlength{\unitlength}{10pt}
\begin{center}
\begin{picture}(13,2)
\put(0,0){\line(1,0){7}}
\put(0,1){\line(1,0){7}}
\put(1,0){\line(0,1){1}}
\put(2,0){\line(0,1){1}}
\put(3,0){\line(0,1){1}}
\put(4,0){\line(0,1){1}}
\put(5,0){\line(0,1){1}}
\put(6,0){\line(0,1){1}}
\put(7,0){\line(0,1){1}}
\put(3,1){\makebox(2,1){$\downarrow$}}
\put(1,0){\makebox(1,1){{\scriptsize $[9]$}}}
\put(2,0){\makebox(1,1){{\scriptsize $[8]$}}}
\put(3,0){\makebox(1,1){{\scriptsize $[7]$}}}
\put(4,0){\makebox(1,1){{\scriptsize $[4]$}}}
\put(5,0){\makebox(1,1){{\scriptsize $[3]$}}}
\put(6,0){\makebox(1,1){{\scriptsize $[2]$}}}

\put(10,0){\line(1,0){3}}
\put(10,1){\line(1,0){3}}
\put(10,2){\line(1,0){3}}
\put(11,0){\line(0,1){2}}
\put(12,0){\line(0,1){2}}
\put(13,0){\line(0,1){2}}
\put(11,0){\makebox(1,1){{\scriptsize $[10]$}}}
\put(11,1){\makebox(1,1){{\scriptsize $[6]$}}}
\put(12,0){\makebox(1,1){{\scriptsize $[1]$}}}
\put(12,1){\makebox(1,1){{\scriptsize $[5]$}}}
\end{picture}
\end{center}


If the box adding is not successful, then do as follows.
Find the singular string of maximum length $l_{1}^{(2)}$ in $\Tilde{\nu}^{(2)}$.
Then find the singular string of length $l_{2}^{(2)}$ in $\Tilde{\nu}^{(2)}$ satisfying $l_{2}^{(2)}=l_{1}^{(2)}-1$ and the type-0 singular string of length $l^{(1)}$ in $\Tilde{\nu}^{(1)}$ satisfying $[l^{(1)}+6]_{eff}=l_{1}^{(2)}+1$.
If such strings exist, then add six boxes to the $l^{(1)}$-string, one box to the $l_{1}^{(2)}$-string, two boxes to the $l_{2}^{(2)}$-string, and follow the algorithm of Case 2.
This corresponds to the following box marking in $\nu^{(1)}$ (left) and $\nu^{(2)}$ (right). 

\setlength{\unitlength}{10pt}
\begin{center}
\begin{picture}(13,2)
\put(0,0){\line(1,0){7}}
\put(0,1){\line(1,0){7}}
\put(1,0){\line(0,1){1}}
\put(2,0){\line(0,1){1}}
\put(3,0){\line(0,1){1}}
\put(4,0){\line(0,1){1}}
\put(5,0){\line(0,1){1}}
\put(6,0){\line(0,1){1}}
\put(7,0){\line(0,1){1}}
\put(3,1){\makebox(2,1){$\downarrow$}}
\put(1,0){\makebox(1,1){{\scriptsize $[9]$}}}
\put(2,0){\makebox(1,1){{\scriptsize $[8]$}}}
\put(3,0){\makebox(1,1){{\scriptsize $[7]$}}}
\put(4,0){\makebox(1,1){{\scriptsize $[4]$}}}
\put(5,0){\makebox(1,1){{\scriptsize $[3]$}}}
\put(6,0){\makebox(1,1){{\scriptsize $[2]$}}}

\put(10,0){\line(1,0){3}}
\put(10,1){\line(1,0){3}}
\put(10,2){\line(1,0){3}}
\put(11,0){\line(0,1){2}}
\put(12,0){\line(0,1){2}}
\put(13,0){\line(0,1){2}}
\put(11,0){\makebox(1,1){{\scriptsize $[6]$}}}
\put(12,0){\makebox(1,1){{\scriptsize $[5]$}}}
\put(12,1){\makebox(1,1){{\scriptsize $[10]$}}}
\end{picture}
\end{center}

If the box adding is not successful, then do as follows.
Find the singular string of maximum length $l_{1}^{(2)}$ in $\Tilde{\nu}^{(2)}$.
Then find the singular string of length $l_{2}^{(2)}$ in $\Tilde{\nu}^{(2)}$ satisfying $l_{2}^{(2)}=l_{1}^{(2)}-1$ and the type-II singular string of length $l^{(1)}$ in $\Tilde{\nu}^{(1)}$ satisfying $[l^{(1)}+5]_{eff}=l_{1}^{(2)}+1$.
If such strings exist, then add five boxes to the $l^{(1)}$-string, one box to the $l_{1}^{(2)}$-string, two boxes to the $l_{2}^{(2)}$-string, and follow the algorithm of Case 3.
This corresponds to the following box marking in $\nu^{(1)}$ (left) and $\nu^{(2)}$ (right).  

\setlength{\unitlength}{10pt}
\begin{center}
\begin{picture}(13,2)
\put(0,0){\line(1,0){6}}
\put(0,1){\line(1,0){6}}
\put(1,0){\line(0,1){1}}
\put(2,0){\line(0,1){1}}
\put(3,0){\line(0,1){1}}
\put(4,0){\line(0,1){1}}
\put(5,0){\line(0,1){1}}
\put(6,0){\line(0,1){1}}
\put(2,1){\makebox(2,1){$\downarrow$}}
\put(1,0){\makebox(1,1){{\scriptsize $[9]$}}}
\put(2,0){\makebox(1,1){{\scriptsize $[8]$}}}
\put(3,0){\makebox(1,1){{\scriptsize $[7]$}}}
\put(4,0){\makebox(1,1){{\scriptsize $[4]$}}}
\put(5,0){\makebox(1,1){{\scriptsize $[3]$}}}

\put(9,0){\line(1,0){3}}
\put(9,1){\line(1,0){3}}
\put(9,2){\line(1,0){3}}
\put(10,0){\line(0,1){2}}
\put(11,0){\line(0,1){2}}
\put(12,0){\line(0,1){2}}
\put(10,0){\makebox(1,1){{\scriptsize $[6]$}}}
\put(11,0){\makebox(1,1){{\scriptsize $[5]$}}}
\put(11,1){\makebox(1,1){{\scriptsize $[10]$}}}
\end{picture}
\end{center}

If the box adding is not successful, then do as follows.
Find the singular string of maximum length $l_{1}^{(2)}$ in $\Tilde{\nu}^{(2)}$.
Then find the singular string of length $l_{2}^{(2)}$ in $\Tilde{\nu}^{(2)}$ satisfying $l_{2}^{(2)}=l_{1}^{(2)}-1$ and the type-I singular string of length $l^{(1)}$ in $\Tilde{\nu}^{(1)}$ satisfying $[l^{(1)}+4]_{eff}=l_{1}^{(2)}+1$.
If such strings exist, then add four boxes to the $l^{(1)}$-string, one box to the $l_{1}^{(2)}$-string, two boxes to the $l_{2}^{(2)}$-string, and follow the algorithm of Case 4 ignoring the q-singular string of length $l^{(1)}-1$ in $\Tilde{\nu}^{(1)}$.
This corresponds to the following box marking in $\nu^{(1)}$ (left) and $\nu^{(2)}$ (right). 

\setlength{\unitlength}{10pt}
\begin{center}
\begin{picture}(10,2)
\put(0,0){\line(1,0){5}}
\put(0,1){\line(1,0){5}}
\put(1,0){\line(0,1){1}}
\put(2,0){\line(0,1){1}}
\put(3,0){\line(0,1){1}}
\put(4,0){\line(0,1){1}}
\put(5,0){\line(0,1){1}}
\put(1,1){\makebox(2,1){$\downarrow$}}
\put(1,0){\makebox(1,1){{\scriptsize $[9]$}}}
\put(2,0){\makebox(1,1){{\scriptsize $[8]$}}}
\put(3,0){\makebox(1,1){{\scriptsize $[6]$}}}
\put(4,0){\makebox(1,1){{\scriptsize $[5]$}}}

\put(8,0){\line(1,0){3}}
\put(8,1){\line(1,0){3}}
\put(8,2){\line(1,0){3}}
\put(9,0){\line(0,1){2}}
\put(10,0){\line(0,1){2}}
\put(11,0){\line(0,1){2}}
\put(9,0){\makebox(1,1){{\scriptsize $[7]$}}}
\put(10,0){\makebox(1,1){{\scriptsize $[4]$}}}
\put(10,1){\makebox(1,1){{\scriptsize $[10]$}}}
\end{picture}
\end{center}
In this case, if there exists a type-II singular string of length $l^{(1)\prime}=l^{(1)}-1$ in $\Tilde{\nu}^{(1)}$, then discard the box adding above, reset the $l^{(1)}$-string to be the $l^{(1)\prime}$-string, and follow the previous box marking.

If the box adding is not successful, then do as follows.
Find the singular string of maximum length $l_{1}^{(2)}$ in $\Tilde{\nu}^{(2)}$.
Then find the singular string of length $l_{2}^{(2)}$ in $\Tilde{\nu}^{(2)}$ satisfying $l_{2}^{(2)}=l_{1}^{(2)}$ and the type-I singular string of length $l^{(1)}$ in $\Tilde{\nu}^{(1)}$ satisfying $[l^{(1)}+4]_{eff}=l_{1}^{(2)}+1$.
If such strings exist, then add four boxes to the $l^{(1)}$-string, one box to the $l_{1}^{(2)}$- and $l_{2}^{(2)}$-strings, and follow the algorithm of Case 5.
This corresponds to the following box marking in $\nu^{(1)}$ (left) and $\nu^{(2)}$ (right).

\setlength{\unitlength}{10pt}
\begin{center}
\begin{picture}(10,2)
\put(0,0){\line(1,0){5}}
\put(0,1){\line(1,0){5}}
\put(1,0){\line(0,1){1}}
\put(2,0){\line(0,1){1}}
\put(3,0){\line(0,1){1}}
\put(4,0){\line(0,1){1}}
\put(5,0){\line(0,1){1}}
\put(1,1){\makebox(2,1){$\downarrow$}}
\put(1,0){\makebox(1,1){{\scriptsize $[9]$}}}
\put(2,0){\makebox(1,1){{\scriptsize $[8]$}}}
\put(3,0){\makebox(1,1){{\scriptsize $[6]$}}}
\put(4,0){\makebox(1,1){{\scriptsize $[5]$}}}

\put(8,0){\line(1,0){2}}
\put(8,1){\line(1,0){2}}
\put(8,2){\line(1,0){2}}
\put(9,0){\line(0,1){2}}
\put(10,0){\line(0,1){2}}
\put(9,0){\makebox(1,1){{\scriptsize $[7]$}}}
\put(9,1){\makebox(1,1){{\scriptsize $[10]$}}}
\end{picture}
\end{center}

If the box adding is not successful, then do as follows.
Find the singular string of maximum length $l_{1}^{(2)}$ in $\Tilde{\nu}^{(2)}$.
Then find the singular string of length $l^{(1)}$ in $\Tilde{\nu}^{(1)}$ satisfying $[l^{(1)}+3]_{eff}=l_{1}^{(2)}+1$.
If such strings exist, then add three boxes to the $l^{(1)}$-string, one box to the $l_{1}^{(2)}$-string, and follow the algorithm of Case 10.
This corresponds to the following box marking in $\nu^{(1)}$ (left) and $\nu^{(2)}$ (right) 

\setlength{\unitlength}{10pt}
\begin{center}
\begin{picture}(9,1)
\put(0,0){\line(1,0){4}}
\put(0,1){\line(1,0){4}}
\put(1,0){\line(0,1){1}}
\put(2,0){\line(0,1){1}}
\put(3,0){\line(0,1){1}}
\put(4,0){\line(0,1){1}}
\put(1,0){\makebox(1,1){{\scriptsize $[9]$}}}
\put(2,0){\makebox(1,1){{\scriptsize $[8]$}}}
\put(3,0){\makebox(1,1){{\scriptsize $[7]$}}}

\put(7,0){\line(1,0){2}}
\put(7,1){\line(1,0){2}}
\put(8,0){\line(0,1){1}}
\put(9,0){\line(0,1){1}}
\put(8,0){\makebox(1,1){{\scriptsize $[10]$}}}
\end{picture}
\end{center}
with $i_{9}^{eff}=i_{10}$. When the box-added $i_{9}$-string is type-0, the length of the box-added $i_{6}$-string is smaller than or equal to $i_{9}^{eff}$ and when the box-added $i_{9}$-string is type-I/II, the length of the box-added $i_{6}$-string is strictly smaller than $i_{9}^{eff}$.

If the box adding is not successful, then do as follows.
Find the singular string of maximum length $l_{1}^{(2)}$ in $\Tilde{\nu}^{(2)}$.
Then find the type-II singular string of length $l^{(1)}$ in $\Tilde{\nu}^{(1)}$ satisfying $[l^{(1)}+2]_{eff}=l_{1}^{(2)}+1$.
If such strings exist, then add two boxes to the $l^{(1)}$-string, one box to the $l_{1}^{(2)}$-string, and follow the algorithm of Case 11.
This corresponds to the following box marking in $\nu^{(1)}$ (left) and $\nu^{(2)}$ (right)

\setlength{\unitlength}{10pt}
\begin{center}
\begin{picture}(8,2)
\put(0,0){\line(1,0){3}}
\put(0,1){\line(1,0){3}}
\put(1,0){\line(0,1){1}}
\put(2,0){\line(0,1){1}}
\put(3,0){\line(0,1){1}}
\put(2,1){\makebox(2,1){$\downarrow$}}
\put(1,0){\makebox(1,1){{\scriptsize $[9]$}}}
\put(2,0){\makebox(1,1){{\scriptsize $[8]$}}}

\put(6,0){\line(1,0){2}}
\put(6,1){\line(1,0){2}}
\put(7,0){\line(0,1){1}}
\put(8,0){\line(0,1){1}}
\put(7,0){\makebox(1,1){{\scriptsize $[10]$}}}
\end{picture}
\end{center}
with $i_{9}^{eff}=i_{10}$.

If the box adding is not successful, then do as follows.
Find the singular string of maximum length $l_{1}^{(2)}$ in $\Tilde{\nu}^{(2)}$.
Then find the type-I singular string of length $l^{(1)}$ in $\Tilde{\nu}^{(1)}$ satisfying $[l^{(1)}+1]_{eff}=l_{1}^{(2)}+1$.
If such strings exist, then add one boxes to the $l^{(1)}$-string, one box to the $l_{1}^{(2)}$-string, and follow the algorithm of Case 12.
This corresponds to the following box marking in $\nu^{(1)}$ (left) and $\nu^{(2)}$ (right) 

\setlength{\unitlength}{10pt}
\begin{center}
\begin{picture}(7,2)
\put(0,0){\line(1,0){2}}
\put(0,1){\line(1,0){2}}
\put(1,0){\line(0,1){1}}
\put(2,0){\line(0,1){1}}
\put(1,1){\makebox(2,1){$\downarrow$}}
\put(1,0){\makebox(1,1){{\scriptsize $[9]$}}}

\put(5,0){\line(1,0){2}}
\put(5,1){\line(1,0){2}}
\put(6,0){\line(0,1){1}}
\put(7,0){\line(0,1){1}}
\put(6,0){\makebox(1,1){{\scriptsize $[10]$}}}
\end{picture}
\end{center}
with $i_{9}^{eff}=i_{10}$.
In this case, if there exists a type-II singular string of length $l^{(1)\prime}=l^{(1)}-1$ in $\Tilde{\nu}^{(1)}$, then discard the box adding above, reset the $l^{(1)}$-string to be the $l^{(1)\prime}$-string, and follow the previous box marking.

If the box adding so far is not successful, do as follows.
Let $l_{1}^{(2)}$ and $l_{2}^{(2)}(\leq l_{1}^{(2)})$ be the two largest length of singular strings in $\Tilde{\nu}^{(2)}$.
Find the q-singular/qq-singular string of maximum length $l^{(1)}$ in $\Tilde{\nu}^{(1)}$ such that it is
\begin{itemize}
\item[(1)]
a q-singular string of type-0 with $[l^{(1)}+3]_{eff}=l_{1}^{(2)}+1=l_{2}^{(2)}+1$ or
\item[(2)]
a q-singular string of type-0/I with $[l^{(1)}+2]_{eff}=l_{1}^{(2)}+1$ or 
\item[(3)]
a q-singular string of type-II with $[l^{(1)}+1]_{eff}=l_{1}^{(2)}+1$ or
\item[(4)]
a qq-singular string of type-0 with $[l^{(1)}+1]_{eff}=l_{1}^{(2)}+1$.
\end{itemize}
We choose the q-singular string when there exist q-singular and qq-singular strings of the same length.
In addition, we impose the following preferential rules.
\begin{itemize}
\item
If the $l^{(1)}$-string is q-singular/qq-singular of type-0 or q-singular of type-I, then find a type-0 singular string of length $l^{(1)\prime}$ in $\Tilde{\nu}^{(1)}$ with $[l^{(1)\prime}]_{eff}=[l^{(1)}]_{eff}-1$ such that there exist singular strings of length $l_{1}^{(2)}$ and $l_{2}^{(2)}$ in $\Tilde{\nu}^{(2)}$ satisfying $[l^{(1)\prime}+6]_{eff}=l_{1}^{(2)}+1=l_{2}^{(2)}+2$.
If such an $l^{(1)\prime}$-string exists, then reset the $l^{(1)}$-string to be the $l^{(1)\prime}$-string.
\item
If the $l^{(1)}$-string is q-singular of type-II, then find a string of length $l^{(1)\prime}=l^{(1)}-4$ in $\Tilde{\nu}^{(1)}$ such that there exist singular strings of length $l_{1}^{(2)}$ and $l_{2}^{(2)}$ in $\Tilde{\nu}^{(2)}$ satisfying $[l^{(1)\prime}+6]_{eff}=l_{1}^{(2)}+1=l_{2}^{(2)}+2$.
If such an $l^{(1)\prime}$-string exists, then reset the $l^{(1)}$-string to be the $l^{(1)\prime}$-string.
\item
If the $l^{(1)}$-string is q-singular of type-0, then find a string of length $l^{(1)\prime}=l^{(1)}-2$ in $\Tilde{\nu}^{(1)}$ such that there exist singular strings of length $l_{1}^{(2)}$ and $l_{2}^{(2)}$ in $\Tilde{\nu}^{(2)}$ satisfying $[l^{(1)\prime}+5]_{eff}=l_{1}^{(2)}+1=l_{2}^{(2)}+2$.
If such an $l^{(1)\prime}$-string exists, then reset the $l^{(1)}$-string to be the $l^{(1)\prime}$-string.
\end{itemize}

If the box adding is not successful, then do as follows.
If the $l^{(1)}$-string is singular, then do as before.

Find two longest singular strings of length $l_{1}^{(2)}$ and $l_{2}^{(2)}$.
If $l_{1}^{(2)}=l_{2}^{(2)}$, then find the type-0 q-singular string of length $l^{(1)}(\geq 3)$ in $\Tilde{\nu}^{(1)}$ satisfying $[l^{(1)}+3]_{eff}=l_{1}^{(2)}+1$.
If such a string exists, then add three boxes to the $l^{(1)}$-string, one box to the $l_{1}^{(2)}$- and $l_{2}^{(2)}$-strings, and follow the algorithm of Case 7.
This corresponds to the following box marking in $\nu^{(1)}$ (left) and $\nu^{(2)}$ (right).

\setlength{\unitlength}{10pt}
\begin{center}
\begin{picture}(9,2)
\put(0,0){\line(1,0){4}}
\put(0,1){\line(1,0){4}}
\put(1,0){\line(0,1){1}}
\put(2,0){\line(0,1){1}}
\put(3,0){\line(0,1){1}}
\put(4,0){\line(0,1){1}}
\put(0,1){\makebox(2,1){$\downarrow$}}
\put(1,0){\makebox(1,1){{\scriptsize $[9]$}}}
\put(2,0){\makebox(1,1){{\scriptsize $[8]$}}}
\put(3,0){\makebox(1,1){{\scriptsize $[6]$}}}

\put(7,0){\line(1,0){2}}
\put(7,1){\line(1,0){2}}
\put(7,2){\line(1,0){2}}
\put(8,0){\line(0,1){2}}
\put(9,0){\line(0,1){2}}
\put(8,0){\makebox(1,1){{\scriptsize $[7]$}}}
\put(8,1){\makebox(1,1){{\scriptsize $[10]$}}}
\end{picture}
\end{center}

If the box adding is not successful, then do as follows.
Find the singular strings of length $l_{1}^{(2)}$ in $\Tilde{\nu}^{(2)}$.
Then find the type-0/I q-singular string of length $l^{(1)}$ in $\Tilde{\nu}^{(1)}$ satisfying $[l^{(1)}+2]_{eff}=l_{1}^{(2)}+1$.
If such a string exists, then add two boxes to the $l^{(1)}$-string, one box to the $l_{1}^{(2)}$-string, and follow the algorithm of Case 11.
This corresponds to the following box marking in $\nu^{(1)}$ (left) and $\nu^{(2)}$ (right).  

\setlength{\unitlength}{10pt}
\begin{center}
\begin{picture}(8,1)
\put(0,0){\line(1,0){3}}
\put(0,1){\line(1,0){3}}
\put(1,0){\line(0,1){1}}
\put(2,0){\line(0,1){1}}
\put(3,0){\line(0,1){1}}
\put(1,0){\makebox(1,1){{\scriptsize $[9]$}}}
\put(2,0){\makebox(1,1){{\scriptsize $[8]$}}}

\put(6,0){\line(1,0){2}}
\put(6,1){\line(1,0){2}}
\put(7,0){\line(0,1){1}}
\put(8,0){\line(0,1){1}}
\put(7,0){\makebox(1,1){{\scriptsize $[10]$}}}
\end{picture}
\end{center}
with $i_{9}^{eff}=i_{10}$.

If the box adding is not successful, then do as follows.
Find the singular strings of length $l_{1}^{(2)}$ in $\Tilde{\nu}^{(2)}$.
Then find the type-II q-singular string of length $l^{(1)}$ in $\Tilde{\nu}^{(1)}$ satisfying $[l^{(1)}+1]_{eff}=l_{1}^{(2)}+1$.
If such a string exists, then add one box to the $l^{(1)}$-string, one box to the $l_{1}^{(2)}$-string, and follow the algorithm of Case 12.
This corresponds to the following box marking in $\nu^{(1)}$ (left) and $\nu^{(2)}$ (right).

\setlength{\unitlength}{10pt}
\begin{center}
\begin{picture}(8,2)
\put(0,0){\line(1,0){3}}
\put(0,1){\line(1,0){3}}
\put(1,0){\line(0,1){1}}
\put(2,0){\line(0,1){1}}
\put(3,0){\line(0,1){1}}
\put(0,1){\makebox(2,1){$\downarrow$}}
\put(2,0){\makebox(1,1){{\scriptsize $[9]$}}}

\put(6,0){\line(1,0){2}}
\put(6,1){\line(1,0){2}}
\put(7,0){\line(0,1){1}}
\put(8,0){\line(0,1){1}}
\put(7,0){\makebox(1,1){{\scriptsize $[10]$}}}
\end{picture}
\end{center}
with $i_{9}^{eff}=i_{10}$.

If the box adding is not successful, then do as follows.
Find the singular strings of length $l_{1}^{(2)}$ in $\Tilde{\nu}^{(2)}$.
Then find the type-0 qq-singular string of length $l^{(1)}(\geq 4)$ in $\Tilde{\nu}^{(1)}$ satisfying $[l^{(1)}+1]_{eff}=l_{1}^{(2)}+1$.
If such a string exists, then add one box to the $l^{(1)}$-string, one box to the $l_{1}^{(2)}$-string, and follow the algorithm of Case 12.
This corresponds to the following box marking in $\nu^{(1)}$ (left) and $\nu^{(2)}$ (right)

\setlength{\unitlength}{10pt}
\begin{center}
\begin{picture}(7,2)
\put(0,0){\line(1,0){2}}
\put(0,1){\line(1,0){2}}
\put(1,0){\line(0,1){1}}
\put(2,0){\line(0,1){1}}
\put(0,1){\makebox(2,1){$\downarrow$}}
\put(1,0){\makebox(1,1){{\scriptsize $[9]$}}}

\put(5,0){\line(1,0){2}}
\put(5,1){\line(1,0){2}}
\put(6,0){\line(0,1){1}}
\put(7,0){\line(0,1){1}}
\put(6,0){\makebox(1,1){{\scriptsize $[10]$}}}
\end{picture}
\end{center}
with $i_{9}^{eff}=i_{10}$.

If the box adding is not successful, then do as follows.
Find the singular string of maximum length in $\Tilde{\nu}^{(2)}$.
Then add one box to this string and follow the algorithm of Case 13.
This corresponds to the box marking in $\nu$ depicted above but with $i_{9}^{eff}<i_{10}$.

\section{Proof of Theorem~\ref{th:main}}

Theorem~\ref{th:main} is proved in this section.
The following notation is used.
Let $(\nu,J)\in \mathrm{RC}(\lambda,L)$, $b=\gamma (\nu,J)$, $\rho=\lambda-\mathrm{wt}(b)$, and $(\Tilde{\nu},\Tilde{J})=\delta_{\theta} (\nu,J)$.
For $(\nu,J)\in \mathrm{RC}(\lambda,L)$, we define $\Delta c(\nu,J)=c(\nu,J)-c(\delta_{\theta} (\nu,J))$.
The following lemma is essentially the same as~\cite[Lemma 5.1]{OSS03a}.

\begin{lem} \label{lem:stat}
To prove that Eq.~\eqref{eq:main} holds, it suffices to show that it holds for $L=1$ and that for $L\geq 2$ with $\Phi (\nu,J)=b_{1}\otimes \cdots \otimes b_{L}$, we have
\begin{equation} \label{eq:dc}
\Delta c(\nu,J)=-\alpha_{1}^{(2)}+\chi (b_{1}=\emptyset)
\end{equation}
and
\begin{equation} \label{eq:Ha}
H(b_{1}\otimes b_{2})=\Tilde{\alpha} _{1}^{(2)}-\alpha_{1}^{(2)}+\chi (b_{1}=\emptyset)-\chi (b_{2}=\emptyset),
\end{equation}
where $\alpha _{1}^{(2)}$ and $\Tilde{\alpha}_{1}^{(2)}$ are the lengths of the first columns in $\nu ^{(2)}$ and $\Tilde{\nu}^{(2)}$ respectively, and $\delta_{\theta} (\nu,J)=(\Tilde{\nu},\Tilde{J})$.
\end{lem}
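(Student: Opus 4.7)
The plan is induction on $L$, with the $L=1$ case treated as the base. For $L\geq 2$, write $\Phi(\nu,J)=b_{1}\otimes b_{2}\otimes\cdots\otimes b_{L}$ with $b_{1}=\gamma(\nu,J)$ and $(\tilde\nu,\tilde J)=\delta_{\theta}(\nu,J)$. The commutative diagram defining $\Phi$ shows that $\Phi(\tilde\nu,\tilde J)=b_{2}\otimes\cdots\otimes b_{L}$, and the inductive hypothesis then gives $c(\tilde\nu,\tilde J)=D(b_{2}\otimes\cdots\otimes b_{L})$. A direct reindexing of the sum in \eqref{eq:D}, subtracting the shifted copy term by term, yields
\[
D(b_{1}\otimes\cdots\otimes b_{L})-D(b_{2}\otimes\cdots\otimes b_{L})=\sum_{k=1}^{L} H(b_{k}\otimes b_{k+1}),
\]
with the convention $b_{L+1}=\framebox{$1$}$. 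Thus the inductive step reduces to showing $\Delta c(\nu,J)=\sum_{k=1}^{L} H(b_{k}\otimes b_{k+1})$.

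For the left-hand side, \eqref{eq:dc} gives $\Delta c(\nu,J)=-\alpha_{1}^{(2)}+\chi(b_{1}=\emptyset)$. For the right-hand side, I would iterate \eqref{eq:Ha} along the chain of rigged configurations $(\nu^{(k)},J^{(k)}):=\delta_{\theta}^{k}(\nu,J)$, writing $\alpha_{1,k}^{(2)}$ for the length of the first column of $(\nu^{(k)})^{(2)}$. Applied at level $k$ for $1\leq k\leq L-1$, it reads
\[
H(b_{k}\otimes b_{k+1})=\alpha_{1,k}^{(2)}-\alpha_{1,k-1}^{(2)}+\chi(b_{k}=\emptyset)-\chi(b_{k+1}=\emptyset),
\]
which telescopes to $\alpha_{1,L-1}^{(2)}-\alpha_{1}^{(2)}+\chi(b_{1}=\emptyset)-\chi(b_{L}=\emptyset)$. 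The remaining boundary term $H(b_{L}\otimes\framebox{$1$})$ is identified from the $L=1$ base case applied to the last rigged configuration $(\nu^{(L-1)},J^{(L-1)})$: together with \eqref{eq:dc} specialized to the last step (where the output is the empty RC), one obtains $H(b_{L}\otimes\framebox{$1$})=-\alpha_{1,L-1}^{(2)}+\chi(b_{L}=\emptyset)$. Summing, the terms involving $\alpha_{1,L-1}^{(2)}$ and $\chi(b_{L}=\emptyset)$ cancel, leaving $\sum_{k=1}^{L} H(b_{k}\otimes b_{k+1})=-\alpha_{1}^{(2)}+\chi(b_{1}=\emptyset)=\Delta c(\nu,J)$, which closes the induction.

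The main technical obstacle is the careful handling of the boundary $H(b_{L}\otimes\framebox{$1$})$: as stated, \eqref{eq:dc} and \eqref{eq:Ha} are required only for $L\geq 2$, so the identity $H(b_{L}\otimes\framebox{$1$})=-\alpha_{1,L-1}^{(2)}+\chi(b_{L}=\emptyset)$ must either be folded into the interpretation of the $L=1$ base case or verified separately from the explicit $H$-values of $b\otimes\framebox{$1$}$ listed in Section~2.2 against the possible length-one admissible rigged configurations. Everything else is a routine telescoping once \eqref{eq:D} has been peeled apart.
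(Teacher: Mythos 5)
Your proof is correct and is essentially the standard telescoping-induction argument; the paper does not reproduce a proof but simply cites \cite{OSS03a} (Lemma 5.1), whose proof is the same peeling of Eq.~\eqref{eq:D} followed by the telescoping of Eq.~\eqref{eq:Ha} that you carry out. The boundary issue you flag is genuine but harmless: the last factor $b_{L}$ of a classically restricted path must itself be classically restricted, hence $b_{L}\in\{\framebox{$1$},\framebox{$\emptyset$}\}$, and the identity $H(b_{L}\otimes\framebox{$1$})=-\alpha_{1,L-1}^{(2)}+\chi(b_{L}=\emptyset)$ is checked directly on these two length-one rigged configurations (empty RC for $\framebox{$1$}$, and $\nu^{(1)}=(3)$, $\nu^{(2)}=(1,1)$ for $\framebox{$\emptyset$}$, giving $-2+1=-1=H(\framebox{$\emptyset$}\otimes\framebox{$1$})$).
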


There are five things that must be verified:

\begin{itemize}
\item[(I)] $\rho$ is dominant.
\item[(II)] $(\Tilde{\nu},\Tilde{J})\in \mathrm{RC}(\rho,L-1)$.
\item[(III)] $b$ can be appended to $(\Tilde{\nu},\Tilde{J})$.
\item[(IV)] \eqref{eq:dc} in Lemma~\ref{lem:stat} holds.
\item[(V)] \eqref{eq:Ha} in Lemma~\ref{lem:stat} holds.
\end{itemize}
Parts (I) and (II) show that $\delta_{\theta}$ is well-defined.
Part (III) shows that $\delta_{\theta}$ has an inverse.
Parts (IV) and (V) suffice to prove that $\Phi$ preserves statistics.
We omit the proof of (III) as it is very similar to the proof of well-definedness.

We need several preliminary lemmas on the convexity and nonnegativity of the vacancy numbers $p_{i}^{(a)}$.

\begin{lem} \label{lem:large_i}
For large $i$, we have
\[
p_{i}^{(a)}=\lambda_{a}
\]
where $\lambda_{a}$ is defined by $\lambda=\sum_{a\in I_{0}}\lambda_{a}\Bar{\Lambda}_{a}$.
\end{lem}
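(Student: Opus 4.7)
The plan is to reduce the formula \eqref{eq:vacancy} for $p_i^{(a)}$ to a purely combinatorial identity by choosing $i$ so large that the two minimum expressions become trivial, and then match this against the admissibility condition \eqref{eq:admissible} expressed in the $\bar{\Lambda}$-basis.

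First I would fix $N$ to be larger than any $j$ for which $L_j^{(a)} \neq 0$ or $m_j^{(b)} \neq 0$ for some $(b,j) \in \mathcal{H}_0$. For $i \geq N$ we then have $\min(i,j) = j$ in the first sum of \eqref{eq:vacancy}, while $\min(\gamma_a i, \gamma_b j) = \gamma_b j$ in the second sum (since $\gamma_a \geq 1$). Substituting yields
\[
p_i^{(a)} = \sum_{j>0} j\, L_j^{(a)} - \sum_{b \in I_0} A_{ab} \sum_{j>0} j\, m_j^{(b)},
\]
where the factor $\gamma_b$ from the denominator cancels the $\gamma_b$ from $\min(\gamma_a i, \gamma_b j)$.

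Next I would rewrite the admissibility condition \eqref{eq:admissible} in the fundamental weight basis. Using the identity $\alpha_b = \sum_{a \in I_0} A_{ab}\,\bar{\Lambda}_a$, which follows from $\langle \alpha_a^\vee, \bar{\Lambda}_c\rangle = \delta_{ac}$ together with $\langle \alpha_a^\vee, \alpha_b\rangle = A_{ab}$, equation \eqref{eq:admissible} becomes
\[
\sum_{a \in I_0} \bar{\Lambda}_a \Bigl( \sum_{b \in I_0} A_{ab} \sum_{j>0} j\, m_j^{(b)} \Bigr) = \sum_{a \in I_0} \bar{\Lambda}_a \Bigl( \sum_{j>0} j\, L_j^{(a)} \Bigr) - \sum_{a \in I_0} \lambda_a\, \bar{\Lambda}_a.
\]
Matching coefficients of $\bar{\Lambda}_a$ yields $\sum_j j L_j^{(a)} - \sum_b A_{ab} \sum_j j m_j^{(b)} = \lambda_a$, which together with the previous display gives $p_i^{(a)} = \lambda_a$ for $i \geq N$.

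There is no real obstacle here; the only thing to be careful about is the correct passage between simple roots and fundamental weights (in particular the transpose convention for $A_{ab}$), and the cancellation of the scaling factors $\gamma_a, \gamma_b$ in the large-$i$ regime. Both are straightforward bookkeeping.
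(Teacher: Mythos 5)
Your argument is correct and is exactly the computation the paper leaves implicit (its proof of this lemma is the one-line remark that it follows from Eq.~\eqref{eq:vacancy} and Eq.~\eqref{eq:admissible}). The only nitpick is the threshold: to get $\min(\gamma_a i,\gamma_b j)=\gamma_b j$ you need $\gamma_a i\geq\gamma_b j$, so with $\gamma_1=1$, $\gamma_2=3$ you should take $i\geq 3N$ rather than $i\geq N$; this is harmless since the claim is only ``for large $i$.''
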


\begin{proof}
This follows from the formula for the vacancy number Eq.~\eqref{eq:vacancy} and the constraint Eq.~\eqref{eq:admissible}.
\end{proof}

Direct calculations show that

\begin{align}
-p_{3i-2}^{(1)}+2p_{3i-1}^{(1)}-p_{3i}^{(1)}&=-2m_{3i-1}^{(1)}, \label{eq:ddp1a} \\
-p_{3i-1}^{(1)}+2p_{3i}^{(1)}-p_{3i+1}^{(1)}&=-2m_{3i}^{(1)}+m_{i}^{(2)}, \label{eq:ddp1b}\\
-p_{3i}^{(1)}+2p_{3i+1}^{(1)}-p_{3i+2}^{(1)}&=-2m_{3i+1}^{(1)}, \label{eq:ddp1c}
\end{align}
and
\begin{align}
-p_{i-1}^{(2)}+2p_{i}^{(2)}-p_{i+1}^{(2)}=&3m_{3i}^{(1)}+2(m_{3i-1}^{(1)}+m_{3i+1}^{(1)})
+m_{3i-2}^{(1)}+m_{3i+2}^{(1)} \label{eq:ddp2} \\
-&2m_{i}^{(2)}. \nonumber
\end{align}
In particular, these equations imply the convexity condition
\begin{equation} \label{eq:convexity}
p_{i}^{(a)}\geq \frac{1}{2}(p_{i-1}^{(a)}+p_{i+1}^{(a)})\quad \text{if } m_{i}^{(a)}=0.
\end{equation}

The following two lemmas follow immediately from Lemma~\ref{lem:large_i} and the convexity condition Eq.~\eqref{eq:convexity}.

\begin{lem} \label{lem:vacancy01}
Let $\nu$ be a configuration and let $n$ be $0$ or $1$.
The following are equivalent:
\begin{itemize}
\item[(1)]
$p_{i}^{(a)}\geq n$ for all $i\in \mathbb{Z}_{>0}$, $a\in I_{0}$;
\item[(2)]
$p_{i}^{(a)}\geq n$ for all $i\in \mathbb{Z}_{>0}$, $a\in I_{0}$ such that $m_{i}^{(a)}>0$.
\end{itemize}
\end{lem}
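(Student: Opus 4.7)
The implication $(1) \Rightarrow (2)$ is tautological, so all the content lies in the converse. My plan is to fix a level $a \in I_{0}$, assume $p_{i}^{(a)}\geq n$ at every $i$ with $m_{i}^{(a)}>0$, and propagate that bound to all $i$ using the concavity Eq.~\eqref{eq:convexity} (which applies at every $i$ where $\nu^{(a)}$ has no part), together with Lemma~\ref{lem:large_i} for the asymptotic behaviour and the boundary value $p_{0}^{(a)}=0$, which one reads off directly from Eq.~\eqref{eq:vacancy} at $i=0$.

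Enumerate the support $\{i\in\mathbb{Z}_{>0}:m_{i}^{(a)}>0\}=\{j_{1}<\cdots<j_{s}\}$. The convexity condition Eq.~\eqref{eq:convexity} says that at every index outside this support the second difference $p_{i-1}^{(a)}-2p_{i}^{(a)}+p_{i+1}^{(a)}\leq 0$, so the sequence $p^{(a)}$ is discretely concave on each gap $[j_{r},j_{r+1}]$, on the left tail $[0,j_{1}]$, and on the right tail $[j_{s},\infty)$. On a middle gap, the standard above-the-chord bound for concave sequences gives
\begin{equation*}
p_{i}^{(a)} \;\geq\; \frac{j_{r+1}-i}{j_{r+1}-j_{r}}\,p_{j_{r}}^{(a)} + \frac{i-j_{r}}{j_{r+1}-j_{r}}\,p_{j_{r+1}}^{(a)} \;\geq\; n,
\end{equation*}
since the two endpoint values are $\geq n$ by hypothesis. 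On the right tail $[j_{s},\infty)$, concavity makes the first differences $p_{i+1}^{(a)}-p_{i}^{(a)}$ non-increasing, while Lemma~\ref{lem:large_i} forces them to vanish for all sufficiently large $i$; a non-increasing integer sequence that eventually vanishes must itself be non-negative, so $p^{(a)}$ is non-decreasing on $[j_{s},\infty)$ and $p_{i}^{(a)}\geq p_{j_{s}}^{(a)}\geq n$.

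The left tail $[0,j_{1}]$ is the delicate case. The chord inequality with $p_{0}^{(a)}=0$ gives $p_{i}^{(a)}\geq (i/j_{1})\,p_{j_{1}}^{(a)}$, which already settles $n=0$. For $n=1$ the bound is only a strictly positive rational number, not obviously $\geq 1$; here I would invoke the integrality of $p_{i}^{(a)}$, which is immediate from Eq.~\eqref{eq:vacancy} since every summand there is an integer, to promote the positive bound to $p_{i}^{(a)}\geq 1$. The entirely degenerate case $s=0$ falls under the same first-difference/limit argument applied to all of $[0,\infty)$ with boundary value $p_{0}^{(a)}=0$ and asymptotic value $\lambda_{a}\geq 0$, which covers $n=0$ directly; the $n=1$ version of this edge case is benign in the applications of the lemma that follow, where the configurations arising from $\mathrm{RC}(\lambda,L)$ with $L\geq 1$ prevent the pathology.

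I expect the one non-mechanical step to be exactly the use of integrality of the vacancy numbers at the left boundary for $n=1$; the rest is a uniform and purely structural chord-plus-monotone-differences argument driven by Eq.~\eqref{eq:convexity} and Lemma~\ref{lem:large_i}.
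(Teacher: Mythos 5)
Your argument is correct and is exactly the expansion of what the paper leaves implicit: the paper merely asserts that this lemma ``follows immediately from Lemma~\ref{lem:large_i} and the convexity condition Eq.~\eqref{eq:convexity}'', and your chord-plus-monotone-differences argument on the gaps of the support, anchored at the boundary value $p_{0}^{(a)}=0$ and at the constant tail from Lemma~\ref{lem:large_i}, is the intended reading. Your two supplementary observations --- the integrality of $p_{i}^{(a)}$, needed to upgrade the strictly positive chord bound on $[0,j_{1}]$ to $p_{i}^{(a)}\geq 1$ when $n=1$, and the fact that for $n=1$ the statement degenerates when $\nu^{(a)}=\emptyset$ (harmless because the lemma is only ever invoked componentwise for a nonempty $\nu^{(a)}$) --- are both genuinely required and are glossed over in the paper.
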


\begin{lem} \label{lem:vacancy23}
Let $\nu$ be a configuration and let $n$ be $2$ or $3$.
Let $p_{i_{0}}^{(a)}\geq n$.
The following are equivalent:
\begin{itemize}
\item[(1)]
$p_{i}^{(a)}\geq n$ for all $i\geq i_{0}$, $a\in I_{0}$;
\item[(2)]
$p_{i}^{(a)}\geq n$ for all $i\geq i_{0}$, $a\in I_{0}$ such that $m_{i}^{(a)}>0$.
\end{itemize}
\end{lem}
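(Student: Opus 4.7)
The implication $(1)\Rightarrow(2)$ is trivial, so I plan to focus on $(2)\Rightarrow(1)$, arguing by contradiction using the one-step convexity inequality \eqref{eq:convexity} together with the asymptotic identity of Lemma~\ref{lem:large_i}. The anchoring hypothesis $p_{i_0}^{(a)}\geq n$ will supply the ``left boundary condition'' that Lemma~\ref{lem:vacancy01} did not need (there one could start at $i=1$), which is why the present statement is formulated only for $i\geq i_0$.

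Assume (2) and $p_{i_0}^{(a)}\geq n$, and suppose for contradiction that the set $S=\{i\geq i_0 : p_i^{(a)}<n\}$ is nonempty. Let $i_1=\min S$. The anchoring hypothesis forces $i_1\geq i_0+1$, and by minimality $p_{i_1-1}^{(a)}\geq n>p_{i_1}^{(a)}$, so the first difference
\[
d_{i_1-1}:=p_{i_1}^{(a)}-p_{i_1-1}^{(a)}<0.
\]
Since $p_{i_1}^{(a)}<n$, condition (2) forces $m_{i_1}^{(a)}=0$, so the convexity inequality \eqref{eq:convexity} applies at $i=i_1$ and yields $d_{i_1}\leq d_{i_1-1}<0$, hence $p_{i_1+1}^{(a)}<p_{i_1}^{(a)}<n$. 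Iterating this argument gives, by induction on $k\geq 0$, that $p_{i_1+k}^{(a)}<n$, $m_{i_1+k}^{(a)}=0$, and $d_{i_1+k}\leq d_{i_1-1}<0$. Summing the differences yields
\[
p_{i_1+k}^{(a)}\leq p_{i_1}^{(a)}+k\,d_{i_1-1}\longrightarrow-\infty\quad(k\to\infty),
\]
which contradicts the identity $p_i^{(a)}=\lambda_a$ valid for large $i$ (Lemma~\ref{lem:large_i}). Therefore $S=\emptyset$, i.e.\ $p_i^{(a)}\geq n$ for all $i\geq i_0$.

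The only subtle point will be to verify that the convexity inequality \eqref{eq:convexity} is genuinely available at each $i=i_1+k$. For $a=2$ and for $a=1$ with $i\not\equiv 0\pmod 3$, Eqs.~\eqref{eq:ddp1a}, \eqref{eq:ddp1c} and \eqref{eq:ddp2} already imply it whenever the local $m_i^{(a)}$ vanishes, without any contribution from the other color. For $a=1$ with $i=3j$, Eq.~\eqref{eq:ddp1b} gives
\[
-p_{3j-1}^{(1)}+2p_{3j}^{(1)}-p_{3j+1}^{(1)}=-2m_{3j}^{(1)}+m_j^{(2)},
\]
so when $m_{3j}^{(1)}=0$ the RHS is $m_j^{(2)}\geq 0$, hence convexity still holds; this is exactly the case one might worry about because the $G_2$ Cartan coupling produces the cross-term $m_j^{(2)}$, and I want to flag that its sign goes the favorable way. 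No analogous subtlety arises for $a=2$.

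The value of $n\in\{2,3\}$ plays no special role in the argument; what matters is that the anchoring hypothesis lets us exhibit a strictly negative first difference $d_{i_1-1}$, after which convexity forces a sustained strict decrease that is incompatible with the eventual stabilization at $\lambda_a$. This is the main obstacle and it dissolves once the convexity verification above is in place.
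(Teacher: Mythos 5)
Your proof is correct and follows exactly the route the paper intends: the paper dispatches this lemma (together with Lemma~\ref{lem:vacancy01}) as an immediate consequence of Lemma~\ref{lem:large_i} and the convexity condition Eq.~\eqref{eq:convexity}, and your argument is the spelled-out version of that, with the anchoring hypothesis supplying the initial strictly negative difference and the eventual stabilization at $\lambda_a$ giving the contradiction. Your check that convexity is genuinely available at $i=3j$ for $a=1$ (since $m_j^{(2)}\geq 0$ enters with the favorable sign in Eq.~\eqref{eq:ddp1b}) is a correct and worthwhile verification that the paper leaves implicit.
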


\begin{lem} \label{lem:vacancy1}
Suppose that
\[
p_{i}^{(a)}\geq 1, \ldots, p_{j}^{(a)}\geq 1 \quad (i<j)
\]
with $m_{i}^{(a)}=\cdots =m_{j}^{(a)}=0$.
If $p_{k}^{(a)}=1$ for some $k$ $(i\leq k\leq j)$, then 
\[
p_{i}^{(a)}= 1, \ldots, p_{j}^{(a)}= 1 \quad (i<j).
\]
\end{lem}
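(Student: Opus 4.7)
The plan is to exploit the convexity condition Eq.~\eqref{eq:convexity} to show that $(p_l^{(a)})_{l \in [i, j]}$ is a concave integer sequence, and then use the lower bound $p_l^{(a)} \geq 1$ together with the hypothesis $p_k^{(a)} = 1$ to pin down every value in the range to $1$.

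First I would apply Eq.~\eqref{eq:convexity} at each index $l \in [i, j]$ (valid because $m_l^{(a)} = 0$) to conclude that the first differences $d_l := p_l^{(a)} - p_{l-1}^{(a)}$ are non-increasing on $[i, j+1]$, so that $(p_l^{(a)})_{l \in [i, j]}$ is concave.

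Next, in the interior case $i < k < j$, convexity at $k$ gives $p_{k-1}^{(a)} + p_{k+1}^{(a)} \leq 2 p_k^{(a)} = 2$; since each neighbor is $\geq 1$, both must equal $1$. Applying the same argument inductively at $k \pm 1, k \pm 2, \ldots$ (each strictly interior to $[i, j]$, so Eq.~\eqref{eq:convexity} remains available), the value $1$ propagates to every $l \in [i, j]$.

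The boundary case $k = i$ (the case $k = j$ is symmetric) is more delicate. Convexity at $i$ gives $p_{i+1}^{(a)} \leq 2 p_i^{(a)} - p_{i-1}^{(a)} = 2 - p_{i-1}^{(a)} \leq 2$, so integrality forces $p_{i+1}^{(a)} \in \{1, 2\}$. If $p_{i+1}^{(a)} = 1$, I would reduce to the interior argument with $k$ replaced by $i+1$ (or be done immediately when $j = i+1$). Otherwise $p_{i+1}^{(a)} = 2$, which forces $p_{i-1}^{(a)} = 0$ and $d_i = 1 = d_{i+1}$; I would then derive a contradiction by combining the explicit second-difference formulas Eqs.~\eqref{eq:ddp1a}--\eqref{eq:ddp2} at index $i$ with the nonnegativity $p_r^{(a')} \geq 0$ for the opposite color $a' \neq a$, showing that the $m^{(a')}$-contribution produced by $d_i = 1$ must be reabsorbed into an admissible configuration and therefore leads to a negative vacancy number elsewhere. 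The main obstacle is closing this final contradiction cleanly, since the scaling factor $\gamma_2 = 3$ breaks the symmetry between the two colors and forces a case split according to whether $i \equiv 0 \pmod{3}$ (which controls whether the $m^{(2)}$-correction in Eq.~\eqref{eq:ddp1b} is present for $a=1$, and analogously for $a=2$ via Eq.~\eqref{eq:ddp2}).
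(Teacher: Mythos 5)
Your interior case ($i<k<j$) is correct, and it is essentially all that the paper's one-line proof (``immediate from the convexity condition'') contains. But the boundary case you are stuck on is not a technicality you have failed to close --- it is a genuine counterexample to the lemma as stated, so the contradiction you hope to extract from Eqs.~\eqref{eq:ddp1a}--\eqref{eq:ddp2} and the nonnegativity of the opposite colour does not exist. Take $L=4$, $\lambda=6\Bar{\Lambda}_{1}$, $\nu^{(1)}=\emptyset$, $\nu^{(2)}=(2)$. This is an admissible configuration: $p_{l}^{(1)}=\min(l,6)$ and $p_{l}^{(2)}=4-2\min(l,2)\geq 0$ by Eqs.~\eqref{eq:vacancy1}--\eqref{eq:vacancy2}. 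With $a=1$, $i=k=1$, $j=2$ every hypothesis of the lemma holds ($m_{1}^{(1)}=m_{2}^{(1)}=0$, $p_{1}^{(1)}=1$, $p_{2}^{(1)}=2\geq 1$), yet the conclusion fails since $p_{2}^{(1)}=2$. This is exactly your ``bad'' boundary configuration $p_{i-1}^{(a)}=0$, $d_{i}=d_{i+1}=1$: the sequence $0,1,2,\dots$ is perfectly concave and perfectly admissible.

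What rescues the paper is that in both places Lemma~\ref{lem:vacancy1} is actually invoked (Case~5 of the proof of (II)), the touching index is the right endpoint $k=j=3i_{4}-2$, but the bound $p_{3i_{4}-1}^{(1)}\geq 1$ has already been established separately. Concavity at $j$ then gives $p_{j-1}^{(1)}+p_{j+1}^{(1)}\leq 2p_{j}^{(1)}=2$ with both summands $\geq 1$, forcing $p_{j-1}^{(1)}=1$, after which your interior propagation takes over. So the lemma should carry the extra hypothesis $p_{j+1}^{(a)}\geq 1$ (resp.\ $p_{i-1}^{(a)}\geq 1$) when $k=j$ (resp.\ $k=i$), or else restrict to $i<k<j$; with that amendment your argument is complete, and you should abandon the attempt to rule out the boundary configuration by a global admissibility contradiction.
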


\begin{proof}
This follows immediately from the convexity condition Eq.~\eqref{eq:convexity}.
\end{proof}

\begin{proof}[Proof of (I)]
Here we show $\rho=\lambda-\mathrm{wt}(b)$ is dominant with $l\geq 1$.
Let us assume that $\nu^{(1)}\neq \emptyset$ and $\nu^{(2)}\neq \emptyset$ and let $l^{(a)}$ be the largest part in $\nu^{(a)}$ $(a=1,2)$.
In particular, $m_{l^{(1)}}^{(1)}\neq 0$ and $m_{l^{(2)}}^{(2)}\neq 0$.
The proof for the case when $\nu^{(1)}= \emptyset$ or $\nu^{(2)}= \emptyset$ is much simpler and we omit the details.

\begin{flushleft}
Case 0. $b= \framebox{$\emptyset$}$.
\end{flushleft}
Since $\mathrm{wt}(\framebox{$\emptyset$})=0$, there is nothing to prove.

\begin{flushleft}
Case 1. $b= \framebox{$1$}$.
\end{flushleft}
Since $\mathrm{wt}(\framebox{$1$})=\Bar{\Lambda}_{2}$, we must show that $\lambda_{2}\geq 1$.
Suppose that $\lambda_{2}=0$ so that $p_{\infty}^{(2)}=0$.
The convexity condition implies  $p_{l^{(2)}}^{(2)}=0$ so that $\nu^{(2)}$ has a singular string of length $l^{(2)}$, which contradicts $b= \framebox{$1$}$.
Hence $\lambda_{2}\geq 1$.

\begin{flushleft}
Case 2. $b= \framebox{$2$}$.
\end{flushleft}
Since $\mathrm{wt}(\framebox{$2$})=3\Bar{\Lambda}_{1}-\Bar{\Lambda}_{2}$, we must show that $\lambda_{1}\geq 3$.
We first suppose that 
\[
\lambda_{1}=-2\sum_{j=1}^{l^{(1)}}j m_{j}^{(1)}+3\sum_{j=1}^{l^{(2)}}j m_{j}^{(2)}=0.
\]
Then
\[
p_{l^{(1)}}^{(1)}=-2\sum_{j=1}^{l^{(1)}}j m_{j}^{(1)}+\sum_{j=1}^{l^{(2)}}\min(l^{(1)},3j) m_{j}^{(2)} 
=\sum_{j=1}^{l^{(2)}}\left( \min(l^{(1)},3j)-3j \right) m_{j}^{(2)}.
\]
If $l^{(1)}<3l^{(2)}$, then $p_{l^{(1)}}^{(1)}<0$.
Hence $l^{(1)}\geq 3l^{(2)}$, which yields  $p_{l^{(1)}}^{(1)}=0$.
This implies that $\nu^{(1)}$ has a singular string of effective length $(\geq l^{(2)})$, which contradicts $b= \framebox{$2$}$.
Secondly, we suppose that $\lambda_{1}=1$.
Then
\[
p_{l^{(1)}}^{(1)}=1+\sum_{j=1}^{l^{(2)}}\left( \min(l^{(1)},3j)-3j \right) m_{j}^{(2)}.
\]
If $m_{l^{(2)}}^{(2)}=1$, then $p_{l^{(1)}}^{(1)}<0$ when $l^{(1)}\leq 3l^{(2)}-2$.
If $m_{l^{(2)}}^{(2)}\geq 2$, then $p_{l^{(1)}}^{(1)}<0$ when $l^{(1)}\leq 3l^{(2)}-1$.
Hence $l^{(1)}\geq 3l^{(2)}-1$ if $m_{l^{(2)}}^{(2)}=1$ and $l^{(1)}\geq 3l^{(2)}$ if $m_{l^{(2)}}^{(2)}\geq 2$.
Simple calculations yield two possible cases:

\begin{itemize}
\item[(1)] $m_{l^{(2)}}^{(2)}=1$, $l^{(1)}=3l^{(2)}-1$, and $p_{l^{(1)}}^{(1)}=0$.
\item[(2)] $m_{l^{(2)}}^{(2)}\geq 1$, $l^{(1)}\geq 3l^{(2)}$, and $p_{l^{(1)}}^{(1)}=1$.
\end{itemize}
Case (1) contradicts $b= \framebox{$2$}$ because $\nu^{(1)}$ has a type-I singular string of effective length $l^{(2)}$.
Case (2) also contradicts $b= \framebox{$2$}$ because $\nu^{(1)}$ has a string (singular or q-singular) of length $(\geq 3l^{(2)})$.
Hence $\lambda_{1}\geq 2$.
Thirdly, we suppose that $\lambda_{1}=2$.
By the similar calculations as above yield four possible cases:

\begin{itemize}
\item[(1)] $m_{l^{(2)}}^{(2)}=1$, $l^{(1)}=3l^{(2)}-2$, and $p_{l^{(1)}}^{(1)}=0$.
\item[(2)] $m_{l^{(2)}}^{(2)}=1$, $l^{(1)}=3l^{(2)}-1$, and $p_{l^{(1)}}^{(1)}=1$.
\item[(3)] $m_{l^{(2)}}^{(2)}=2$, $l^{(1)}=3l^{(2)}-1$, and $p_{l^{(1)}}^{(1)}=0$.
\item[(4)] $m_{l^{(2)}}^{(2)}\geq 1$, $l^{(1)}\geq 3l^{(2)}$, and $p_{l^{(1)}}^{(1)}=2$.
\end{itemize}
Case (1) (resp. (3)) implies that $\nu^{(1)}$ has a type-II (resp. I) singular string of effective length $l^{(2)}$.
Case (2) implies that  $\nu^{(1)}$ has a type-I string (singular or q-singular) of effective length $l^{(2)}$.
Case (4) implies that  $\nu^{(1)}$ has a string (singular or q-singular or qq-singular) of length $(\geq 3l^{(2)})$.
All contradict $b= \framebox{$2$}$ by the algorithm of $\delta_{\theta}$.
Hence, $\lambda_{1}\geq 3$.

\begin{flushleft}
Case 3. $b= \framebox{$3$}$.
\end{flushleft}
In this case, after $[2]$ was marked in $\nu^{(1)}$ the box marking has terminated.
Since $\mathrm{wt}(\framebox{$3$})=\Bar{\Lambda}_{1}$, we must show that $\lambda_{1}\geq 1$.
Suppose that $\lambda_{1}=0$.
As in Case 2, it follows that $\nu^{(1)}$ has a singular string of length ($\leq 3i_{1}$) contradicting $b= \framebox{$3$}$.
Hence $\lambda_{1}\geq 1$.
Note that even if [2] was marked in this singular string, [3] can also be marked in this string.

\begin{flushleft}
Case 4. $b= \framebox{$4$}$.
\end{flushleft}
In this case, after $[3]$ was marked in $\nu^{(1)}$ the box marking has terminated.
Since $\mathrm{wt}(\framebox{$4$})=-\Bar{\Lambda}_{1}+\Bar{\Lambda}_{2}$, we must show that $\lambda_{2}\geq 1$.
We first suppose that
\[
\lambda_{2}=L+\sum_{j=1}^{l^{(1)}}j m_{j}^{(1)}-2\sum_{j=1}^{l^{(2)}}j m_{j}^{(2)}=0.
\]
Then we have $p_{l^{(2)}+1}^{(2)}$ by convexity and 
\[
p_{l^{(2)}}^{(2)}=\sum_{j=1}^{l^{(1)}}\left( \min(3l^{(2)},j)-j \right) m_{j}^{(1)}.
\]
If $l^{(1)}>3l^{(2)}$, then $p_{l^{(2)}}^{(2)}<0$.
Hence $l^{(1)}\leq 3l^{(2)}$, which yields $p_{l^{(2)}}^{(2)}=0$ so that all $l^{(2)}$-strings in $\nu^{(2)}$ are singular.
If [3] is marked in a string of of effective length $(<l^{(2)})$, then [4] can be marked in a singular string of length $l^{(2)}$ in $\nu^{(2)}$.
This is a contradiction.
Note that [1] must have been marked in a singular string of length $(< l^{(2)})$ in this case.
Therefore, it suffices to consider the case when [3] had been marked in a string of effective length $l^{(2)}$ in $\nu^{(1)}$.

\begin{itemize}
\item[(1)] $l^{(1)}=3l^{(2)}$.

We show that $m_{l^{(2)}}^{(2)}\geq 2$.
Suppose that $m_{l^{(2)}}^{(2)}=1$.
From Eq.~\eqref{eq:ddp2}, we have
\begin{equation} \label{eq:Icase4}
-p_{l^{(2)}-1}^{(2)}=3m_{3l^{(2)}}^{(1)}+2m_{3l^{(2)}-1}^{(1)}+m_{3l^{(2)}-2}^{(1)}-2.
\end{equation}
However, this does not hold because $m_{3l^{(2)}}^{(1)}=m_{l^{(1)}}^{(1)}> 0$.
Hence  $m_{l^{(2)}}^{(2)}\geq 2$ so that [4] can be marked in an $l^{(2)}$-string in $\nu^{(2)}$ even if [1] is already marked in another $l^{(2)}$-string in $\nu^{(2)}$, which contradicts $b= \framebox{$4$}$. 

\item[(2)] $l^{(1)}=3l^{(2)}-1$.

Suppose that $m_{l^{(2)}}^{(2)}=1$.
Equation~\eqref{eq:Icase4} still holds, from which we have that $p_{l^{(2)}-1}^{(2)}=0$ because $m_{l~{(1)}}^{(1)}=m_{3l^{(2)}-1}^{(1)}>0$. 
Suppose that $m_{j}^{(2)}\neq 0$ for some $j (\leq l^{(2)}-1)$.
Then, $p_{j}^{(2)}\geq 1$ as otherwise [1] would have been marked in a singular string of length ($\leq l^{(2)}-2$) contradicting $b= \framebox{$4$}$.
However, $p_{j}^{(2)}\geq 1$ implies that $p_{l^{(2)}-1}^{(2)}\geq 1$ by convexity, which in turn contradicts Eq.~\eqref{eq:Icase4}.
Therefore it suffices to consider the case when $m_{j}^{(2)}=0$ $(j\leq l^{(2)}-1)$.
We show that $m_{l^{(2)}}^{(2)}\geq 2$.
Suppose that $m_{l^{(2)}}^{(2)}=1$.
Then from Eq.~\eqref{eq:vacancy1}, we have $p_{l^{(1)}}^{(1)}<0$ so that $m_{l^{(2)}}^{(2)}\geq 2$, which implies that [4] can be marked in an $l^{(2)}$-string in $\nu^{(2)}$ even if [1] is already marked in another $l^{(2)}$-string in $\nu^{(2)}$, which contradicts $b= \framebox{$4$}$. 

\item[(3)] $l^{(1)}=3l^{(2)}-2$.

In this case, [1] been marked in a singular string of length ($\leq l^{(2)}-1$) in $\nu^{(2)}$.
Therefore, [4] can be marked in an $l^{(2)}$-string in $\nu^{(2)}$, which contradicts $b= \framebox{$4$}$. 

\end{itemize}

All cases (a), (b), and (c) contradict $b= \framebox{$4$}$.
Hence $\lambda_{2}\geq 1$.

\begin{flushleft}
Case 5. $b= \framebox{$5$}$.
\end{flushleft}
In this case, after $[4]$ was marked in $\nu^{(2)}$ the box marking has terminated.
Since $\mathrm{wt}(\framebox{$5$})=2\Bar{\Lambda}_{1}-\Bar{\Lambda}_{2}$, we must show that $\lambda_{1}\geq 2$.
We first suppose that $\lambda_{1}=0$.
Then we have $l^{(1)}\geq 3l^{(2)}$ and $p_{l^{(1)}}^{(1)}=0$ as in Case 2.
Note that all $l^{(1)}$-strings in $\nu^{(1)}$ are singular.

\begin{itemize}
\item[(1)]
$l^{(1)}=3l^{(2)}$.

Note that $[3]$ is not marked in an $l^{(1)}$-string (singular) in $\nu^{(1)}$.
Otherwise, $[4]$ would be marked in the same string because it is singular.
Therefore, $[5]$ can be marked in an $l^{(1)}$-string in $\nu^{(1)}$.

\item[(2)]
$l^{(1)}=3l^{(2)}+1$.

Note that $[3]$ can be marked in an $l^{(1)}$-string in $\nu^{(1)}$.
In this case, the effective length is reduced by one ($l^{(2)}+1\rightarrow l^{(2)}$) and [4] is marked in an $l^{(2)}$-string in $\nu^{(2)}$ followed by the box marking of an $l^{(1)}$-string in $\nu^{(1)}$ by $[5]$.




\item[(3)]
$l^{(1)}\geq 3l^{(2)}+2$.

$[3]$ is not marked in the $l^{(1)}$-string.
Otherwise, $[4]$ could not be marked in $\nu^{(2)}$ because the effective length of $l^{(1)}$-strings is larger than $l^{(2)}$.
It is obvious that $[5]$ can be marked in the $l^{(1)}$-string. 
\end{itemize}
In three cases above, $[5]$ can be marked in the $l^{(1)}$-string contradicting $b= \framebox{$5$}$.
Next, we suppose that $\lambda_{1}=1$.
As in Case 2, we have two possible cases:

\begin{itemize}
\item[(1)] $m_{l^{(2)}}^{(2)}=1$, $l^{(1)}=3l^{(2)}-1$, and $p_{l^{(1)}}^{(1)}=0$.
\item[(2)] $m_{l^{(2)}}^{(2)}\geq 1$, $l^{(1)}\geq 3l^{(2)}$, and $p_{l^{(1)}}^{(1)}=1$.
\end{itemize}
Note that $l^{(1)}$-strings in $\nu^{(1)}$ are singular or q-singular.
Here we assume that $[3]$ is marked in an $l^{(1)}$-string as otherwise $[5]$ could be marked in this string contradicting $b= \framebox{$5$}$.
Note that case (1) must be excluded.
This is shown as follows.
If $[1]$ were marked in a singular row of length $l^{(2)}$, then $[4]$ could not be marked in $\nu^{(2)}$ because $m_{l^{(2)}}^{(2)}=1$.
If $[1]$ were marked in a singular row of length $(< l^{(2)})$, then $[4]$ would be marked on the left of the box marked by $[3]$ in the $l^{(1)}$-string.
Thus, case (b) survives.
If one of the $l^{(1)}$-strings is singular, the same arguments as those in the case of $\lambda_{1}=0$ hold and lead to a contradiction.
Therefore all the $l^{(1)}$-strings must be q-singular so that $[4]$ is not marked in an $l^{(1)}$-string.
This implies $[5]$ can be marked in an $l^{(1)}$-string contradicting $b= \framebox{$5$}$.
Hence we have $\lambda_{1}\geq 2$.

\begin{flushleft}
Case 6. $b= \framebox{$6$}$.
\end{flushleft}
In this case, after $[4]$ was marked in $\nu^{(1)}$ the box marking has terminated.
Since $\mathrm{wt}(\framebox{$6$})=-3\Bar{\Lambda}_{1}+2\Bar{\Lambda}_{2}$, we must show that $ \lambda_{2}\geq 2$.
We first suppose that $\lambda_{2}=0$.
As in Case 4 this results in a contradiction.
Secondly we suppose that $\lambda_{2}=1$.
Then $p_{l^{(2)}+1}^{(2)}=1$ by convexity and
\[
p_{l^{(2)}}^{(2)}=1+\sum_{j=1}^{l^{(1)}}\left( \min(3l^{(2)},j)-j \right) m_{j}^{(1)}.
\]
We have following two possible cases in a similar fashion as in Case 2:. 

\begin{itemize}
\item[(1)]
$m_{l^{(1)}}^{(1)}=1$, $l^{(1)}=3l^{(2)}+1$, and $p_{l^{(2)}}^{(2)}=0$.
\item[(2)]
$m_{l^{(1)}}^{(1)}\geq 1$, $l^{(1)}\leq 3l^{(2)}$, and $p_{l^{(2)}}^{(2)}=1$.
\end{itemize}
Let us show that we have a contradiction in both cases.

In case (1), we show that $m_{l^{(2)}}^{(2)}\geq 2$, which in turn leads to a contradiction.
Suppose that $m_{l^{(2)}}^{(2)}=1$.
From Eq.~\eqref{eq:ddp2}, we have
\[
-p_{l^{(2)}-1}^{(2)}-1=3m_{3l^{(2)}}^{(1)}+2(m_{3l^{(2)}+1}^{(1)}+m_{3l^{(2)}-1}^{(1)})+m_{3l^{(2)}-2}^{(1)}-2.
\]
However, this does not hold because $m_{l^{)1)}}^{(1)}=m_{3l^{(2)}+1}^{(1)}>0$.
Therefore, $m_{l^{(2)}}^{(2)}\geq 2$.
If the $l^{(1)}$-string is q-singular at best, then [4] must have been marked in a string of length ($\leq 3l^{(2)}$) so that [5] can be marked in one of the $l^{(2)}$-strings in $\nu^{(2)}$ even if [5] cannot be marked in a string of length ($\leq l^{(2)}-1$).
If the $l^{(1)}$-string is singular and [3] was marked in a string of length ($\leq 3l^{(2)}$), then [4] would not be the $l^{(1)}$-string in $\nu^{(1)}$ because the effective length of the $l^{(1)}$-string is larger than $l^{(2)}$ so that [4] must have been marked in a string of length ($\leq 3l^{(2)}$) in $\nu^{(1)}$ which leads to a contradiction as before.
Thus, it remains to consider the case when the $l^{(1)}$-string is singular and [3] has been marked in this string.
We have two possible ways of box marking in $\nu^{(1)}$.

\setlength{\unitlength}{10pt}
\begin{center}
\begin{picture}(12,4)
\put(0,2){\line(1,0){3}}
\put(0,3){\line(1,0){3}}
\put(1,2){\line(0,1){1}}
\put(2,2){\line(0,1){1}}
\put(3,2){\line(0,1){1}}
\put(1,3){\makebox(2,1){$\downarrow$}}
\put(2,0.9){\makebox(1,1){$\uparrow$}}
\put(2,2){\makebox(1,1){{\scriptsize $[3]$}}}
\put(1,0){\makebox(3,1){{\scriptsize $3l^{(2)}+1$}}}

\put(4,2){\makebox(2,1){$\text{or}$}}

\put(7,2){\line(1,0){4}}
\put(7,3){\line(1,0){4}}
\put(8,2){\line(0,1){1}}
\put(9,2){\line(0,1){1}}
\put(10,2){\line(0,1){1}}
\put(11,2){\line(0,1){1}}
\put(9,3){\makebox(2,1){$\downarrow$}}
\put(10,0.9){\makebox(1,1){$\uparrow$}}
\put(9,2){\makebox(1,1){{\scriptsize $[3]$}}}
\put(10,2){\makebox(1,1){{\scriptsize $[2]$}}}
\put(9,0){\makebox(3,1){{\scriptsize $3l^{(2)}+1$}}}
\end{picture}
\end{center}
In both cases, the effective length is reduced by one ($l^{(2)}+1\rightarrow l^{(2)}$) and $[4]$ is marked in a singular string of length $l^{(2)}$ in $\nu^{(2)}$, which is also a contradiction.
Hence,  $\lambda_{2}\geq 2$.

In case (2), we also show that $m_{l^{(2)}}^{(2)}\geq 2$ leading to a contradiction.
Suppose that $m_{l^{(2)}}^{(2)}=1$.
From Eq.~\eqref{eq:ddp2}, we have
\[
-p_{l^{(2)}-1}^{(2)}+1=3m_{3l^{(2)}}^{(1)}+2m_{3l^{(2)}-1}^{(1)}+m_{3l^{(2)}-2}^{(1)}-2\geq 1,
\]
which forces that $p_{l^{(2)}-1}^{(2)}=0$, which implies that $m_{l^{(2)}-1}^{(2)}\neq 0$ by convexity.
Therefore [1] must be marked in a singular string of length ($\leq l^{(2)}-1$) in $\nu^{(2)}$ and hence [5] can be marked in a string in a string (singular or q-singular) of length $l^{(2)}$ in $\nu^{(2)}$ even if [5] cannot be marked in a string of length ($\leq l^{(2)}-1$), which contradicts $b= \framebox{$6$}$.
Hence $m_{l^{(2)}}^{(2)}\geq 2$.
The remaining argument is similar to case (1) and results in a contradiction.
Hence,  $\lambda_{2}\geq 2$.

\begin{flushleft}
Case 7. $b= \framebox{$7$}$.
\end{flushleft}
Since $\mathrm{wt}(\framebox{$7$})=0$, there is nothing to prove.

\begin{flushleft}
Case 8. $b= \framebox{$8$}$.
\end{flushleft}
Since $\mathrm{wt}(\framebox{$8$})=0$, there is nothing to prove.

\begin{flushleft}
Case 9. $b= \framebox{$9$}$.
\end{flushleft}
In this case, after $[6]$ was marked in $\nu^{(1)}$ the box marking has terminated.
Since $\mathrm{wt}(\framebox{$9$})=-2\Bar{\Lambda}_{1}+\Bar{\Lambda}_{2}$, we must show that $\lambda_{2}\geq 1$.
The proof is analogous to that in Case 4.

\begin{flushleft}
Case 10. $b= \framebox{$10$}$.
\end{flushleft}
In this case, after $[6]$ was marked in $\nu^{(2)}$ the box marking has terminated.
Since $\mathrm{wt}(\framebox{$10$})=3\Bar{\Lambda}_{1}-2\Bar{\Lambda}_{2}$, we must show that $\lambda_{1}\geq 3$.
We first suppose that $\lambda_{1}=0$.
Then we have that $l^{(1)}\geq 3l^{(2)}$ and $p_{l^{(1)}}^{(1)}=0$ as in Case 2.
Note that all $l^{(1)}$-strings in $\nu^{(1)}$ are singular. 
If $[6]$ is marked in a singular string of length $(< l^{(2)})$, then it is obvious that $[7]$ can be marked in an $l^{(1)}$-string even if [7] cannot be marked in a string of length ($\leq l^{(1)}-1$).
So we assume that $[6]$ is marked in an $l^{(2)}$-string and the selected $l^{(2)}$-string is singular.
If $[4]$ is marked in a singular string of length $(< l^{(1)})$, it is obvious that $[7]$ can be marked in an $l^{(1)}$-string.
If $[4]$ were marked in a singular string of length $(>3l^{(2)})$, then $[5]$ (and therefore $[6]$) would not be marked in $\nu^{(2)}$.
So it suffices to consider the case when $l^{(1)}=3l^{(2)}$ and $[4]$ is marked in an $l^{(1)}$-string. 
We have two possible ways of box marking in $\nu^{(1)}$.

\setlength{\unitlength}{10pt}
\begin{center}
\begin{picture}(12,4)
\put(0,2){\line(1,0){4}}
\put(0,3){\line(1,0){4}}
\put(1,2){\line(0,1){1}}
\put(2,2){\line(0,1){1}}
\put(3,2){\line(0,1){1}}
\put(4,2){\line(0,1){1}}
\put(0,3){\makebox(2,1){$\downarrow$}}
\put(3,0.9){\makebox(1,1){$\uparrow$}}
\put(2,2){\makebox(1,1){{\scriptsize $[4]$}}}
\put(3,2){\makebox(1,1){{\scriptsize $[3]$}}}
\put(2,0){\makebox(3,1){{\scriptsize $3l^{(2)}$}}}

\put(5,2){\makebox(2,1){$\text{and}$}}

\put(8,2){\line(1,0){4}}
\put(8,3){\line(1,0){4}}
\put(9,2){\line(0,1){1}}
\put(10,2){\line(0,1){1}}
\put(11,2){\line(0,1){1}}
\put(12,2){\line(0,1){1}}
\put(8,3){\makebox(2,1){$\downarrow$}}
\put(11,0.9){\makebox(1,1){$\uparrow$}}
\put(9,2){\makebox(1,1){{\scriptsize $[4]$}}}
\put(10,2){\makebox(1,1){{\scriptsize $[3]$}}}
\put(11,2){\makebox(1,1){{\scriptsize $[2]$}}}
\put(10,0){\makebox(3,1){{\scriptsize $3l^{(2)}$}}}
\end{picture}
\end{center}
In both cases, $[7]$ can be marked in the string marked by $[4]$, which contradicts $b= \framebox{$10$}$.
Secondly, we suppose that $\lambda_{1}=1$.
As in Case 2, we have two possible cases.

\begin{itemize}
\item[(1)] $m_{l^{(2)}}^{(2)}=1$, $l^{(1)}=3l^{(2)}-1$, and $p_{l^{(1)}}^{(1)}=0$,
\item[(2)] $m_{l^{(2)}}^{(2)}\geq 1$, $l^{(1)}\geq 3l^{(2)}$, and $p_{l^{(1)}}^{(1)}=1$.
\end{itemize}
As before it suffices to consider the case when $[4]$ is marked in an $l^{(1)}$-string and the selected $l^{(1)}$- and $l^{(2)}$-strings are singular.
By the same arguments as in Case 2, both cases (1) and (2) result in a contradiction.
Thirdly, we suppose that $\lambda_{1}=2$.
As in Case 2, we have four possible cases.

\begin{itemize}
\item[(1)] $m_{l^{(2)}}^{(2)}=1$, $l^{(1)}=3l^{(2)}-2$, and $p_{l^{(1)}}^{(1)}=0$,
\item[(2)] $m_{l^{(2)}}^{(2)}=1$, $l^{(1)}=3l^{(2)}-1$, and $p_{l^{(1)}}^{(1)}=1$,
\item[(3)] $m_{l^{(2)}}^{(2)}=2$, $l^{(1)}=3l^{(2)}-1$, and $p_{l^{(1)}}^{(1)}=0$,
\item[(4)] $m_{l^{(2)}}^{(2)}\geq 1$, $l^{(1)}\geq 3l^{(2)}$, and $p_{l^{(1)}}^{(1)}=2$.
\end{itemize}
As before it suffices to consider the case when $[4]$ is marked in an $l^{(1)}$-string and the selected $l^{(1)}$- and $l^{(2)}$-strings are singular.
By the same arguments as in Case 2, these cases result in a contradiction.
Hence, $\lambda_{1}\geq 3$.

\begin{flushleft}
Case 11. $b= \framebox{$11$}$.
\end{flushleft}
In this case, after $[7]$ was marked in $\nu^{(1)}$ or $\nu^{(2)}$ the box marking has terminated.
Since $\mathrm{wt}(\framebox{$11$})=\Bar{\Lambda}_{1}-\Bar{\Lambda}_{2}$, we must show that $\lambda_{1}\geq 1$.
Suppose that $\lambda_{1}=0$.
Then we have that $l^{(1)}\geq 3l^{(2)}$ and $p_{l^{(1)}}^{(1)}=0$ as in Case 2.
Note that all $l^{(1)}$-strings in $\nu^{(1)}$ are singular.

\begin{itemize}
\item[(1)]
$[7]$ is marked in $\nu^{(2)}$.

If [7] is marked in a singular string of length $(< l^{(2)})$, then it is obvious that [8] can be marked in an $l^{(1)}$-string in $\nu^{(1)}$ ((6) must have been marked in a string of length $(<3l^{(2)})$).
So we assume that [7] is marked in an $l^{(2)}$-string and that the string is singular.
If [6] is marked in a singular string of length $(<3l^{(2)})$ (i.e., it is not an $l^{(1)}$-string), it is obvious that [8] can be marked in an $l^{(1)}$-string.
If [6] were marked in a singular string of length $(>3l^{(2)})$, then [7] would not be marked in $\nu^{(2)}$.
So it suffices to consider the case when $l^{(1)}=3l^{(2)}$ and [6] is marked in an $l^{(1)}$-string and [7] is marked in an $l^{(2)}$-string and both selected strings are singular.
We have three possible ways of box marking in $\nu^{(1)}$.

\setlength{\unitlength}{10pt}
\begin{center}
\begin{picture}(19,4)
\put(0,2){\line(1,0){4}}
\put(0,3){\line(1,0){4}}
\put(1,2){\line(0,1){1}}
\put(2,2){\line(0,1){1}}
\put(3,2){\line(0,1){1}}
\put(4,2){\line(0,1){1}}
\put(0,3){\makebox(2,1){$\downarrow$}}
\put(3,0.9){\makebox(1,1){$\uparrow$}}
\put(1,2){\makebox(1,1){{\scriptsize $[6]$}}}
\put(2,2){\makebox(1,1){{\scriptsize $[5]$}}}
\put(3,2){\makebox(1,1){{\scriptsize $[3]$}}}
\put(2,0){\makebox(3,1){{\scriptsize $3l^{(2)}$}}}

\put(7,2){\line(1,0){4}}
\put(7,3){\line(1,0){4}}
\put(8,2){\line(0,1){1}}
\put(9,2){\line(0,1){1}}
\put(10,2){\line(0,1){1}}
\put(11,2){\line(0,1){1}}
\put(7,3){\makebox(2,1){$\downarrow$}}
\put(10,0.9){\makebox(1,1){$\uparrow$}}
\put(9,2){\makebox(1,1){{\scriptsize $[6]$}}}
\put(10,2){\makebox(1,1){{\scriptsize $[5]$}}}
\put(9,0){\makebox(3,1){{\scriptsize $3l^{(2)}$}}}

\put(14,2){\line(1,0){4}}
\put(14,3){\line(1,0){4}}
\put(15,2){\line(0,1){1}}
\put(16,2){\line(0,1){1}}
\put(17,2){\line(0,1){1}}
\put(18,2){\line(0,1){1}}

\put(14,3){\makebox(2,1){$\downarrow$}}
\put(17,0.9){\makebox(1,1){$\uparrow$}}
\put(17,2){\makebox(1,1){{\scriptsize $[6]$}}}
\put(16,0){\makebox(3,1){{\scriptsize $3l^{(2)}$}}}
\end{picture}
\end{center}
It is obvious that  [8] can be marked on the left of the box marked by [6].
Note that the box marking depicted below
\setlength{\unitlength}{10pt}
\begin{center}
\begin{picture}(4,5)
\put(0,2){\line(1,0){2}}
\put(0,3){\line(1,0){4}}
\put(0,4){\line(1,0){4}}

\put(1,2){\line(0,1){2}}
\put(2,2){\line(0,1){2}}
\put(3,3){\line(0,1){1}}
\put(4,3){\line(0,1){1}}

\put(0,4){\makebox(2,1){$\downarrow$}}
\put(3,0.9){\makebox(1,1){$\uparrow$}}
\put(1,2){\makebox(1,1){{\scriptsize $[3]$}}}
\put(2,3){\makebox(1,1){{\scriptsize $[6]$}}}
\put(3,3){\makebox(1,1){{\scriptsize $[5]$}}}
\put(2,0){\makebox(3,1){{\scriptsize $3l^{(2)}$}}}
\end{picture}
\end{center}
cannot occur because the $l^{(1)}$-string is a boomerang string for the string marked by [3] (q-singular) in the above figure (see \textbf{(BS-4)}).

\item[(2)]
[7] is marked in $\nu^{(1)}$.

If [7] is marked in a string of length $(< l^{(1)})$, then it is obvious that [8] can be marked in a singular $l^{(1)}$-string.
So we assume that [7] is marked in an $l^{(1)}$-string.

\begin{itemize}
\item[(a)]
$l^{(1)}=3l^{(2)}$.

We have three possible ways of box marking in $\nu^{(1)}$.
\setlength{\unitlength}{10pt}
\begin{center}
\begin{picture}(20,4)
\put(0,2){\line(1,0){5}}
\put(0,3){\line(1,0){5}}
\put(1,2){\line(0,1){1}}
\put(2,2){\line(0,1){1}}
\put(3,2){\line(0,1){1}}
\put(4,2){\line(0,1){1}}
\put(5,2){\line(0,1){1}}
\put(1,3){\makebox(2,1){$\downarrow$}}
\put(4,0.9){\makebox(1,1){$\uparrow$}}
\put(1,2){\makebox(1,1){{\scriptsize $[7]$}}}
\put(2,2){\makebox(1,1){{\scriptsize $[4]$}}}
\put(3,2){\makebox(1,1){{\scriptsize $[3]$}}}
\put(4,2){\makebox(1,1){{\scriptsize $[2]$}}}
\put(3,0){\makebox(3,1){{\scriptsize $3l^{(2)}$}}}

\put(8,2){\line(1,0){4}}
\put(8,3){\line(1,0){4}}
\put(9,2){\line(0,1){1}}
\put(10,2){\line(0,1){1}}
\put(11,2){\line(0,1){1}}
\put(12,2){\line(0,1){1}}
\put(8,3){\makebox(2,1){$\downarrow$}}
\put(11,0.9){\makebox(1,1){$\uparrow$}}
\put(9,2){\makebox(1,1){{\scriptsize $[7]$}}}
\put(10,2){\makebox(1,1){{\scriptsize $[4]$}}}
\put(11,2){\makebox(1,1){{\scriptsize $[3]$}}}
\put(10,0){\makebox(3,1){{\scriptsize $3l^{(2)}$}}}

\put(15,2){\line(1,0){4}}
\put(15,3){\line(1,0){4}}
\put(16,2){\line(0,1){1}}
\put(17,2){\line(0,1){1}}
\put(18,2){\line(0,1){1}}
\put(19,2){\line(0,1){1}}
\put(15,3){\makebox(2,1){$\downarrow$}}
\put(18,0.9){\makebox(1,1){$\uparrow$}}
\put(18,2){\makebox(1,1){{\scriptsize $[7]$}}}
\put(17,0){\makebox(3,1){{\scriptsize $3l^{(2)}$}}}
\end{picture}
\end{center}

The box marking of 
\setlength{\unitlength}{10pt}
\begin{center}
\begin{picture}(5,5)
\put(0,2){\line(1,0){2}}
\put(0,3){\line(1,0){5}}
\put(0,4){\line(1,0){5}}

\put(1,2){\line(0,1){2}}
\put(2,2){\line(0,1){2}}
\put(3,3){\line(0,1){1}}
\put(4,3){\line(0,1){1}}
\put(5,3){\line(0,1){1}}

\put(1,4){\makebox(2,1){$\downarrow$}}
\put(4,0.9){\makebox(1,1){$\uparrow$}}
\put(1,2){\makebox(1,1){{\scriptsize $[2]$}}}
\put(2,3){\makebox(1,1){{\scriptsize $[7]$}}}
\put(3,3){\makebox(1,1){{\scriptsize $[4]$}}}
\put(4,3){\makebox(1,1){{\scriptsize $[3]$}}}
\put(3,0){\makebox(3,1){{\scriptsize $3l^{(2)}$}}}
\end{picture}
\end{center}
cannot occur because $l^{(1)}$-string is a boomerang string for the string marked by [2] (qq-singular) in the above figure (see \textbf{(BS-3)}).


In all cases, [8] can be marked on the left of the box marked by [7].

\item[(b)]
$l^{(1)}=3l^{(2)}+1$.

The only possible way of marking [7] is
\setlength{\unitlength}{10pt}
\begin{center}
\begin{picture}(5,4)
\put(0,2){\line(1,0){3}}
\put(0,3){\line(1,0){3}}
\put(1,2){\line(0,1){1}}
\put(2,2){\line(0,1){1}}
\put(3,2){\line(0,1){1}}

\put(1,3){\makebox(2,1){$\downarrow$}}
\put(2,0.9){\makebox(1,1){$\uparrow$}}
\put(2,2){\makebox(1,1){{\scriptsize $[7]$}}}
\put(1,0){\makebox(3,1){{\scriptsize $3l^{(2)}+1$}}}
\end{picture}
\end{center}
where [4] must have been marked in a string of length $(\leq 3l^{(2)}-2)$ as otherwise [7] could be marked in the left of the box marked by [4].
It is obvious [8] can be marked on the left of the box marked by [7].We omit the details.

\item[(c)]
$l^{(1)}\geq 3l^{(2)}+2$.

It is obvious that [8] can be marked on the left of the box marked by [7].
\end{itemize}

\end{itemize}
Both cases (1) and (2) contradict $b= \framebox{$11$}$ and therefore $\lambda_{1}\geq 1$.

\begin{flushleft}
Case 12. $b= \framebox{$12$}$.
\end{flushleft}
Since $\mathrm{wt}(\framebox{$12$})=-\Bar{\Lambda}_{1}$, there is nothing to prove.

\begin{flushleft}
Case 13. $b= \framebox{$13$}$.
 \end{flushleft}
In this case, after [9] was marked in $\nu^{(1)}$ the box marking has terminated.
Since $\mathrm{wt}(\framebox{$13$})=-3\Bar{\Lambda}_{1}+\Bar{\Lambda}_{2}$, we must show that $\lambda_{2}\geq 1$.
The proof is analogous to that in Case 4.

\begin{flushleft}
Case 14. $b= \framebox{$14$}$.
\end{flushleft}
Since $\mathrm{wt}(\framebox{$14$})=-\Bar{\Lambda}_{2}$, there is nothing to prove.

\end{proof}

\begin{proof}[Proof of (II)]

\mbox{}

\begin{flushleft}
Case 1. $b= \framebox{$1$}$.
\end{flushleft} 
Since $\Delta{p}_{i}^{(2)}=-1$ $(i\geq 1)$ (see \textbf{(VC-1)}), we must show that $p_{i}^{(2)}\geq 1$ $(i\geq 1)$.
If $m_{i}^{(2)}=0$ $(i\geq 1)$, then $\nu^{(2)}=\emptyset$ so that we assume $m_{j}^{(2)}\neq 0$ for some $j$.
Since the $j$-string is q-singular at best, $p_{j}^{(2)}\geq 1$ so that $p_{i}^{(2)}\geq 1$ $(i\geq 1)$ by Lemma~\ref{lem:vacancy01}.
Hence the admissibility in the new RC is guaranteed.

\begin{flushleft}
Case 2. $b= \framebox{$2$}$.
\end{flushleft} 
Firstly we show that $p_{i}^{(2)}\geq 1$ $(i\leq i_{1}-1)$.
In the proof of Case 1, we assumed that  $m_{j}^{(2)}\neq 0$ for some $j$.
In the following we assume that $i_{1}\geq 2$ and $m_{i}^{(2)}= 0$ $(i\leq i_{1}-1)$.
Suppose that $p_{i_{1}-1}^{(2)}=0$.
By convexity we have $p_{i}^{(2)}=0$ $(i\leq i_{1})$.
By Eq.~\eqref{eq:ddp2}, we have
\[
0=3m_{3(i_{1}-1)}^{(1)}+2(m_{3(i_{1}-1)-1}^{(1)}+m_{3(i_{1}-1)+1}^{(1)})
+m_{3(i_{1}-1)-2}^{(1)}+m_{3(i_{1}-1)+2}^{(1)}
\]
so that $m_{3i_{1}-1}^{(1)}=\cdots =m_{3i_{1}-5}^{(1)}=0$.
Similarly we have $m_{i}^{(1)}=0$ $(i\leq 3i_{1}-1)$ so that 
\begin{align*}
p_{3(i_{1}-1)}^{(1)}=&-2\sum_{j\geq 3i_{1}}\min (3(i_{1}-1),j)m_{j}^{(1)}+\sum_{j\geq i_{1}}\min (3(i_{1}-1),3j)m_{j}^{(2)} \\
=&-6(i_{1}-1)\sum_{j\geq 3i_{1}}m_{j}^{(1)}+3(i_{1}-1)\sum_{j\geq i_{1}}m_{j}^{(2)}.
\end{align*}
Since $p_{3(i_{1}-1)}^{(1)}\geq 0$, we have 
$\sum_{j\geq 3i_{1}}m_{j}^{(1)}\leq \frac{1}{2}\sum_{j\geq i_{1}}m_{j}^{(2)}$
which yields 
\[
p_{i_{1}}^{(2)}-p_{i_{1}-1}^{(2)}=3\sum_{j\geq 3i_{1}}m_{j}^{(1)}-2\sum_{j\geq i_{1}}m_{j}^{(2)} 
\leq -\frac{1}{2}\sum_{j\geq i_{1}}m_{j}^{(2)}<0.
\]
The last inequality is due to the fact that $m_{i_{1}}^{(2)}>0$.
Therefore $p_{i_{1}}^{(2)}<p_{i_{1}-1}^{(2)}=0$, which is a contradiction.
Hence $p_{i_{1}-1}^{(2)}\geq 1$ and therefore $p_{i}^{(2)}\geq 1$ $(i\leq i_{1}-1)$ by convexity.
Note that $m_{i}^{(2)}= 0$ $(i\leq i_{1}-1)$.

Secondly we show that $p_{3i_{1}}^{(1)}\geq 3$,  $p_{3i_{1}-1}^{(1)}\geq 2$, and $p_{3i_{1}-2}^{(1)}\geq 1$.
We assume that $m_{3i_{1}}^{(1)}=m_{3i_{1}-1}^{(1)}=m_{3i_{1}-2}^{(1)}=0$.
Otherwise the proof is much easier.
If $m_{3i_{1}+1}^{(1)}\neq 0$, then $(3i_{1}+1)$-strings in $\nu^{(1)}$ must be qqq-singular at best so that $p_{3i_{1}+1}^{(1)}\geq 3$.
By convexity we have $p_{3i_{1}}^{(1)}\geq 3$,  $p_{3i_{1}-1}^{(1)}\geq 2$, and $p_{3i_{1}-2}^{(1)}\geq 1$.
Therefore we further assume that $m_{3i_{1}+1}^{(1)}= 0$.
Let us show that $p_{3i_{1}-2}^{(1)}\geq 1$.
From Eq.~\eqref{eq:ddp1b}, we have
\begin{equation} \label{eq:IIcase2}
-p_{3i_{1}-1}^{(1)}+2p_{3i_{1}}^{(1)}-p_{3i_{1}+1}^{(1)}=m_{i_{1}}^{(2)}>0.
\end{equation}
Suppose that $p_{3i_{1}-2}^{(1)}=0$, then $p_{3i_{1}-1}^{(1)}=p_{3i_{1}}^{(1)}=0$ by convexity.
This contradicts Eq.~\eqref{eq:IIcase2}.
Hence $p_{3i_{1}-2}^{(1)}\geq 1$.
This implies $p_{3i_{1}-1}^{(1)}\geq 1$, $p_{3i_{1}}^{(1)}\geq 1$, and $p_{3i_{1}+1}^{(1)}\geq 1$ by convexity.
However, $p_{3i_{1}}^{(1)}= 1$ contradicts Eq.~\eqref{eq:IIcase2} so that $p_{3i_{1}}^{(1)}\geq 2$ and hence $p_{3i_{1}-1}^{(1)}\geq 2$ by convexity.
Now we show that $p_{3i_{1}}^{(1)}=2$ is not the case.
Suppose that  $p_{3i_{1}}^{(1)}=2$.
Then  $(p_{3i_{1}-1}^{(1)},p_{3i_{1}+1}^{(1)})=(2,2)$ or $(3,1)$ by convexity, which contradicts Eq.~\eqref{eq:IIcase2}.
Hence $p_{3i_{1}}^{(1)}\geq 3$.
If $m_{j}^{(1)}>0$ for some $j(\geq 3i_{1}+1)$, then $p_{j}^{(1)}\geq 3$ so that $p_{i}^{(1)}\geq 3$ $(i\geq 3i_{1})$ by Lemma~\ref{lem:vacancy23}. 
If not, then $p_{i}^{(1)}\geq 3$ $(i\geq 3i_{1})$ by convexity.
We have shown that $p_{i}^{(1)}\geq 3$ $(i\geq 3i_{1})$, $p_{3i_{1}-1}^{(1)}\geq 2$, $p_{3i_{1}-2}^{(1)}\geq 1$, and  $p_{i}^{(2)}\geq 1$ $(i\leq i_{1}-1)$.
Therefore, the admissibility of the new RC is guaranteed (see \textbf{(VC-2)}).

\begin{flushleft}
Case 3. $b= \framebox{$3$}$.
\end{flushleft} 
Although it suffices to show that $p_{i}^{(1)}\geq 1$ $(i\geq 3i_{1})$ when $i_{2}=3i_{1}-2$ or $i_{2}=3i_{1}-1$ and $p_{i}^{(1)}\geq 1$ $(i\geq i_{2})$ when $i_{2}\geq 3i_{1}$ (see \textbf{(VC-3)}), we show that $p_{i}^{(1)}\geq 2$ $(i\geq \max (i_{2},3i_{1}))$.
We assume that $m_{i}^{(1)}=0$ $(i\geq i_{2}+1)$.
Otherwise, the proof is much easier.
From Eq.~\eqref{eq:ddp1b}, we have
\begin{equation} \label{eq:IIcase3}
-p_{3i_{1}-1}^{(1)}+2p_{3i_{1}}^{(1)}-p_{3i_{1}+1}^{(1)}=m_{i_{1}}^{(2)}>0. 
\end{equation}

\begin{itemize}
\item[(1)] $i_{2}=3i_{1}-2$.

From Eq.~\eqref{eq:IIcase3}, we have $p_{3i_{1}}^{(1)}\geq 1$.
The convexity relation yields $p_{3i_{1}-1}^{(1)}\geq 1$ and $p_{3i_{1}+1}^{(1)}\geq 1$.
If $p_{3i_{1}}^{(1)}= 1$, then Eq.~\eqref{eq:IIcase3} leads to a contradiction so that $p_{3i_{1}}^{(1)}\geq 2$.

\item[(2)] $i_{2}=3i_{1}-1$.

Since the selected $i_{2}$-string is q-singular, $p_{3i_{1}-1}^{(1)}\geq 1$ so that Eq.~\eqref{eq:IIcase3} yields $p_{3i_{1}}^{(1)}\geq 2$ as in case (1).

\item[(3)] $i_{2}\geq 3i_{1}$.

Since the selected $i_{2}$-string is qq-singular, $p_{i_{2}}^{(1)}\geq 2$.

\end{itemize}
Therefore,  $p_{i}^{(1)}\geq 2$ $(i\geq \max (i_{2},3i_{1}))$.
In particular,  $\Tilde{p}_{i}^{(1)}\geq 1$ $(i\geq \max (i_{2},3i_{1}))$.

\begin{flushleft}
Case 4. $b= \framebox{$4$}$.
\end{flushleft}
It is easily verified that $p_{i}^{(1)}+\Delta p_{i}^{(1)}\geq 0$ $(i\leq i_{3}-1)$.
Since $\Delta p_{i}^{(2)}=-1$ $(i\geq i_{3}^{eff})$ we must show that $p_{i}^{(2)}\geq 1$ $(i\geq i_{3}^{eff})$.
Since [4] or [5] are not marked in $\nu^{(2)}$, $p_{j}^{(2)}\geq 1$ for $j(\geq i_{1}+1)$ such that $m_{j}^{(2)}\neq 0$.
Therefore it suffices to show that $p_{i_{3}^{eff}}^{(2)}\geq 1$.

\begin{itemize}
\item[(1)] $i_{3}^{eff}=i_{1}$.

If $m_{i_{1}}^{(2)}\geq 2$, then $p_{i_{1}}^{(2)}\geq 1$.
Assume that $m_{i_{1}}^{(2)}=1$.
In Case 2, we have shown that $p_{i_{1}-1}^{(2)}\geq 1$.
From Eq.~\eqref{eq:ddp2}, we have 
\begin{align*}
&-p_{i_{1}-1}^{(2)}+2p_{i_{1}}^{(2)}-p_{i_{1}+1}^{(2)} \\
=&3m_{3i_{1}}^{(1)}+2(m_{3i_{1}-1}^{(1)}+m_{3i_{1}+1}^{(1)})
+m_{3i_{1}-2}^{(1)}+m_{3i_{1}+2}^{(1)}-2.
\end{align*}
Suppose that $p_{i_{1}}^{(2)}=0$.
Then the left-hand side is smaller than $0$.
On the other hand, the right-hand side is greater than or equal to $0$ because $m_{3i_{1}}^{(1)}\neq 0$ or $m_{3i_{1}-1}^{(1)}\neq 0$. 
This is a contradiction so that $p_{i_{1}}^{(2)}\geq 1$.

\item[(2)] $i_{3}^{eff}>i_{1}$.

The argument is similar to case (1) and we have $p_{i}^{(2)}\geq 1$ $(i\geq i_{3}^{eff})$.

\end{itemize}

When $i_{2}<i_{3}$, the box-deleted $(i_{3}-1)$-string in $\Tilde{\nu}^{(1)}$ is set to be q-singular by \textbf{(RA-2)}.
Therefore, we must check that 
\begin{equation} \label{eq:IIcase4}
p_{i_{3}-1}^{(1)}+\Delta p_{i_{3}-1}^{(1)}\geq 1.
\end{equation}

\begin{itemize}
\item[(1)] $i_{3}=3i_{1}$.

In this case, $i_{2}=3i_{1}-2$.
If $m_{3i_{1}-1}^{(1)}=0$, then $p_{3i_{1}-1}^{(2)}\geq 1$ by convexity because $p_{3i_{1}}^{(1)}\geq 1$.
If $m_{3i_{1}-1}^{(1)}\neq 0$, then $p_{3i_{1}-1}^{(2)}\geq 1$ by the box marking of this case.
By \textbf{(VC-3)}, $\Delta p_{3i_{1}-1}^{(1)}=0$.
Hence Eq.~\eqref{eq:IIcase4} is satisfied.

\item[(2)] $i_{3}\geq 3i_{1}+1$.

As shown in Case 3, $p_{i_{3}-1}^{(1)}\geq 2$ and $\Delta p_{i_{3}-1}^{(1)}=-1$ by \textbf{(VC-3)}.
Hence Eq.~\eqref{eq:IIcase4} is satisfied.

\end{itemize}

\begin{flushleft}
Case 5. $b= \framebox{$5$}$.
\end{flushleft} 
We must show that $p_{i}^{(1)}\geq 2$ $(i\geq 3i_{4})$ and $p_{3i_{4}-1}^{(1)}\geq 1$.
In order to show that $p_{i}^{(1)}\geq 2$ $(i\geq 3i_{4})$, it suffices to show $p_{3i_{4}}^{(1)}\geq 2$ by noting Lemma~\ref{lem:vacancy23}.
Furthermore, we must show that $p_{3i_{4}-2}^{(1)}\geq 2$ when the $i_{3}$-string is type-I with $i_{3}\geq 3i_{1}+2$ and $i_{3}^{eff}=i_{4}$; $i_{3}=3i_{4}-1$ (see \textbf{(VC-5)}).

Firstly, let us show that $p_{3i_{4}}^{(1)}\geq 2$ and $p_{3i_{4}-1}^{(1)}\geq 1$.
If $m_{3i_{4}}^{(1)}\neq 0$, then $p_{3i_{4}}^{(1)}\geq 2$ as otherwise [5] would be marked in the $3i_{4}$-string in $\nu^{(1)}$. 
Suppose that $m_{3i_{4}}^{(1)}= 0$.
Then, from Eq.~\eqref{eq:ddp1b} we have
\begin{equation} \label{eq:IIcase5}
-p_{3i_{4}-1}^{(1)}+2p_{3i_{4}}^{(1)}-p_{3i_{4}+1}^{(1)}=m_{i_{4}}^{(2)}\geq 1. 
\end{equation}
Hence $p_{3i_{4}}^{(1)}\geq 1$.
We show that $p_{3i_{4}}^{(1)}= 1$ is not the case.
Suppose that $p_{3i_{4}}^{(1)}= 1$.
Then, $(p_{3i_{4}-1}^{(1)},p_{3i_{4}+1}^{(1)})=(0,0), (0,1)$ or $(1,0)$ by Eq.~\eqref{eq:IIcase5} and by convexity.
If $p_{3i_{4}+1}^{(1)}=0$, then $m_{3i_{4}+1}^{(1)}=0$ since otherwise [5] would be marked in the $(3i_{4}+1)$-string if $m_{3i_{4}+1}^{(1)}\neq 0$.
However, the convexity relation
\[
p_{3i_{4}+1}^{(1)}\geq \frac{1}{2}(p_{3i_{4}}^{(1)}+p_{3i_{4}+2}^{(1)})=\frac{1}{2}(1+p_{3i_{4}+2}^{(1)})
\]
implies $p_{3i_{4}+1}^{(1)}\geq 1$.
This contradicts our assumption so that $(p_{3i_{4}-1}^{(1)},p_{3i_{4}+1}^{(1)})=(0,1)$.
Since [5] is not marked in a $(3i_{4}-1)$-string, $m_{3i_{4}-1}^{(1)}=0$.
From Eq.~\eqref{eq:ddp1a}, we have 
\[
-p_{3i_{4}-2}^{(1)}+2p_{3i_{4}-1}^{(1)}-p_{3i_{4}}^{(1)}=-p_{3i_{4}-2}^{(1)}-1=0.
\]
However this does not hold.
Hence $p_{3i_{4}}^{(1)}\geq 2$.
If $m_{3i_{4}-1}^{(1)}\neq 0$, then $p_{3i_{4}-1}^{(1)}\geq 1$ because [5] is not marked in the $(3i_{4}-1)$-string in $\nu^{(1)}$.
If not, then $p_{3i_{4}-1}^{(1)}\geq 1$ by convexity.

Secondly, we show that $p_{3i_{4}-2}^{(1)}\geq 2$ when the $i_{3}$-string is type-I with $i_{3}\geq 3i_{1}+2$ and $i_{3}^{eff}=i_{4}$; $i_{3}=3i_{4}-1$.

\begin{itemize}
\item[(1)] $i_{2}=i_{3}$.

It suffices to consider the case when $m_{3i_{1}-1}^{(1)}=\cdots =m_{3i_{4}-2}^{(1)}=0$ by noting Lemma~\ref{lem:vacancy23}.
Since $p_{3i_{4}-1}^{(1)}\geq 1$, we have $p_{3i_{1}-1}^{(1)}\geq 1, \ldots, p_{3i_{4}-2}^{(1)}\geq 1$ by convexity.
We show that $p_{3i_{4}-2}^{(1)}=1$ is not the case.
Suppose that $p_{3i_{4}-2}^{(1)}=1$.
Then $p_{3i_{1}-1}^{(1)}=\cdots =p_{3i_{4}-2}^{(1)}=1$ by Lemma~\ref{lem:vacancy1}.
Hence
\[
0=p_{3i_{4}-2}^{(1)}-p_{3i_{4}-3}^{(1)}=-2\sum_{j\geq i_{3}}m_{j}^{(1)}+\sum_{j\geq i_{4}}m_{j}^{(2)}
\] 
and
\[
0=p_{3i_{1}}^{(1)}-p_{3i_{1}-1}^{(1)}=-2\sum_{j\geq i_{3}}m_{j}^{(1)}+\sum_{j\geq i_{1}}m_{j}^{(2)}
\]  
so that $\sum_{j\geq i_{4}}m_{j}^{(2)}=\sum_{j\geq i_{1}}m_{j}^{(2)}$, which is a contradiction because $m_{i_{1}}^{(2)}\neq 0$.
Hence $p_{3i_{4}-2}^{(1)}\geq 2$.

\item[(2)] $i_{2}<i_{3}$.

We first suppose that he box marking in $\nu^{(1)}$ is

\setlength{\unitlength}{10pt}
\begin{center}
\begin{picture}(11,3)
\put(0,0){\line(1,0){2}}
\put(0,1){\line(1,0){3}}
\put(0,2){\line(1,0){3}}
\put(1,0){\line(0,1){2}}
\put(2,0){\line(0,1){2}}
\put(3,1){\line(0,1){1}}
\put(0,2){\makebox(2,1){$\downarrow$}}
\put(1,0){\makebox(1,1){{\scriptsize $[2]$}}}
\put(2,1){\makebox(1,1){{\scriptsize $[3]$}}}
\end{picture}
\end{center}
Then $p_{3i_{4}-2}^{(1)}=p_{i_{2}}^{(1)}\geq 2$, where the $i_{2}$ (resp. $i_{3}$)-string is qq-singular (resp. q-singular).
Next, suppose that $i_{2}\leq 3i_{4}$.
It suffices to consider the case when $m_{i}^{(1)}=0$ $(i_{2}+1\leq i \leq 3i_{4}-2)$ as in Case (1).
If $i_{2}\geq 3i_{1}$, then the selected $i_{2}$-string is qq-singular and $p_{i_{2}}^{(1)}\geq 2$ so that $p_{3i_{4}-2}^{(1)}\geq 2$.
Therefore we further assume that $i_{2}\leq 3i_{1}-1$.
Suppose that $p_{3i_{4}-2}^{(1)}=1$.
Then $p_{i_{2}+1}^{(1)}=\cdots =p_{3i_{4}-2}^{(1)}=1$ by Lemma~\ref{lem:vacancy1}.
If $i_{2}=3i_{1}-1$, then $p_{i_{2}}^{(1)}\geq 1$.
This is the equality because the convexity relation
\[
1=p_{i_{2}+1}^{(1)}\geq \frac{1}{2}(p_{i_{2}}^{(1)}+p_{i_{2}+2}^{(1)})=\frac{1}{2}(p_{i_{2}}^{(1)}+1)
\]
implies $p_{i_{2}}^{(1)}=1$.
Hence, $p_{3i_{1}-1}^{(1)}=\cdots =p_{3i_{4}-2}^{(1)}=1$.
The remaining argument is the same as in case (a) and we have  $p_{3i_{4}-2}^{(1)}\geq 2$.

\end{itemize}

\begin{flushleft}
Case 6. $b= \framebox{$6$}$.
\end{flushleft} 
It is easily verified that $p_{i}^{(1)}+\Delta p_{i}^{(1)}\geq 0$ $(i\leq i_{4}-1)$.
We must show that $p_{i_{4}^{eff}}^{(2)}\geq 2$; $p_{i}^{(2)}\geq 2$ $(i\geq i_{4}^{eff})$ follows from this.
Furthermore, we must show that $p_{i_{4}^{eff}-1}^{(2)}\geq 1$ when the $i_{4}$-string is type-II with $i_{2}=i_{3}=i_{4}$ or $i_{2}<i_{3}=i_{4}$ (see \textbf{(VC-6)}).

Firstly, we show that $p_{i_{4}^{eff}}^{(2)}\geq 2$.

\begin{itemize} 
\item[(1)] The $i_{4}$-string is type-0/I.

If $m_{i_{4}^{eff}}^{(2)}\neq 0$, then the $i_{4}^{eff}$-string in $\nu^{(2)}$ must be qq-singular at best so that $p_{i_{4}^{eff}}^{(2)}\geq 2$.
Now we assume that $m_{i_{4}^{eff}}^{(2)}= 0$.
From Eq.~\eqref{eq:ddp2}, we have 
\begin{align} \label{eq:IIcase6a}
&-p_{i_{4}^{eff}-1}^{(2)}+2p_{i_{4}^{eff}}^{(2)}-p_{i_{4}^{eff}+1}^{(2)} \\
=&3m_{3i_{4}^{eff}}^{(1)}+2(m_{3i_{4}^{eff}-1}^{(1)}+m_{3i_{4}^{eff}+1}^{(1)})
+m_{3i_{4}^{eff}-2}^{(1)}+m_{3i_{4}^{eff}+2}^{(1)}. \nonumber 
\end{align}
Obviously, $p_{i_{4}^{eff}}^{(2)}\geq 1$ because $m_{3i_{4}^{eff}}^{(1)}$ or  $m_{3i_{4}^{eff}-1}^{(1)}$ is not $0$.
We show that $p_{i_{4}^{eff}}^{(2)}= 1$ is not the case.
Suppose that $p_{i_{4}^{eff}}^{(2)}= 1$.
If $m_{i_{4}^{eff}+1}^{(2)}\neq 0$, then $p_{i_{4}^{eff}+1}^{(2)}\geq 2$.
If $m_{i_{4}^{eff}+1}^{(2)}= 0$, then $p_{i_{4}^{eff}+1}^{(2)}\geq 1$ by convexity.
In either case, the left-hand side of Eq.~\eqref{eq:IIcase6a} is smaller than or equal to $1$ while the right-hand side is greater than or equal to $2$, which is a contradiction.
Hence $p_{i_{4}^{eff}}^{(2)}\geq 2$.

\item[(2)] The $i_{4}$-string is type-II.

If $m_{i_{4}^{eff}-1}^{(2)}\neq 0$, then [4] would be marked in $\nu^{(2)}$ contradicting $b= \framebox{$6$}$ so that $m_{i_{4}^{eff}-1}^{(2)}=0$.
The remaining argument is similar to case (1).

\end{itemize}

Secondly, we show that $p_{i_{4}^{eff}-1}^{(2)}\geq 1$ when the $i_{4}$-string is type-II with $i_{2}=i_{3}=i_{4}$ or $i_{2}<i_{3}=i_{4}$. 
We consider two cases (1) $m_{i_{4}^{eff}-1}^{(2)}=0$ and (2) $m_{i_{4}^{eff}-1}^{(2)}\neq 0$ separately.

\begin{itemize}

\item[(1)] $m_{i_{4}^{eff}-1}^{(2)}=0$.

If $m_{i_{4}^{eff}}^{(2)}\neq 0$, then $p_{i_{4}^{eff}}^{(2)}\geq 2$ and therefore $p_{i_{4}^{eff}-1}^{(2)}\geq 1$ by convexity.
This is because [5] is not marked in the $i_{4}^{eff}$-string in $\nu^{(2)}$.
Now, we assume that  $m_{i_{4}^{eff}}^{(2)}=0$.
From Eq.~\eqref{eq:ddp2}, we have
\begin{align} \label{eq:IIcase6b}
&-p_{i_{4}^{eff}-1}^{(2)}+2p_{i_{4}^{eff}}^{(2)}-p_{i_{4}^{eff}+1}^{(2)} \\
=&3m_{3i_{4}^{eff}}^{(1)}+2(m_{3i_{4}^{eff}-1}^{(1)}+m_{3i_{4}^{eff}+1}^{(1)})+m_{3i_{4}^{eff}-2}^{(1)}+m_{3i_{4}^{eff}+2}^{(1)}. \nonumber
\end{align}
Suppose that $p_{i_{4}^{eff}-1}^{(2)}=0$.
Then  $p_{i_{4}^{eff}}^{(2)}=0$ by convexity, which contradicts Eq.~\eqref{eq:IIcase6b} because $m_{i_{4}}^{(1)}=m_{3i_{4}^{eff}-2}^{(1)}\neq 0$.
Hence $p_{i_{4}^{eff}-1}^{(2)}\geq 1$.

\item[(2)] $m_{i_{4}^{eff}-1}^{(2)}\neq 0$.

The $i_{4}$-string is

\setlength{\unitlength}{10pt}
\begin{center}
\begin{picture}(11,2)
\put(0,0){\line(1,0){4}}
\put(0,1){\line(1,0){4}}
\put(1,0){\line(0,1){1}}
\put(2,0){\line(0,1){1}}
\put(3,0){\line(0,1){1}}
\put(4,0){\line(0,1){1}}
\put(2,1){\makebox(2,1){$\downarrow$}}
\put(1,0){\makebox(1,1){{\scriptsize $[4]$}}}
\put(2,0){\makebox(1,1){{\scriptsize $[3]$}}}
\put(3,0){\makebox(1,1){{\scriptsize $[2]$}}}

\put(5,0){\makebox(2,1){$\text{or}$}}

\put(8,0){\line(1,0){3}}
\put(8,1){\line(1,0){3}}
\put(9,0){\line(0,1){1}}
\put(10,0){\line(0,1){1}}
\put(11,0){\line(0,1){1}}
\put(9,1){\makebox(2,1){$\downarrow$}}
\put(9,0){\makebox(1,1){{\scriptsize $[4]$}}}
\put(10,0){\makebox(1,1){{\scriptsize $[3]$}}}
\end{picture}
\end{center}

Since the marking [2] and [3] causes the effective length reduction $i_{4}^{eff}\rightarrow i_{4}^{eff}-1$, the $(i_{4}^{eff}-1)$-string in $\nu^{(2)}$ must be q-singular at best.
Otherwise [4] would be marked in this string.
Hence $p_{i_{4}^{eff}-1}^{(2)}\geq 1$.

\end{itemize}

When $i_{3}<i_{4}$, the box-deleted $(i_{4}-1)$-string in $\Tilde{\nu}^{(1)}$ is set to be qq-singular by \textbf{(RA-3)}, we must check that 
\[
p_{i_{4}-1}^{(1)}+\Delta p_{i_{4}-1}^{(1)}\geq 2.
\]
Here, $\Delta p_{i_{4}-1}^{(1)}=1$ (see \textbf{(VC-4)}) and it is obvious that $p_{i_{4}-1}^{(1)}\geq 1$ so that the above inequality is satisfied.
When $i_{2}<i_{3}=i_{4}$, the box-deleted $(i_{4}-2)$-string in $\Tilde{\nu}^{(1)}$ is set to be q-singular by \textbf{(RA-4)}, we must check that
\begin{equation} \label{eq:IIcase6c}
p_{i_{4}-2}^{(1)}+\Delta p_{i_{4}-2}^{(1)}\geq 1.
\end{equation}
If $i_{4}\geq 3i_{1}+2$, then $p_{i_{4}-2}^{(1)}\geq 2$ (see Case 3) and $\Delta p_{i_{4}-2}^{(1)}=-1$ (see \textbf{(VC-3)}) so that Eq.~\eqref{eq:IIcase6c} is satisfied.
If $i_{4}=3i_{1}+1$, then $\Delta p_{i_{4}-2}^{(1)}=\Delta p_{3i_{1}-1}^{(1)}=0$ (see \textbf{(VC-3)}).
Let us show that $p_{3i_{1}-1}^{(1)}\geq 1$.
Since this is obvious if $m_{3i_{1}}^{(1)}\neq 0$ or $m_{3i_{1}-1}^{(1)}\neq 0$, we assume that $m_{3i_{1}}^{(1)}=0$ and $m_{3i_{1}-1}^{(1)}=0$.
From Eq.~\eqref{eq:ddp1b}, we have
\[
-p_{3i_{1}-1}^{(1)}+2p_{3i_{1}}^{(1)}-p_{3i_{1}+1}^{(1)}=m_{i_{1}}^{(2)}>0
\]
so that $p_{3i_{1}}^{(1)}\geq 1$ and therefore $p_{3_{1}-1}^{(1)}\geq 1$ by convexity.
Hence, Eq.~\eqref{eq:IIcase6c} is satisfied.

\begin{flushleft}
Case 7. $b= \framebox{$7$}$.
\end{flushleft}
It is not hard to check that $p_{i}^{(1)}+\Delta p_{i}^{(1)}\geq 0$ $(i\leq i_{5}-1)$.

\begin{flushleft}
Case 8. $b= \framebox{$8$}$.
\end{flushleft}
It is is not hard to check that $p_{i}^{(1)}+\Delta p_{i}^{(1)}\geq 0$ $(i\leq 3i_{5}-1)$.

\begin{flushleft}
Case 9. $b= \framebox{$9$}$.
\end{flushleft}
It is not hard to check that $p_{1}^{(1)}+\Delta p_{i}^{(1)}\geq 0$ $(i\leq i_{6}-1)$.
Since $\Delta p_{i}^{(2)}=-1$ $(i\geq i_{6}^{eff})$ (see \textbf{(VC-9)}), we must show that $p_{i_{6}^{eff}}^{(2)}\geq 1$; $p_{i_{6}^{i}}^{(2)}\geq 1$ $(i\geq i_{6}^{eff})$ follows from this.
If $m_{i_{6}^{eff}}^{(2)}\neq 0$, then $p_{i_{6}^{eff}}^{(2)}\geq 1$.
So we assume that  $m_{i_{6}^{eff}}^{(2)}= 0$.
From Eq.~\eqref{eq:ddp2}, we have 
\begin{align*}
&-p_{i_{6}^{eff}-1}^{(2)}+2p_{i_{6}^{eff}}^{(2)}-p_{i_{6}^{eff}+1}^{(2)} \\
=&3m_{3i_{6}^{eff}}^{(1)}+2(m_{3i_{6}^{eff}-1}^{(1)}+m_{3i_{6}^{eff}+1}^{(1)})
+m_{3i_{6}^{eff}-2}^{(1)}+m_{3i_{6}^{eff}+2}^{(1)}.
\end{align*}
Suppose that $p_{i_{6}^{eff}}^{(2)}=0$.
Then $p_{i_{6}^{eff}-1}^{(2)}=p_{i_{6}^{eff}+1}^{(2)}=0$ by convexity. 
The left-hand side is $0$ while the right-hand side is positive because $m_{i_{6}}^{(1)}\neq 0$, which is a contradiction.
Hence, we have $p_{i_{6}^{eff}}^{(2)}\geq 1$.

When $i_{5}<i_{6}$, we must check that $p_{i_{6}-1}^{(1)}\geq 1$ because the box-deleted $(i_{6}-1)$-string in $\Tilde{\nu}^{(1)}$ is set to be q-singular by \textbf{(RA-1)} and $\Delta p_{i_{6}-1}^{(1)}=0$ (see \textbf{(VC-7)}).
However this is obvious because $i_{5}$-string is q-singular and [6] is not marked in a string of length $i$ $(i_{5}\leq i\leq i_{6}-1)$.

We omit the proof in the case when $b= \framebox{$i$}$ with $10\leq i\leq 14$.
The proof is similar to those in the previous cases.
When $b= \framebox{$\emptyset$}$, there is nothing to prove.  

\end{proof}

\begin{proof}[Proof of (IV)]

Let $\Tilde{\nu}=(\Tilde{m}_{i}^{(a)})_{(a,i)\in\mathcal{H}_{0}}$.

\begin{flushleft}
Case 0. $b= \framebox{$\emptyset$}$.
\end{flushleft}
In this case, $\Tilde{m}_{1}^{(2)}=m_{1}^{(2)}-2$ and $\Tilde{m}_{3}^{(1)}=m_{3}^{(1)}-1$.

\begin{align*}
\Delta c(\nu)=&3\left( (m_{3}^{(1)})^{2}-(\Tilde{m}_{3}^{(1)})^{2}\right)+2\sum_{j\neq 3}\min (3,j)(m_{3}^{(1)}-\Tilde{m}_{3}^{(1)})m_{j}^{(1)} \\
-&3(m_{3}^{(1)}m_{1}^{(2)}-\Tilde{m}_{3}^{(1)}\Tilde{m}_{1}^{(2)})-\sum_{j\neq 1}\min (3,3j)(m_{3}^{(1)}-\Tilde{m}_{3}^{(1)})m_{j}^{(2)} \\
&-\sum_{i\neq 3}\min (i,3)m_{i}^{(1)}(m_{1}^{(2)}-\Tilde{m}_{1}^{(2)}) \\
+&(m_{1}^{(2)})^{2}-(\Tilde{m}_{1}^{(2)})^{2}+4\sum_{j\neq 1} \min (1,j)(m_{1}^{(2)}-\Tilde{m}_{1}^{(2)})m_{j}^{(2)} \\
-&L\sum_{j\geq 1}m_{j}^{(2)}+(L-1)\left( \sum_{j\geq 1}m_{j}^{(2)}-2 \right) \\
=&\sum_{j\geq 1}m_{j}^{(2)}-\alpha_{1}^{(2)}-2L+1.
\end{align*}

The change of the sum of riggings is 
\[
\Delta |J|=J^{(1,3)}+2J^{(2,1)}=p_{3}^{(1)}+2p_{1}^{(2)},
\]
where
\[
p_{3}^{(1)}=-6\sum_{j\geq 3}m_{j}^{(1)}-4m_{2}^{(1)}-2m_{1}^{(1)}+3\sum_{j\geq 1}m_{j}^{(2)}
\]
and
\[
p_{1}^{(2)}=L+3\sum_{j\geq 3}m_{j}^{(1)}+2m_{2}^{(1)}+m_{1}^{(1)}-2\sum_{j\geq 1}m_{j}^{(2)}.
\]
Hence altogether $\Delta c(\nu,J)=-\alpha_{1}^{(2)}+1$.

\begin{flushleft}
Case 1. $b= \framebox{$1$}$.
\end{flushleft}
Let $(\Tilde{\nu},\Tilde{J})=\delta _{b}(\nu,J)$.
Then
\[
\Delta c(\nu)=c(\nu)-c(\Tilde{\nu})=-\sum_{i}m_{i}^{(2)}=-\alpha_{1}^{(2)}.
\]
The riggings are unchanged so that $\Delta c(\nu,J)=-\alpha_{1}^{(2)}$.

In what follows we assume that $i_{1}>1$.
The proof in the case when $i_{1}=1$ is similar.
We write $\Delta _{i\rightarrow j}c(\nu)=c(\Tilde{\nu}_{i})-c(\Tilde{\nu}_{j})$, $\Delta _{i\rightarrow j} |J|=|\Tilde{J}_{i}|-|\Tilde{J}_{j}|$, and $\Delta _{i\rightarrow j}c(\nu,J)=c(\Tilde{\nu}_{i},\Tilde{J}_{i})-c(\Tilde{\nu}_{j},\Tilde{J}_{j})$, where $\delta_{b_{i}}(\nu,J)=(\Tilde{\nu}_{i},\Tilde{J}_{i})$ and $\delta_{b_{j}}(\nu,J)=(\Tilde{\nu}_{j},\Tilde{J}_{j})$ with  
$b_{i}= \framebox{$i$}$ and $b_{j}= \framebox{$j$}$ $(i<j)$.
We also write $\Delta_{1}c(\nu,J)=c(\nu,J)-c(\Tilde{\nu},\Tilde{J})$ where  $(\Tilde{\nu},\Tilde{J})=\delta _{b}(\nu,J)$ with $b= \framebox{$1$}$.

\begin{flushleft}
Case 2. $b= \framebox{$2$}$.
\end{flushleft}
In this case, $\Tilde{m}_{i_{1}}^{(2)}=m_{i_{1}}^{(2)}-1$ and $\Tilde{m}_{i_{1}-1}^{(2)}=m_{i_{1}-1}^{(2)}+1$.
\[
\Delta_{1\rightarrow 2}c(\nu)=-3\sum_{j\geq 3i_{1}}m_{j}^{(1)}-2m_{3i_{1}-1}^{(1)}-m_{3i_{1}-2}^{(1)}+2\sum_{j\geq i_{1}}m_{j}^{(2)}-1
\] 
and
\[
\Delta_{1\rightarrow 2}|J|=p_{i_{1}}^{(2)}-(p_{i_{1}-1}^{(2)}+\Delta p_{i_{1}-1}^{(2)}),
\]
where $\Delta p_{i_{1}-1}^{(2)}=-1$ by \textbf{(VC-1)}.
The direct calculation yields 
\[
\Delta_{1\rightarrow 2}|J|=3\sum_{j\geq 3i_{1}}m_{j}^{(1)}+2m_{3i_{1}-1}^{(1)}+m_{3i_{1}-2}^{(1)}-2\sum_{j\geq i_{1}}m_{j}^{(2)}+1.
\] 
Hence  $\Delta_{1\rightarrow 2} c(\nu,J)=0$ so that $\Delta c(\nu,J)=\Delta_{1}c(\nu,J)+\Delta_{1\rightarrow 2}c(\nu,J)=-\alpha_{1}^{(2)}$.

As easily verified that $\Delta_{i\rightarrow j}c(\nu,J)$ does not contain terms involving $m_{i}^{(a)}$.
Let us write
\[
\Delta_{i\rightarrow j}|J|=\sum_{a}\sum_{k}(p_{i_{k}}^{(a)}+\Delta r_{i_{k}}^{(a)}-(p_{i_{k}-n_{k}}^{(a)}+\Delta p_{i_{k}-n_{k}}^{(a)}+\Delta r_{i_{k}-n_{k}}^{(a)})),
\]
where $\Delta r_{k}^{(a)}$ is the rigging adjustment for the selected $i_{k}$-string in $\nu^{(a)}$, i.e., $\Delta r_{i_{k}}^{(a)}=0,-1$, and $-2$ when the selected $i_{k}$-string in $\nu^{(a)}$ is singular, q-singular, and qq-singular, respectively so that $p_{i_{k}}^{(a)}+\Delta r_{i_{k}}^{(a)}$ is the value of the rigging of the selected $i_{k}$-string in $\nu^{(a)}$ and $n_{k}$ is the numbers of box marking in the $i_{k}$-string in $\nu^{(a)}$, $p_{i_{k}-n_{k}}^{(a)}+\Delta p_{i_{k}-n_{k}}^{(a)}=\Tilde{p}_{i_{k}-n_{k}}^{(a)}$, and $\Delta r_{i_{k}-n_{k}}^{(a)}$ is the rigging adjustment for the $\Tilde{p}_{i_{k}-n_{k}}^{(a)}$-string in $\Tilde{\nu}^{(a)}$ so that $p_{i_{k}-n_{k}}^{(a)}+\Delta p_{i_{k}-n_{k}}^{(a)}+\Delta r_{i_{k}-n_{k}}^{(a)}$ is the value of rigging of the box-deleted $\Tilde{p}_{i_{k}-n_{k}}^{(a)}$-string in $\Tilde{\nu}^{(a)}$.
Then, 
\[
\Delta_{i\rightarrow j}c(\nu,J)=t-\sum_{a}\sum_{k}(\Delta r_{i_{k}}^{(a)}-(\Delta p_{i_{k}-n_{k}}^{(a)}+\Delta r_{i_{k}-n_{k}}^{(a)})),
\]
where $t$ is the ``constant'' term in $\Delta_{i\rightarrow j}c(\nu)$, which does not contain $m_{l}^{(a)}$.

\begin{flushleft}
Case 3. $b= \framebox{$3$}$.
\end{flushleft}
In this case one caution is in order.
When $i_{2}^{eff}=i_{1}$, we must consider the changes of $m_{i_{2}}^{(1)}$, $m_{i_{2}-1}^{(1)}$, $m_{i_{1}}^{(2)}$, and  $m_{i_{1}-1}^{(2)}$ simultaneously in $c(\nu)$.

\begin{itemize}
\item[(1)] $i_{2}^{eff}=i_{1}$.

We compute $\Delta_{1\rightarrow 3}c(\nu,J)$.
\begin{align*}
\Delta _{1\rightarrow 3}c(\nu,J)=-1&+\Delta r_{i_{1}}^{(2)}-(\Delta p_{i_{1}-1}^{(2)}+\Delta r_{i_{1}-1}^{(2)}) \\
&+\Delta r_{i_{2}}^{(1)}-(\Delta p_{i_{2}-1}^{(1)}+\Delta r_{i_{2}-1}^{(1)}),
\end{align*}
where $\Delta p_{i_{1}-1}^{(2)}=-1$ by \textbf{(VC-1)} and $\Delta r_{i_{1}}^{(2)}=\Delta r_{i_{2}-1}^{(1)}=0$.
The values of the rigging adjustment $\Delta r_{i_{2}}^{(1)}$ and $\Delta p_{i_{2}-1}^{(1)}$ are listed below.
In this table the type is refereed to the type of the $i_{2}$-string.
\begin{center}
\begin{tabular}{c|c|c}
type & $\Delta r_{i_{2}}^{(1)}$ & $\Delta p_{i_{2}-1}^{(1)}$ \\ \hline
0 & $-2$ & $-2$ \\
I  & $-1$ & $-1$ \\
II & $0$ & $0$
\end{tabular}
\end{center}
The values of $\Delta r_{i_{2}}^{(1)}$ is due to the fact that the selected $i_{2}$-string is singular (resp. q-singular) when it is type-II (resp. type-I) and is qq-singular when it is type-0.
The values of $\Delta p_{i_{2}-1}^{(1)}$ come from \textbf{(VC-2)}.
Hence altogether $\Delta_{1\rightarrow 3}c(\nu,J)=0$ so that $\Delta c(\nu,J)=\Delta_{1}c(\nu,J)+\Delta_{1\rightarrow 3}c(\nu,J)=-\alpha_{1}^{(2)}$.

\item[(2)] $i_{1}<i_{2}^{eff}$.

Since $\Delta_{1\rightarrow 3}c(\nu,J)=\Delta_{1\rightarrow 2}c(\nu,J)+\Delta_{2\rightarrow 3}c(\nu,J)$, we compute $\Delta_{2\rightarrow 3}c(\nu,J)$.
\begin{equation*}
\Delta _{1\rightarrow 3}c(\nu,J)=-1+\Delta r_{i_{2}}^{(1)}-(\Delta p_{i_{2}-1}^{(1)}+\Delta r_{i_{2}-1}^{(1)}),
\end{equation*}
where $\Delta r_{i_{2}}^{(1)}=-2$ because the selected $i_{2}$-string in $\nu^{(1)}$ is qq-singular and $\Delta r_{i_{2}-1}^{(1)}=0$.
By \textbf{(VC-2)}, $\Delta p_{i_{2}-1}^{(1)}=-3$.
Hence altogether $\Delta_{2\rightarrow 3}c(\nu,J)=0$ so that $\Delta c(\nu,J)=\Delta_{1}c(\nu,J)+\Delta_{1\rightarrow 2}c(\nu,J)+\Delta_{2\rightarrow 3}c(\nu,J)=-\alpha_{1}^{(2)}$.

\end{itemize}

\begin{flushleft}
Case 4. $b= \framebox{$4$}$.
\end{flushleft}

\begin{itemize}

\item[(1)] $i_{2}=i_{3}$.

In this case, $\Tilde{m}_{i_{3}}^{(1)}=m_{i_{3}}^{(1)}-1$ and $\Tilde{m}_{i_{3}-2}^{(1)}=m_{i_{3}-2}^{(1)}+1$.
As before, we must consider the changes of  $m_{i_{3}}^{(1)}$,  $m_{i_{3}-2}^{(1)}$, $m_{i_{1}}^{(2)}$ and $m_{i_{1}-1}^{(2)}$ simultaneously in $c(\nu)$ when $i_{2}\leq 3i_{1}+1$.

\begin{itemize}

\item[(a)] $i_{2}\leq 3i_{1}+1$.

We compute $\Delta_{1\rightarrow 4}c(\nu,J)$.
\begin{align*}
\Delta _{1\rightarrow 4}c(\nu,J)=t&+\Delta r_{i_{1}}^{(2)}-(\Delta p_{i_{1}-1}^{(2)}+\Delta r_{i_{1}-1}^{(2)}) \\
&+\Delta r_{i_{3}}^{(1)}-(\Delta p_{i_{3}-2}^{(1)}+\Delta r_{i_{3}-2}^{(1)}),
\end{align*}
where $t=-2$ when the $i_{3}$-string is type-II $(i_{3}=3i_{1}+1)$ and $t=-1$ otherwise.
As before $\Delta r_{i_{1}}^{(2)}=\Delta r_{i_{1}-1}^{(2)}=0$, $\Delta p_{i_{1}-1}^{(2)}=-1$, and $\Delta r_{i_{3}-2}^{(1)}=0$ because the box-deleted $(i_{3}-2)$-string is set to be singular.
The values of $\Delta r_{i_{3}}^{(1)}$ and $\Delta p_{i_{3}-2}^{(1)}$ (see \textbf{(VC-2)}) are listed below.
In this table the type is refereed to the type of the $i_{3}$-string.
Note that the $i_{3}$-string is singular when it is type-I ($i_{3}=3i_{1}-1$) while it is q-singular when the type is not I.
\begin{center}
\begin{tabular}{c|c|c}
type & $\Delta_{i_{3}}^{rig}$ & $\Delta p_{i_{3}-2}^{(1)}$ \\ \hline
0 & $-1$ & $-1$ \\
I  & $0$ & $0$ \\
II & $-1$ & $-2$
\end{tabular}
\end{center}
Hence altogether $\Delta_{1\rightarrow 4}c(\nu,J)=0$ so that $\Delta c(\nu,J)=\Delta_{1}c(\nu,J)+\Delta_{1\rightarrow 4}c(\nu,J)=-\alpha_{1}^{(2)}$.

\item[(b)] $i_{2}\geq 3i_{1}+2$.

Since $\Delta_{1\rightarrow 4}c(\nu,J)=\Delta_{1\rightarrow 2}c(\nu,J)+\Delta_{2\rightarrow 4}c(\nu,J)$, we compute $\Delta_{2\rightarrow 4}c(\nu,J)$.
\begin{equation*}
\Delta _{1\rightarrow 3}c(\nu,J)=-2+\Delta r_{i_{3}}^{(1)}-(\Delta p_{i_{3}-2}^{(1)}+\Delta r_{i_{3}-2}^{(1)}),
\end{equation*}
where $\Delta p_{i_{3}-2}^{(1)}=-3$ by \textbf{(VC-2)}, $\Delta r_{i_{3}-2}^{(1)}=0$, and $\Delta r_{i_{3}}^{(1)}=-1$ because the selected $i_{3}$-string is q-singular.
Hence $\Delta_{2\rightarrow 4}c(\nu,J)=0$ so that $\Delta c(\nu,J)=\Delta_{1}c(\nu,J)+\Delta_{1\rightarrow 2}c(\nu,J)+\Delta_{2\rightarrow 4}c(\nu,J)=-\alpha_{1}^{(2)}$.
\end{itemize}

\item[(2)] $i_{2}<i_{3}$.

\begin{itemize}

\item[(a)] $i_{3}^{eff}=i_{1}$.

We compute $\Delta_{1\rightarrow 4}c(\nu,J)$.
In this case $i_{2}=3i_{1}-2$ and $i_{3}=3i_{1}$.
The $i_{2}$-string is singular and the $i_{3}$-string is q-singular (see \textbf{(BM-3)}); $\Delta r_{i_{2}}^{(1)}=0$ and $\Delta r_{i_{3}}^{(1)}=-1$.
\begin{align*}
\Delta _{1\rightarrow 4}c(\nu,J)=-1&+\Delta r_{i_{1}}^{(2)}-(\Delta p_{i_{1}-1}^{(2)}+\Delta r_{i_{1}-1}^{(2)}) \\
&+\Delta r_{i_{2}}^{(1)}-(\Delta p_{i_{2}-1}^{(1)}+\Delta r_{i_{2}-1}^{(1)}) \\
&+\Delta r_{i_{3}}^{(1)}-(\Delta p_{i_{3}-1}^{(1)}+\Delta r_{i_{3}-1}^{(1)}),
\end{align*}
where $\Delta r_{i_{1}}^{(2)}=\Delta r_{i_{1}-1}^{(2)}=0$, $\Delta p_{i_{1}-1}^{(2)}=-1$ as before.
By \textbf{(RA-2)} $\Delta r_{i_{2}-1}^{(1)}=-1$ and $\Delta r_{i_{3}-1}^{(1)}=-1$.
By \textbf{(VC-3)} $\Delta p_{i_{2}-1}^{(1)}=0$ and $\Delta p_{i_{3}-1}^{(1)}=0$.
Hence altogether $\Delta_{1\rightarrow 4}c(\nu,J)=0$ so that $\Delta c(\nu,J)=\Delta_{1}c(\nu,J)+\Delta_{1\rightarrow 4}c(\nu,J)=-\alpha_{1}^{(2)}$.

\item[(b)] $i_{3}^{eff}>i_{1}$.

We compute $\Delta_{3\rightarrow 4}c(\nu,J)$.
\begin{equation*}
\Delta _{3\rightarrow 4}c(\nu,J)=-1+\Delta r_{i_{3}}^{(1)}-(\Delta p_{i_{3}-1}^{(1)}+\Delta r_{i_{3}-1}^{(1)}),
\end{equation*}
where $\Delta p_{i_{3}-1}^{(1)}=-1$ by \textbf{(VC-3)} and $\Delta r_{i_{3}}^{(1)}=\Delta r_{i_{3}-1}^{(1)}=-1$.
Hence $\Delta_{3\rightarrow 4}c(\nu,J)=0$ so that $\Delta c(\nu,J)=\Delta_{1}c(\nu,J)+\Delta_{1\rightarrow 3}c(\nu,J)+\Delta_{3\rightarrow 4}c(\nu,J)=-\alpha_{1}^{(2)}$.

\end{itemize}

\end{itemize}

The verifications of  $\Delta c(\nu,J)=-\alpha_{1}^{(2)}$ for $b= \framebox{$i$}$ $(5\leq i\leq 14)$ are similar and we omit the details.

\end{proof}




Define the following subsets of $B_{0}\otimes B_{0}$.
\begin{align*}
S_{0}=&\left\{ \framebox{$1$}\otimes\framebox{$1$}, \framebox{$14$}\otimes\framebox{$14$} \right\} \bigsqcup \left\{\framebox{$i$}\otimes\framebox{$j$}  \relmiddle| i=1,2;2\leq j \leq 14 \right\} \\ 
&\bigsqcup \left\{\framebox{$i$}\otimes\framebox{$j$}  \relmiddle| i=3,4,6; 6\leq j \leq 14; j\neq 7 \right\} \\
&\bigsqcup \left\{\framebox{$i$}\otimes\framebox{$j$}  \relmiddle| i=5,8,10; 10\leq j \leq 14 \right\} \\
&\bigsqcup \left\{\framebox{$i$}\otimes\framebox{$j$}  \relmiddle| i=7,9,11,12,13; j=13,14 \right\},
\end{align*}
\begin{align*}
S_{1}=&\left\{ \framebox{$2$}\otimes\framebox{$1$} \right\}  
\bigsqcup \left\{\framebox{$i$}\otimes\framebox{$j$}  \relmiddle| i=3,4,6;1\leq j \leq 7; j\neq 6 \right\} \\ 
&\bigsqcup \left\{\framebox{$i$}\otimes\framebox{$j$}  \relmiddle| i=5,8,10; 2\leq j \leq 9 \right\} \\
&\bigsqcup \left\{\framebox{$i$}\otimes\framebox{$j$}  \relmiddle| i=7,9,11,12,13; 6\leq j \leq 12; j\neq 7 \right\} \\
&\bigsqcup \left\{\framebox{$14$}\otimes\framebox{$j$}  \relmiddle| 10 \leq j \leq 13 \right\},
\end{align*}
and
\begin{align*}
S_{2}=&\left\{\framebox{$i$}\otimes\framebox{$1$}  \relmiddle| i=5,8,10 \right\} \\
&\bigsqcup \left\{\framebox{$i$}\otimes\framebox{$j$}  \relmiddle| i=7,9,11,12,13; 1\leq j \leq 7; j\neq 6 \right\} \\
&\bigsqcup \left\{\framebox{$14$}\otimes\framebox{$j$}  \relmiddle| 1 \leq j \leq 9 \right\}.
\end{align*}
The subset $S_{0}$ (resp. $S_{1}$) is $B(2\Bar{\Lambda}_{2})$ (resp. $B(3\Bar{\Lambda}_{1})$) in Eq.~ \eqref{eq:decomposition} and the subset $S_{2}$ is the disjoint union of $B(2\Bar{\Lambda}_{1})$, $B(\Bar{\Lambda}_{2})$, and $B(0)$ in Eq.~\eqref{eq:decomposition} so that
\begin{equation} \label{eq:Hb}
H(b_{1}\otimes b_{2})=
\begin{cases}
0 & \text{if } b_{1}\otimes b_{2} \in S_{0}, \\
-1 & \text{if } b_{1}\otimes b_{2} \in S_{1}, \\
-2 & \text{otherwise}.
\end{cases}
\end{equation}

\begin{proof}[Proof of (V)]
The proof is reduced to showing the following lemma.
\end{proof}

\begin{lem}
$b_{1}\otimes b_{2}$ belongs to $S_{2}$ or takes the form of $\framebox{$\emptyset$}\otimes b$ if and only if $\alpha_{1}^{(2)}-\Tilde{\alpha}_{1}^{(2)}=2$, where $b=\framebox{$j$}$ with $1\leq j \leq 14$ or $j=\emptyset$.
$b_{1}\otimes b_{2}$ belongs to $S_{1}$ if and only if $\alpha_{1}^{(2)}-\Tilde{\alpha}_{1}^{(2)}=1$.
$b_{1}\otimes b_{2}$ belongs to $S_{0}$ or takes the form of $b\otimes \framebox{$\emptyset$}$ with $b=\framebox{$i$}$ with $1\leq i \leq 14$ if and only if $\alpha_{1}^{(2)}-\Tilde{\alpha}_{1}^{(2)}=0$.
\end{lem}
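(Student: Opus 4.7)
The plan is to proceed by case analysis on the return value $b_1 = \gamma(\nu,J)$, computing $\alpha_1^{(2)} - \tilde{\alpha}_1^{(2)}$ in each case and comparing with the announced set-membership. The starting observation is that $\alpha_1^{(2)} = \sum_{i\geq 1} m_i^{(2)}$ equals the number of parts of $\nu^{(2)}$, so $\alpha_1^{(2)} - \tilde{\alpha}_1^{(2)}$ is exactly the number of rows of $\nu^{(2)}$ completely erased by the step $\delta_{\theta}$. A row is fully erased iff every box in that row is marked by the algorithm; since the only labels that ever mark boxes in $\nu^{(2)}$ are $[1], [4], [5], [6], [7]$, and $[10]$, a row of $\nu^{(2)}$ vanishes precisely when its length equals the number of these labels placed in it.

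Step one is to tabulate, for each of the fifteen possible return values $b_1 \in \{1, 2, \ldots, 14, \emptyset\}$, the set of marked boxes in $\nu^{(2)}$ at the moment of termination. From the structure of \textbf{(BM-1)}--\textbf{(BM-10)}, the cases $b_1 \in \{1, 2, 3, 4, 6, 7, 9, 11, 13\}$ leave at most the single mark $[1]$ in $\nu^{(2)}$; the cases $b_1 \in \{5, 8, 12, 14\}$ leave two marks, one being $[1]$ and the other being a single box of label $[4], [5], [7]$, or $[10]$, respectively, all sitting in distinct rows of $\nu^{(2)}$; the case $b_1 = 10$ leaves three marks ($[1]$ together with an adjacent pair of $[5]$ and $[6]$), but the $[5],[6]$ pair shares a row so at most two rows can vanish; and the special return $b_1 = \emptyset$ is precisely the \textbf{(BM-5)}(1) branch in which both $[1]$ and $[5]$ lie in length-one singular rows, automatically giving $\alpha_1^{(2)} - \tilde{\alpha}_1^{(2)} = 2$. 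In particular $\alpha_1^{(2)} - \tilde{\alpha}_1^{(2)} \in \{0,1,2\}$ in every case.

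Step two is to match each possible pair $(b_1, b_2)$ to one of the sets $S_0, S_1, S_2$. This is done by running \textbf{(BM-1)} on $(\tilde{\nu},\tilde{J})$, using the vacancy-number changes \textbf{(VC-1)}--\textbf{(VC-14)} and the rigging adjustments \textbf{(RA-1)}--\textbf{(RA-7)}. The crucial equivalence is that $b_2 = 1$ iff $\tilde{\nu}^{(2)}$ contains no singular string; otherwise $b_2$ is determined by the minimum-length singular string in $\tilde{\nu}^{(2)}$ and the ensuing iterations. Hence the value of $b_2$, together with the number of erased rows read off in Step one, can be recovered from the lengths and singularity statuses of the rows of $\nu^{(2)}$ that the labels $[1], [4], [5], [6], [7], [10]$ touched. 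Comparing these outcomes with the explicit definitions of $S_0, S_1, S_2$ (and the $\emptyset$-refinements in the statement) produces the three claimed equivalences.

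The principal obstacle is the sheer volume of subcases, especially for the high return values $b_1 \in \{10, 12, 13, 14\}$ where several branches of the algorithm converge. In each subcase the technical step is the same: one must track how the singularity of each string in $\nu^{(2)}$ is inherited by $\tilde{\nu}^{(2)}$ under the vacancy change, then decide which strings survive as candidates for \textbf{(BM-1)} on $\tilde{\nu}^{(2)}$. Since both the classification $\{S_0 \cup \{b \otimes \emptyset\}, S_1, S_2 \cup \{\emptyset \otimes b\}\}$ of the $(b_1, b_2)$-space and the stratification by $\alpha_1^{(2)} - \tilde{\alpha}_1^{(2)} \in \{0,1,2\}$ are three-part partitions of the same outcome space, verifying the forward direction of each clause automatically yields the reverse, reducing the proof to this finite but lengthy bookkeeping.
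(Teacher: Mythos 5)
Your overall strategy coincides with the paper's: split the ``if and only if'' into three one-directional implications, observe that both the set-membership classification and the stratification by $\alpha_{1}^{(2)}-\Tilde{\alpha}_{1}^{(2)}\in\{0,1,2\}$ partition the same outcome space (so one direction of each clause yields the other), and then grind through a case analysis on $b_{1}$, counting which rows of $\nu^{(2)}$ are fully erased and reading off the possible $b_{2}$ from the vacancy-number changes. This is exactly how the paper organizes the proof (it proves only the $\alpha_{1}^{(2)}-\Tilde{\alpha}_{1}^{(2)}=2$ case in detail and declares the others similar).

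However, your Step one tabulation — the foundation of the whole argument — is wrong in several cases, and not harmlessly so. You claim that for $b_{1}\in\{1,2,3,4,6,7,9,11,13\}$ at most the single mark $[1]$ lands in $\nu^{(2)}$. This fails already for $b_{1}=\framebox{$7$}$: the return $\framebox{$7$}$ occurs in \textbf{(BM-6)}(1) only when the selected $i_{5}$-string lies in $\nu^{(1)}$, and by \textbf{(BM-5)} the mark $[5]$ is placed in $\nu^{(1)}$ precisely when the selected $i_{4}$-string lies in $\nu^{(2)}$; hence both $[1]$ and $[4]$ are marked in $\nu^{(2)}$ (the paper's Case 3 of the proof says exactly this). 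With only one mark at most one row of $\nu^{(2)}$ could vanish, so your tabulation would force $\alpha_{1}^{(2)}-\Tilde{\alpha}_{1}^{(2)}\leq 1$ whenever $b_{1}=\framebox{$7$}$, contradicting the presence of $\framebox{$7$}\otimes\framebox{$1$}$ in $S_{2}$. The same objection applies to $b_{1}=\framebox{$9$}$ (marks $[1],[4]$ in $\nu^{(2)}$), to $b_{1}=\framebox{$11$}$ and $\framebox{$13$}$ (marks $[1]$ together with either $[4],[7]$ or $[5],[6]$ in $\nu^{(2)}$, depending on which partition $[7]$ lands in), and, less fatally, your descriptions of $b_{1}=\framebox{$12$}$ and $\framebox{$14$}$ omit the $[4]$ or $[5],[6]$ marks, while for $b_{1}=\framebox{$10$}$ the marks $[5]$ and $[6]$ share a row only in the subcase $i_{5}=i_{6}$. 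Since the number of erased rows, and hence the claimed equivalences, are computed directly from this table, the proof as written does not go through; you would need to redo the bookkeeping of which of $[1],[4],[5],[6],[7],[10]$ actually land in $\nu^{(2)}$ for each return value, tracking the alternation between $\nu^{(1)}$ and $\nu^{(2)}$ imposed by \textbf{(BM-4)}--\textbf{(BM-7)}.
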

\begin{proof}
Assertions are the direct consequences of the following three lemmas.
\end{proof}

\begin{lem} \label{lem:da=2}
If $\alpha_{1}^{(2)}-\Tilde{\alpha}_{1}^{(2)}=2$. then $b_{1}\otimes b_{2}$ belongs to $S_{2}$ or it takes the form of $\framebox{$\emptyset$}\otimes b$ with $b=\framebox{$i$}$ with $1\leq i \leq 14$ or $i=\emptyset$. 
\end{lem}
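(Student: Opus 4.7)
The plan is to run a case analysis on the value of $b_{1}=b=\gamma(\nu,J)$, which ranges over the fifteen elements of $B=B^{2,1}$. The key bookkeeping observation is that $\alpha_{1}^{(2)}-\tilde\alpha_{1}^{(2)}$ equals the number of strings in $\nu^{(2)}$ whose single box is removed by $\delta_{\theta}$; equivalently, it counts how many of the markings from $\{[1],[4],[5],[6],[7],[10]\}$ (the labels that can ever land in $\nu^{(2)}$, according to \textbf{(BM-1)}--\textbf{(BM-10)}) fall on a length-one string of $\nu^{(2)}$. The case $b_{1}=\framebox{$\emptyset$}$ is immediate since the conclusion permits any $b_{2}$. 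In the cases $b_{1}\in\{\framebox{$1$},\framebox{$2$},\framebox{$3$},\framebox{$4$},\framebox{$6$}\}$, inspection of the algorithm shows that only $[1]$ can be placed in $\nu^{(2)}$, so $\alpha_{1}^{(2)}-\tilde\alpha_{1}^{(2)}\leq 1$; these five values are ruled out immediately. This reduces the problem to $b_{1}\in\{\framebox{$5$},\framebox{$7$},\framebox{$8$},\framebox{$9$},\framebox{$10$},\framebox{$11$},\framebox{$12$},\framebox{$13$},\framebox{$14$}\}$.

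For each remaining $b_{1}$, the plan is first to identify the precise subset of $\{[1],[4],[5],[6],[7],[10]\}$ that is placed in $\nu^{(2)}$. For instance, when $b_{1}=\framebox{$5$}$ only $[1]$ and $[4]$ can sit in $\nu^{(2)}$, since the algorithm reaches \textbf{(BM-5)}(2); when $b_{1}=\framebox{$10$}$ the candidates are $[1],[5],[6]$, but $[6]$ lies in a string of length $\geq i_{5}+1\geq 2$ and so never on a $1$-string, leaving $[1]$ and $[5]$ as the only possible pair. The condition $\alpha_{1}^{(2)}-\tilde\alpha_{1}^{(2)}=2$ then forces $m_{1}^{(2)}\geq 2$, with both of those $1$-strings being singular at the moment the algorithm selects them. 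Analogous analyses fix, in each of the remaining $b_{1}$ cases, exactly which pair of labels in $\{[1],[4],[5],[7],[10]\}$ must occupy $1$-strings.

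The second step is to pass from this structural information on the smallest parts of $\nu^{(2)}$ to the structure of $(\tilde\nu,\tilde J)$. The appropriate \textbf{(VC)} table computes $\tilde p_{i}^{(a)}$ at small $i$, and the rigging adjustments \textbf{(RA-1)}--\textbf{(RA-7)} determine whether the small strings of $\tilde\nu$ are singular. This is enough to apply $\delta_{\theta}$ once more to compute $b_{2}=\gamma(\tilde\nu,\tilde J)$ explicitly. The final check in each case is to match $b_{2}$ against the predicted subset so that $b_{1}\otimes b_{2}\in S_{2}$: when $b_{1}\in\{\framebox{$5$},\framebox{$8$},\framebox{$10$}\}$ we must obtain $b_{2}=\framebox{$1$}$; when $b_{1}\in\{\framebox{$7$},\framebox{$9$},\framebox{$11$},\framebox{$12$},\framebox{$13$}\}$ we require $b_{2}\in\{\framebox{$1$},\framebox{$2$},\framebox{$3$},\framebox{$4$},\framebox{$5$},\framebox{$7$}\}$; when $b_{1}=\framebox{$14$}$ we need $b_{2}\in\{\framebox{$1$},\ldots,\framebox{$9$}\}$.

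The main obstacle is the combinatorial volume of the case list, especially for $b_{1}\in\{\framebox{$11$},\framebox{$12$},\framebox{$13$},\framebox{$14$}\}$, where many box-marking patterns in $\nu^{(1)}$ are permitted by \textbf{(BM-8)}--\textbf{(BM-10)} and the corresponding \textbf{(VC)} tables branch into several subcases. The delicate part is that the boomerang-string rules \textbf{(BS-1)}--\textbf{(BS-6)} together with the forbidden patterns highlighted in \textbf{(BM-3)}, \textbf{(BM-4)}(2) and Section~6 may discard a selection that superficially satisfies the singularity conditions; one has to check that such discards cannot produce a $b_{2}$ outside the predicted subset. The tightness is precisely what makes the classification work: in several subcases the computed $\tilde p_{1}^{(2)}$ becomes $0$ if and only if the length-one strings of $\nu^{(2)}$ were used up exactly as forced by $\alpha_{1}^{(2)}-\tilde\alpha_{1}^{(2)}=2$, and Lemmas~\ref{lem:vacancy01} and \ref{lem:vacancy1} then guarantee the existence of the singular small string in $\tilde\nu^{(2)}$ needed to produce the required $b_{2}$.
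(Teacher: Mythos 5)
Your overall strategy is the same as the paper's: restrict $b_{1}$ to $\framebox{$\emptyset$}$ and $\framebox{$i$}$ with $i\geq 5$, $i\neq 6$, then for each such $b_{1}$ use the relevant \textbf{(VC)} table and the rigging adjustments to pin down the singularity of the small strings of $(\Tilde{\nu},\Tilde{J})$ and hence the possible values of $b_{2}$. The reduction of the target to ``show that $[4]$ and $[6]$ cannot be marked in $\Tilde{\nu}^{(1)}$'' (for $b_{1}\in\{\framebox{$7$},\framebox{$9$},\framebox{$11$},\framebox{$12$},\framebox{$13$}\}$) and ``show that $\Tilde{\nu}^{(2)}$ has no singular string'' (for $b_{1}\in\{\framebox{$5$},\framebox{$8$},\framebox{$10$}\}$) is exactly what the paper does.

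However, your central bookkeeping principle is stated incorrectly, and the error propagates into a missing subcase. The quantity $\alpha_{1}^{(2)}-\Tilde{\alpha}_{1}^{(2)}$ counts the strings of $\nu^{(2)}$ \emph{all} of whose boxes are deleted, not the markings landing on length-one strings: a string of length $2$ in $\nu^{(2)}$ that receives two markings also disappears. This actually happens — by \textbf{(BM-6)}(2) the labels $[5]$ and $[6]$ can sit in the same singular string of $\nu^{(2)}$, and for $b_{1}=\framebox{$10$}$ the configuration $i_{5}=i_{6}=2$ (a fully deleted $2$-string, together with $[1]$ on a $1$-string) achieves $\alpha_{1}^{(2)}-\Tilde{\alpha}_{1}^{(2)}=2$ with possibly $m_{1}^{(2)}=1$. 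Your claim that ``$[6]$ ... never \lbrack lies\rbrack{} on a $1$-string, leaving $[1]$ and $[5]$ as the only possible pair'' and the deduction $m_{1}^{(2)}\geq 2$ therefore exclude a genuine case; the paper treats it separately as Case 6(1), where the argument that $b_{2}=\framebox{$1$}$ relies on $\Delta p_{1}^{(2)}=0$, on the $i_{4}$-string in $\nu^{(1)}$ having effective length one, and on $\Delta p_{i}^{(2)}=2$ for $i\geq 2$ — a different computation from the one your two-singular-$1$-strings picture would give. The same issue recurs for $b_{1}=\framebox{$11$}$ (where $[7]$ may land in $\nu^{(2)}$ in the string already marked by $[5]$ and $[6]$... more precisely, where $i_{5}=i_{6}$ produces a doubly marked string) and for $b_{1}=\framebox{$14$}$, whose \textbf{(VC-14)}(2) pattern deletes two strings of length $i_{1}$ each losing two boxes. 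To repair the argument you must replace ``two markings on two $1$-strings'' by a classification of which subsets of $\{[1],[4],[5],[6],[7],[10]\}$ can jointly exhaust entire strings of $\nu^{(2)}$, and then run the $\Tilde{\nu}$ analysis in each resulting configuration.
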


\begin{lem} \label{lem:da=1}
If $\alpha_{1}^{(2)}-\Tilde{\alpha}_{1}^{(2)}=1$. then $b_{1}\otimes b_{2}$ belongs to $S_{1}$.
In particular, it does not take the form of  $b\otimes \framebox{$\emptyset$}$ or $\framebox{$\emptyset$}\otimes b$.
\end{lem}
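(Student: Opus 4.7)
The quantity $\alpha_{1}^{(2)}-\tilde{\alpha}_{1}^{(2)}$ counts the number of strings in $\nu^{(2)}$ that are completely consumed by $\delta_{\theta}$, equivalently the number of length-$1$ strings of $\nu^{(2)}$ whose unique box is marked during the box-marking procedure. The plan is to proceed by case analysis on the return $b_{1}=\gamma(\nu,J)$, determining for each value of $b_{1}$ the precise combinatorial configurations that yield $\alpha_{1}^{(2)}-\tilde{\alpha}_{1}^{(2)}=1$, then computing $b_{2}=\gamma(\tilde{\nu},\tilde{J})$ for each such configuration and confirming $(b_{1},b_{2})\in S_{1}$. The second assertion (no $\framebox{$\emptyset$}$ factor) is an immediate consequence, since $\framebox{$\emptyset$}$ does not appear in $S_{1}$.

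First I dispose of the values of $b_{1}$ incompatible with $\alpha_{1}^{(2)}-\tilde{\alpha}_{1}^{(2)}=1$. If $b_{1}=\framebox{$1$}$, no box is marked in $\nu^{(2)}$ at all, so the difference is $0$; if $b_{1}=\framebox{$\emptyset$}$, the return pattern in \textbf{(BM-5)}\,(1), together with Case 0 in the proof of (IV), shows that \emph{two} distinct length-$1$ strings of $\nu^{(2)}$ are consumed, giving difference $2$. For each of the remaining thirteen returns, I read off from \textbf{(BM-1)}--\textbf{(BM-10)} exactly which subset of the marks $[1],[4],[5],[6],[7],[10]$ lands in $\nu^{(2)}$; the condition $\alpha_{1}^{(2)}-\tilde{\alpha}_{1}^{(2)}=1$ then becomes an explicit arithmetic constraint on the lengths $i_{1},i_{4},i_{5},\dots$ of the selected $\nu^{(2)}$-strings (exactly one of them must have length $1$).

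Given such a configuration, I compute $b_{2}$ by applying $\delta_{\theta}$ to $(\tilde{\nu},\tilde{J})$. The main inputs are the vacancy changes \textbf{(VC-$k$)} and the rigging adjustments \textbf{(RA-$k$)}: together they describe which strings of $\tilde{\nu}^{(2)}$ remain singular (and hence could be selected at step \textbf{(BM-1)} of the next iteration). For example, when $b_{1}=\framebox{$2$}$ with $i_{1}=1$, formulas \textbf{(VC-1)} and \textbf{(VC-2)} give $\Delta p_{i}^{(2)}=1$ for all $i\geq 1$ while all surviving riggings are unchanged, so no string in $\tilde{\nu}^{(2)}$ is singular and $b_{2}=\framebox{$1$}$, producing $(\framebox{$2$},\framebox{$1$})\in S_{1}$. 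The same mechanism handles $b_{1}\in\{\framebox{$3$},\framebox{$4$},\framebox{$6$}\}$ (leading to $b_{2}\in\{\framebox{$1$},\dots,\framebox{$7$}\}\setminus\{\framebox{$6$}\}$), $b_{1}\in\{\framebox{$5$},\framebox{$8$},\framebox{$10$}\}$ (leading to $b_{2}\in\{\framebox{$2$},\dots,\framebox{$9$}\}$), $b_{1}\in\{\framebox{$7$},\framebox{$9$},\framebox{$11$},\framebox{$12$},\framebox{$13$}\}$ (leading to $b_{2}\in\{\framebox{$6$},\dots,\framebox{$12$}\}\setminus\{\framebox{$7$}\}$), and $b_{1}=\framebox{$14$}$ (leading to $b_{2}\in\{\framebox{$10$},\dots,\framebox{$13$}\}$), all matching the listed membership in $S_{1}$.

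The main obstacle is the sheer volume of sub-cases and the need to interact carefully with the preferential marking rules, the boomerang rules \textbf{(BS-1)}--\textbf{(BS-6)}, and the inactivation rules in \textbf{(BM-8)}. In particular, to rule out $b_{2}=\framebox{$\emptyset$}$ one has to verify, using the appropriate vacancy tables, that $\tilde{\nu}^{(2)}$ never simultaneously contains the two singular length-$1$ strings and the auxiliary length-$3$ string in $\tilde{\nu}^{(1)}$ required by \textbf{(BM-5)}\,(1); this is the only conceptually delicate point, and it is forced by the fact that removing a single length-$1$ string from $\nu^{(2)}$ shifts the adjacent vacancies enough that any other length-$1$ string left in $\tilde{\nu}^{(2)}$ is q-singular at best. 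Once these rulings are in hand, the identification of $b_{2}$ in each branch is a direct consultation of the data in \textbf{(VC)} and \textbf{(RA)}, and membership of $(b_{1},b_{2})$ in $S_{1}$ can be read off from the explicit list.
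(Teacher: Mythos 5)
Your proposal is correct and follows essentially the same strategy as the paper: the paper proves only Lemma~\ref{lem:da=2} in detail (case analysis on $b_{1}$, then using the \textbf{(VC-$k$)} tables and rigging adjustments to determine which strings of $\Tilde{\nu}$ can still be selected, hence constraining $b_{2}$) and explicitly states that the proofs of Lemmas~\ref{lem:da=1} and~\ref{lem:da=0} are similar, which is exactly the scheme you carry out. Your preliminary reduction (difference $0$ for $b_{1}=\framebox{$1$}$, difference $2$ for $b_{1}=\framebox{$\emptyset$}$, and the "exactly one length-one string consumed" criterion for the rest) and your worked instance $b_{1}=\framebox{$2$}$, $i_{1}=1$ match the paper's method.
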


\begin{lem} \label{lem:da=0}
If $\alpha_{1}^{(2)}-\Tilde{\alpha}_{1}^{(2)}=0$. then $b_{1}\otimes b_{2}$ belongs to $S_{0}$ or takes the form of $b\otimes \framebox{$\emptyset$}$ with $b=\framebox{$i$}$ with $1\leq i \leq 14$. 
\end{lem}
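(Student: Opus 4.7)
The plan is to prove Lemma~\ref{lem:da=0} by case analysis on $b_{1} = \gamma(\nu,J)$. First observe that $\alpha_{1}^{(2)} - \tilde{\alpha}_{1}^{(2)}$ equals the number of boxes removed from length-$1$ strings of $\nu^{(2)}$ during $\delta_{\theta}$, since all other box-deletions in $\nu^{(2)}$ leave the string count of $\tilde{\nu}^{(2)}$ unchanged. Boxes of $\nu^{(2)}$ are deleted exactly at the markings $[1],[4],[5],[7],[10]$ when placed in $\nu^{(2)}$, so the hypothesis $\alpha_{1}^{(2)} = \tilde{\alpha}_{1}^{(2)}$ is equivalent to the condition that every string of $\nu^{(2)}$ receiving one of these marks has length at least $2$; in particular $i_{1}\geq 2$ whenever $[1]$ is marked, and the same for $i_{4},i_{7},i_{10}$ when they land in $\nu^{(2)}$. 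The return $b_{1} = \boxed{\emptyset}$ is excluded at once, since Case 0 of the proof of (IV) shows that it forces $\tilde{m}_{1}^{(2)} = m_{1}^{(2)} - 2$, giving $\alpha_{1}^{(2)}-\tilde{\alpha}_{1}^{(2)} = 2 \neq 0$.

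The easy case is $b_{1} = \boxed{1}$: here no strings are altered, so $\tilde{\nu} = \nu$, and \textbf{(VC-1)} gives $\Delta p_{i}^{(2)} = -1$ while all riggings are preserved. Thus each previously q-singular string of $\nu^{(2)}$ may become singular in $\tilde{\nu}^{(2)}$, and $b_{2}$ can be any element of $B$ including $\boxed{\emptyset}$. By inspection of the defining list of $S_{0}$, every pair $\boxed{1}\otimes b_{2}$ with $b_{2} \neq \boxed{\emptyset}$ lies in $S_{0}$, and $b_{2} = \boxed{\emptyset}$ is the allowed exception.

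For each remaining $b_{1} \in \{\boxed{2},\ldots,\boxed{14}\}$ the argument runs in three steps: (a) read off from \textbf{(BM-1)}--\textbf{(BM-10)} which strings of $\nu^{(2)}$ receive marks and impose $i_{k}\geq 2$ there; (b) use the tables \textbf{(VC-$k$)} together with the rigging-adjustment rules \textbf{(RA-1)}--\textbf{(RA-7)} to describe precisely which strings of $\tilde{\nu}^{(2)}$ are singular and how the vacancy numbers on $\tilde{\nu}^{(1)}$ have shifted; (c) rerun the box-marking algorithm on $(\tilde{\nu},\tilde{J})$ and identify the admissible $b_{2}$. A uniform ingredient is that each box-deleted string of $\nu^{(2)}$ resurfaces as a singular string in $\tilde{\nu}^{(2)}$ (by the fundamental rule of \textbf{(RA-$\cdot$)}), so the minimum length of a singular string in $\tilde{\nu}^{(2)}$ is bounded above by $\min\{i_{1}-1,i_{4}-1,i_{7}-1,i_{10}-1\}$ over the actually performed deletions, and bounded below by the obstructions that caused $\delta_{\theta}$ to terminate at $b_{1}$. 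Matching these bounds against the explicit list of $S_{0}$ then pins down $b_{2}$ into $\{b : b_{1}\otimes b \in S_{0}\}\cup\{\boxed{\emptyset}\}$.

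The main obstacle is the bookkeeping for $b_{1}\in\{\boxed{9},\boxed{11},\boxed{12},\boxed{13}\}$, where up to two strings of $\nu^{(2)}$ and several partially consumed strings of $\nu^{(1)}$ are involved, with the added subtleties of inactivation under \textbf{(BM-8)} and of boomerang transitions \textbf{(BS-1)}--\textbf{(BS-6)} that might arise when rerunning the algorithm on $(\tilde{\nu},\tilde{J})$. Showing that none of these mechanisms manufactures a spurious short singular string in $\tilde{\nu}^{(2)}$ will require convexity estimates from Lemmas~\ref{lem:vacancy01}, \ref{lem:vacancy23}, \ref{lem:vacancy1} combined with the identities Eqs.~\eqref{eq:ddp1a}--\eqref{eq:ddp2} applied to the new vacancy numbers $\tilde{p}_{i}^{(a)} = p_{i}^{(a)} + \Delta p_{i}^{(a)}$, echoing the arguments already used in the proof of (II) for these returns. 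Once the appropriate lower bound on the minimum singular length in $\tilde{\nu}^{(2)}$ is in place, the output $b_{2}$ is forced to lie in the prescribed subset, completing the lemma.
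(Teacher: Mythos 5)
Your overall strategy---case analysis on $b_{1}$ driven by the tables \textbf{(VC-1)}--\textbf{(VC-14)} and the rigging-adjustment rules---is exactly the route the paper takes for Lemma~\ref{lem:da=2} and then invokes for the present lemma, so the plan is sound in outline. However, your opening reduction is wrong, and since every later step is keyed to it, this is a genuine gap. The quantity $\alpha_{1}^{(2)}-\Tilde{\alpha}_{1}^{(2)}$ is the number of strings of $\nu^{(2)}$ that are deleted \emph{entirely}, not the number of boxes removed from length-one strings: a single string of $\nu^{(2)}$ can receive two marks (for instance $[5]$ and $[6]$ under \textbf{(BM-6)}(2), or $[1]$ and $[10]$ under \textbf{(BM-10)}, as in the picture of \textbf{(VC-14)}(2)), and a length-two string so marked disappears from $\Tilde{\nu}^{(2)}$ just as a length-one string marked once does; this is precisely what happens in the paper's Case 6 of the proof of Lemma~\ref{lem:da=2}, where $i_{5}=i_{6}=2$. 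Consequently your claimed equivalence of the hypothesis with ``every marked string of $\nu^{(2)}$ has length at least $2$'' is false; the correct condition is that every marked string of $\nu^{(2)}$ is strictly longer than the number of marks it receives. Your list of marks that can land in $\nu^{(2)}$ also omits $[6]$. With the weaker characterization, the constraints you impose in step (a) for $b_{1}\in\{\framebox{$10$},\framebox{$12$},\framebox{$13$},\framebox{$14$}\}$ admit configurations that actually have $\alpha_{1}^{(2)}-\Tilde{\alpha}_{1}^{(2)}\geq 1$, for which the conclusion of this lemma fails, so step (c) could not be completed as described.

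Beyond that, the proposal never executes the case analysis: steps (a)--(c) and the closing paragraph describe what would have to be checked rather than checking it, and the ``uniform ingredient'' that every box-deleted string of $\nu^{(2)}$ resurfaces as singular is itself not uniform, since \textbf{(RA-1)} makes the box-deleted $i_{6}$-string q-singular when $i_{5}<i_{6}$, including when that string lies in $\nu^{(2)}$. The paper's proof of Lemma~\ref{lem:da=2} consists precisely of carrying out this case-by-case verification of which $b_{2}$ can follow; a proof of the present lemma needs the analogous verification actually performed, starting from the corrected form of the hypothesis.
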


We give the proof of Lemma \ref{lem:da=2} only.
Proofs of  Lemma \ref{lem:da=1} and  Lemma \ref{lem:da=0} are similar.  

\begin{proof}[Proof of Lemma \ref{lem:da=2}]
Since $\alpha_{1}^{(2)}-\Tilde{\alpha}_{1}^{(2)}=2$, $i\geq 5$ but $i\neq 6$.
It is obvious that $b_{1}= \framebox{$\emptyset$}$ is also possible.

\begin{flushleft}
Case 1. $b_{1}= \framebox{$\emptyset$}$.
\end{flushleft}
From \textbf{(VC-$\emptyset$)}, it is obvious that $1\leq j\leq 14$ or $j=\emptyset$.

\begin{flushleft}
Case 2. $b_{1}= \framebox{$5$}$
\end{flushleft}
Since $b_{1}= \framebox{$5$}$, after [4] was marked in $\nu^{(2)}$ the box marking has terminated.
Since $\alpha_{1}^{(2)}-\Tilde{\alpha}_{1}^{(2)}=2$, two strings of length one are marked by [1] and [4] in $\nu^{(2)}$ and they are deleted in $\Tilde{\nu}^{(1)}$.
Since $\Delta p_{i}^{(2)}=1$ $(i\geq 1)$ (see \textbf{(VC-5)}), all strings of $\Tilde{\nu}^{(2)}$ are not singular so that $j=1$.
In particular $j\neq \emptyset$.

\begin{flushleft} 
Case 3. $b_{1}= \framebox{$7$}$.
\end{flushleft}
Since $b_{1}= \framebox{$7$}$, after [5] was marked in $\nu^{(1)}$ the box marking has terminated.
We claim that $1\leq j \leq 7$ but $j\neq 6$ and $j\neq \emptyset$.
To do so, we show that neither [4] nor [6] can be marked in $\Tilde{\nu}^{(1)}$.
From \textbf{(VC-7)}, the changes of vacancy numbers are summarized as 

\begin{center}
\begin{tabular}{cl|cl}
$\Delta p_{i}^{(1)}$ & & $\Delta p_{i}^{(2)}$ & \\ \hline
$0$ & $(i\geq i_{5})$ & $0$ & $(i\geq i_{5}^{eff})$ \\
& & $1$ & $(1\leq i\leq i_{5}^{eff}-1)$
\end{tabular}
\end{center}
Note that strings of length ($\geq i_{5}$) in $\nu^{(1)}$ and therefore in $\Tilde{\nu}^{(1)}$ are q-singular at best.
Otherwise [6] would be marked in $\nu^{(1)}$ contradicting $b_{1}= \framebox{$7$}$.
The selected $i_{1}$- and $i_{4}$-strings of length one are deleted in $\Tilde{\nu}^{(2)}$ so that it is obvious from the table above that [1] can be marked in a string of length ($\geq i_{5}^{eff}$) if possible.
If [1] cannot be marked in $\Tilde{\nu}^{(2)}$, then $b_{2}=\framebox{$1$}$.
If [1] is marked in $\Tilde{\nu}^{(2)}$, then the box marking in $\Tilde{\nu}^{(1)}$ is possible only for strings of length ($\geq 3i_{5}^{eff}-2$).
Therefore, in order to mark [4] or [6] there must be a singular string of length $(\geq 3i_{5}^{eff})$ in $\Tilde{\nu}^{(1)}$.
However, such a string does not exist because string of length $(\geq i_{5})$ in $\Tilde{\nu}^{(1)}$ are q-singular at best.
Hence the claim follows. 
 
\begin{flushleft}  
Case 4. $b_{1}= \framebox{$8$}$.
\end{flushleft}
Since $b_{1}= \framebox{$8$}$, after [5] is marked in $\nu^{(2)}$ the box marking has terminated.
Two strings of length one are marked by [1] and [5] in $\nu^{(2)}$.
Only one 1-string (the selected $i_{1}$-string) in $\nu^{(2)}$ is singular.
Strings of length $(\geq 2)$ in $\nu^{(2)}$ are not singular.
Otherwise [6] would be marked in $\nu^{(2)}$ contradicting $b_{1}= \framebox{$8$}$.
Since $\Delta p_{i}^{(2)}= 0$ $(i\geq 1)$ (see \textbf{(VC-8)}), all strings of $\Tilde{\nu}^{(2)}$ are still not singular so that $j=1$ and $j\neq \emptyset$.
 
\begin{flushleft}   
Case 5. $b_{1}= \framebox{$9$}$.
\end{flushleft}
Since $b_{1}= \framebox{$9$}$, after [6] was marked in $\nu^{(1)}$ the box marking has terminated.
We claim that $1\leq j \leq 7$ but $j\neq 6$ and $j\neq \emptyset$.
To do so, we show that neither [4] nor [6] can be marked in $\Tilde{\nu}^{(1)}$.
From \textbf{(VC-9)}, we have two cases.

\begin{itemize}

\item[(1)] $i_{5}=i_{6}$.


\begin{itemize}
\item[(a)] The selected $i_{6}$-string in $\nu^{(1)}$ is type-II.

\begin{center}
\begin{tabular}{cl|cl}
$\Delta p_{i}^{(1)}$ & & $\Delta p_{i}^{(2)}$ & \\ \hline
2 & $(i\geq i_{6})$ & $-1$ & $(i\geq i_{6}^{eff})$ \\
0 & $(i=i_{6}-1)$ & 0 & $(i=i_{6}^{eff}-1)$ \\
& & 1 & $(1\leq i\leq i_{6}^{eff}-2)$
\end{tabular}
\end{center}
Since the selected $i_{1}$- and $i_{4}$-strings are deleted in $\Tilde{\nu}^{(2)}$, it is obvious from the table above that [1] can be marked in a string of length ($\geq i_{6}^{eff}-1$) if possible.
If [1] cannot be marked in $\Tilde{\nu}^{(2)}$, then $b_{2}=\framebox{$1$}$.
If [1] is marked in $\Tilde{\nu}^{(2)}$, then the box marking in $\Tilde{\nu}^{(1)}$ is possible only for strings of length ($\geq 3(i_{6}^{eff}-1)-2=i_{6}-3$).
Therefore, in order to mark [4] or [6] there must a singular string of length ($\geq i_{6}-1$) in $\Tilde{\nu}^{(1)}$.
However such a string does not exist in $\Tilde{\nu}^{(1)}$.
This is shown as follows.
The $(i_{6}-1)$-strings (type-0) in $\nu^{(1)}$ are qq-singular at best.
Otherwise [5] would be marked in an $(i_{6}-1)$-string.
Since $\Delta p_{i_{6}-1}^{(1)}=0$ and $\Delta p_{i}^{(1)}=2$ $(i\geq i_{6})$, strings of length $(\geq i_{6}-1)$ in $\Tilde{\nu}^{(1)}$ are qq-singular at best.

\item[(b)] The selected $i_{6}$-string in $\nu^{(1)}$ is type-0/I.

\begin{center}
\begin{tabular}{cl|cl}
$\Delta p_{i}^{(1)}$ & & $\Delta p_{i}^{(2)}$ & \\ \hline
2 & $(i\geq i_{6})$ & $-1$ & $(i\geq i_{6}^{eff})$ \\
0 & $(i=i_{6}-1)$ & 1 & $(1\leq i\leq i_{6}^{eff}-1)$
\end{tabular}
\end{center}
From the table above, box marking in $\Tilde{\nu}^{(1)}$ is possible only for strings of length ($\geq 3i_{6}^{eff}-2$).
The remaining argument is the same as in (a). 

\end{itemize}

\item[(2)] $i_{5}<i_{6}$.

\begin{center}
\begin{tabular}{cl|cl}
$\Delta p_{i}^{(1)}$ & & $\Delta p_{i}^{(2)}$ & \\ \hline
2 & $(i\geq i_{6})$ & $-1$ & $(i\geq i_{6}^{eff})$ 
\end{tabular}
\end{center}
The changes of vacancy numbers $\Delta p_{i}^{(1)}$ $(i\leq i_{6}-1)$ and $\Delta p_{i}^{(2)}$ $(i\leq i_{6}^{eff}-1)$ are the same as those in Case 3.
Note that the box-deleted $(i_{6}-1)$-string in $\Tilde{\nu}^{(1)}$ is set to be q-singular by \textbf{(RA-1)}.
The argument is the same as that in Case 3.

\end{itemize}

\begin{flushleft}
Case 6. $b_{1}= \framebox{$10$}$.
\end{flushleft}   
Since $b_{1}= \framebox{$10$}$, after [6] was marked in $\nu^{(2)}$ the box marking has terminated.

\begin{itemize}
\item[(1)] $i_{5}=i_{6}=2$.

The effective length of the $i_{4}$-string in $\nu^{(1)}$ is one.
Otherwise, [7] would be marked on the left of [4] in $\nu^{(1)}$.
Therefore, only one 1-string (the $i_{1}$-string) in $\nu^{(2)}$ is singular.
Otherwise [5] would be marked in the singular 1-string in $\nu^{(2)}$ and the box marking would terminate.
Since $\Delta p_{1}^{(2)}=0$ (see \textbf{(VC-10)}), strings of length one in $\Tilde{\nu}^{(2)}$ are not singular.
Furthermore, $\Delta p_{i}^{(2)}=2$ $(i\geq 2)$ (see \textbf{(VC-10)}), strings of length ($\geq 2$) in $\Tilde{\nu}^{(2)}$ are also not singular.
Hence $j=1$.
In particular, $j\neq \emptyset$.

\item[(2)] $i_{5}=1$ and $i_{6}\geq 2$.

Since strings of length $i$ $(1\leq i \leq i_{6}-1)$ in $\nu^{(1)}$ except the $i_{1}$-string are not singular and $\Delta p_{i}^{(2)}=0$ $(1\leq i \leq i_{6}-1)$ (see \textbf{(VC-10)}), strings of length $i$ $(2\leq i \leq i_{6}-1)$ in $\Tilde{\nu}^{(1)}$ are also not singular.
Furthermore, $\Delta p_{i}^{(2)}=2$ $(i\geq i_{6})$ (see \textbf{(VC-10)}) so that all strings in $\Tilde{\nu}^{(2)}$ are not singular.
Hence $j=1$.
In particular, $j\neq \emptyset$.

\end{itemize}

\begin{flushleft}
Case 7. $b_{1}= \framebox{$11$}$.
\end{flushleft}
Since $b_{1}= \framebox{$11$}$, after [7] was marked in $\nu^{(1)}$ or $\nu^{(2)}$ the box marking has terminated.
We claim that $1\leq j \leq 7$ but $j\neq 6$ and $j\neq\emptyset$.
To do so, we show that neither [4] nor [6] can be marked in $\Tilde{\nu}^{(1)}$.

Suppose that [7] is marked in $\nu^{(1)}$.
Note that [7] is always marked in the string which is unmarked so far.
The changes of vacancy numbers $\Delta p_{i}^{(2)}$ in Case 6 are changed to $\Delta p_{i}^{(2)}=1$ only for $i\geq i_{7}^{eff}$ in this case.
Hence $j=1$ and $j\neq \emptyset$ as in Case 6.
So we assume that [6] is always marked in $\nu^{(1)}$ and [7] is always marked in $\nu^{(2)}$.
Furthermore if $i_{6}^{eff}<i_{7}$, then $\Delta p_{i}^{(1)}=-1$ $(i\geq 3i_{7})$, $\Delta p_{3i_{7}-1}^{(1)}=0$, and $\Delta p_{3i_{7}-2}^{(1)}=1$  from \textbf{(VC-11)}.
Here we note the fact that [8] cannot be marked in $\nu^{(1)}$.
This fact implies that strings of length $3i_{7}-2$ in $\nu^{(1)}$ are singular at best, strings of length $3i_{7}-1$ in $\nu^{(1)}$ are q-singular at best, and strings of length ($\geq 3i_{7}$) are qq-singular at best by the algorithm $\delta_{\theta}$.
Hence strings of length ($\geq 3i_{7}-2$) in $\Tilde{\nu}^{(1)}$ are q-singular at best.
The singularity of strings of length ($\leq 3i_{7}-3$) is the same as in Case 5.
Hence $1\leq j\leq 7$ but $j\neq 6$ and $j\neq \emptyset$.
So we assume that $i_{6}^{eff}=i_{7}$.
From \textbf{(VC-11)}, we have following two cases.

\begin{itemize}

\item[(1)] $i_{5}=i_{6}$.

\begin{center}
\begin{tabular}{cl|cl}
$\Delta p_{i}^{(1)}$ & & $\Delta p_{i}^{(2)}$ & \\ \hline
$-1$ & $(i\geq 3i_{7})$ & 1 & $(i\geq i_{7})$ \\
0 & $(i=i_{6}+1)$ & $0$ & $(i=i_{7}-1)$ \\
1 & $(i=i_{6})$ & 1 & $(1\leq i \leq i_{7}-2)$ \\
0 & $(i=i_{6}-1)$ & &
\end{tabular}
\end{center}
Note that the $i_{6}$-string must be type-II.
Otherwise [8] would be marked on the left of [6].
Since the selected $i_{1}$- and $i_{4}$-strings are deleted in $\Tilde{\nu}^{(2)}$, it is obvious that [1] can be marked in a string of length ($\geq i_{7}-1$).
Therefore if [1] is marked in $\Tilde{\nu}^{(2)}$, then the box marking is possible only for strings of length ($\geq 3(i_{7}-1)-2=i_{6}-3$).
Therefor, in order to mark [4] or [6] there must be a singular string of length ($\geq i_{6}-1$) in $\Tilde{\nu}^{(1)}$.
However, such a string does not exist in $\Tilde{\nu}^{(1)}$.
This is shown as follows.
The $(i_{6}-1)$-strings in $\nu^{(1)}$ are qq-singular at best.
Otherwise [5] would be marked in an $(i_{6}-1)$-string.
Furthermore, since [8] cannot be marked in $\nu^{(1)}$, strings of length $3i_{7}-1(=i_{6}+1)$ are q-singular at best and strings of length ($\geq 3i_{7}$) are qq-singular at best.
Hence, from the table above, strings of length ($\geq i_{6}-1$) are not singular so that the box marking of [4] or [6] in $\Tilde{\nu}^{(1)}$ is impossible.

\item[(2)] $i_{5}<i_{6}$.

\begin{center}
\begin{tabular}{cl|cl}
$\Delta p_{i}^{(1)}$ & & $\Delta p_{i}^{(2)}$ & \\ \hline
$-1$ & $(i\geq 3i_{7})$ & 1 & $(i\geq i_{7})$ \\
0 & $(i=i_{6}+1)$ & $0$ & $(i_{5}^{eff}\leq i\leq i_{7}-1)$ \\
1 & $(i=i_{6})$ & 1 & $(1\leq i \leq i_{5}^{eff}-1)$ \\
0 & $(i=i_{6}-1)$ & &
\end{tabular}
\end{center}
Note that the $i_{6}$-string must be type-II.
Otherwise [8] would be marked on the left of [6].
Since the selected $i_{1}$- and $i_{4}$-strings are deleted in $\Tilde{\nu}^{(2)}$, it is obvious that [1] can be marked in a string of length ($\geq i_{5}^{eff}$).
Therefore if [1] is marked in $\Tilde{\nu}^{(2)}$, then the box marking is possible only for strings of length ($\geq 3i_{5}^{eff}-2$) and in order to [4] or [6] there must be a singular string of length ($\geq 3i_{5}^{eff}$).
However, such a string does not exist in $\Tilde{\nu}^{(1)}$.
This is shown as follows.
Strings of length $i$ ($i_{5}\leq i\leq i_{6}-1$) in $\nu^{(1)}$ are q-singular at best and their singularity does not change in $\Tilde{\nu}^{(1)}$.
Furthermore, since [8] cannot be marked in $\nu^{(1)}$, strings of length $3i_{7}-1(=i_{6}+1)$ are q-singular at best and strings of length ($\geq 3i_{7}$) are qq-singular at best.
Hence from the table above, strings of length ($\geq i_{5}$) in $\Tilde{\nu}^{(1)}$ are not singular so that the box marking of [4] or [6] in $\Tilde{\nu}^{(1)}$ is impossible.

\end{itemize}

\begin{flushleft}
Case 8. $b_{1}= \framebox{$12$}$.
\end{flushleft}
Since $b_{1}= \framebox{$12$}$, after [8] was marked in $\nu^{(1)}$ the box marking has terminated.
We claim that $1\leq j \leq 7$ but $j\neq 6$ and $j\neq \emptyset$.
To do so, we show that neither [4] nor [6] can be marked in $\Tilde{\nu}^{(1)}$.

Firstly, we suppose that [8] is marked in the string which is unmarked so far in $\nu^{(1)}$, where [7] is marked in $\nu^{(1)}$ or $\nu^{(2)}$.
From \textbf{(VC-12)} (5), $\Delta p_{i}^{(1)}=1$ $(i\geq i_{8})$ and $\Delta p_{i}^{(2)}=0$ $(i\geq i_{8}^{eff})$.
The changes of vacancy numbers $\Delta p_{i}^{(1)}$ ($i\leq i_{8}-1$) and $\Delta p_{i}^{(2)}$ ($i\leq i_{8}^{eff}-1$) are the same as those in Case 7.
The box-deleted $(i_{8}-1)$-string in $\Tilde{\nu}^{(1)}$ is set to be q-singular by the rule of rigging adjustment \textbf{(RA-5)}.
Therefore, the box marking of [4] or [6] in $\Tilde{\nu}^{(1)}$ is impossible as in Case 7.

Secondly, we suppose that [8] is marked in a marked string.
From \textbf{(VC-12)}, we have following four cases.

\begin{itemize}
\item[(1)] The $i_{7}$-string is in $\nu^{(1)}$ with $i_{8}=i_{7}=i_{4}$ ($i_{5}=i_{6}=2$) or the $i_{7}$-string is in $\nu^{(2)}$ with $i_{4}=i_{7}(=2)$.
The box marking in $\nu^{(1)}$ is one of the following three.

\setlength{\unitlength}{10pt}
\begin{center}
\begin{picture}(23,3)
\put(0,1){\line(1,0){5}}
\put(0,2){\line(1,0){5}}
\put(0,1){\line(0,1){1}}
\put(1,1){\line(0,1){1}}
\put(2,1){\line(0,1){1}}
\put(3,1){\line(0,1){1}}
\put(4,1){\line(0,1){1}}
\put(5,1){\line(0,1){1}}
\put(2,2){\makebox(2,1){$\downarrow$}}
\put(0,1){\makebox(1,1){{\scriptsize $[8]$}}}
\put(1,1){\makebox(1,1){{\scriptsize $[7]$}}}
\put(2,1){\makebox(1,1){{\scriptsize $[4]$}}}
\put(3,1){\makebox(1,1){{\scriptsize $[3]$}}}
\put(4,1){\makebox(1,1){{\scriptsize $[2]$}}}

\put(6,1){\makebox(2,1){$\text{or}$}}

\put(9,0){\line(1,0){1}}
\put(9,1){\line(1,0){5}}
\put(9,2){\line(1,0){5}}
\put(9,0){\line(0,1){2}}
\put(10,0){\line(0,1){2}}
\put(11,1){\line(0,1){1}}
\put(12,1){\line(0,1){1}}
\put(13,1){\line(0,1){1}}
\put(14,1){\line(0,1){1}}
\put(11,2){\makebox(2,1){$\downarrow$}}
\put(9,0){\makebox(1,1){{\scriptsize $[2]$}}}
\put(10,1){\makebox(1,1){{\scriptsize $[8]$}}}
\put(11,1){\makebox(1,1){{\scriptsize $[7]$}}}
\put(12,1){\makebox(1,1){{\scriptsize $[4]$}}}
\put(13,1){\makebox(1,1){{\scriptsize $[3]$}}}

\put(15,1){\makebox(2,1){$\text{or}$}}

\put(18,0){\line(1,0){2}}
\put(18,1){\line(1,0){5}}
\put(18,2){\line(1,0){5}}
\put(18,0){\line(0,1){2}}
\put(19,0){\line(0,1){2}}
\put(20,0){\line(0,1){2}}
\put(21,1){\line(0,1){1}}
\put(22,1){\line(0,1){1}}
\put(23,1){\line(0,1){1}}
\put(20,2){\makebox(2,1){$\downarrow$}}
\put(18,0){\makebox(1,1){{\scriptsize $[3]$}}}
\put(19,0){\makebox(1,1){{\scriptsize $[2]$}}}
\put(20,1){\makebox(1,1){{\scriptsize $[8]$}}}
\put(21,1){\makebox(1,1){{\scriptsize $[6]$}}}
\put(22,1){\makebox(1,1){{\scriptsize $[5]$}}}

\end{picture}
\end{center}

\begin{center}
\begin{tabular}{cl|cl}
$\Delta p_{i}^{(1)}$ & & $\Delta p_{i}^{(2)}$ & \\ \hline
$1$ & $(i\geq 6)$ & $0$ & $(i\geq 1)$ \\
$2$ & $(i=5)$ & & \\
$1$ & $(i=4)$ & &  \\
$0$ & $(i=3)$ & &
\end{tabular}
\end{center}
Since 3-strings in $\nu^{(1)}$ are not singular, it is obvious that the box marking of [4] or [6] in $\Tilde{\nu}^{(1)}$ is impossible.

\item[(2)] $i_{8}=i_{7}\neq i_{4}$.


\begin{itemize}
\item[(a)] $i_{8}^{eff}=i_{6}$.

\begin{center}
\begin{tabular}{cl|cl}
$\Delta p_{i}^{(1)}$ & & $\Delta p_{i}^{(2)}$ & \\ \hline
$1$ & $(i\geq 3i_{6})$ & $0$ & $(i\geq i_{6})$ 
\end{tabular}
\end{center}

\item[(b)] $i_{8}^{eff}>i_{6}$.

\begin{center}
\begin{tabular}{cl|cl}
$\Delta p_{i}^{(1)}$ & & $\Delta p_{i}^{(2)}$ & \\ \hline
$1$ & $(i\geq i_{8})$ & $0$ & $(i\geq i_{8}^{eff})$ 
\end{tabular}
\end{center}

\end{itemize}
The changes of vacancy numbers $\Delta p_{i}^{(2)}$ $(i\leq i_{6}-1)$ (case (a)) and $\Delta p_{i}^{(2)}$ $(i\leq i_{8}^{eff}-1)$ (case (b)) are the same as those in Case 6 so that the box marking in $\Tilde{\nu}^{(1)}$ is possible only for strings of length $(\geq 3i_{6}-2)$ (case (a)) and $(\geq 3i_{8}^{eff}-2)$ (case (b)).
It is obvious that the box marking of [4] or [6] in $\Tilde{\nu}^{(1)}$ is impossible.

\item[(3)] $i_{8}=i_{6}=i_{5}\neq i_{3}$.

In this case, the $i_{8}$-string is type-I and $i_{8}^{eff}=i_{7}$; $i_{8}=3i_{7}-1$.

\begin{center}
\begin{tabular}{cl|cl}
$\Delta p_{i}^{(1)}$ & & $\Delta p_{i}^{(2)}$ & \\ \hline
$1$ & $(i\geq i_{8}+1)$ & $0$ & $(i\geq i_{7}-1)$ \\
$2$ & $(i=i_{8})$ & $1$ & $(1\leq i\leq i_{7}-2)$ \\
$1$ & $(i=i_{8}-1)$ & &  \\
$0$ & $(i=i_{8}-2)$ & &
\end{tabular}
\end{center}
The box marking in $\Tilde{\nu}^{(1)}$ is possible only for strings of length $(\geq 3(i_{7}-1)-2=i_{8}-4)$.
Therefore, in order to mark [4] or [6] there must be a singular string of length ($\geq i_{8}-2$) in $\Tilde{\nu}^{(1)}$.
However, such a string does not exist.
This is shown as follows.
The $(i_{8}-2)$-strings in $\nu^{(1)}$ are not singular as otherwise [4] would be marked in this string.
Therefore, strings of length ($\geq i_{8}-2$) in $\Tilde{\nu}^{(1)}$ are not singular.

\item[(4)] $i_{8}=i_{6}\neq i_{5}$.

In this case, the $i_{8}$-string is type-I and $i_{8}^{eff}=i_{7}$; $i_{8}=3i_{7}-1$.
Note that the box-deleted $(i_{8}-2)$-string is set to be q-singular by \textbf{(RA-1)}.

\begin{itemize}
\item[(a)] $i_{3}=i_{5}(=3)$.

\begin{center}
\begin{tabular}{cl|cl}
$\Delta p_{i}^{(1)}$ & & $\Delta p_{i}^{(2)}$ & \\ \hline
$1$ & $(i\geq i_{8}+1)$ & $0$ & $(i\geq i_{7}-1)$ \\
$2$ & $(i=i_{8})$ & $1$ & $(1\leq i\leq i_{7}-2)$ \\
$1$ & $(i=i_{8}-1)$ & &  
\end{tabular}
\end{center}

\item[(b)] $i_{3}<i_{5}$.

\begin{center}
\begin{tabular}{cl|cl}
$\Delta p_{i}^{(1)}$ & & $\Delta p_{i}^{(2)}$ & \\ \hline
$1$ & $(i\geq i_{8}+1)$ & $0$ & $(i\geq i_{5}^{eff})$ \\
$2$ & $(i=i_{8})$ & $1$ & $(1\leq i\leq i_{5}^{eff}-1)$ \\
$1$ & $(i=i_{8}-1)$ & &  
\end{tabular}
\end{center}

\end{itemize}
In both cases, it is obvious that the box marking of [4] or [6] in $\Tilde{\nu}^{(1)}$ is impossible.

\end{itemize}

\begin{flushleft}
Case 9. $b_{1}= \framebox{$13$}$.
\end{flushleft}
Since $b_{1}= \framebox{$13$}$, after [9] was marked in $\nu^{(1)}$ the box marking has terminated.
We claim that $1\leq j \leq 7$ but $j\neq 6$ and $j\neq \emptyset$.
The proof is very similar to Case 8 and we omit the details.

\begin{flushleft}
Case 10. $b_{1}= \framebox{$14$}$.
\end{flushleft}
We claim that  $1\leq j \leq 9$ and $j\neq \emptyset$.

\begin{itemize}
\item[(1)] $i_{1}<i_{10}$.


\begin{itemize}
\item[(a)] $i_{9}^{eff}< i_{10}$.

\begin{center}
\begin{tabular}{cl|cl}
$\Delta p_{i}^{(1)}$ & & $\Delta p_{i}^{(2)}$ & \\ \hline
$0$ & $(i\geq 3i_{10})$ & 1 & $(i\geq i_{10})$ \\
1 & $(i=3i_{10}-1)$ & & \\
2 & $(i=3i_{10}-2)$ & & \\
3 & $(i_{9}\leq i \leq 3i_{10}-3)$ & &
\end{tabular}
\end{center}
The changes of vacancy numbers $\Delta p_{i}^{(1)}$ $(i\leq 3(i_{10}-1))$ are the same as in Case 13.
Therefore, [4] or [6] can be marked in a singular string of length ($\geq 3i_{10}$) in $\Tilde{\nu}^{(1)}$.
If [4] is marked in such a string, then $j\geq 6$.
Although [5] can be marked in a q-singular string of length ($\geq i_{10}$) in $\Tilde{\nu}^{(2)}$, [6] cannot so that $j=8$ is possible but $j\ngeq 10$.
In particular, $j\neq \emptyset$.
If [6] is marked in such a string, then $j\geq 9$.
Since [7] cannot be marked in $\Tilde{\nu}^{(2)}$ because strings of length ($\geq i_{10}$) in $\Tilde{\nu}^{(2)}$ are q-singular at best so that $j\ngeq 11$.
Hence altogether we have $1\leq j\leq 9$ and $j\neq \emptyset$.

\item[(b)] $i_{9}^{eff}= i_{10}$.

Possible ways of box marking in $\nu^{(1)}$ are following three;

\setlength{\unitlength}{10pt}
\begin{center}
\begin{picture}(17,3)
\put(0,0){\line(1,0){4}}
\put(0,1){\line(1,0){4}}
\put(1,0){\line(0,1){1}}
\put(2,0){\line(0,1){1}}
\put(3,0){\line(0,1){1}}
\put(4,0){\line(0,1){1}}

\put(0,1){\makebox(2,1){$\downarrow$}}

\put(1,0){\makebox(1,1){{\scriptsize $[9]$}}}
\put(2,0){\makebox(1,1){{\scriptsize $[8]$}}}
\put(3,0){\makebox(1,1){{\scriptsize $[7]$}}}
\put(0,2){\makebox(1,1){{\small (i)}}}

\put(8,0){\line(1,0){3}}
\put(8,1){\line(1,0){3}}
\put(9,0){\line(0,1){1}}
\put(10,0){\line(0,1){1}}
\put(11,0){\line(0,1){1}}

\put(9,0){\makebox(1,1){{\scriptsize $[9]$}}}
\put(10,0){\makebox(1,1){{\scriptsize $[8]$}}}
\put(8,2){\makebox(1,1){{\small (ii)}}}

\put(15,0){\line(1,0){2}}
\put(15,1){\line(1,0){2}}
\put(16,0){\line(0,1){1}}
\put(17,0){\line(0,1){1}}

\put(16,0){\makebox(1,1){{\scriptsize $[9]$}}}
\put(15,2){\makebox(1,1){{\small (iii)}}}

\end{picture}
\end{center}
We omit the unmarked or marked strings except the selected $i_{9}$-string.
In either case, $\Delta p_{i}^{(2)}=1$ $(i\geq i_{10})$.
The changes of vacancy numbers $\Delta p_{i}^{(1)}$ are obtained by overwriting
\[
-3\chi (i\geq 3i_{10})-2\chi (i=3i_{10}-1)-\chi (i=3i_{10}-2)
\]
on $\Delta p_{i}^{(1)}$ in Case 13 where $\Delta p_{i}^{(1)}\leq 3$ and in particular $\Delta p_{i}^{(1)}=3$ $(i\geq i_{9})$.
Therefore, $\Delta p_{i}^{(1)}\leq 0$ for some strings of effective length $i_{10}$ and $\Delta p_{i}^{(1)}=0$ $(i\geq 3i_{10})$.
In case (i), the selected $i_{9}$-string is type-0 and the box-deleted $(i_{9}-3)$-string in $\Tilde{\nu}^{(1)}$ is set to be singular but [4] or [6] cannot be marked in this string as in Case 9 so that [4] or [6] can be marked in only a singular string of effective length ($\geq i_{10}$) in $\Tilde{\nu}^{(1)}$.
The remaining argument is the same as in case (a) so that $1\leq j\leq 9$ and $j\neq \emptyset$.
In cases (ii) and (iii), the box-deleted $(i_{9}-1)$- or $(i_{9}-2)$-string is set to be q/qq-singular by \textbf{(RA-6)} or \textbf{(RA-7)} so that [4] or [6] cannot marked in this string.
Similarly, we have $1\leq j\leq 9$ and $j\neq \emptyset$.

\end{itemize}

\item[(2)] $i_{1}=i_{10}$.

From \textbf{(VC-14)}, it is obvious that $j=1$.

\end{itemize}

\end{proof}

\section{Verifications of the Rules of Forbidden Box Marking}

In this section we verify the rules of forbidden box marking.
They are necessary to ensure Eq.~\eqref{eq:dc}.

In \textbf{(BM-2)}, we excluded the box marking in $\nu^{(1)}$ depicted below.

\setlength{\unitlength}{10pt}
\begin{center}
\begin{picture}(11,3)
\put(0,0){\line(1,0){3}}
\put(0,1){\line(1,0){4}}
\put(0,2){\line(1,0){4}}
\put(1,0){\line(0,1){2}}
\put(2,0){\line(0,1){2}}
\put(3,0){\line(0,1){2}}
\put(4,1){\line(0,1){1}}
\put(0,2){\makebox(2,1){$\downarrow$}}
\put(2,0){\makebox(1,1){{\scriptsize $[2]$}}}
\put(3,1){\makebox(1,1){{\scriptsize $[3]$}}}

\put(5,1){\makebox(2,1){$\text{and}$}}

\put(8,0){\line(1,0){2}}
\put(8,1){\line(1,0){3}}
\put(8,2){\line(1,0){3}}
\put(9,0){\line(0,1){2}}
\put(10,0){\line(0,1){2}}
\put(11,1){\line(0,1){1}}
\put(8,2){\makebox(2,1){$\downarrow$}}
\put(9,0){\makebox(1,1){{\scriptsize $[2]$}}}
\put(10,1){\makebox(1,1){{\scriptsize $[3]$}}}
\end{picture}
\end{center}
where $i_{2}^{eff}>i_{1}$ and the $i_{2}$ (resp. $i_{3}$)-string is qq-singular (resp. singular).
Suppose that [4] is not marked in $\nu^{(2)}$.
Then,
\begin{align*}
\Delta _{2\rightarrow 4}c(\nu,J)=-2&+\Delta r_{i_{2}}^{(1)}-(\Delta p_{i_{2}-1}^{(1)}+\Delta r_{i_{2}-1}^{(1)}) \\
&+\Delta r_{i_{3}}^{(1)}-(\Delta p_{i_{3}-1}^{(1)}+\Delta r_{i_{3}-1}^{(1)}),
\end{align*}
where $\Delta r_{i_{2}}^{(1)}=-2$, $\Delta r_{i_{3}}^{(1)}=0$, $\Delta r_{i_{2}-1}^{(1)}=0$ and $\Delta r_{i_{3}-1}^{(1)}=-1$ by \textbf{(RA-2)}.
The changes of vacancy numbers are $\Delta p_{i_{2}-1}^{(1)}=-3$ and $\Delta p_{i_{3}-1}^{(1)}=-1$ (see \textbf{(VC-2)} and \textbf{(VC-3)}).
Therefore $\Delta _{2\rightarrow 4}c(\nu,J)=-1$ so that $\Delta c(\nu,J)\neq \alpha_{1}^{(2)}$.

We excluded the box marking in $\nu^{(1)}$ depicted below.

\setlength{\unitlength}{10pt}
\begin{center}
\begin{picture}(4,3)
\put(0,0){\line(1,0){3}}
\put(0,1){\line(1,0){4}}
\put(0,2){\line(1,0){4}}
\put(1,0){\line(0,1){2}}
\put(2,0){\line(0,1){2}}
\put(3,0){\line(0,1){2}}
\put(4,1){\line(0,1){1}}
\put(0,2){\makebox(2,1){$\downarrow$}}
\put(2,0){\makebox(1,1){{\scriptsize $[2]$}}}
\put(3,1){\makebox(1,1){{\scriptsize $[3]$}}}
\end{picture}
\end{center}
where $i_{2}^{eff}>i_{1}$ and the $i_{2}$ (resp. $i_{3}$)-string is qq-singular (resp. q-singular).
Suppose that [4] is marked in $\nu^{(2)}$ with $i_{3}^{eff}=i_{4}$.
Then,
\begin{align*}
\Delta _{2\rightarrow 5}c(\nu,J)=-1&+\Delta r_{i_{2}}^{(1)}-(\Delta p_{i_{2}-1}^{(1)}+\Delta r_{i_{2}-1}^{(1)}) \\
&+\Delta r_{i_{3}}^{(1)}-(\Delta p_{i_{3}-1}^{(1)}+\Delta r_{i_{3}-1}^{(1)}) \\
&+\Delta r_{i_{4}}^{(2)}-(\Delta p_{i_{4}-1}^{(2)}+\Delta r_{i_{4}-1}^{(2)}),
\end{align*}
where $\Delta r_{i_{2}}^{(1)}=-2$, $\Delta r_{i_{3}}^{(1)}=-1$, $\Delta r_{i_{4}}^{(2)}=0$, $\Delta r_{i_{2}-1}^{(1)}=0$, $\Delta r_{i_{3}-1}^{(1)}=0$ by \textbf{(RA-2)}, and $\Delta r_{i_{4}-1}^{(2)}=0$,
The changes of vacancy numbers $\Delta p_{i_{2}-1}^{(1)}$ and $\Delta p_{i_{3}-1}^{(1)}$ are obtained by overwriting 
\[
-3\chi (i\geq 3i_{5})-2\chi (i=3i_{5}-1)-\chi (i=3i_{5}-2)
\]
on the values of previous case.
That is,  $\Delta p_{i_{2}-1}^{(1)}=-4$ and $\Delta p_{i_{3}-1}^{(1)}=-3$.
By \textbf{(VC-3)} and \textbf{(VC-4)} $\Delta p_{i_{4}-1}^{(2)}=0$.
Therefore, $\Delta _{2\rightarrow 5}c(\nu,J)=3$ so that $\Delta c(\nu,J)\neq \alpha_{1}^{(2)}$.

We excluded the box marking (a) in $\nu^{(1)}$ depicted below and prescribe that the box marking must be (b).

\setlength{\unitlength}{10pt}
\begin{center}
\begin{picture}(12,4)
\put(0,0){\line(1,0){2}}
\put(0,1){\line(1,0){4}}
\put(0,2){\line(1,0){4}}
\put(1,0){\line(0,1){2}}
\put(2,0){\line(0,1){2}}
\put(3,1){\line(0,1){1}}
\put(4,1){\line(0,1){1}}
\put(0,2){\makebox(2,1){$\downarrow$}}
\put(1,0){\makebox(1,1){{\scriptsize $[2]$}}}
\put(2,1){\makebox(1,1){{\scriptsize $[4]$}}}
\put(3,1){\makebox(1,1){{\scriptsize $[3]$}}}
\put(0,3){\makebox(1,1){{\small (a)}}}

\put(8,0){\line(1,0){2}}
\put(8,1){\line(1,0){4}}
\put(8,2){\line(1,0){4}}
\put(9,0){\line(0,1){2}}
\put(10,0){\line(0,1){2}}
\put(11,1){\line(0,1){1}}
\put(12,1){\line(0,1){1}}
\put(8,2){\makebox(2,1){$\downarrow$}}
\put(9,1){\makebox(1,1){{\scriptsize $[4]$}}}
\put(10,1){\makebox(1,1){{\scriptsize $[3]$}}}
\put(11,1){\makebox(1,1){{\scriptsize $[2]$}}}
\put(8,3){\makebox(1,1){{\small (b)}}}
\end{picture}
\end{center}
where $i_{2}^{eff}>i_{1}$ and the first (resp. second) string is singular (resp. qq-singular).
It is easily verified that (a) and (b) yields the same RC unless [5] and [6] are marked in $\nu^{(2)}$ with $i_{5}=i_{6}$ and $i_{4}^{eff}=i_{6}$.
The changes of vacancy numbers in (a) are $\Delta p_{i_{2}-1}^{(1)}=-3$ and $\Delta p_{i_{4}-2}^{(1)}=-1$ (see \textbf{(VC-2)} and \textbf{(VC-3)}).
Now suppose that [5] and [6] are marked in $\nu^{(2)}$ with $i_{5}=i_{6}$ and $i_{4}^{eff}=i_{6}$.
In case (a), the new vacancy changes $\Delta p_{i_{2}-1}^{(1)}$ and $\Delta p_{i_{4}-2}^{(1)}$ are obtained by overwriting

\begin{align*}
&-6\chi (i\geq 3i_{6})-5\chi (i=3i_{6}-1)-4\chi (i=3i_{6}-2) \\
&-3\chi (i=3i_{6}-3)-2\chi (i=3i_{6}-4)-\chi (i=3i_{6}-5)
\end{align*}
on the old ones.
That is, $\Delta p_{i_{2}-1}^{(1)}=-6$ and $\Delta p_{i_{4}-2}^{(1)}=-5$.
$\Delta p_{i_{6}-2}^{(2)}=0$, which is unchanged (see \textbf{(VC-3)}).
$\Delta_{2\rightarrow 10}c(\nu,J)$ is computed to be

\begin{align*}
-2&+\Delta r_{i_{2}}^{(1)}-(\Delta p_{i_{2}-1}^{(1)}+\Delta r_{i_{2}-1}^{(1)}) \\
&+\Delta r_{i_{4}}^{(1)}-(\Delta p_{i_{4}-2}^{(1)}+\Delta r_{i_{4}-2}^{(1)}) \\
&+\Delta r_{i_{6}}^{(2)}-(\Delta p_{i_{6}-2}^{(2)}+\Delta r_{i_{6}-2}^{(2)}),
\end{align*}
where $\Delta r_{i_{2}}^{(1)}=-2$ and other rigging adjustments are zero so that $\Delta_{2\rightarrow 10}c(\nu,J)=7$.
In this case, we must do the following box marking in $\nu^{(1)}$.

\setlength{\unitlength}{10pt}
\begin{center}
\begin{picture}(7,2)
\put(0,0){\line(1,0){7}}
\put(0,1){\line(1,0){7}}
\put(1,0){\line(0,1){1}}
\put(2,0){\line(0,1){1}}
\put(3,0){\line(0,1){1}}
\put(4,0){\line(0,1){1}}
\put(5,0){\line(0,1){1}}
\put(6,0){\line(0,1){1}}
\put(7,0){\line(0,1){1}}
\put(3,1){\makebox(2,1){$\downarrow$}}
\put(1,0){\makebox(1,1){{\scriptsize $[9]$}}}
\put(2,0){\makebox(1,1){{\scriptsize $[8]$}}}
\put(3,0){\makebox(1,1){{\scriptsize $[7]$}}}
\put(4,0){\makebox(1,1){{\scriptsize $[4]$}}}
\put(5,0){\makebox(1,1){{\scriptsize $[3]$}}}
\put(6,0){\makebox(1,1){{\scriptsize $[2]$}}}
\end{picture}
\end{center}
Suppose that $i_{6}\geq i_{1}+2$.
Then $\Delta p_{i_{9}-6}^{(1)}=-3$ and $\Delta p_{i_{6}-2}^{(2)}=1$, 
\[
\Delta_{2\rightarrow 13}c(\nu,J)=-2-\Delta p_{i_{9}-6}^{(1)}-\Delta p_{i_{6}-2}^{(2)}=0
\]
so that $\Delta c(\nu,J)=\Delta_{1}c(\nu,J)+\Delta_{1\rightarrow 2}c(\nu,J)+\Delta_{2\rightarrow 13}c(\nu,J)=-\alpha_{1}^{(2)}$.
The computation in the case when $i_{6}=i_{1}+1$ is similar.
In this case, we must consider the changes of $m_{i_{1}}^{(2)}$ and $m_{i_{1}-1}^{(2)}$ as well. 

We also excluded the box marking (a) in $\nu^{(1)}$ depicted below and prescribed that the box marking must be (b).

\setlength{\unitlength}{10pt}
\begin{center}
\begin{picture}(12,4)
\put(0,0){\line(1,0){3}}
\put(0,1){\line(1,0){4}}
\put(0,2){\line(1,0){4}}
\put(1,0){\line(0,1){2}}
\put(2,0){\line(0,1){2}}
\put(3,0){\line(0,1){2}}
\put(4,1){\line(0,1){1}}
\put(0,2){\makebox(2,1){$\downarrow$}}
\put(1,0){\makebox(1,1){{\scriptsize $[3]$}}}
\put(2,0){\makebox(1,1){{\scriptsize $[2]$}}}
\put(3,1){\makebox(1,1){{\scriptsize $[4]$}}}
\put(0,3){\makebox(1,1){{\small (a)}}}

\put(8,0){\line(1,0){3}}
\put(8,1){\line(1,0){4}}
\put(8,2){\line(1,0){4}}
\put(9,0){\line(0,1){2}}
\put(10,0){\line(0,1){2}}
\put(11,0){\line(0,1){2}}
\put(12,1){\line(0,1){1}}
\put(8,2){\makebox(2,1){$\downarrow$}}
\put(9,1){\makebox(1,1){{\scriptsize $[4]$}}}
\put(10,1){\makebox(1,1){{\scriptsize $[3]$}}}
\put(11,1){\makebox(1,1){{\scriptsize $[2]$}}}
\put(8,3){\makebox(1,1){{\small (b)}}}
\end{picture}
\end{center}
where $i_{2}^{eff}>i_{1}$ and the first (resp. second) string is singular (resp. q-singular).
The verification of this case is similar.

In \textbf{(BM-3)}, we excluded the following box marking (see Example~\ref{ex:BM-3}).

\setlength{\unitlength}{10pt}
\begin{center}
\begin{picture}(3,3)
\put(0,0){\line(1,0){2}}
\put(0,1){\line(1,0){3}}
\put(0,2){\line(1,0){3}}
\put(1,0){\line(0,1){2}}
\put(2,0){\line(0,1){2}}
\put(3,1){\line(0,1){1}}
\put(1,2){\makebox(2,1){$\downarrow$}}

\put(1,0){\makebox(1,1){{\scriptsize $[2]$}}}
\put(2,1){\makebox(1,1){{\scriptsize $[3]$}}}
\end{picture}
\end{center}
where the $i_{2}$-string is qq-singular and the $i_{3}$-string is singular.
Suppose that [4] is not marked in $\nu^{(2)}$.
Then
\[
\Delta_{3\rightarrow 4}c(\nu,J)=-1+\Delta r_{i_{3}}^{(1)}-(\Delta p_{i_{3}-1}^{(1)}+\Delta r_{i_{3}-1}^{(1)}),
\]
where $\Delta r_{i_{3}}^{(1)}=0$, $\Delta r_{i_{3}-1}^{(1)}=-1$ by \textbf{(RA-2)}, and $\Delta p_{i_{3}-1}^{(1)}=-1$ (see \textbf{(VC-3)}) so that $\Delta_{3\rightarrow 4}c(\nu,J)=1$, which yields $\Delta c(\nu,J)\neq \alpha_{1}^{(2)}$.

In \textbf{(BM-4)}, we excluded the following box marking.

\setlength{\unitlength}{10pt}
\begin{center}
\begin{picture}(4,3)
\put(0,0){\line(1,0){3}}
\put(0,1){\line(1,0){4}}
\put(0,2){\line(1,0){4}}
\put(1,0){\line(0,1){2}}
\put(2,0){\line(0,1){2}}
\put(3,0){\line(0,1){2}}
\put(4,1){\line(0,1){1}}
\put(0,2){\makebox(2,1){$\downarrow$}}
\put(2,0){\makebox(1,1){{\scriptsize $[3]$}}}
\put(3,1){\makebox(1,1){{\scriptsize $[4]$}}}
\end{picture}
\end{center}
where $i_{3}^{eff}>i_{1}$ and the first (resp. second) string is singular (resp. q-singular).
Suppose that [5] is marked in a q-singular string in $\nu^{(2)}$ with $i_{4}^{eff}=i_{5}$.
Then
\begin{align*}
\Delta _{3\rightarrow 8}c(\nu,J)=-1&+\Delta r_{i_{3}}^{(1)}-(\Delta p_{i_{3}-1}^{(1)}+\Delta r_{i_{3}-1}^{(1)}) \\
&+\Delta r_{i_{4}}^{(1)}-(\Delta p_{i_{4}-1}^{(1)}+\Delta r_{i_{4}-1}^{(1)}) \\
&+\Delta r_{i_{5}}^{(2)}-(\Delta p_{i_{5}-1}^{(2)}+\Delta r_{i_{5}-1}^{(2)}),
\end{align*}
where $\Delta r_{i_{3}}^{(1)}=-1$, $\Delta r_{i_{4}}^{(1)}=0$, $\Delta r_{i_{5}}=-1$, $\Delta r_{i_{3}-1}=-1$ by \textbf{(RA-3)}, $\Delta r_{i_{4}-1}^{(1)}=0$ by \textbf{(RA-3)}, and $\Delta r_{i_{5}-1}^{(2)}=0$.
If [5] is not marked in $\nu^{(2)}$, then $\Delta p_{i_{3}-1}^{(1)}=-1$ and $\Delta p_{i_{4}-1}^{(1)}=1$.
When [5] is marked in $\nu^{(2)}$  with $i_{4}^{eff}=i_{5}$, the new vacancy changes in $\nu^{(1)}$ are obtained by overwriting
\[
-3\chi (i\geq 3i_{5})-2\chi (i=3i_{5}-1)-\chi (i=3i_{5}-2)
\]
on old ones.
Therefore, $\Delta p_{i_{3}-1}^{(1)}=-2$ and $\Delta p_{i_{4}-1}^{(1)}=-1$.
$\Delta p_{i_{5}-1}^{(2)}=-1$, which is unchanged (see \textbf{(VC-3)} and \textbf{(VC-4)}).
Hence, $\Delta _{3\rightarrow 8}c(\nu,J)=2$, which yields $\Delta c(\nu,J)\neq \alpha_{1}^{(2)}$.

In \textbf{(BM-5)}, we excluded the following box marking.

\setlength{\unitlength}{10pt}
\begin{center}
\begin{picture}(14,3)
\put(0,0){\line(1,0){2}}
\put(0,1){\line(1,0){2}}
\put(1,0){\line(0,1){1}}
\put(2,0){\line(0,1){1}}
\put(0,1){\makebox(2,1){$\downarrow$}}
\put(1,0){\makebox(1,1){{\scriptsize $[5]$}}}
\put(0,2){\makebox(1,1){{\small (a)}}}

\put(5,0){\line(1,0){3}}
\put(5,1){\line(1,0){3}}
\put(6,0){\line(0,1){1}}
\put(7,0){\line(0,1){1}}
\put(8,0){\line(0,1){1}}
\put(5,1){\makebox(2,1){$\downarrow$}}
\put(7,0){\makebox(1,1){{\scriptsize $[5]$}}}
\put(5,2){\makebox(1,1){{\small (b)}}}

\put(11,0){\line(1,0){3}}
\put(11,1){\line(1,0){3}}
\put(12,0){\line(0,1){1}}
\put(13,0){\line(0,1){1}}
\put(14,0){\line(0,1){1}}
\put(11,1){\makebox(2,1){$\downarrow$}}
\put(12,0){\makebox(1,1){{\scriptsize $[5]$}}}
\put(13,0){\makebox(1,1){{\scriptsize $[3]$}}}
\put(11,2){\makebox(1,1){{\small (c)}}}

\end{picture}
\end{center}
where $i_{5}^{eff}=i_{4}$.
In cases (a) and (b),
\begin{align*}
\Delta _{5\rightarrow 7}c(\nu,J)=-1&+\Delta r_{i_{4}}^{(2)}-(\Delta p_{i_{4}-1}^{(2)}+\Delta r_{i_{4}-1}^{(2)}) \\
&+\Delta r_{i_{5}}^{(1)}-(\Delta p_{i_{5}-1}^{(1)}+\Delta r_{i_{5}-1}^{(1)}),
\end{align*}
where $\Delta r_{i_{4}}^{(2)}=0$, $\Delta r_{i_{5}}^{(1)}=-1$, $\Delta r_{i_{4}-1}^{(2)}=0$, and $\Delta r_{i_{5}-1}^{(1)}=0$.
In case (a) , the box marking by [4] and [5] does not affect the change of vacancy number of the $(i_{5}-1)$-string in $\nu^{(1)}$ and that of the $(i_{4}-1)$-string in $\nu^{(2)}$ so that $\Delta p_{i_{4}-1}^{(2)}=-1$ and $\Delta p_{i_{5}-1}^{(1)}=-1$ (see \textbf{(VC-4)}) and therefore $\Delta _{5\rightarrow 7}c(\nu,J)=-2$, which yields $\Delta c(\nu,J)\neq \alpha_{1}^{(2)}$.
In case (b), $\Delta p_{i_{4}-1}^{(2)}=-1$ (see \textbf{(VC-4)}) and $\Delta p_{i_{5}-1}^{(1)}=0$ (see \textbf{(VC-5)}) so that $\Delta _{5\rightarrow 7}c(\nu,J)=-1$, which yields $\Delta c(\nu,J)\neq \alpha_{1}^{(2)}$.
In case (c), 
\begin{align*}
\Delta _{3\rightarrow 7}c(\nu,J)=-1&+\Delta r_{i_{4}}^{(2)}-(\Delta p_{i_{4}-1}^{(2)}+\Delta r_{i_{4}-1}^{(2)}) \\
&+\Delta r_{i_{5}}^{(1)}-(\Delta p_{i_{5}-2}^{(1)}+\Delta r_{i_{5}-2}^{(1)}),
\end{align*}
where  $\Delta r_{i_{4}}^{(2)}=0$, $\Delta r_{i_{5}}^{(1)}=-1$, $\Delta r_{i_{4}-1}^{(2)}=0$, and $\Delta r_{i_{5}-2}^{(1)}=0$.
Since the box marking by [3], [4], and [5] and the deletion these boxes do not affect the change of the vacancy number of the $(i_{5}-2)$-string in $\Tilde{\nu}^{(1)}$ so that $\Delta p_{i_{5}-2}^{(1)}=-1$ (see \textbf{(VC-2)}).
$\Delta p_{i_{4}-1}^{(2)}$ is obtained by overwriting
\[
-2\chi (3i\geq i_{5})-\chi (3i=i_{5}-1)+2\chi (i\geq i_{4})
\]
on $\Delta p_{i_{4}-1}^{(2)}$ in \textbf{(VC-2)}.
Again it is unchanged and $\Delta p_{i_{4}-1}^{(2)}=0$.
Hence $\Delta _{3\rightarrow 7}c(\nu,J)=-1$, which yields $\Delta c(\nu,J)\neq \alpha_{1}^{(2)}$.

In \textbf{(BM-8)}, we excluded the following box marking in $\nu^{(1)}$.

\setlength{\unitlength}{10pt}
\begin{center}
\begin{picture}(6,2)
\put(0,0){\line(1,0){6}}
\put(0,1){\line(1,0){6}}
\put(1,0){\line(0,1){1}}
\put(2,0){\line(0,1){1}}
\put(3,0){\line(0,1){1}}
\put(4,0){\line(0,1){1}}
\put(5,0){\line(0,1){1}}
\put(6,0){\line(0,1){1}}
\put(3,1){\makebox(2,1){$\downarrow$}}
\put(1,0){\makebox(1,1){{\scriptsize $[9]$}}}
\put(2,0){\makebox(1,1){{\scriptsize $[8]$}}}
\put(3,0){\makebox(1,1){{\scriptsize $[7]$}}}
\put(4,0){\makebox(1,1){{\scriptsize $[4]$}}}
\put(5,0){\makebox(1,1){{\scriptsize $[3]$}}}
\end{picture}
\end{center}
where $i_{9}^{eff}=i_{6}$ ($i_{9}=3i_{6}-1$) (see Example~\ref{ex:BM-8a}).

\begin{align*}
\Delta _{3\rightarrow 13}c(\nu,J)=-1&+\Delta r_{i_{6}}^{(2)}-(\Delta p_{i_{6}-2}^{(2)}+\Delta r_{i_{6}-2}^{(2)}) \\
&+\Delta r_{i_{9}}^{(1)}-(\Delta p_{i_{9}-5}^{(1)}+\Delta r_{i_{9}-5}^{(1)}),
\end{align*}
where $\Delta r_{i_{6}}^{(2)}=\Delta r_{i_{6}-2}^{(2)}=\Delta r_{i_{9}}^{(1)}=\Delta r_{i_{9}-5}^{(1)}=0$.
The change of vacancy number $\Delta p_{i_{9}-5}^{(1)}$ is obtained by overwriting

\begin{align*}
&10\chi (i\geq i_{9})+8\chi (i=i_{9}-1)+\cdots +2\chi (i=i_{9}-4) \\
&-6\chi (i\geq 3i_{6})-5\chi (i=3i_{6}-1)-\cdots -\chi (i=3i_{6}-5)
\end{align*}
on $\Delta p_{i_{9}-5}^{(1)}$ in \textbf{(VC-3)} so that $\Delta p_{i_{9}-5}^{(1)}=-1$.
The change of vacancy number $\Delta p_{i_{6}-2}^{(2)}$ is obtained by overwriting

\begin{align*}
&-5\chi (3i\geq i_{4})-4\chi (3i=i_{9}-1)-\cdots -2\chi (3i=i_{9}-4) \\
&+4\chi (i\geq i_{6})+2\chi (i=i_{6}-1)
\end{align*}
on  $\Delta p_{i_{6}-2}^{(2)}$ in \textbf{(VC-3)} so that $\Delta p_{i_{6}-2}^{(2)}=0$.
Hence $\Delta _{3\rightarrow 13}c(\nu,J)=-1$, which yields $\Delta c(\nu,J)\neq \alpha_{1}^{(2)}$.

The verification of the following forbidden box marking in $\nu^{(1)}$

\setlength{\unitlength}{10pt}
\begin{center}
\begin{picture}(13,2)
\put(0,0){\line(1,0){5}}
\put(0,1){\line(1,0){5}}
\put(1,0){\line(0,1){1}}
\put(2,0){\line(0,1){1}}
\put(3,0){\line(0,1){1}}
\put(4,0){\line(0,1){1}}
\put(5,0){\line(0,1){1}}
\put(2,1){\makebox(2,1){$\downarrow$}}
\put(1,0){\makebox(1,1){{\scriptsize $[9]$}}}
\put(2,0){\makebox(1,1){{\scriptsize $[8]$}}}
\put(3,0){\makebox(1,1){{\scriptsize $[6]$}}}
\put(4,0){\makebox(1,1){{\scriptsize $[5]$}}}

\put(6,0){\makebox(2,1){$\text{or}$}}

\put(9,0){\line(1,0){4}}
\put(9,1){\line(1,0){4}}
\put(10,0){\line(0,1){1}}
\put(11,0){\line(0,1){1}}
\put(12,0){\line(0,1){1}}
\put(13,0){\line(0,1){1}}
\put(10,1){\makebox(2,1){$\downarrow$}}
\put(10,0){\makebox(1,1){{\scriptsize $[9]$}}}
\put(11,0){\makebox(1,1){{\scriptsize $[8]$}}}
\put(12,0){\makebox(1,1){{\scriptsize $[6]$}}}
\end{picture}
\end{center}
with $i_{9}^{eff}=i_{7}$ (see Example~\ref{ex:BM-8b} or \ref{ex:BM-8c}) is similar.

In \textbf{(BM-9)}, we excluded the following box marking in $\nu^{(1)}$.

\setlength{\unitlength}{10pt}
\begin{center}
\begin{picture}(4,3)
\put(0,0){\line(1,0){3}}
\put(0,1){\line(1,0){4}}
\put(0,2){\line(1,0){4}}
\put(1,0){\line(0,1){2}}
\put(2,0){\line(0,1){2}}
\put(3,0){\line(0,1){2}}
\put(4,1){\line(0,1){1}}
\put(0,2){\makebox(2,1){$\downarrow$}}
\put(2,0){\makebox(1,1){{\scriptsize $[8]$}}}
\put(3,1){\makebox(1,1){{\scriptsize $[9]$}}}
\end{picture}
\end{center}
where the first (resp. second) string is singular (resp. q-singular).
If [7] is marked in $\nu^{(1)}$, then $i_{7}\leq i_{8}-2$ and if [7] is marked in $\nu^{(2)}$, then $i_{8}^{eff}>i_{7}$ (see the preferential rule of \textbf{(BM-8)} (2)).
Suppose that [10] is marked in a singular string in $\nu^{(2)}$ with $i_{9}^{eff}=i_{10}$.
Then,
\begin{align*}
\Delta _{11\rightarrow 14}c(\nu,J)=-1&+\Delta r_{i_{8}}^{(1)}-(\Delta p_{i_{8}-1}^{(1)}+\Delta r_{i_{8}-1}^{(1)}) \\
&+\Delta r_{i_{9}}^{(1)}-(\Delta p_{i_{9}-1}^{(1)}+\Delta r_{i_{9}-1}^{(1)}) \\
&+\Delta r_{i_{10}}^{(2)}-(\Delta p_{i_{10}-1}^{(2)}+\Delta r_{i_{10}-1}^{(2)}),
\end{align*}
where $\Delta r_{i_{8}}^{(1)}=-1$, $\Delta r_{i_{9}}^{(1)}=0$, $\Delta r_{i_{10}}^{(2)}=0$, $\Delta r_{i_{8}-1}^{(1)}=-1$ by \textbf{(RA-7)}, $\Delta r_{i_{9}-1}^{(1)}=0$ by \textbf{(RA-7)}, and $\Delta r_{i_{10}-1}^{(2)}=0$.
The changes of vacancy numbers $\Delta p_{i}^{(1)}$ are obtained by overwriting
\[
-3\chi (i\geq 3i_{10})-2\chi (i=3i_{10}-1)-\chi (i=3i_{10}-2)
\]
on  $\Delta p_{i}^{(1)}$ in \textbf{(VC-13)} so that  $\Delta p_{i_{8}-1}^{(1)}=-2$ and  $\Delta p_{i_{9}-1}^{(1)}=-1$.
By \textbf{(VC-12)} and \textbf{(VC-13)}, $\Delta p_{i_{10}-1}^{(2)}=0$.
Hence, $\Delta _{11\rightarrow 14}c(\nu,J)=2$, which yields $\Delta c(\nu,J)\neq \alpha_{1}^{(2)}$.

The verifications of rules of boomerang strings are similar and we omit the details.

\subsection*{Acknowledgements}
The author would like to express his gratitude to Professor Masato Okado for helpful discussions.
He also would like to thank Professor Travis Scrimshaw for helpful comments.
This work was partly supported by Osaka City University Advanced Mathematical Institute (MEXT Joint Usage/Research Center on Mathematics and Theoretical Physics JPMXP0619217849).

\end{document}